\newtheorem{theorem}{Theorem}[section]
\newtheorem{corollary}{Corollary}[theorem]
\newtheorem{lemma}[theorem]{Lemma}
\definecolor{matcha}{cmyk}{0.21, 0, 0.68, 0}
\journal{Elsevier}
\DeclareFontFamily{OT1}{pzc}{}
\DeclareFontShape{OT1}{pzc}{m}{it}{<-> s * [1.200] pzcmi7t}{}
\DeclareMathAlphabet{\mathpzc}{OT1}{pzc}{m}{it}
\DeclareSymbolFont{slant}{OT1}{\familydefault}{m}{sl}
\DeclareSymbolFontAlphabet{\mathsl}{slant}
\begin{document}

\begin{frontmatter}

\title{Windowed space--time least-squares Petrov--Galerkin method for nonlinear model order reduction}

\author[mymainaddress]{Yukiko S. Shimizu\corref{mycorrespondingauthor}}\cortext[mycorrespondingauthor]{Corresponding author}
\ead{yshimiz@sandia.gov}
\author[mymainaddress]{Eric J. Parish}
\ead{ejparis@sandia.gov}
\address[mymainaddress]{Extreme-scale Data Science and Analytics, Sandia National Laboratories, Livermore, CA 94550, USA}

\begin{abstract}
This work presents the windowed space--time least-squares Petrov--Galerkin method (WST-LSPG) for model reduction of nonlinear parameterized dynamical systems. WST-LSPG is a generalization of the space--time least-squares Petrov--Galerkin method (ST-LSPG). The main drawback of ST-LSPG is that it requires solving a dense space--time system with a space--time basis that is calculated over the entire \emph{global} time domain, which can be unfeasible for large-scale applications. Instead of using a temporally-global space--time trial subspace and minimizing the discrete-in-time full-order model (FOM) residual over an entire time domain, the proposed WST-LSPG approach addresses this weakness by (1) dividing the time simulation into time windows, (2) devising a unique low-dimensional space--time trial subspace for each window, and (3) minimizing the discrete-in-time space--time residual of the dynamical system over each window. This formulation yields a problem with coupling confined within each window, but sequential across the windows. To enable high-fidelity trial subspaces characterized by a relatively minimal number of basis vectors, this work proposes constructing space--time bases using tensor decompositions for each window. WST-LSPG is equipped with hyper-reduction techniques to further reduce the computational cost. Numerical experiments for the one-dimensional Burgers' equation and the two-dimensional compressible Navier--Stokes equations for flow over a NACA 0012 airfoil demonstrate that WST-LSPG is superior to ST-LSPG in terms of accuracy and computational gain.
 
\end{abstract}

\begin{keyword}
reduced-order modeling \sep windowed space--time \sep proper orthogonal decomposition \sep affine subspace \sep nonlinear dynamics \sep hyper-reduction
\end{keyword}
\end{frontmatter}

%

\section{Introduction}
%
%
Simulating large-scale nonlinear dynamical systems plays an essential role in numerous fields of science and engineering. However, executing such simulations often comes at the price of prohibitively high computational costs; this is the case when the dynamical system has a large state-space dimension. As a result, analysts often rely on low-cost reduced-order models (ROMs) that generate approximate solutions to the high-fidelity FOM. These ROMs can then be used more effectively, e.g., in many-query problems and time-critical applications, examples of which include uncertainty quantification, optimization, error estimation, and mesh adaptation.

Projection-based ROMs in particular can efficiently generate accurate approximate solutions at a low computational cost. These techniques are performed on a FOM, which can be either a nonlinear continuous-in-time set of ordinary differential equations (ODE) or a discrete-in-time set of ordinary difference equations (O$\Delta$E).  Generally speaking, projection-based ROMs operate by (1) restricting the state to live in a low-dimensional trial subspace by performing proper orthogonal decomposition (POD) and (2) executing a projection or residual minimization process. This process yields a ROM whose dimension is much less than that of the FOM. Projection-based ROMs have proven to be particularly effective for linear time-invariant (LTI) systems~\cite{wilcox_benner_rev,moore,roberts,GugercinIRKA,rovas_thesis}, as methods that account for, e.g., observability and controllability, $\mathcal{H}^2$ optimality, non-affine parametric dependence, and \textit{a posteriori} error bounds have been developed. While the development of projection-based ROMs for nonlinear dynamical systems has shown to be more challenging than that of LTI systems, significant advancements have been made in recent years. In particular, the maturation of ROMs based on variable transformations and lifting approaches~\cite{QIAN2020132401,doi:10.2514/1.J057791,KRATH2020109959}, residual minimization principles, space--time settings, and so-called ``hyper-reduction" techniques~\cite{everson_sirovich_gappy,eim,qdeim_drmac}, has enabled the construction of efficient and accurate ROMs of complex nonlinear dynamical systems.
This work seeks to expand upon these developments by introducing a novel space--time residual minimization approach that leverages the concept of windowing. To this end, this paper provides a brief review of the state-of-the-art residual minimization method, a study of the space--time model reduction approaches, and a discussion of the outstanding challenges.

Residual minimization approaches in reduced-order modeling operate by computing an approximate solution, which lies within a low-dimensional trial subspace that minimizes the residual of the FOM ODE or FOM O$\Delta$E~\cite{bui2007model, bui2008model, bui2008parametric, carlberg2011efficient, carlberg2011model, carlberg2013gnat, carlberg2017galerkin, legresley2000airfoil, abgrall2018model, parish2019windowed, collins2019output, collins2020petrov}. For model reduction of dynamical systems,  Refs.~\cite{bui2007model,carlberg2011efficient} first formulated this residual minimization problem by (1) restricting the state to live in a low-dimensional \textit{spatial} trial subspace and (2) sequentially minimizing the discrete-in-time residual (i.e., the residual arising after discretizing the FOM ODE in time) in a weighted $\ell^2$-norm at each time step. This formulation is now best known as the least-squares Petrov--Galerkin (LSPG) approach. LSPG has been demonstrated to be more robust than the classical Galerkin approach and has been successfully used for model reduction of numerous nonlinear dynamical systems~\cite{carlberg_thesis,bui_thesis,carlberg2013gnat,carlberg2017galerkin}. However, LSPG has several drawbacks. In addition to displaying a complex dependence on the time-discretization scheme~\cite{carlberg2017galerkin,parish2019windowed}, LSPG only reduces the \textit{spatial} (i.e., the state-space) dimension of a nonlinear dynamical system (this is a limitation of spatial-reduction-only ROMs in general) and has \emph{a posteriori} error bounds that grow exponentially in time~\cite{carlberg2017galerkin}. Thus, for a problem requiring a large number of temporal degrees of freedom (due to, e.g., disparate time scales, long time horizons), LSPG projection fails to yield an accurate ROM of an adequately small dimension. 

Space--time ROMs aim to overcome these latter two limitations by (1) reducing the spatial \textit{and} temporal dimensions of the FOM, and (2) executing a space--time projection or residual minimization process. A variety of space--time ROM formulations have been proposed in the literature, including approaches based on the reduced-basis method~\cite{urban2012new, urban2014improved, yano2014space, doi:10.1142/S0218202514500110}, the POD-Galerkin method~\cite{baumann2016space, volkwein2006algorithm}, and residual minimization principles~\cite{constantine2012residual, choi2019space}. The space--time LSPG (ST-LSPG) approach by Choi and Carlberg~\cite{choi2019space} is of particular interest to this work. ST-LSPG operates by (1) applying a time-discretization technique to the FOM ODE, (2) restricting the time-discrete space--time state (e.g., the discrete state at every time instance) to live in low-dimensional \textit{space--time} trial subspace, and (3) executing a space--time residual minimization process. In the space--time residual minimization problem, the goal is to obtain an approximate space--time state that minimizes the space--time residual in a weighted $\ell^2$-norm. ST-LSPG is preferable to LSPG due to its ability to (1) reduce both spatial and temporal degrees of freedom and (2) have more favorable theoretical properties; for example, error bounds do not grow exponentially in time and the space--time residual decreases monotonically with increasing basis dimension. Another appealing aspect of ST-LSPG is that, by virtue of its fully discrete formulation, it does not require the FOM to be a space--time model. Despite these advantages, ST-LSPG (and other space--time methods) suffers from several practical limitations. First, the computational cost of solving the system of equations emerging from a space--time discretization scales cubically with the number of space--time degrees of freedom; in contrast, the cost of standard spatial-reduction-only ROMs scales linearly in the number of temporal degrees of freedom. Thus, even though the temporally-global space--time dimension may be low, the computational cost associated with ST-LSPG can be high. Similarly, another disadvantage of ST-LSPG is the memory overhead. The ST-LSPG approach to solve the residual minimization problem requires computing the product between an $\nspacedof \ntimedof \times \nspacedof \ntimedof$ sparse matrix and an $\nspacedof \ntimedof \times n_{st}$ dense matrix, where $\nspacedof, \ntimedof$ and $n_{st}$ are the number of spatial degrees of freedom, temporal degrees of freedom, and space--time bases, respectively. This matrix-matrix product is cumbersome to compute and store for large high-fidelity problems. As a result, ST-LSPG is generally an impractical method without applying hyper-reduction in both space and time.           

Recently, the windowed least-squares (WLS) approach was developed to address several of these shortcomings. In contrast to (1) LSPG projection, which minimizes the FOM O$\Delta$E residual over each time step and (2) ST-LSPG projection, which minimizes the FOM O$\Delta$E residual over the entire time domain, WLS sequentially minimizes the FOM ODE residual over arbitrarily defined time windows. By sequentially minimizing the residual over arbitrarily defined time windows, WLS allows for fine-grained trade offs between computational cost and error. In Ref.~\cite{parish2019windowed}, WLS was formulated for two kinds of solution techniques (discretize then optimize and optimize then discretize) and two types of space–time trial subspaces: \textit{S-reduction} subspaces that associate with spatial dimension reduction, and \textit{ST-reduction} subspaces that associate with space–time dimension reduction. Ref.~\cite{parish2019windowed} showed that the limiting cases of WLS recover existing model reduction approaches. For instance, WLS with \textit{S-reduction} subspaces recovers LSPG when a discretize-then-optimize solution approach is employed, and the window size is set to be equivalent to the time step. Similarly, WLS with \textit{ST-reduction} subspaces recovers ST-LSPG when a discretize-then-optimize approach is employed, and the window size is set to be equivalent to the time domain. In Ref.\cite{parish2019windowed}, theoretical analyses and numerical experiments were carried out using WLS with \textit{S-reduction} subspaces. Critically, it was found that sequentially minimizing the residual over larger window sizes led to lower space--time residuals and more robust solutions, but not necessarily lower space--time $\ell^2$-errors.

Although WLS was formulated for space--time trial subspaces (ST-reduction subspaces) in Ref.~\cite{parish2019windowed}, the focus of the analyses and experiments was on spatial reduction (S-reduction) subspaces and on the impact of the window size over which the residual was minimized. A detailed investigation into the space--time model reduction---theoretical analyses, numerical experiments, and practical methods for constructing windowed space--time subspaces--- was not performed. This work on WST-LSPG fills this gap by extending the ST-LSPG approach to incorporate the concept of windows. The proposed WST-LSPG approach operates by minimizing the discrete-in-time space--time residual over arbitrarily defined time windows within low-dimensional space--time trial subspaces. WST-LSPG distinguishes itself from existing work in several ways. First, like WLS, WST-LSPG operates by minimizing a residual over arbitrarily defined time windows. However, contrary to Ref.~\cite{parish2019windowed}, the focus of WST-LSPG revolves around space--time model reduction. Second, in WST-LSPG, a sequence of time-local space--time bases are constructed within each window.
These space--time trial subspaces are constructed by (1) decomposing each window into sub-windows and (2) applying higher-order singular value decompositions~\cite{choi2019space} to time-local training data over each sub-window to generate a sequence of time-local space--time bases. A detailed study on the construction of these bases and the impact of the hyper-parameters that define them (e.g., window size, sub-window size) is conducted in this work. These time-local space--time bases enable WST-LSPG to produce accurate solutions at a low cost.

WST-LSPG displays commonalities with several existing efforts in the literature. First, as previously discussed, WST-LSPG is equivalent to the previously unexplored WLS with ST-reduction subspaces and is an extension of ST-LSPG. Second, the time-local space--time bases employed by ST-LSPG displays commonalities with the local reduced-order bases developed in Ref.~\cite{amsallem2012nonlinear,AmsZahWas15}. In Refs.~\cite{amsallem2012nonlinear,AmsZahWas15}, \emph{space-local} reduced bases are constructed by (1) executing an \textit{a priori} clustering step on the training data, and (2) performing POD on each cluster. The result of this process is a set of local trial subspaces. In the online stage, the state is restricted to live in a specific trial subspace, e.g., by identifying the cluster center nearest to the current state. The present work displays similarities to Refs.~\cite{amsallem2012nonlinear,AmsZahWas15} in that local bases are employed, but differs in that (1) each local basis is a space--time basis representing the state over a space--time (sub)window and (2) the local bases are assigned, \textit{a priori}, to represent the solution over a specific space--time interval; Refs.~\cite{amsallem2012nonlinear,AmsZahWas15} employ a distance-based algorithm to select the local basis at a given time instance. Lastly, the present work displays commonalities with domain decomposition ROMs (DD-ROMs)~\cite{maday_rbe,hoang2020domaindecomposition,IAPICHINO201263,phuong_huynh_knezevic_patera_2013}. In these DD-ROMs, the spatial domain is broken down into subdomains and bases tailored specifically to each subdomain are then employed. Compatibility between the various subdomains is then enforced via Lagrange multipliers. The present work displays commonalities to these approaches in that the time-domain is broken down into various subdomains, and time-local space--time bases are then employed over each subdomain. In the present case, however, solution algorithms are able to leverage the cylindrical nature of the space--time domain to avoid compatibility constraints.


Specific contributions of this work include:
\begin{itemize}
    \item Formulation of the WST-LSPG method for model reduction of dynamical systems
    \item A strategy for high-fidelity time-local space--time bases construction that incorporates both temporal domain decomposition and tensor decomposition
    \item Techniques to perform space--time hyper-reduction for WST-LSPG to further decrease the computational cost
    \item \emph{A priori} error bounds demonstrating that errors in WST-LSPG grow exponentially in the number of windows; in contrast, the Galerkin and LSPG methods are equipped with \textit{a priori} bounds that grow exponentially in the number of time steps.
    \item More favorable \emph{a posteriori} error bounds for WST-LSPG when compared to that of LSPG
    \item Numerical experiments demonstrating the effectiveness of WST-LSPG when applied to the one-dimensional Burgers' equation and the two-dimensional compressible Navier--Stokes equations
\end{itemize}
This paper is presented in the following way. Section~\ref{sec:FOM} describes the governing equations and the FOM. Section~\ref{sec:spaceROM} describes spatial projection-based model reduction and LSPG projection. Section~\ref{sec:spaceTimeLSPG} introduces the foundations of space--time projection-based model reduction. Section~\ref{sec:windowedSTLSPG} then introduces the proposed WST-LSPG projection and describes a method for constructing time-local bases over each window via tensor products. Section~\ref{sec:analysis} provides \emph{a priori} and \emph{a posteriori} error analysis for WST-LSPG, and Section~\ref{sec: experiments} provides numerical results. Section~\ref{sec: conclusions} provides conclusions and discusses future work.
\section{Full-order model}\label{sec:FOM}
This section introduces the FOM, which corresponds to a parameterized nonlinear dynamical system. Such a dynamical system can arise, for example, from the spatial discretization of a partial differential equation that depends on both space and time. Lastly, linear multistep methods are introduced and used to discretize the FOM in time. 

\subsection{Time-continuous representation}
The time-continuous FOM is defined as a parameterized set of nonlinear ODEs,
\begin{equation}
     \frac{d\state}{dt}\pa{t;\param} = \flux\pa{\state\pa{t;\param},t; \param},\quad\quad\state\pa{0;\param} =\solFullDiscreteArg{0}\pa{\param},\quad  t\in\bmat{0,\totaltime},
    \label{e:fom}
\end{equation}
where $\totaltime\in\RRplus{}$ is the final time, $\solFullDiscreteArg{0}:\paramDomain\rightarrow \RR{\nspacedof}$ are the initial conditions, and $\param\in\mathcal{D}$ are the parameters that belong to the input parameter domain, $\paramDomain\subseteq\RR{n_{\mu}}$. The number of parameters in the parameter domain is denoted $n_{\mu}$. as The time-dependent parameterized state is defined as $\state:\bmat{0,\totaltime}\times \paramDomain\rightarrow \RR{\nspacedof}$ and 
the velocity is defined as $\flux : \RR{\nspacedof} \times \bmat{0,\totaltime} \times \paramDomain \rightarrow\RR{\nspacedof}$. Lastly, in the remainder of this paper, the natural number domain is defined for any non-negative integer, $Y$, such that $\natNo\pa{Y}\defeq\Bmat{0,1,\ldots,Y\!\!-\!1}$.

\subsection{Time-discrete representation and linear multistep methods}
Linear multistep methods are considered to numerically solve the FOM ODE~\eqref{e:fom} in time. Linear multistep methods discretize the FOM ODE~\eqref{e:fom} in time such that the continuous-in-time state $\state\pa{t;\param}$ is approximated at $N_t$ time instances, $t^{n+1}, n \in \natNo\pa{N_t}$ by $\state\pa{t^{n+1};\param} \approx \solFullDiscreteArg{n+1}\pa{\param}$, where the discrete-in-time state is defined as $\solFullDiscreteArg{n+1} : \paramDomain \rightarrow \RR{\nspacedof}$, $n \in \mathbb{N}(N_t).$ The time step is defined as $\Delta \timeArg{n+1} = \timeArg{n+1}\!-\timeArg{n} \in \RR{+}$, $n \in \mathbb{N}(N_t)$. In this work, a fixed time step of constant $\dtArg{n+1}$ is used. 

Discretizing the FOM ODE~\eqref{e:fom} in time gives a set of O$\Delta$Es for each instance $n \in \mathbb{N}(N_t)$ that satisfy
\begin{align}
\resFullDiscreteArg{n+1}\pa{\solFullDiscreteArg{n+1}\pa{\param};\solFullDiscreteArg{n}\pa{\param},\ldots,\solFullDiscreteArg{n+1-\knArg{n+1}}\pa{\param},\param}= \zerobold,
\label{e:LMMresidual}
\end{align}
where $\kn$ denotes the width of the linear multistep stencil at the $n^{\mathrm{th}}$ time-instance. The discrete-in-time residual is defined as
\vspace{-3mm}
\begin{equation}
\begin{split}
  \resFullDiscreteArg{n}&:  \pa{\dummyFullDiscreteArg{n};\ldots,\dummyFullDiscreteArg{n - \kn},\param}
  \mapsto
	\sum_{j=0}^{\kn }\alpha_j^n\dummyFullDiscreteArg{n-j}
	-\dtArg{n}	\sum_{j=0}^{\kn }\beta_j^n\flux\pa{\dummyFullDiscreteArg{n - j},t^{n-j};\paramDummy},\\ 
&:\RR{\nspacedof}\times \cdots \times \RR{\nspacedof}\times\paramDomain\rightarrow\RR{\nspacedof}.
\end{split}
\label{e:residual}
\end{equation}
The coefficients $\alpha_j^n, \beta_j^n\!\in\!\RR{}$, $j \in \nat{ \kn + 1}$ define a particular linear multistep scheme where $\alpha_0^n \neq 0$ and $\sum\limits_{j=0}^{\kn}\alpha_j^n = 0$ is necessary for ensure consistency. 

\section{Spatial projection-based model reduction}\label{sec:spaceROM}

The spatial projection-based model reduction approach seeks to generate approximate solutions to the FOM O$\Delta$E\footnote{It is noted that projection-based model reduction approaches have been developed that seek to reduce the dimensionality of the FOM ODE~\eqref{e:fom}, as well as the FOM O$\Delta$E~\eqref{e:LMMresidual}. In this work, the discussion is restricted to methods that reduce the dimensionality of the FOM O$\Delta$E.} by (1) restricting the state at each time instance to live in a low-dimensional \textit{spatial trial subspace} and (2) performing a spatial projection or residual minimization process via a \emph{spatial test subspace}. This overarching method consists of two main phases: an \emph{offline} phase and an \emph{online} phase. First, this section describes the offline phase, which consists of describing how the trial subspaces are constructed. Next, this section outlines the standard LSPG projection and residual minimization problem, which is employed in the online phase. 


\subsection{Spatial trial subspaces for model reduction (offline phase)}
%
%
To reduce the dimensions of the FOM, spatial projection-based model reduction techniques seek to restrict the state solutions at each time instance to live in a low-dimensional \emph{affine trial subspace}.
This affine trial subspace is denoted as $\trialSSubspace$ and is given by
\begin{equation}\label{e:trialSsubspace}
    \trialSSubspace = \solref\pa{\param}+ \trialSSubspaceColumn\in \RR{N_s},
\end{equation}
where $\trialSSubspaceColumn$ is referred to as the invariant non-affine trial subspace or \emph{column space}, and $\solref : \paramDomain \rightarrow \RR{\nspacedof}$ is the spatial reference state, which describes the distance between the the column space and the affine trial subspace.\footnote{In projection-based model reduction, there is some freedom to choose which trial subspace to use, whether it be the column space, $\trialSSubspaceColumn$ or the affine subspace, $\trialSSubspace$, which is dependent on the chosen value for $\solref$. Employing $\trialSSubspace=\trialSSubspaceColumn$ (i.e., setting $\solref = \bm{0}$) as the trial subspace is a popular choice in model order reduction; however, employing an affine subspace with $\solref \neq\bm{0}$ has its benefits. For example, setting $\solref\pa{\param} =\solFullDiscreteArg{0}\pa{\param}$ guarantees that the initial conditions are contained in the trial subspace.}
The column space is taken to be spanned by a set of $\romdim$ orthonormal basis vectors,
\begin{equation*}
    \trialSSubspaceColumn \defeq
\mathrm{range}\pa{\sbasismat},\quad    \sbasismat \equiv \bmat{\basisvecspacei{0} & \cdots & \basisvecspacei{n_s-1}}\in \mathbb{V}_{\romdim}\pa{\RR{\nspacedof}},
\end{equation*}
where $\sbasismat$ is the spatial basis matrix and $\mathbb{V}_{\romdim}\pa{\RR{\nspacedof}}$ refers to the Stiefel manifold ($\mathrm{i.e.},\;\Bmat{\sbasismat\in \RR{N_s\times n_s} | \sbasismat^T\sbasismat =\bm{I}}$). In this work, the spatial basis is obtained by performing POD~\cite{berkooz_turbulence_pod, rathinam2003new}. The dimension of the affine trial subspace and the column space must be fewer than the total number of spatial degrees of freedom associated with the FOM, i.e., $\dim(\trialSSubspace)=\dim(\trialSSubspaceColumn) =\nbasisspace\leq\nspacedof$, where ideally, $\nbasisspace\ll\nspacedof$.
The spatial reference state is set to the initial conditions of the FOM, $\solref\pa{\param}=\solFullDiscreteArg{0}\pa{\param}$. One of the benefits of employing this reference state is that the initial conditions are ensured to live within the affine trial subspace and thus can be exactly enforced.

Spatial projection-based model reduction methods restrict the state to belong to the affine trial subspace~\eqref{e:trialSsubspace} and approximate the discrete-in-time state at each time instances $t^{n+1}$, $n\in \natNo\pa{N_t}$ as
\begin{equation}\label{e:spatial_approx_state}
    \solFullDiscreteArg{n+1}\pa{\param}\approx\solFullDiscreteApproxArg{n+1}\pa{\param}\equiv \solref\pa{\params}+\sbasismat\solFullDiscreteReducedArg{n+1}\pa{\param}\in \trialSSubspace,
\end{equation}
where $\solFullDiscreteReducedArg{n+1}: \mathcal{D}\rightarrow \RR{\romdim}$, $n\in \natNo\pa{N_t}$, are the discrete-in-time spatial reduced states and $\solFullDiscreteApproxArg{n+1}: \mathcal{D}\rightarrow \trialSSubspace$, $n\in \natNo\pa{N_t}$ are the approximate full states. Next, Section~\ref{s:spacetest} describes the test subspace and the spatial residual minimization problem used to solve for the spatial reduced states.

\subsection{Spatial test subspace for spatial model reduction (online phase)}\label{s:spacetest}
In order to describe spatial-projection-only reduced order models, the approximation of the state~\eqref{e:spatial_approx_state} is first substituted into the discrete-in-time residual of the O$\Delta$E~\eqref{e:LMMresidual} giving
\begin{equation*}
\resFullDiscreteArg{n+1}\pa{\solref\pa{\param}+\sbasismat\solFullDiscreteReducedArg{n+1}\pa{\param}; \solFullDiscreteApproxArg{n}\pa{\param},\ldots,\solFullDiscreteApproxArg{n+1-\kn}\,\pa{\param},\param}=\zerobold.
\label{eq:spatial_rom_overdetermined}
\end{equation*}
The above O$\Delta$E system comprises an over-determined system for the $\romdim$ reduced states.
To find a set of equations where a unique reduced state at each time instance is guaranteed, the O$\Delta$E residual is restricted to be $\weightmat^T \weightmat$ orthogonal to \emph{a test subspace}. In this work, $\weightmat \in \RR{z \times \nspacedof}$, $\nbasisspace\leq\weightmatRows\pa{\leq\ndof}$ is a spatial weighting matrix and is used primarily to enable hyper-reduction to further decrease the computational cost of model reduction techniques. Next, the test subspace, $\testSSubspaceColumn$, is defined as
\begin{equation}
\testSSubspaceColumn\defeq\mathrm{range}\pa{\stestmat},\quad \stestmat \in \RR{\nspacedof \times \romdim},
\end{equation}
where $\stestmat$ is referred to as the test space basis matrix. Restricting the O$\Delta$E residual to be orthogonal to the test subspace leads to the system,
\begin{equation}
 \stestmat^T \weightmat^T \weightmat \resFullDiscreteArg{n+1}\pa{\solref\pa{\params}+\sbasismat\solFullDiscreteReducedArg{n+1}\pa{\param}; \solFullDiscreteApproxArg{n}\pa{\param},\ldots, \solFullDiscreteApproxArg{n+1-k\pa{t^n}}\pa{\param},\param}=\zerobold.
\end{equation}
The model reduction ideology described above is the foundation of various reduced-order modeling methods. Arguably, the two most popular model reduction approaches are the Galerkin method and the LSPG method. In the Galerkin approach, the test subspace is set to be equivalent to the column space, $\testSSubspaceColumn = \trialSSubspaceColumn$. However, although the Galerkin method yields accurate approximate solutions for symmetric systems~\cite{carlberg_thesis,bui_thesis}, the approach is known to lack robustness for non-symmetric and non-coercive systems. The LSPG method offers a more robust alternative and is now described.

\subsubsection{Least-squares Petrov--Galerkin method}\label{sec:LSPGprojection}
LSPG projection~\cite{carlberg2011efficient, carlberg2017galerkin, carlberg2011model, carlberg2013gnat, bui2007model, bui2008model, bui2008parametric} differs from Galerkin projection in that the test subspace is no longer set to be equivalent to the column space. Instead, for $n\in \nat{N_t}$, LSPG employs the test basis,
\begin{equation*}
    \stestmat^{n+1}_{\mathrm{lspg}}\defeq \dfrac{\partial \resFullDiscreteArg{n+1}}{\partial \dummyFullDiscreteArg{n+1}}\pa{\solref(\params)+\sbasismat\solFullDiscreteReducedArg{n+1}\pa{\param}; \solFullDiscreteApproxArg{n}\pa{\param},\ldots,\solFullDiscreteApproxArg{n+1-\kn}\pa{\param},\param}\sbasismat.
    \label{e:testlspg}
\end{equation*}
LSPG projection is thus defined by
\begin{equation}\label{eq:lspg_projection}
     \left[\stestmat^{n+1}_{\mathrm{lspg}}\right]^T \weightmat^T \weightmat \resFullDiscreteArg{n+1}\pa{\solref\pa{\param}+\sbasismat\solFullDiscreteReducedArg{n+1}\pa{\param}; \solFullDiscreteApproxArg{n}\pa{\param},\ldots,\solFullDiscreteApproxArg{n+1-\kn}\pa{\param},\param}=\zerobold.
\end{equation}
%
Critically, LSPG projection can alternatively be expressed as a discrete residual minimization problem arising at each time instance,
\begin{equation} \label{eq:lspggnatReduced}
\solFullDiscreteReducedArg{n+1}\pa{\param}=\arg\min{\dummyFullDiscreteReduced\in \RR{n_s}}\Vmat{ \weightmat\resFullDiscreteArg{n+1}\pa{\solref\pa{\params}+\sbasismat\dummyFullDiscreteReduced;\solFullDiscreteApproxArg{n}\pa{\param},\ldots,\solFullDiscreteApproxArg{n+1-\kn}\pa{\param},\param}}_2^2,\quad n\in\natNo\pa{N_t}.
\end{equation}
The solution to the system~\eqref{eq:lspggnatReduced} is equivalent to that of~\eqref{eq:lspg_projection}. As a result, LSPG is optimal in the sense that at each time step, it computes the solution that minimizes the residual of the FOM O$\Delta$E at $n+1$ time step in the weighted $\ell^2$-norm induced by the weighting matrix, $\weightmat$.

LSPG projection has been shown to yield accurate approximate solutions to complex nonlinear dynamical systems arising from, for example, turbulent fluid dynamics and structural mechanics. However, LSPG is limited in that (1) it only reduces the spatial dimension of the FOM and (2) it is equipped with \textit{a priori} error bounds that grow exponentially in the number of time steps. Thus, for problems requiring many temporal degrees of freedom, LSPG can fail to yield accurate solutions with a sufficiently low computational cost. Space--time model reduction techniques attempt to address these shortcomings.

\section{Space--time projection-based model reduction}\label{sec:spaceTimeLSPG}

While spatial-only model reduction techniques have been shown to reduce the computational cost of approximating the FOM, they do not reduce the temporal dimension of the problem\footnote{It should be noted that, as compared to the FOM, spatial projection ROMs can often employ much larger stable time steps~\cite{bach2018stability}, thus in this sense spatial projection ROMs implicitly reduce the temporal dimension of the FOM.}. To decrease the computational cost further, space--time model reduction approaches seek to reduce the number of spatial \emph{and} temporal degrees of freedom. Space--time model reduction approaches operate by (1) seeking a space--time reduced state solution in a \textit{space--time subspace} and (2) executing a space--time projection process. This section outlines space--time model reduction.

\subsection{Space--time vector formulation}
To outline the space--time model reduction, a space--time formulation of the FOM O$\Delta$E is first provided. The discretization of the FOM ODE~\eqref{e:fom} satisfies the vectorized space--time system defined as 
\begin{equation}\label{e:fom_spacetime_vector}
\resFullDiscreteVector\pa{\solFullDiscreteVector\pa{\param};\param} = \bz,
\end{equation}
where the corresponding space--time state vector is given as
$
\solFullDiscreteVector: \param \mapsto  \bmat{\solFullDiscreteArg{1}\pa{\param}^{T} & \cdots & \solFullDiscreteArg{N_t}\pa{\param}^{T}}^{T}$,
$\solFullDiscreteVector: \paramDomain \rightarrow \RR{\nspacedof \ntimedof}$ 
and the space--time residual is given as
\begin{equation*}\label{e:fom_timediscrete_vector}
\begin{split}
	\resFullDiscreteVector &: \pa{\dumFullDiscreteVector;\param} \mapsto
\bmat{
\resFullDiscreteArg{1}\pa{ \dumFullDiscreteArg{1} ; \solFullDiscreteArg{0}(\params),\params}^{T} 
&\cdots& 
\resFullDiscreteArg{\ntimedof}\pa{ \dumFullDiscreteArg{\ntimedof} ; \dumFullDiscreteArg{\ntimedof-1} ,\ldots,\dumFullDiscreteArg{\ntimedof-k(t^{\ntimedof})} ,\params }^{T}
}^{T}, \\
&: \RR{\nspacedof \ntimedof }  \times \paramDomain \rightarrow \RR{\nspacedof \ntimedof},
\end{split}
\end{equation*}
where  $\dumFullDiscreteVector \equiv \bmat{\bmat{\dumFullDiscreteArg{1}}^T & \cdots & \bmat{\dumFullDiscreteArg{\ntimedof}}}^T$. This space--time vector formulation is used throughout this paper to describe the space--time model reduction technique.

\subsection{Space--time trial subspace for model reduction (offline)}\label{sec:spacetime_trialspace}
Space--time projection-based ROMs generate approximate solutions to the space--time system~\eqref{e:fom_spacetime_vector} by restricting the state to a low-dimensional space--time trial subspace. Analogous to spatial-only model reduction, space--time ROMs restrict the states to an \emph{affine space--time trial subspace}. The space--time approaches considered in this work generate approximate solutions, $\solFullDiscreteApproxVector\pa{\param}\pa{\approx \solFullDiscreteVector\pa{\param}}\in \trialSTSubspaceVector$, where the affine space--time trial subspace $\trialSTSubspaceVector$ is defined as
\begin{equation}
    \trialSTSubspaceVector = \solrefSTVector\pa{\params}+ \trialSTSubspaceColumnVector  \subset \RR{\nspacedof \ntimedof}.
    \label{e:spacetimeaffine}
\end{equation}
The \emph{space--time column space} is denoted as $\trialSTSubspaceColumnVector$, and the space--time reference state is denoted as $\solrefSTVector :\paramDomain \rightarrow  \RR{\nspacedof \ntimedof}$. The space--time reference state is set to be $\solrefSTVector \equiv \bmat{\bmat{\solFullDiscreteArg{0}\pa{\param}}^T\cdots \bmat{\solFullDiscreteArg{0}\pa{\param}}^T}^T$. The space--time column space is described as
\begin{equation*}
   \trialSTSubspaceColumnVector\defeq\mathrm{range}\pa{\stbasismat} \subset \RR{\nspacedof \ntimedof},\quad \stbasismatWindowArg{} \equiv \bmat{ \wstBasisVectorArg{0}{} & \cdots & \wstBasisVectorArg{{\romdimWSTArg{}-1}}{}} \in \RR{\nspacedof \numStepsInWindowArg{} \times \romdimWSTArg{}}
\end{equation*}
where $\stbasismat \in \RR{\nspacedof \ntimedof \times \romdimST}$ is the basis matrix for the space--time column space, $\trialSTSubspaceColumn$, and is constructed using space--time POD; the construction of space--time subspaces is discussed later in the manuscript. The dimension of the space--time affine trial subspace and space--time column space must be fewer than the total number of space--time degrees of freedom, i.e., $\mathrm{dim}\pa{\trialSTSubspaceVector}$ and $\mathrm{dim}\pa{\trialSTSubspaceColumnVector} = n_{st}\leq N_s N_t$. 
%
The approximate space--time state can thus be expressed in vector form as
\begin{equation}\label{e:st_approxsol_vector}
\solFullDiscreteVector\pa{\param}\approx\solFullDiscreteApproxVector\pa{\params}   =  \solrefSTVector\pa{\params}  +  \stbasismat \solFullDiscreteReducedArg{}\pa{\params}\in  \trialSTSubspaceVector.
\end{equation}

\subsection{Space--time test subspace for space--time least-squares Petrov--Galerkin method (online phase)}

Substituting the approximation~\eqref{e:st_approxsol_vector} into Eqn.~\eqref{e:fom_timediscrete_vector} gives the following modified space--time residuals,
\begin{equation}\label{e:fom_timediscrete_vector_romstate}
   \resFullDiscreteVector\pa{\solrefSTVector(\params)+\stbasismat \solFullDiscreteReducedArg{}\pa{\param};\param}=\zerobold.
\end{equation}
%
Eqn.~\eqref{e:fom_timediscrete_vector_romstate} comprises an overdetermined space--time system. To find a set of space--time equations where the space--time reduced state is guaranteed, the space--time O$\Delta$E residual~\eqref{e:fom_timediscrete_vector_romstate} is restricted to be $\weightmatst^T\weightmatst$ orthogonal to a space--time test space, where $\weightmatst\in\RR{\weightmatstRows\times\ndof\ntimedof}$ is a space--time weighting matrix (e.g., that enables hyper-reduction) and $\nbasisst\leq\weightmatstRows(\leq\ndof\ntimedof)$. Analogous to LSPG, ST-LSPG defines the test subspace as,
\begin{equation}\label{e:stlspg_testspace}
    \testSTSubspaceColumn\defeq\mathrm{range}\pa{\stestmat_{\mathrm{stlspg}}},
\end{equation}
where $\stestmat_{\mathrm{stlspg}}$ is the full rank space--time test space basis matrix. The space--time test basis is given as
\begin{align}
\stestmat_{\mathrm{stlspg}}:=& \dfrac{\partial \resFullDiscreteVector}{\partial \dumFullDiscreteVector}\pa{\solrefSTVector(\params)+\stbasismat \solFullDiscreteReducedArg{}\pa{\param};\param}\stbasismat\in\RR{N_s N_t\times n_{st}},
\end{align}
%
where $\frac{\partial \resFullDiscreteVector}{\partial \dumFullDiscreteVector}$ is the space--time Jacobian, whose sparsity pattern depends on the type of linear multistep scheme employed for time discretization. Restricting the space--time residual~\eqref{e:fom_timediscrete_vector_romstate} to be $\weightmatst^T\weightmatst$ orthogonal to the test space~\eqref{e:stlspg_testspace} yields the system,
 \begin{equation}\label{e:rom_timediscrete_vector_romstate}
 \stestmat_{\mathrm{stlspg}}^T \weightmatst^T\weightmatst   \resFullDiscreteVector\pa{\solrefSTVector(\params)+\stbasismat \solFullDiscreteReducedArg{}\pa{\param} ;   \param}=\zerobold.
\end{equation}
%
This setup is referred to as the ST-LSPG method.
ST-LSPG can equivalently be written as a minimization problem in a weighted $\ell^2$-norm as
\begin{equation} \label{e:t-lspgReducedLargerFull}
 \solFullDiscreteReducedArg{}\pa{\param} =
	\underset{
\dummyFullDiscreteReduced\in\RR{\romDimSpaceTime}
		}{\arg\min}
     \Vmat{\weightmatst
		\resFullDiscreteVector\pa{\solrefSTVector(\params)+\stbasismat\dummyFullDiscreteReduced;\param}
		}_2^2.
\end{equation}
In practice, the space--time reduced states can be obtained, for example, via the Gauss--Newton method. It is emphasized that different choices of the weighting matrix $\weightmatst$ can enable different hyper-reduction techniques, e.g., collocation, Gauss--Newton with approximated tensors.

\subsection{Outstanding challenges}
ST-LSPG offers distinct advantages over LSPG in that it (1) allows for both spatial and temporal dimension reduction and (2) is equipped with more favorable \textit{a priori} and \textit{a posteriori} error bounds~\cite{choi2019space}. In practice, however, ST-LSPG faces several significant issues. First, if hyper-reduction is not employed, ST-LSPG requires solving a temporally-global residual minimization problem whose residual vector is of size $\nspacedof \ntimedof$; storing the associated Jacobian and residual vectors for this residual minimization problem is not practical for real-world problems. Second, even if hyper-reduction is employed, the cost of solving the global minimization problem scales cubically with the number of space--time reduced-states. Thus, even though the global space–time dimension may be low, the computational cost associated with ST-LSPG can be high. Lastly, ST-LSPG employs a single space--time trial subspace that has support over all space and time. For complex systems that explore numerous regions in state-space, the dimensionality of the space--time trial subspace required for accurate solutions may be prohibitively large. The following section introduces the WST-LSPG approach to address these shortcomings.

\section{Windowed space--time projection based model reduction}\label{sec:windowedSTLSPG}
This section presents the proposed WST-LSPG approach. WST-LSPG first operates by partitioning the space--time domain into a series of non-overlapping temporal windows.. Over each window, a space--time state is then computed within a time-local low-dimensional space--time trial subspace that minimizes the space--time residual over each window. WST-LSPG offers two immediate advantages over ST-LSPG. First, by replacing the time-global residual minimization statement in ST-LSPG with a sequence of time-local residual minimization problems defined over each window, WST-LSPG overcomes the high-storage and computational complexity requirements associated with ST-LSPG. Second, by enabling the use of a piecewise linear space--time bases, WST-LSPG is equipped with subspaces that can more accurately represent the solution with fewer degrees of freedom. Lastly, it is emphasized that, in addition to being viewed as an extension of ST-LSPG, WST-LSPG can alternatively be derived from the WLS framework by using a ``discretize-then-optimize" solution approach and ``ST-reduction" trial subspaces~\cite{parish2019windowed}. The remainder of this section outlines the WST-LSPG approach. 

\subsection{Windowed problem setup}\label{s:wstlspgsetup}
\begin{figure}
    \centering
    \includegraphics[width = 1\textwidth]{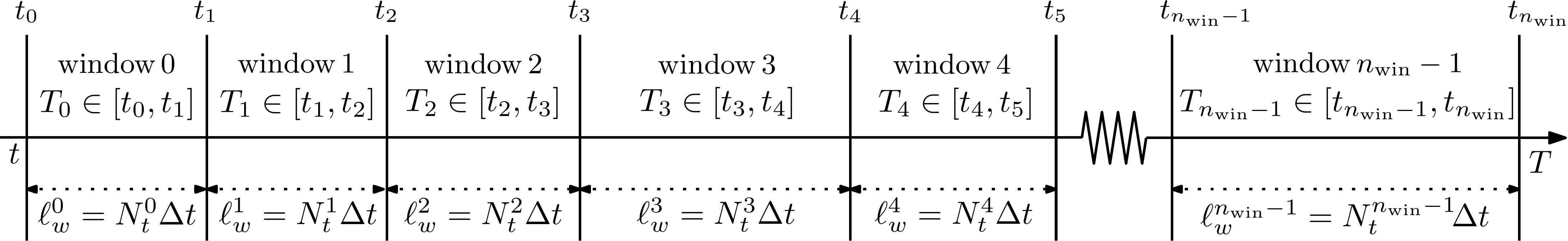}
    \caption{Window problem setup for the entire time simulation for $\numWindows$ where $N_t^k$ refers to the number of times steps, $T_k$ is the time interval in window $k$, and $t_k$ refers to the start (end) time of the $k^{\mathrm{th}}$ ($k-1^{\mathrm{st}}$) window.}
    \label{f:windowsImg}
\end{figure}
First, WST-LSPG partitions the discrete time domain $\{t^0,\ldots,t^{\numSteps}\}$ into $\numWindows$ windows of length $\windowLengthArg{k} = \numStepsInWindowArg{k}\Delta t$, $k \in \nat{\numWindows}$. Here, $\numStepsInWindowArg{k}$ denotes the number of time steps contained in the $k^{\mathrm{th}}$ window. 
WST-LSPG proceeds by (1) constructing low-dimensional space--time trial subspaces over each window and (2) sequentially solving a space--time residual minimization problem over each window. This decomposition of the time domain into windows is depicted in Figure~\ref{f:windowsImg}. Additionally, for notational purposes, a function that maps a window index to a time step index at the start of the window---excluding the initial conditions---is defined as
\begin{equation}\label{e:windowIndexMapper}
\windowIndexMapper: k \mapsto 
\begin{cases}
\sum\limits_{j=0}^{k-1} \numStepsInWindowArg{j} + 1 & 0 < k < \numWindows \\
1 & k = 0
\end{cases}.
\end{equation}

\subsection{Windowed space--time trial subspace}\label{sec:wst_trial_subspaces}
WST-LSPG employs piecewise linear space--time trial subspaces defined over each window. The time-global space--time subspace is thus defined as a direct sum of non-overlapping space--time trial subspaces defined for each window,
\begin{equation*}
    \trialSTSubspaceVector=\bigoplus\limits_{k=0}^{\numWindows-1}\trialSTSubspaceWinVector, 
\end{equation*}
where $\trialSTSubspaceWinVector \subset \RR{\nspacedof \numStepsInWindow}$ is the space--time subspace for the $k^{\mathrm{th}}$ window, $k\in \nat{\numWindows}$. 
%
The $k^{\mathrm{th}}$ windowed space--time subspace is defined as
\begin{equation}
   \trialSTSubspaceWinVector = \solrefSTVector^k\pa{\params}+\trialSTSubspaceColumnWinVector\in \RR{N_s N_t^k},
   \end{equation}
where  $\trialSTSubspaceColumnWinVector$ is the space--time column space on the $k^{\mathrm{th}}$ window, and $\solrefSTVector^k(\params) \in \RR{\nspacedof \numStepsInWindowArg{k}}$ is the windowed space--time reference state that is unique for each $k^{\mathrm{th}}$ window. The space--time column space for the $k^{\mathrm{th}}$ window, $k \in \nat{\numWindows}$, is defined as the range of the $k^{\mathrm{th}}$ space--time basis matrix,
\begin{equation*}
    \trialSTSubspaceColumnWinVector\defeq\mathrm{range}\pa{\stbasismatWindowArg{k}} \subseteq \RR{\nspacedof  \ntimedofWin} ,\quad \stbasismatWindowArg{k} \equiv \bmat{ \wstBasisVectorArg{0}{k} & \cdots & \wstBasisVectorArg{\romdimWSTArg{k}-1}{k}  } \in \RR{\nspacedof \numStepsInWindowArg{k} \times \romdimWSTArg{k}}
\end{equation*}
where $\stbasismatWindowArg{k}$ comprise $\romdimWSTArg{k}\leq \nspacedof \numStepsInWindowArg{k}$ basis vectors. 
Over the $k^{\mathrm{th}}$ window, $k \in \nat{\numWindows}$, WST-LSPG thus approximates the state as
\begin{equation}\label{e:romWstVectorDef}
\solFullDiscreteVector^k\pa{\param}\approx\solFullDiscreteApproxVectorArg{k}\pa{\param}= \solrefSTVector^k\pa{\params} +
\stbasismatWindowArg{k} \solFullDiscreteReducedArg{k}\pa{\param} \in \trialSTSubspaceWinVector,
\end{equation}
where $\solFullDiscreteReducedArg{k}: \paramDomain \rightarrow \RR{\romdimWSTArg{k}}$
denotes the space--time reduced states associated with the $k^{\mathrm{th}}$ window. Analogously to ST-LSPG, the space--time trial subspace for the temporal domain allows time dependence of the approximated solution to be moved from the space--time reduced states to the space--time bases.
\subsection{Construction of windowed space--time trial subspaces}
Various techniques exist for constructing the windowed space--time trial subspaces. The most straightforward approach for generating these subspaces is to apply space--time POD directly to the full space--time trajectories contained within the training data. However, as noted in~\cite{choi2019space}, this approach is limited in that (1) only a single space--time basis vector can be extracted from each training simulation, and (2) the space--time basis requires $\nspacedof \ntimedof \romdimST$ storage, which can be prohibitive for practical problems.  An alternative approach proposed in~\cite{choi2019space} is to construct space--time basis vectors via tensor products, in which each space--time basis vector is defined by a tensor product between a spatial basis vector and a temporal basis vector. While this approach enables extracting more basis vectors per training simulation and reduces the storage requirements, it is limited in that a single, global trial subspace is constructed. For problems whose dynamics trace through various regions in phase-space, as is often the case for parameterized nonlinear dynamical systems, the use of time-local subspaces can yield much accurate trial subspaces of a lower dimension~\cite{amsallem2012nonlinear}.  

This work develops time-local tailored windowed space--time trial subspaces via higher-order singular value decompositions (referred to here as tailored WST-HOSVD subspaces) to address these issues. Specifically, the techniques developed in Ref.~\cite{choi2019space} are extended by applying tensor-decomposition techniques to time-local training data to construct a piecewise-linear global space--time trial subspace. 

In what follows, it is helpful to view the windowed space--time state as a \textit{rank-2 tensor}, opposed to a \textit{rank-1 vector}. To this end, an ``unrolling" function that reshapes a rank-1 vector into a rank-2 tensor is defined as
\begin{equation}\label{e:unroll_func}
\begin{split}
\unrollfunc &: \dumFullDiscreteVector \mapsto
  	\bmat{\dumFullDiscreteArg{0} & \cdots & \dumFullDiscreteArg{N_2-1} },\\
&:\RR{N_1 N_2} \rightarrow \RR{N_1 \times N_2},
\end{split}
\end{equation}
for arbitrary $N_1$, $N_2$ and where $\dumFullDiscreteVector \equiv \bmat{ \bmat{\dumFullDiscreteArg{1}}^T & \cdots & \bmat{\dumFullDiscreteArg{N_2}}^T }^T$. The approximate space--time state over each window can be described in tensor form as
\begin{equation*}
    \solFullDiscreteApproxTensorArg{k}\pa{\param} \equiv \unrollfunc\pa{\solFullDiscreteApproxVectorArg{k}\pa{\params}} \in    \trialSTSubspaceWinTensor.
\end{equation*}
In tensor notation, the space--time affine trial subspace $\trialSTSubspaceWinTensor$ is given as the unrolling of $\trialSTSubspaceWinVector$,
\begin{equation*}
   \trialSTSubspaceWinTensor \defeq \solrefSTTensor^k\pa{\params}+\trialSTSubspaceColumnWinTensor \subseteq \RR{ \nspacedof \times \numStepsInWindowArg{k}},
\end{equation*}
with $ \solrefSTTensor^{k}(\params) \equiv \unrollfunc( \solrefSTVector^k(\params) )$ and $\trialSTSubspaceColumnWinTensor\defeq\mathrm{span}\!\Bmat{  \wstBasisTensorArg{i}{k}}_{i=0}^{n_{st,k}-1} \subseteq \RR{\nspacedof \times \ntimedofWin}$, where $ \wstBasisTensorArg{i}{k} \equiv \unrollfunc( \wstBasisVectorArg{i}{k})$. This rank-2 tensor formulation will be employed when describing the tailored WST-HOSVD subspaces in the next section.

%
\subsection{Construction of tailored WST-HOSVD subspaces}
\begin{figure}
    \centering
    \includegraphics[width = 1\textwidth]{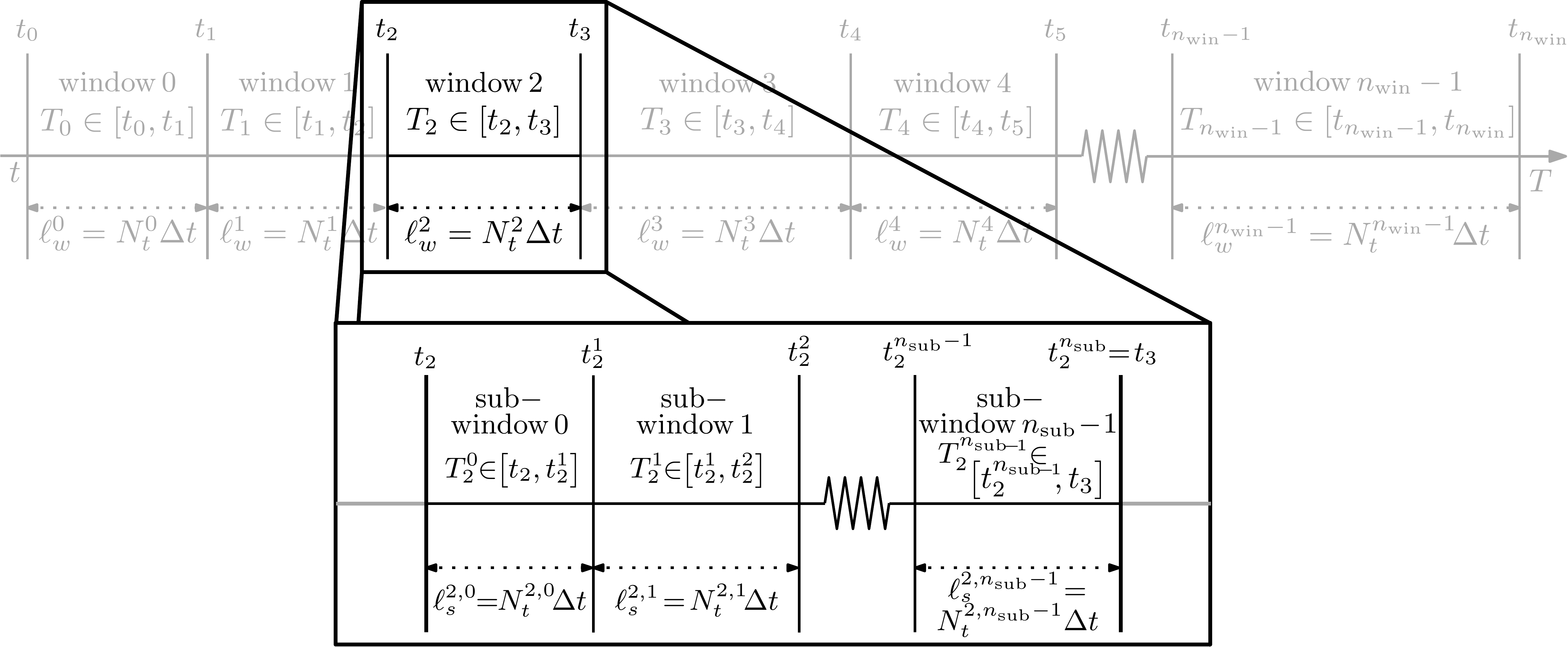}
    \caption{Sub-window problem setup for the entire time simulation for $\numWindows$, where $N_t^k$ refers to the number of times steps in window $k$, and $t_k^m$ refers to the time index at the start of the $m^{\mathrm{th}}$ sub-window of the $k$th window.}
    \label{f:windowsSubImg}
\end{figure}
First the tailored WST-HOSVD technique consists of decomposing the $k^{\mathrm{th}}$ window into $\numSubWindowsArg{k}$ sub-windows of length $\subWindowLengthArg{k}{m} = \numStepsInSubWindowArg{k}{m}\Delta t$, $k \in \nat{\numWindows}$, $m\in \nat{\numSubWindowsArg{k}}$;  $\numStepsInSubWindowArg{k}{m} \le \numStepsInWindowArg{k}$ is the number of time indices in the $m^{\mathrm{th}}$ sub-window of the $k^{\mathrm{th}}$ window. For simplicity, this work considers constant sub-window lengths. Figure~\ref{f:windowsSubImg} depicts this partitioning. Analogously to Eqn.~\eqref{e:windowIndexMapper}, a function that maps between a sub-window index and window index to the global time step index at the start of that sub-window is defined as
\begin{equation}\label{e:subWindowIndexMapper}
\subWindowIndexMapper: \pa{k,m} \mapsto  \windowIndexMapperArg{k}  +
\begin{cases}
\sum\limits_{n=0}^{m-1} \numStepsInSubWindowArg{k}{n} & 0 < m < \numStepsInWindowArg{k} \\
0 & m = 0 \\
\end{cases}.
\end{equation}
Next, the windowed space--time trial subspace over each time window is set to be the direct sum of non-overlapping space--time trial subspaces defined over each sub-window,
%
\begin{equation*}
    \trialSTSubspaceWinTensor=\bigoplus\limits_{m=0}^{\numSubWindows-1}\trialSTSubspaceSubWinTensor,
\end{equation*}
where $\trialSTSubspaceSubWinTensor$ is the affine space--time subspace for the $m^{\mathrm{th}}$ sub-window of the $k^{\mathrm{th}}$ window. 
Within each sub-window, the sub-windowed space--time affine subspace is defined as
\begin{equation}
    \trialSTSubspaceSubWinTensor\defeq \solrefST^{k,m}\pa{\params} +\trialSTSubspaceColumnSubWinTensor\subset \RR{\nspacedof \times \numStepsInSubWindowArg{k}{m}},
\end{equation}
where $\solrefST^{k,m}(\params) \in \RR{\nspacedof  \times \numStepsInSubWindowArg{k}{m}}$ and
$\trialSTSubspaceColumnSubWinTensor\defeq\mathrm{span}\!\Bmat{\overrightharpoon{\bm{\pi}}_i^{k,m}}_{i=1}^{n_{st}^{k,m}} \subset \RR{\nspacedof \times \ntimedofSubWin}$
are the sub-windowed space--time reference states and the column space over the $m^{\mathrm{th}}$ sub-window of the $k^{\mathrm{th}}$ window.
The discrete-in-time approximation for the state over the $m^{\mathrm{th}}$ sub-window on the $k^{\mathrm{th}}$ window is thus given as
\begin{equation}
\solFullDiscreteApproxTensorArg{k,m}\pa{\param}= \solrefWSTTensorArg{k}{m}\pa{\param} +
\sum\limits_{i=0}^{\romdimWSTSubWindowArg{k}{m}-1}\overrightharpoon{\bm{\pi}}^{k,m}_i\solFullDiscreteReducedArg{k,m}_i\pa{\param} \subset \RR{\nspacedof \times \numStepsInSubWindowArg{k}{m}},
\label{e:spacetimeExpansionnewST}
\end{equation}
where $\solFullDiscreteReducedArg{k,m} :\paramDomain \rightarrow  \RR{\romdimWSTSubWindowArg{k}{m}}$ are the space--time reduced states over the $m^{\mathrm{th}}$ sub-window of the $k^{\mathrm{th}}$ window.
%
\subsubsection{Tensor-product trial subspaces}\label{s:tailored_wsthosvd}
Building on the methods developed in~\cite{choi2019space}, the space--time bases over each sub-window, $\swstbasisvecTensorArg{k,m}{i}$, are obtained via \textit{tensor products}. Specifically, each basis tensor is given as a tensor product between a spatial basis vector and a temporal basis vector. These spatial and temporal basis vectors are obtained via higher-order singular value decompositions (HOSVD). To perform this process, first a training data set is required. Here, the training data comprise a collection of trajectories that are obtained by solving the FOM O$\Delta$E for a set of parameter instances $\paramDomainTrain \defeq \{\paramTrain{0}, \ldots,\paramTrain{\ntrain-1}\}\subset\paramDomain$. The set of parameter instances is chosen by, e.g., uniform sampling, Latin-hypercube sampling, greedy sampling. Thus, the training data comprise the solution snapshots $\solFullDiscreteArg{n+1}(\param^i_{\mathrm{train}})\in\RR{\nspacedof}$, $i \in \nat{\ntrain}$, $n \in \nat{N_t}$. 

Next, to construct bases over each sub-window, the training data are collected into a set of $ \sum\limits_{k=0}^{\numWindows-1} \numSubWindowsArg{k}$ tensors, defined here by $\stateTensorSubWindowArg{k}{m}$. Each of these three-way tensors contain training data shifted by the initial state over each window, i.e., the $a,b,c$ entry of the $k^{\mathrm{th}}$ tensor on the $m^{\mathrm{th}}$ sub-window is given by
\begin{equation}\label{e:threewaytensorInit}
\bmat{ \stateTensorSubWindowArg{k}{m}}_{abc} = 
\mathsl{u}^{\subWindowIndexMapperArg{k,m}+b}_a\pmat{\paramTrain{c}}-\mathsl{u}^{\subWindowIndexMapperArg{k,m}-1}_a\pmat{\paramTrain{c}},\quad
a\innat{\nspacedof},\ b\innat{\numStepsInSubWindowArg{k}{m}},\ c\innat{\ntrain},
\end{equation}
where $\mathsl{u}^b_a(\paramTrain{c}) \in \RR{}$ is used to denote the $a^{\mathrm{th}}$ entry of $\solFullDiscreteArg{b}$ evaluated at parameter instance $\paramTrain{c}$. 

Next, spatial and temporal basis vectors are found by applying HOSVDs to the three-way space--time snapshot tensor defined in Eqn~\eqref{e:threewaytensorInit}. Ref.~\cite{choi2019space} proposes three variations based on HOSVDs to calculate the (global) space--time basis. 
In each technique, the space--time basis is formed via the tensor product between a spatial and temporal basis. All techniques compute the spatial basis using POD~\cite{everson_sirovich_gappy}, but differ in terms of how the temporal basis is computed. The three approaches proposed in Ref.~\cite{choi2019space} for computing the temporal bases are: (1) fixed temporal subspace via Temporal-higher-order singular value decomposition (T-HOSVD), which performs POD directly on the mode-2 unfolding of the snapshot tensor, (2) fixed temporal subspace via space--time higher-order singular value decomposition (ST-HOSVD), which performs POD on a mode-2 tensor that is obtained by performing a tensor-matrix multiplication between the snapshot tensor and the basis matrix, and (3) tailored temporal subspace via space--time higher-order singular value decomposition (tailored ST-HOSVD), which performs POD on mode-2 tensors that are obtained by performing a tensor-matrix multiplication between the snapshot tensor and each spatial basiis vector. This work considers only the tailored ST-HOSVD method, and the following sections will detail their application for the construction of the tailored WST-HOSVD subspaces.  


\subsubsection{Spatial basis}\label{s:pod}
As noted previously, each space--time basis comprises a tensor product between a spatial basis and a temporal basis. Here, the spatial basis vectors are obtained via the standard POD approach. 
Also known as principle component analysis or the Karhunen-Lo\'{e}ve decomposition, POD computes a set of orthonormal basis vectors that enable optimal reconstruction of the training data in the least-squares sense. 
In the present context, POD is performed by applying the singular value decomposition (SVD) to the mode-1 unfolding of the three way snapshot tensor~\eqref{e:threewaytensorInit} in each sub-window. First, the mode-1 unfolding of the three way tensor~\eqref{e:threewaytensorInit} yields 
\begin{equation*}
\stateTensorUnfoldSubWindowArg{1}{k}{m}= \bmat{ \snapshotsSubWindowArg{k}{m}(\paramTrain{0}) & \cdots &  \snapshotsSubWindowArg{k}{m}(\paramTrain{\ntrain - 1}) }\in
\RR{\nspacedof\times\numStepsInSubWindowArg{k}{m}\ntrain},
\label{e:spacesnapshot}
\end{equation*}
where $\snapshotsSubWindowArg{k}{m}(\paramTrain{j}) = \bmat{ 
\solFullDiscreteArg{\subWindowIndexMapperArg{k,m}} (\paramTrain{j}) -  \solFullDiscreteArg{\subWindowIndexMapperArg{k,m}-1} (\paramTrain{j})& \cdots & \solFullDiscreteArg{\subWindowIndexMapperArg{k,m} + \numStepsInSubWindowArg{k}{m}}(\paramTrain{j}) - \solFullDiscreteArg{\subWindowIndexMapperArg{k,m}-1} (\paramTrain{j}) }$ are the state snapshots over the $m^{\mathrm{th}}$ sub-window of the $k^{\mathrm{th}}$ window.
Performing thin SVD on $\stateTensorUnfoldSubWindowArg{1}{k}{m}$ gives 
\vspace{-3mm}
\begin{equation}
\begin{split}
 \stateTensorUnfoldSubWindowArg{1}{k}{m} &= \leftsingmatspaceSubWindowArg{k}{m} \singvalmatspaceSubWindowArg{k}{m}	\bmat{\rightsingmatspaceSubWindowArg{k}{m}}^T \in \RR{N_s\times N_t^{k,m}\ntrain},\\
 \basisvecspaceSubWindowArg{i}{k}{m} &= \mathrm{col}_{i}\pa{\leftsingmatspaceSubWindowArg{k}{m}}, \quad i\in\natNo\pa{\romDimSpaceSubWindowArg{k}{m}},\\
 \sbasismatSubWindowArg{k}{m}  &=  \bmat{\basisvecspaceSubWindowArg{0}{k}{m} &\cdots & \basisvecspaceSubWindowArg{\romDimSpaceSubWindowArg{k}{m}-1}{k}{m}}\in \RR{N_s\times n_s^{k,m}},
 \end{split}
\end{equation}
where $\sbasismatSubWindowArg{k}{m}$ is the spatial basis for window $k$ and sub-window $m$. To determine the number of spatial bases required to represent the training data to a given error tolerance, one can look at the decay of the singular values, $\sigma_i^{k,m} = \bmat{\singvalmatspaceSubWindowArg{k}{m}}_{ii}$, in the following way:
\begin{equation*}
    e=\sum\limits_{i=0}^{\romDimSpaceSubWindowArg{k}{m}-1}\bmat{\sigma^{k,m}_i}^2\Big/\sum\limits_{i=0}^{ (\numStepsInSubWindowArg{k}{m}-1) n_{\mathrm{train}}}\bmat{\sigma_i^{k,m}}^2,
\end{equation*}
where $e$ refers to the fraction of statistical energy retained in the truncated basis. In practice, for a desired $e^*$, each column of $\leftsingmatspaceSubWindowArg{k}{m}$ is added to the bases and then the statistical energy fraction, $e$ is calculated and compared to the desired $e^*$. This process is continued until the following becomes true: $e\geq e^*$. Note that this fraction of statistical energy criterion is additionally used to calculate the number of temporal bases to use as well in the next sections.

\subsubsection{Temporal bases via tailored higher-order singular value decomposition}
After obtaining the spatial bases for the $m^{\mathrm{th}}$ sub-window, a corresponding temporal basis is required for each sub-window. Here the tailored temporal subspace via tailored ST-HOSVD. These approaches rely on mode-$p$ tensor-matrix products and mode-$p$ tensor-vector products. The mode-$p$ tensor-matrix product of a matrix $\bm{P} \in \RR{ I_p \times J}$ and tensor $\tensor \in \RR{I_1  \times \cdots \times I_p \times \cdots \times I_n}$ is denoted by $ \tensor \times_p \bm{P} \in \RR{I_1 \times \cdots \times I_{p-1} \times J \times I_{p+1} \times \cdots \times I_n}.$
Additionally, it is noted that the mode-$p$ unfolding of $\tensor \pa{\bm{P}}$ can be written as $\bm{X}_{(p)} \pa{\bm{P}} = \bm{P}^T \bm{X}_{(p)}.$ Similarly, the mode-$p$ tensor-vector product of a vector $\bm{p} \in \RR{I_p}$ and tensor $\tensor \in \RR{I_1 \times \cdots \times I_p \times \cdots \times I_n}$ is denoted by $\tensor \times_p \bm{p} \in \RR{I_1 \times \cdots \times I_{p-1} \times I_{p+1} \times \cdots \times I_n}.$

The tailored ST-HOSVD approach for sub-window $m$ creates a temporal snapshot matrix that is calculated for each spatial basis in sub-window $m$. The mode-$1$ tensor-vector product of $\stateTensorSubWindowArg{k}{m}$ and $\basisvecspaceSubWindowArg{i}{k}{m}$ for tailored ST-HOSVD is defined as $\stateTensorTSTHOSVDSubWindowArg{k}{m}{ \basisvecspaceSubWindowArg{i}{k}{m}} = \stateTensorSubWindowArg{k}{m} \times_1 \basisvecspaceSubWindowArg{i}{k}{m}$, $i \in \nat{\romDimSpaceSubWindowArg{k}{m}}$ such that the mode-1 unfolding of $\stateTensorTSTHOSVDSubWindowArg{k}{m}{ \basisvecspaceSubWindowArg{i}{k}{m}}$ is defined as
\begin{equation*}
\stateTensorSTHOSVDUnfoldSubWindowArg{1}{k}{m}{\basisvecspaceSubWindowArg{i}{k}{m} }  = \bmat{\bmat{ \basisvecspaceSubWindowArg{i}{k}{m}}^{T}\snapshotsSubWindowArg{k}{m}\pa{\paramTrain{0}} &
\cdots & \bmat{\basisvecspaceSubWindowArg{i}{k}{m}}^T\snapshotsSubWindowArg{k}{m}\pa{\paramTrain{\ntrain-1}} }\in
\RR{\numStepsInSubWindowArg{k}{m} \ntrain}.
\end{equation*}
Then the mode-2 unfolding of $\stateTensorTSTHOSVDSubWindowArg{k}{m}{\basisvecspaceSubWindowArg{i}{k}{m}}$, $i \in \nat{\romDimSpaceSubWindowArg{k}{m}}$ is defined as,
\begin{equation}
    \stateTensorTSTHOSVDUnfoldSubWindowArg{2}{k}{m}{\basisvecspaceSubWindowArg{i}{k}{m}}= \bmat{\bmat{\snapshotsSubWindowArg{k}{m}\pa{\paramTrain{0}}}^T \basisvecspaceSubWindowArg{i}{k}{m} &
\cdots &\bmat{ \snapshotsSubWindowArg{k}{m}\pa{\paramTrain{\ntrain-1}}}^T\basisvecspaceSubWindowArg{i}{k}{m}} \in
\RR{\numStepsInSubWindowArg{k}{m}\times \ntrain}.
\end{equation}
The temporal bases are then obtained via the singular value decomposition of $\stateTensorSTHOSVDUnfoldSubWindowArg{2}{k}{m}{\basisvecspaceSubWindowArg{i}{k}{m} }$, $i \in \nat{\romDimSpaceSubWindowArg{k}{m}}$, which gives
\vspace{-3mm}
\begin{equation}\label{e:tailoredSVD}
\begin{split}
\stateTensorSTHOSVDUnfoldSubWindowArg{2}{k}{m}{\basisvecspaceSubWindowArg{i}{k}{m} }
&=
\leftsingmattime\pa{\basisvecspaceSubWindowArg{i}{k}{m}} \singvalmattime\pa{\basisvecspaceSubWindowArg{i}{k}{m}}
\rightsingmattime\pa{\basisvecspaceSubWindowArg{i}{k}{m}}^T\in\RR{\numStepsInSubWindowArg{k}{m}\times\ntrain},\\
\basisvectimeij{i,k,m}{j} &=\mathrm{col}_j\left( \leftsingmattime\pa{\basisvecspaceSubWindowArg{i}{k}{m}}\right),\quad
	i\innat{\romDimSpaceSubWindowArg{k}{m}},\ j\innat{\nbasistime^{i,k,m}},\\
	 \bm{\psi}^{i,k,m}&=\bmat{\basispsi^{i,k,m}_0&\cdots& \basispsi^{i,k,m}_{n_t^{i,k,m}-1}}\in \mathbb{R}^{N_t \times n_t^{i,k,m}},
\end{split}
\end{equation}
where $n_t^{i,k,m}$ are the number of temporal basis vectors for the $i^{\mathrm{th}}$ spatial basis vector on the $m^{\mathrm{th}}$ sub-window of the $k^{\mathrm{th}}$ window.
This approach generates a tailored temporal subspace for each spatial basis vector. Additionally,
assuming $\numStepsInSubWindowArg{k}{m} \le \romDimSpaceSubWindowArg{k}{m} \ntrain$, the cost of computing the $\romDimSpaceSubWindowArg{k}{m}$ SVDs in \eqref{e:tailoredSVD} is significantly less than the cost incurred by ST-HOSVD. This reduction in cost results from the $\mathcal{O}\big(\text{min}(m^2 n, m n^2)\big)$ complexity of the SVD, where $m$ and $n$ are the number of rows and columns, respectively. However, the maximum
dimension of each temporal basis is limited to the number of
training parameter instances, i.e., $n_t^{i,k,m}\leq\ntrain$,
$i\innat{\nbasisspace}$.
This temporal subspace requires
$\sum\limits_{i=0}^{\nbasisspace^{k,m}-1}\nbasistime^{i,k,m}\numStepsInSubWindowArg{k}{m}=\nbasisst^{k,m}\ntimedof^{k,m}$ storage for each sub-window. Note that for each window $k$, the number of space--time bases is defined as $n^k_{st} = \sum\limits_{m=0}^{\numSubWindowsArg{k}-1}\sum\limits_{i=0}^{n_s^{k,m}-1} n_t^{i,k,m}$. Thus, for the tailored ST-HOSVD, each space--time basis is defined as
\begin{equation}
\overrightharpoon{\bm{\pi}}^{k,m}_i = \bm{\phi}_i^{k,m}\otimes\bm{\psi}^{i,k,m} \in \RR{\nspacedof \numStepsInSubWindowArg{k}{m}} \times \RR{n_t^{i,k,m}}.
\end{equation}
The space--time basis matrix over each window is then given as $\stbasismatSubWindowArg{k}{m} = \bmat{ \vectorizeNoArg( \overrightharpoon{\bm{\pi}}^{k,m}_0) & \cdots &  \vectorizeNoArg\pa{ \overrightharpoon{\bm{\pi}}^{k,m}_{\romdimWSTSubWindowArg{k}{m}-1}} },$ where $\vectorizeNoArg : \bmat{ \bm{v}^1 & \cdots & \bm{v}^{N_2} } \mapsto \bmat{\bmat{[\bm{v}^1}^T & \cdots & \bmat{\bm{v}^{N_2}}^T }^T$ is a vectorization function (i.e., the inverse of $\unrollfunc$).
%


\subsubsection{Selection of reference states and windowed basis}\label{s:reference_state}
In LSPG and ST--LSPG, the reference state is set to be the (parameterized) initial condition. This is made to be more challenging in WST-LSPG, as the initial condition into each window is not known \textit{a priori} when executing the ROM for a novel parameter instance. Here, with the exception of the first sub-window in the first window, the space--time reference state for each sub-window is set to be the solution from the previous time window, 
$$
\solrefWSTTensorArg{k}{m}\left(\bm{\mu}\right)  = \solFullDiscreteApproxArg{ \subWindowIndexMapperArg{k,m} -1}(\params) \otimes \onebold_{\numStepsInSubWindowArg{k}{m}},
$$
where $\onebold_{\numStepsInWindowArg{k}{m}} \in \{1\}^{\numStepsInSubWindowArg{k}{m}}$. This choice for the space--time reference state ensures continuity between the trial subspaces of each sub-window. In the first sub-window of the first window, the reference state is set to be the initial conditions,
$
\solrefWSTTensorArg{0}{0}\left(\bm{\mu}\right)  = \solFullDiscreteArg{0}(\params) \otimes \onebold_{\numStepsInSubWindowArg{0}{0}}
$.
The solution over each window is then expressed with the correct reference states as
\begin{equation}
\small
\renewcommand\arraystretch{2}
\bmat{
\begin{array}{c}
\\
\\
\solFullDiscreteApproxVector^k(\params)\\
\\
\\
\end{array}
}
=
\renewcommand\arraystretch{2}
\underbrace{\bmat{
\begin{array}{c}
\\
\\
\solFullDiscreteApproxArg{ \windowIndexMapperArg{k} -1}(\params) \otimes \onebold_{\numStepsInWindowArg{k}}\\
\\
\\
\end{array}
}}_{\let\scriptstyle\textstyle\solrefSTVector^{k}\textstyle(\params)
\setstacktabbedgap{2pt}
}
+
\renewcommand\arraystretch{2}
\underbrace{\bmat{
\begin{array}{ccccc}
    \hb \stbasismatSubWindowArg{k}{0}\h&\h& \h& \h\\
    \hb \stbasismatEndSubWindowArg{k}{0} \h&\hb \stbasismatSubWindowArg{k}{1} \h&  \h& \h \\
    \hb \stbasismatEndSubWindowArg{k}{0} \h& \hb \stbasismatEndSubWindowArg{k}{1} \h& \hb \stbasismatSubWindowArg{k}{2} \h&\h  \\
    \hb\vdots \h& \hb\vdots\h&\hb \vdots\h& \hb\ddots \h\\
    \hb \stbasismatEndSubWindowArg{k}{0} \h& \hb \stbasismatEndSubWindowArg{k}{1}\h&\hb \stbasismatEndSubWindowArg{k}{2} \h& \hb\cdots\h&\hb \stbasismatSubWindowArg{k}{\numSubWindowsArg{k}-1}\h\\ \end{array}
  }}_{\let\scriptstyle\textstyle{\stbasismat^{k}}}
  \renewcommand\arraystretch{1.65}
  \underbrace{\bmat{
  \begin{array}{c}
  \solFullDiscreteReducedArg{k,0}(\params)  \\
  \solFullDiscreteReducedArg{k,1}(\params)   \\
  \solFullDiscreteReducedArg{k,2}(\params) \\
  \vdots\\
  \solFullDiscreteReducedArg{k,\numSubWindowsArg{k}-1} \\
  \end{array}
  }}_{\let\scriptstyle\textstyle\solFullDiscreteReducedArg{k}\textstyle(\params)}.
  \label{e:wholethingFull}
\end{equation}
Setting the space--time reference state to be the solution from the previous sub-window leads to a modified basis matrix, $\stbasismat^k$, that is block lower triangular over a \textit{window} as seen in Eqn.~\eqref{e:wholethingFull} where
%
\begin{equation*}
\overline{\stbasismat}^{k,m} = \bmat{
\bmat{\text{row}_{\subWindowIndexMapperArg{k,m} + ( \numStepsInSubWindowArg{k}{m}-1) - \nspacedof }\pa{ \stbasismatSubWindowArg{k}{m}}}^T & \cdots &  
\bmat{\text{row}_{\subWindowIndexMapperArg{k,m} + \numStepsInSubWindowArg{k}{m} - 1}\pa{ \stbasismatSubWindowArg{k}{m}}}^T 
}^T \in \RR{\nspacedof \times \romdimWSTSubWindowArg{k}{m}}
\end{equation*}
comprise the space--time basis for the state at the last time step of each sub-window. 
It is important to note that due to this choice of an affine offset, the space--time basis for each window, $\stbasismat^k$ is not orthogonal. It is additionally noted that the number of bases contained within each window, $\romdimWSTArg{k}$, is potentially higher for $\ell_s<\ell_w$ than when $\ell_s=\ell_w$, making the problem more accurate at the expense of computational cost. This setup is explored in the numerical experiments.
\subsection{Windowed space--time least-squares Petrov--Galerkin method}
\begin{figure}[hp!]
\centering
\subfloat[$\ell_w=\ell_s$, $\numSubWindows=1$, space--time residual minimization occurs over window length of $\ell_s$\label{f:blue}]{\includegraphics[width = 0.9955\textwidth]{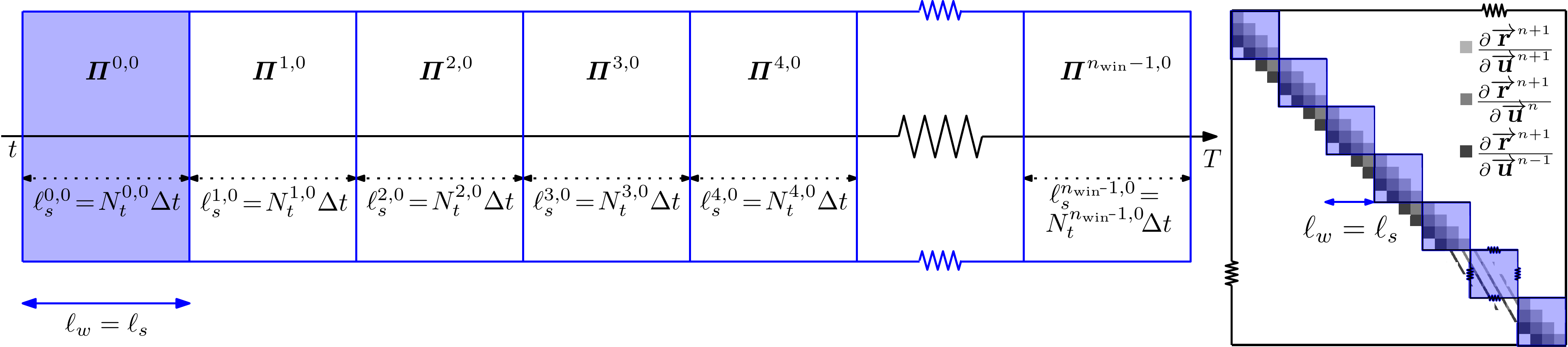}}\\\vspace{-3.25pt}
\subfloat[$\ell_w=2\ell_s$, $\numSubWindows=2$, space--time residual minimization occurs over window length of $2\ell_s$ \label{f:red}]{\includegraphics[width = 0.9955\textwidth]{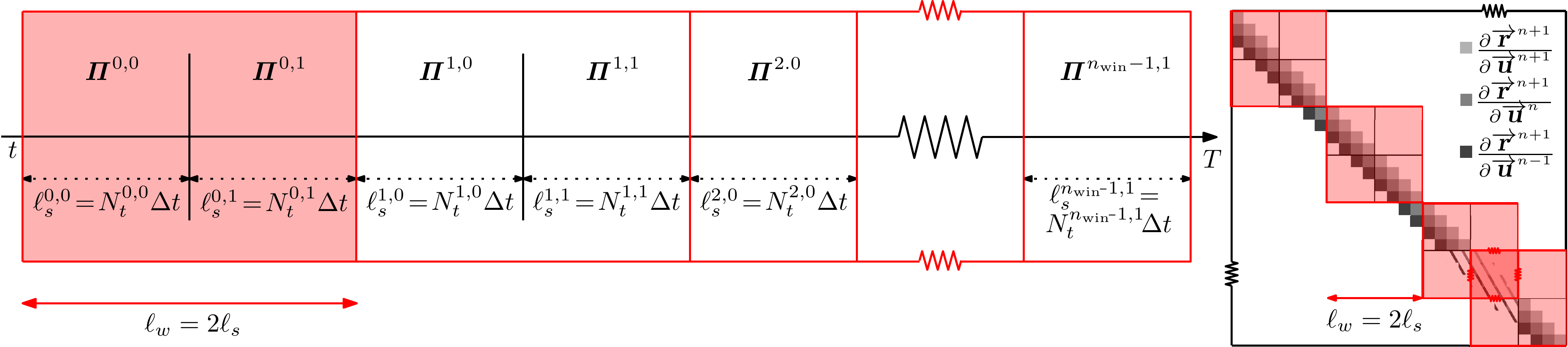}}\\\vspace{-3.25pt}
\subfloat[$\ell_w=3\ell_s$, $\numSubWindows=3$, space--time residual minimization occurs over window length of $3\ell_s$ \label{f:purple}]{\includegraphics[width = 0.9955\textwidth]{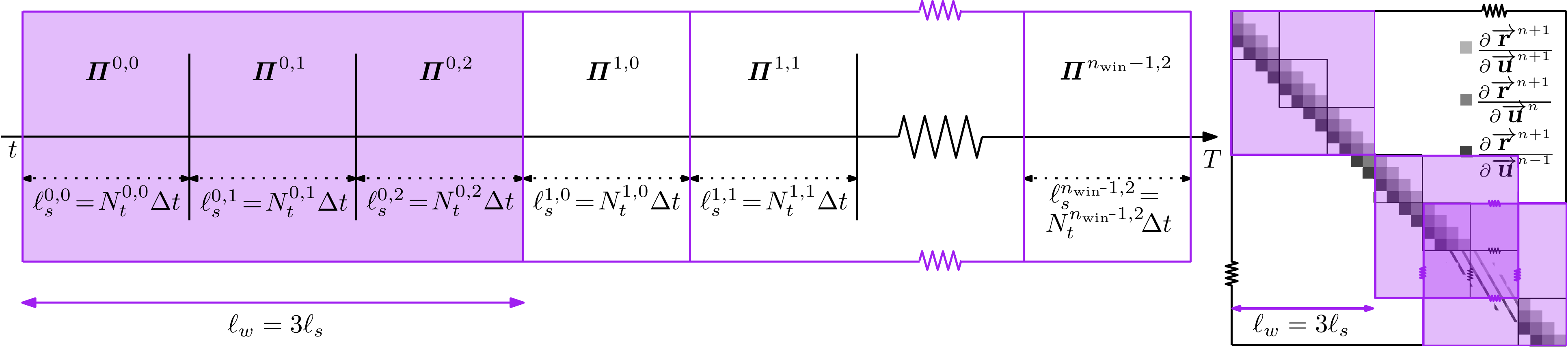}}\\\vspace{-3.25pt}
\subfloat[$\ell_w=4\ell_s$, $\numSubWindows=4$, space--time residual minimization occurs over window length of $4\ell_s$ \label{f:green}]{\includegraphics[width = 0.9955\textwidth]{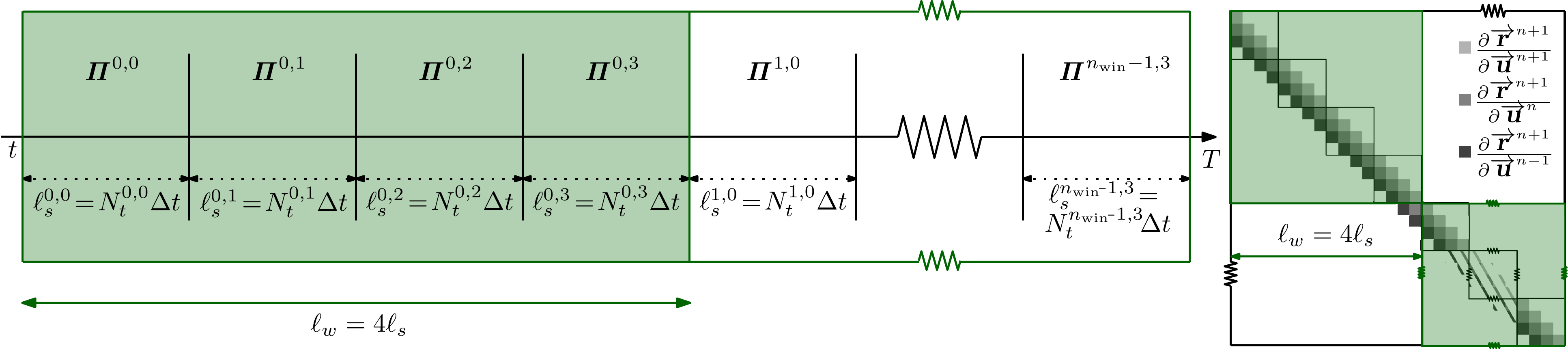}}\\\vspace{-3.25pt}
\subfloat[$\ell_w=T$, $\numSubWindows$ sub-windows,  space--time residual minimization occurs over window length of $T$ \label{f:orange}]{\includegraphics[width = 0.9955\textwidth]{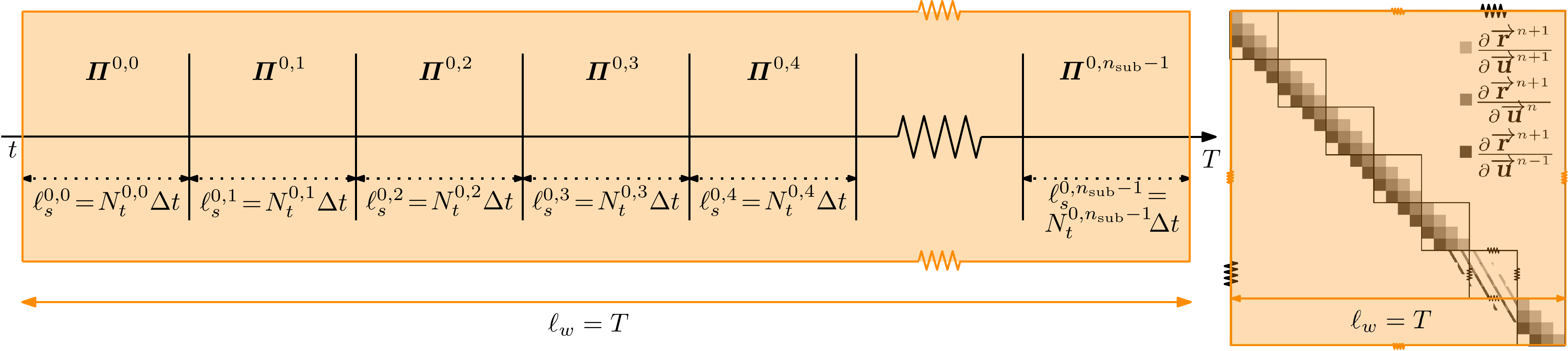}}
\caption{An example setup for a constant sub-window length, a varied window length $\ell_w$, and a set of Jacobians that are divided into $\numWindows$ space--time residual minimizations. As $\numWindows$ increases, the windowed space--time Jacobian becomes more decoupled or sparse compared to the full space--time Jacobian.}
\label{f:wstlspgsetup_window}
\end{figure}
%

Now that the trial subspaces have been defined for space--time model reduction, the proposed WST-LSPG approach proceeds by sequentially solving a system of algebraic equations defined by a Petrov--Galerkin projection on each window. To begin, the FOM O$\Delta$E solution satisfies\footnote{For simplicity, it is assumed that the size of the time stencil size at the start of each window is one, i.e., $\knArg{\windowIndexMapperArg{k}} = 1$.} 
\begin{equation}\label{e:fom_odeltae_spacetime_vector}
\resWindowDiscreteVectorArg{k} \pa{\solWindowDiscreteVectorArg{k}\pa{\param};\solFullDiscreteArg{\windowIndexMapperArg{k}-1}(\params) , \params} = \bz,
\end{equation}
over the $k^{\mathrm{th}}$ window, $k\in \nat{\numWindows}$, where the corresponding residual is defined as
\begin{equation}
\small{
\begin{aligned}
\resWindowDiscreteVectorArg{k} &: \pa{\dumWindowDiscreteVectorArg{k}; \dumFullDiscreteArg{0}, \params } \mapsto 
\bmat{
\resFullDiscreteArg{\windowIndexMapperArg{k}} \pa{\dumFullDiscreteArg{1}; \dumFullDiscreteArg{0}, \params} & \cdots & 
\resFullDiscreteArg{\windowIndexMapperArg{k} + \numStepsInWindowArg{k}}  \pa{\dumFullDiscreteArg{\numStepsInWindowArg{k}}; \dumFullDiscreteArg{\numStepsInWindowArg{k}-1},\ldots,\dumFullDiscreteArg{\numStepsInWindowArg{k} - \knArg{\windowIndexMapperArg{k}}}, \params}
} \\
&: \RR{\nspacedof \numStepsInWindowArg{k}} \times {\RR{\nspacedof}} \times \paramDomain \rightarrow \RR{\nspacedof  \numStepsInWindowArg{k}}
\end{aligned}
}
\end{equation}
with $\dumWindowDiscreteVectorArg{k} = \bmat{ \bmat{\dumFullDiscreteArg{1}}^T & \cdots & \bmat{\dumFullDiscreteArg{\numStepsInWindowArg{k}}}^T}^T$. Substituting Eqn.~\eqref{e:romWstVectorDef} into Eqn.~\eqref{e:fom_odeltae_spacetime_vector} yields the following overdetermined space--time system of algebraic equations over the $k^{\mathrm{th}}$ window, $k\in \nat{\numWindows}$, 
\begin{equation}\label{e:windowed_resid_at_rom_state}
  \resWindowDiscreteVectorArg{k}\pa{ \solrefSTVector^k\pa{\params}+\stbasismat^k\solFullDiscreteReducedArg{k}\pa{\params}; \solFullDiscreteApproxArg{ \windowIndexMapperArg{k} -1}\pa{\params} , \params} = \bz .
\end{equation}
To create a system where the reduced solution is unique, the residual over the $k^{\mathrm{th}}$ window~\eqref{e:windowed_resid_at_rom_state} is restricted to be ${\weightmatwstArg{k}}^T \weightmatwstArg{k}$ orthogonal to a space--time test space, where $\weightmatwstArg{k} \in \RR{\weightmatwstRowsArg{k} \times \nspacedof \numStepsInWindowArg{k}}$ with $\romdimWSTArg{k} \le \weightmatwstRowsArg{k} \le \nspacedof \numStepsInWindowArg{k}$ is a windowed space--time weighting matrix that enables hyper-reduction; the construction of such weighting matrices is outlined in Section~\ref{s:hyperreduction}. 

For this work, analogously to the spatial only model reduction technique, the space--time test subspace is defined as
\begin{equation}\label{e:wst_testspace}
    \testSTSubspaceColumn^k\defeq\mathrm{range}\pa{\stestmat^k_{\mathrm{wstlspg}}},
\end{equation}
where $\stestmat^k_{\mathrm{wstlspg}}$ is the full rank space--time test space basis matrix.
The space--time test basis is defined analogously to the LSPG test basis as
\begin{align}
\stestmat^k_{\mathrm{stlspg}}=& \dfrac{\partial \resWindowDiscreteVectorArg{k}}{\partial \dumWindowDiscreteVectorArg{k}}\pa{\solrefSTVector^k(\params)+\stbasismat^k \solFullDiscreteReducedArg{k}\pa{\param};   \solFullDiscreteApproxArg{ \windowIndexMapperArg{k} -1}(\params) , \params }\stbasismat^k\in\RR{N_s N_t\times n_{st}},
\end{align}
%
%
where $\frac{\partial \resFullDiscreteVector^k}{\partial \dumWindowDiscreteVectorArg{k}}$ is the space--time Jacobian whose non zero matrix structure is dependent on the linear multistep scheme that is used at all time steps. 
Restricting the windowed space--time residual~\eqref{e:windowed_resid_at_rom_state} to be ${\weightmatwstArg{k}}^T \weightmatwstArg{k}$ orthogonal to the space--time test space~\eqref{e:wst_testspace} yields the system of algebraic equations on the $k^{\mathrm{th}}$ window, $k \in \nat{\numWindows}$,
\begin{equation}\label{e:windowed_rom}
[ \stestmat^k_{\mathrm{wstlspg}}]^T {\weightmatwstArg{k}}^T \weightmatwstArg{k}  \resWindowDiscreteVectorArg{k}(   \solrefSTVector^k+\stbasismat^k\solFullDiscreteReducedArg{k}(\params) ; \solFullDiscreteApproxArg{ \windowIndexMapperArg{k} -1}(\params)  , \params) = \bz. 
\end{equation}
Thus, \methodAcronym\ operates by sequentially solving the system~\eqref{e:windowed_rom} over each window, $k \in \nat{ \numWindows }$. It is noted that the algebraic system defined by~\eqref{e:windowed_rom} can be equivalently written as a minimization problem in a weighted $\ell^2$-norm as
\begin{equation} \label{e:t-lspgReducedLargerFullST}
 \solFullDiscreteReducedArg{k}\pa{\param} =
	\underset{
\dummyFullDiscreteReduced\in\RR{\romdimWSTArg{k}}
		}{\arg\min}
    \left \| \weightmatwstArg{k} 
		\resWindowDiscreteVectorArg{k} \pa{\solrefSTVector^k(\params) + \wstbasismatArg{k}\dummyFullDiscreteReduced  ; \solFullDiscreteApproxArg{ \windowIndexMapperArg{k} -1}(\params) ,  \params } 
		\right \|_2^2 .
\end{equation}
%
Figure~\ref{f:wstlspgsetup_window} shows how the Jacobian is divided up for each window given a BDF2 temporal discretization and a constant sub-window length. The components of the Jacobian for window $k$ takes up a subset of the full space--time Jacobian for the original ST-LSPG. Figure~\ref{f:orange} refers to the space--time Jacobian setup for ST-LSPG where the black outlined boxes refer to the Jacobian of the states found within the designated sub-window length---adding sub-windows do not alter the structure of the space--time Jacobian. Figure~\ref{f:blue} shows what part of the ST-LSPG Jacobian is used when WST-LSPG is implemented for $\ell_w=\ell_s$. Figure~\ref{f:red} shows that the when $\ell_w=2\ell_s$, more of the ST-LSPG Jacobian in used and less information is loss between each time window. Figure~\ref{f:purple} and~\ref{f:green} show larger portions of the ST-LSPG Jacobian used for each window. Note that when $\ell_w<T$, the space--time Jacobian becomes more decoupled or sparse compared to the full ST-LSPG case. Finally, the procedure for WST-LSPG is presented in Algorithm~\ref{a:stlspg}.
\algdef{SE}[DOWHILE]{Do}{doWhile}{\algorithmicdo}[1]{\algorithmicwhile\ #1}%
\begin{figure}[t!]
\begin{center}
\begin{minipage}[t]{1\linewidth}
\begin{algorithm}[H]
\caption{Online: Gauss--Newton procedure for the WST-LSPG method}
\label{a:stlspg}
\textbf{Input:}
\multiline{for all time windows $k$: space--time basis, $\stbasismat^k\in \mathbb{R}^{N_s N_t \times n_{st}}$ \\
\hspace{37mm}initial guess for the reduced space--time states, $\solFullDiscreteReducedArg{0}\pa{\param}$ \\
\hspace{37mm}initial conditions, $\solrefSTTensor\pa{\param}$}\\
\textbf{Output:}
\multiline{for all time windows $k$: space--time reduced states,$\solFullDiscreteReducedArg{}\pa{\param}$\\
\hspace{37mm}total number of required Gauss--Newton iterations for $\param$, $N_{\mathrm{GN}}\pa{\param}$\\ 
\hspace{37mm}optional: rank-2 residual snapshot matrix for $\param$, $\bm{X}_{\bm{r}}$ }
\begin{algorithmic}[1]
\State Set the tolerance for convergence, $\epsilon$ 
\For{$k=0;\,k<\numWindows;\,k\scriptstyle^{++}$}
\State Set the Gauss--Newton iteration counter, $i=0$
\State Set $\Vmat{\bm{p}}_2= 0$ \Comment{Initialize the descent direction such that its $\ell^2$ norm is 0}
\State Set \texttt{converged}$ = 0$
\Do
\State $\solFullDiscreteApproxVector^{\pa{i}}\pa{\params}=  \solrefSTVector^k(\params)  +  \stbasismat^k \solFullDiscreteReducedArg{(i),k}(\params)$ \Comment{Calculate the approximate full space--time state}
\State Compute the space--time  
$\resFullDiscreteVector^{\pa{i}}\pa{\solFullDiscreteApproxVector^{(i)}(\params);\param} \in \mathbb{R}^z$
\State Compute the space--time $\bm{J}^{\,\pa{i}} = \frac{\partial \resFullDiscreteVector^{(i)}\pa{\solFullDiscreteApproxVector^{(i)}(\params);\param}}{\partial \solFullDiscreteApproxVector^{(i)}(\params)}\pa{\solFullDiscreteApproxVector^{(i)}(\params);\param)}\stbasismat^k \in\RR{z\times\nbasisst}$
\State Calculate $\overline{r}^{(i)}=\Vmat{\bmat{\bm{J}^{\,\pa{i}}}^T\resFullDiscreteVector^{\,\pa{i}}}_2$
\If{$ \overline{r}^{(i)}\Big/\overline{r}^{(0)} <\epsilon \;\mathrm{\mathbf{or}}\; \pa{\overline{r}^{(i)}<\epsilon\;\mathrm{\mathbf{and}}\;\Vmat{\bm{p}}_2<\epsilon}$}
\State \texttt{converged}=1
\Else
\State $\bm{J}^{\,\pa{i}} = \bm{Q}^{\,\pa{i}} \bm{R}^{\,\pa{i}} ,\; \bm{Q}^{\,\pa{i}} \in\mathbb{R}^{z\times n_{st}},\; \bm{R}^{\,\pa{i}} \in\mathbb{R}^{n_{st}\times n_{st}}$ \Comment{Compute thin qr decomposition}
\State $\bm{R}^{\,\pa{i}} \bm{p}= -\bmat{\bm{Q}^{\,\pa{i}} }^T\resFullDiscreteVector^{\,\pa{i}}$ \Comment{Solve the linear least-squares problem}
\State Perform line search to find $\lineSearchParamArg{i}$ or set $\lineSearchParamArg{i}=1$
\State $\solFullDiscreteReducedArg{\,\pa{i+1},k}\pa{\param}=\solFullDiscreteReducedArg{\,\pa{i},k}\pa{\param}+\lineSearchParamArg{i}\bm{p}$\Comment{Update the guess to the space--time reduced states}
\State $i\leftarrow i+1$ \Comment{Update the total number of Gauss--Newton iteration }
\EndIf
\doWhile{\texttt{converged} $==0$} \Comment{Continue with Gauss--Newton method until convergence criteria is met}
\State Set the total number of Gauss--Newton iterations, $N_{\mathrm{GN}}\pa{\param}=i+1$ 
\State Optional: Construct residual snapshots, \\ $\quad\quad\quad \bm{X}_{\bm{r}}\pa{\param}=\bmat{\resFullDiscreteTensor^{\pa{0}}\pa{\solFullDiscreteApproxTensor^{(0)}(\params);\param}& \cdots &\resFullDiscreteTensor^{(N_{\mathrm{GN}}\,\pa{\param}-1)}\pa{\solFullDiscreteApproxTensor^{(N_{\mathrm{GN}}\,\pa{\param}-1)}(\params);\param}}\in\RR{N_s\times N_t N_{\mathrm{GN}}\,\pa{\param}}$
\EndFor
\end{algorithmic}
\end{algorithm}
\end{minipage}
\end{center}
\end{figure}

\begin{remark}
(WST-LSPG recovers ST-LSPG) WST-LSPG recovers ST-LSPG when $\numWindows=1$ and $\numSubWindows=1$, which is equivalent to $\ell_w=T$ and $\ell_s = T$.
\end{remark} 
\subsubsection{Initial guess for Gauss--Newton Solver in WST-LSPG}
Employing the Gauss--Newton algorithm to solve the nonlinear least-squares problem requires the specification of an initial guess to the solution. In LSPG, the initial guess at a given time instance $t^n$ can be
set to the solution from the previous time instance, i.e.,
$\solFullDiscreteReducedArg{n(0)}\pa{\param} = \solFullDiscreteReducedArg{n-1}\pa{\param}$.
This choice typically leads
to rapid convergence due to the fact that the state undergoes limited
variation between time instances, particularly for small time steps $\dtArg{n+1}$.


This work employs a similar approach to that proposed in Ref.~\cite{choi2019space} and uses least-squares regression for the initial guess for the residual minimization problem over each window. However, the choice of space--time reference state detailed in Section~\ref{s:reference_state} results in several complications. First, unlike the ST-LSPG, the space--time basis is not necessarily orthogonal, such that $\stbasismat^{+}\neq\stbasismat^T$. Taking the pseudo inverse of a large windowed-space time basis can be expensive for large systems. The second issue is that, due to the choice of affine offset, the windowed space--time basis depends on the ROM solution throughout the window. As a result, the projection onto the trial subspace depends on the ROM solution. Thus, for the purpose of finding an initial guess for WST-LSPG, the training data are \textit{approximately} projected onto the trial space by employing the approximate space--time basis over each window $\widetilde{\stbasismat}^k = \mathrm{blockdiag}\pa{\stbasismat^k}$
where $\mathrm{blockdiag}\pa{\stbasismat^k}$ is a function that returns only the block diagonal entries of $\stbasismat^k$.  Thus, the training data for the initial guess is given by
\begin{equation}
\bmat{
\solFullDiscreteReducedArg{k}\pa{\paramTrain{0}} & \cdots & \solFullDiscreteReducedArg{k}\pa{\paramTrain{\ntrain-1}}
}
 = \bmat{\widetilde{\stbasismat}^{k}}^+\bmat{\snapshotmatparam^{\!\!\!\!k}}^T
\end{equation}
where $\snapshotmatparam^{\!\!\!\!k}$ denotes the mode-3 unfolding of the three-way tensor over the $k^{\mathrm{th}}$ window~\eqref{e:threewaytensorInit} (with one sub-window, i.e., $\snapshotmatparam^{\!\!\!\!k}$ contain all the training data over the $k^{\mathrm{th}}$ window). In the online phase, the initial guess $\solFullDiscreteReducedArg{k(0)}\pa{\param}$ is then computed via least-squares regression.
This approximation was found to produce adequate initial guesses for the Gauss--Newton solver.
\subsection{Hyper-reduction for WST-LSPG}\label{s:hyperreduction}
When the space--time weighting matrix is set to be the identity matrix, $\weightmatwstArg{k} = \bm{I}$, the model reduction process, for example, the Gauss--Newton solver, requires that all $\nspacedof \ntimedof$ elements of the space--time residual and space--time Jacobian be computed. Although $\weightmatwstArg{k} = \bm{I}$ may produce the most accurate solution, implementing this for high-fidelity nonlinear problems will lead to high computational costs. 
In order to fully realize the benefits of model reduction for space--time, it is best to look at hyper-reduction techniques that offer alternative definitions for the weighting matrix, $\weightmatwstArg{k}$. In this work, the Gauss--Newton with Approximated Tensors (GNAT) method~\cite{carlberg2013gnat} is employed for hyper-reduction in each window $k$. GNAT operates by sampling the space--time residual at a subset of the $\nspacedof \ntimedof^k$ elements (this subset is commonly referred to as the space--time sample mesh, $\bm{Z}^k$, ~\cite{carlberg_thesis}) and performing a least-squares reconstruction of the space--time residual via a space--time residual basis and the Gappy POD method~\cite{everson_sirovich_gappy}.
The hyper-reduction approach employed here for WST-LSPG using GNAT (WST-GNAT) involves the following steps: (1) compute a snapshot matrix of the residuals for each sub-window, (2) construct a basis for the residual over each sub-window by applying the tailored WST-HOSVD approach to the residual snapshot matrix, (3) compute the sample mesh $\bm{Z}^k$ via the greedy sampling of temporal then spatial indices method~\cite{carlberg2013gnat}. These ingredients are now described.

\subsubsection{Construction of the windowed space-time residual basis}

The GNAT method requires the calculation of a space--time residual basis for the least-squares reconstruction of the space--time residual. This calculation is done similarly to the way the spatial and temporal state bases are calculated. For the space--time residual basis, the tailored WST-HOSVD method used previously to calculate the space--time basis for the states is used.  

The space--time residual basis for window $k$ and sub-window $m$, $\stbasismat_r^{k,m}$, is found by collecting the Gauss--Newton space--time residuals for $n^r_{\mathrm{train}}$ parameters in the parameter design space to create the space--time residual snapshots. The parameter design space for the residual snapshots, $\paramTrain{r}$ can be the same design parameter space used for the state snapshots, $\paramTrain{r}=\paramTrain{}$. To find the residual snapshots, Algorithm~\ref{a:stlspg} is executed for all $n^r_{\mathrm{train}}$ parameters. Applying Algorithm~\ref{a:stlspg} to each $q^{th}$ parameter where $q\innat{\ntrain^r}$, outputs the number of Gauss--Newton iterations required to solve for the reduced space--time states at $\paramTrain{r,q}$, denoted as $N_{\mathrm{GN}}\pa{\paramTrain{r,q}}$ and the corresponding space--time residual snapshots over each sub-window, $\bm{X}^{k,m}_{\bm{r}}\pa{\paramTrain{r,q}}$.

After acquiring the space--time residual snapshots, $\bm{X}^{k,m}_{\bm{r}}\pa{\paramTrain{r,q}}$, at every training parameter, the residual snapshots can be written as a three-way residual tensor over each sub-window, $\tensorSub{\bm{r}}^{k,m} \in \RR{\nspacedof \times \numStepsInSubWindowArg{k}{m} \times n_{\mathrm{res}} }$,
%
where the final number of total Gauss--Newton iterations required for convergence, for all parameter instances, is denoted as $n_{\mathrm{res}}=\sum\limits_{q=1}^{n^r_{\mathrm{train}}}N_{\mathrm{GN}}\pa{\param_{\mathrm{train}}^{r,q}}$. Next, the POD of the mode-1 unfolding of $\tensorSub{\bm{r}}^{k,m}$ 
gives the spatial residual basis, $\sbasismat_r^{k,m}=\bmat{\basisvecspacei{r,0}^{k,m}&\cdots & \basisvecspacei{r,\romDimResidSpaceSubWindowArg{k}{m}-1}^{k,m}}$, where $\romDimResidSpaceSubWindowArg{k}{m}$ are the number of spatial residual basis vectors over the $m^{\mathrm{th}}$ sub-window of the $k^{\mathrm{th}}$ window. To find the temporal residual basis, the tailored ST-HOSVD method is used again, which acts upon the mode-$2$ of $\tensorSub{\bm{r}}^{k,m}$, $\snapshotsResWindowSubWindow\pa{\basisvecspacei{r,i}^{k,m}}$ for $i \in \nat{\romDimResidSpaceSubWindowArg{k}{m}}$. Performing POD on $\snapshotsResWindowSubWindow\pa{\basisvecspacei{r,i}^{k,m}}$ gives
%
\vspace{-3mm}
\begin{equation}
\begin{split}
\snapshotsResWindowSubWindow\pa{\basisvecspacei{r,i}^{k,m}} &=
\bm{U}_{r,t}^{k,m}(\basisvecspacei{r,i}^{k,m}) \bm{\Sigma}_{r,t}^{k,m}(\basisvecspacei{r,i}^{k,m})
\bm{V}_{r,t}^{k,m}(\basisvecspacei{r,i}^{k,m})^T\in\RR{\numStepsInSubWindowArg{k}{m}\times n_{\mathrm{res}}}
	\\
         \basisvectimeij{i,k,m}{r,j} &= \mathrm{col}_j\!\pa{\bm{U}_{r,t}^{k,m}\pmat{\basisvecspacei{r,i}}},\quad
	j\innat{\romDimResidTimeSubWindowArg{i}{k}{m}},\\
	 \bm{\psi}^{i,k,m}_{r,j}&=
	 \bmat{\basispsi^{i,k,m}_{r,0} &\ldots&\basispsi^{i,k,m}_{r,n_t^{i,k,m}-1}}
	 \in \mathbb{R}^{\numStepsInSubWindowArg{k}{m} \times \romDimResidTimeSubWindowArg{i}{k}{m}},
\end{split}
\end{equation}
where $\romDimResidTimeSubWindowArg{i}{k}{m}$ are the number of temporal basis vectors retained for the $i^{th}$ spatial vector over the $m^{th}$ sub-window of the $k^{\mathrm{th}}$ window.
The space--time residual basis is then calculated by first performing
\begin{equation*}
    \overrightharpoon{\bm{\pi}}_{r,i}^{k,m} = \basisvecspace_{r,i}^{k,m}\otimes \bm{\psi}_{r}^{i,k,m}\in \RR{\nspacedof \numStepsInSubWindowArg{k}{m}}\times \RR{n_{t,r}^{i,k,m}}.
\end{equation*}
Next the vectorization of the components of the space--time residual basis is found to be
\begin{equation*}
 \overrightarrow{\bm{\pi}}_{r,i}^{k,m} =\mathrm{vec}\pa{\overrightharpoon{\bm{\pi}}_{r,i}^{k,m} },\quad
    \bm{\Pi}_r^{k,m} =
       \bmat{
            \overrightarrow{\bm{\pi}}_{r,0}^{k,m}&\ldots& \overrightarrow{\bm{\pi}}_{r,n_{st,r}^{k,m}-1}^{k,m}
        }
    \in \mathbb{R}^{N_s N_t\times n_{st,r}^{k,m}}.
\end{equation*}
Next, the orthonormal space--time residual basis, $\stbasismat_r^{k,m}$, is found by taking the $QR$ factorization of $\bm{\Pi}_r^{k,m}$, $\stbasismat_r^{k,m}\rightarrow \bm{\Pi}_r^{k,m} = \stbasismat_r^{k,m} \bm{R}^{k,m}$. Finally, the full space--time residual basis for the entire window $k$ is defined as $\stbasismat_r^{k} = \mathrm{blockdiag}\pa{ \stbasismat_r^{k,0} , \ldots , \stbasismat_r^{k,\numSubWindows-1} } \in \mathbb{V}_{ \residualBasisDimWSTArg{k} }( \RR{\nspacedof \numStepsInWindowArg{k}} )$, where $n^k_{st,r}=\sum\limits_{m=0}^{\numSubWindows^k-1} n^{k,m}_{st,r}$. The space--time residual basis, $\stbasismat_r^{k}$ is used to calculate the sample mesh and perform hyper-reduction for each window.

\subsubsection{Sample mesh}
Hyper-reduction techniques such as GNAT operate by computing the space--time residual at a subset of the $\nspacedof \ntimedof^{\!\!k}$ elements for each window; this subset of elements is commonly referred to as the sample mesh. Mathematically, the sample mesh can be described by the sampling matrix, $\bm{Z}^k \in \{0,1\}^{ z \times \nspacedof \ntimedof^{\!\!k}}$, with $z \le \nspacedof \ntimedof^{\!\!k}$. In this work, the sequential greedy sampling of temporal then spatial indices (SGSTSI) algorithm is used to construct space--time sampling matrices defined over each window; SGSTSI is one of three methods proposed by Choi et. al~\cite{choi2019space} for space--time hyper-reduction. SGSTSI computes the space--time sample mesh as a Cartesian product of spatial indices and temporal indices, both of which are obtained in a greedy manner. The result of this process is a space--time sampling matrix defined over each window, $\bm{Z}^k$, $k \in \nat{\numWindows}$. 

 \subsubsection{Windowed space--time Gauss--Newton with Approximated Tensors}
GNAT is a hyper-reduction approach that operates by (1) sampling space--time elements of the residual on the sample mesh and (2) performing a least-squares reconstruction of the full space--time residual over the entire window $k$ via Gappy POD~\cite{everson_sirovich_gappy,eim,deim}. WST-GNAT approximates the space--time residual over the $k^{\mathrm{th}}$ window as 
\begin{equation*}
    \resFullDiscreteVector^k\pa{\solFullDiscreteApproxVectorArg{k}\pa{\param}; \solFullDiscreteApproxArg{\windowIndexMapperArg{k}-1},\param} \approx \resFullDiscreteApproxVector^k\pa{\solFullDiscreteApproxVectorArg{k}\pa{\param};\solFullDiscreteApproxArg{\windowIndexMapperArg{k}-1},\param} = \stbasismatResidArg{k} \residualReducedWindowArg{k} \in \RR{\nspacedof \numStepsInWindowArg{k}},
\end{equation*}
where $\residualReducedWindowArg{k}  \in \RR{\residualBasisDimWSTArg{k}}$ are the space--time reduced states associated with the gappy reconstruction over the $k^{\mathrm{th}}$ window. The reduced states are given by the solution to the minimization problem
\begin{equation}
\residualReducedWindowArg{k} = 	\underset{
\dummyFullDiscreteReduced\in \RR{\residualBasisDimWSTArg{k}}
		}{\arg\min}\Vmat{\sampleMatrixWindowArg{k}\stbasismatResidArg{k} \dummyFullDiscreteReduced - \sampleMatrixWindowArg{k} \resFullDiscreteVector^k\pa{\solFullDiscreteApproxVectorArg{k}\pa{\param}; \solFullDiscreteApproxArg{\windowIndexMapperArg{k}-1},\param} }^2_2 = \left(\sampleMatrixWindowArg{k}\stbasismatResidArg{k}\right)^+\sampleMatrixWindowArg{k} \resFullDiscreteVector^k\pa{\solFullDiscreteApproxVectorArg{k}\pa{\param}; \solFullDiscreteApproxArg{\windowIndexMapperArg{k}-1},\param},
\end{equation}
where $\sampleMatrixWindowArg{k} \in \{0,1\}^{z^k \times \nspacedof \numStepsInWindowArg{k}}$, with $z^k \le \nspacedof\numStepsInWindowArg{k}$, is the sampling matrix with one non-zero element per column that selects $z^k$ rows of the residual. Thus, GNAT solves the minimization problem~\eqref{e:t-lspgReducedLargerFullST} with the space--time weighting matrix $\weightmatwstArg{k} =  \left(\sampleMatrixWindowArg{k} \stbasismatResidArg{k}\right)^+\sampleMatrixWindowArg{k} $. This space--time weighting matrix is calculated offline and forces the least squares problem in the Gauss--Newton procedure to be the same size as the number of space--time residual bases.

\section{Analysis}\label{sec:analysis}
This section presents numerical analyses for the proposed WST-LSPG method. First, \textit{a posteriori} bounds are derived, where it is shown that WST-LSPG is equipped with a more favorable \textit{a posteriori} error bound than LSPG. Second, \textit{a priori} error bounds are derived. Critically, it is shown that the error in WST-LSPG grows exponentially in the number of windows; this is in direct contrast to the traditional Galerkin and LSPG methods, in where the \textit{a priori} error bounds grow exponentially in the number of time steps~\cite{carlberg2017galerkin}. For simplicity, analyses are presented for the case where there is no hyper-reduction, i.e., $\weightmatwstArg{k} = \mathbf{I}$; it is noted that the following results can be extended to include hyper-reduction. For subsequent exposition, several definitions are made. 
First, the space--time velocity over the $k^{\mathrm{th}}$ window is defined as
\vspace{-3mm}
\begin{equation}
\begin{split}
\fluxWSTArg{k}&: \pa{\stateWSTDumAArg{k};\params} \mapsto \bmat{\bmat{ \flux\pa{\stateDumA^1,t^{\windowIndexMapperArg{k}};\params} } ^T & \cdots & \bmat{\flux\pa{\stateDumA^{\numStepsInWindowArg{k}},t^{\windowIndexMapperArg{k} + \numStepsInWindowArg{k}-1};\params}}^T}^T \\
&: \RR{\nspacedof \numStepsInWindowArg{k}} \times \paramDomain \rightarrow \RR{\nspacedof \numStepsInWindowArg{k}},
\end{split}
\end{equation}
where $\stateSTDumA = \bmat{ \bmat{\stateSTDumA^1}^T & \cdots & \bmat{\stateSTDumA^{\numStepsInWindowArg{k}}}^T }^T$. Next, the time-discrete space--time residual over the $k^{\mathrm{th}}$ window can be expressed as
\begin{equation}
\resFullWSTDiscreteArg{k} : \pa{ \stateWSTDumAArg{k} ; \dumFullDiscreteArg{0}, \params} \mapsto  \AWSTArg{k} \stateWSTDumAArg{k} - \Delta  t \BWSTArg{k}\fluxWSTArg{k}\pa{ \stateWSTDumAArg{k};\params} +  \AICWSTArg{k} \dumFullDiscreteArg{0} - \Delta t \BICWSTArg{k}\flux\pa{ \dumFullDiscreteArg{0},t^{\windowIndexMapperArg{k}-1};\params},
\end{equation}
where $\AWSTArg{k}, \BWSTArg{k} \in \RR{\nspacedof \numStepsInWindowArg{k} \times \nspacedof \numStepsInWindowArg{k}}$, $k \in \nat{\numWindows}$ are matrices that contain the linear multistep coefficients pertaining to the unknowns over the window, and $\AICWSTArg{k} , \BICWSTArg{k} \in \RR{\nspacedof \numStepsInWindowArg{k} \times \nspacedof}$, $k \in \nat{\numWindows}$ contain the linear multistep coefficients pertaining to the input state $\dumFullDiscreteArg{0}$. Lastly, the initial states into the $k^{th}$ window will be denoted as $\fomStateICWSTArg{k}(= \solFullDiscreteArg{\windowIndexMapperArg{k}-1})$ and $\approxStateICWSTArg{k}(= \solFullDiscreteApproxArg{\windowIndexMapperArg{k}-1})$ for the FOM and the WST-LSPG ROM, respectively. 
%
%

Following Ref.~\cite{choi2019space}, the following assumptions are employed in the analysis:
\begin{itemize}
\item \textbf{A1}. The velocity is assumed to be Lipshitz continuous, i.e., there exists a constant $\lipschitzVelocity  > 0$ such that
\begin{equation*}
\Vmat{\flux\pa{\stateDumA,t;\params} - \flux\pa{\stateDumB,t;\params} }_2 \le \lipschitzVelocity \Vmat{ \stateDumA - \stateDumB }_2 \quad \forall \stateDumA,\stateDumB \in \RR{\nspacedof}.
\end{equation*}
\item \textbf{A2}. The time step is sufficiently small such that 
$
\sigmaMinArg{\AWSTArg{k}} - \sigmaMaxArg{\BWSTArg{k}} \Delta t \lipschitzVelocity > 0 ,
$
where $\sigmaMinArg{\cdot}$ and $\sigmaMinArg{\cdot}$ denote the minimum and maximum singular values, respectively.\footnote{While this bound is additionally employed in a similar analysis for ST-LSPG~\cite{choi2019space}, it is noted here that, for linear multistep schemes, $\sigmaMinArg{\AWSTArg{k}}$ decreases as the number of steps in the window grows. Thus, for large window sizes, this assumption becomes strong, and the resulting bounds may become less sharp.}
\end{itemize}
For the remainder of this section, the dependence of functions on parameters is suppressed when possible. 

\subsection{A posteriori error bounds}
\textit{A posteriori} error bounds are first derived. These bounds enable the error in the WST-LSPG approximate solution to be bounded by the FOM residual evaluated about the WST-LSPG solution.  
\begin{lemma}
Under assumption A1, the space--time velocity is Lipshitz continuous with the Lipshitz constant $\lipschitzVelocity$, i.e., there exists a constant $\lipschitzVelocity$ such that, $\forall \stateSTDumA,\stateSTDumB \in \RR{\nspacedof \numStepsInWindowArg{k}}$,
\begin{equation}                           
\Vmat{\fluxWSTArg{k}\pa{\stateSTDumA ;\params} - \fluxWSTArg{k}\pa{\stateSTDumB;\params}}_2 \le \lipschitzVelocity   \Vmat{\stateSTDumA - \stateSTDumB}_2, \qquad 
\end{equation}
where $\stateSTDumA \equiv \bmat{ \bmat{\stateDumA^1}^T & \cdots & \bmat{\stateDumA^{\numStepsInWindowArg{k}}}^T }^T$ and $\stateSTDumB \equiv \bmat{ \bmat{\stateDumB^1}^T & \cdots & \bmat{\stateDumB^{\numStepsInWindowArg{k}}}^T }^T$. 
\end{lemma}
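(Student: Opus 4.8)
The plan is to reduce the claim to a blockwise application of assumption \textbf{A1}, exploiting the fact that the space--time velocity $\fluxWSTArg{k}$ acts \emph{independently} on each temporal block of its argument (this mirrors the corresponding ST-LSPG argument of Ref.~\cite{choi2019space}). First I would write the difference $\fluxWSTArg{k}\pa{\stateSTDumA;\params} - \fluxWSTArg{k}\pa{\stateSTDumB;\params}$ in stacked form: its $j$-th temporal block is $\flux\pa{\stateDumA^j,t^{\windowIndexMapperArg{k}+j-1};\params} - \flux\pa{\stateDumB^j,t^{\windowIndexMapperArg{k}+j-1};\params}$ for $j = 1,\ldots,\numStepsInWindowArg{k}$, since no cross-temporal coupling appears in the definition of $\fluxWSTArg{k}$.

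Next, I would use that the Euclidean norm of a stacked vector is the square root of the sum of the squared Euclidean norms of its blocks, which gives
\[
\Vmat{\fluxWSTArg{k}\pa{\stateSTDumA;\params} - \fluxWSTArg{k}\pa{\stateSTDumB;\params}}_2^2 = \sum_{j=1}^{\numStepsInWindowArg{k}} \Vmat{\flux\pa{\stateDumA^j,t^{\windowIndexMapperArg{k}+j-1};\params} - \flux\pa{\stateDumB^j,t^{\windowIndexMapperArg{k}+j-1};\params}}_2^2 .
\]
Applying \textbf{A1} to each summand (which bounds it by $\lipschitzVelocity^2 \Vmat{\stateDumA^j - \stateDumB^j}_2^2$, uniformly in the time instance) and summing yields
\[
\Vmat{\fluxWSTArg{k}\pa{\stateSTDumA;\params} - \fluxWSTArg{k}\pa{\stateSTDumB;\params}}_2^2 \le \lipschitzVelocity^2 \sum_{j=1}^{\numStepsInWindowArg{k}} \Vmat{\stateDumA^j - \stateDumB^j}_2^2 = \lipschitzVelocity^2 \Vmat{\stateSTDumA - \stateSTDumB}_2^2 ,
\]
and taking square roots gives the stated bound with the same constant $\lipschitzVelocity$.

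There is essentially no substantive obstacle here; the only points requiring care are (i) correctly matching the $j$-th temporal block of the stacked argument with the time instance $t^{\windowIndexMapperArg{k}+j-1}$ at which $\flux$ is evaluated in that block, and (ii) observing that the Lipschitz constant in \textbf{A1} is independent of $t$, so the \emph{same} $\lipschitzVelocity$ controls every block. I would emphasize, as the payoff, that this argument shows the space--time Lipschitz constant does \emph{not} grow with the window length $\numStepsInWindowArg{k}$, which is precisely what makes the downstream \emph{a posteriori} error bound favorable compared with LSPG.
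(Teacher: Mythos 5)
Your proposal is correct and follows essentially the same route as the paper: expand the stacked $\ell^2$-norm into the sum of squared blockwise norms, apply assumption A1 to each block with the same time-independent constant $\lipschitzVelocity$, and take the square root. The blockwise indexing against $t^{\windowIndexMapperArg{k}+j-1}$ is consistent with the paper's definition of $\fluxWSTArg{k}$, so no gap remains.
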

\begin{proof}
The result follows trivially from expanding the definition of the norm and applying assumption A1, which yields 
\vspace{-3mm}
\begin{equation*}
\Vmat{\fluxWSTArg{k}\pa{\stateSTDumA  ;\params} - \fluxWSTArg{k}\pa{\stateSTDumB;\params} }_2^2 \le \lipschitzVelocity^2  \sum_{i=0}^{\numStepsInWindowArg{k} -1} \Vmat{ \stateDumA^{i+1} - \stateDumB^{i+1} }_2^2.
\end{equation*}
Noting that the norm on the right-hand side is the square of the $\ell^2$-norm of $\stateSTDumA - \stateSTDumB$, and taking the square root yields the desired result.
\end{proof}

\begin{theorem}\label{th:traj_bound}
(Local bound between two trajectories) Under assumptions A1 and A2, the difference in two space--time trajectories $\stateWSTDumAArg{k},\stateWSTDumBArg{k} \in \RR{\nspacedof \numStepsInWindowArg{k}}$---associating with starting points $\stateWSTDumAICArg{k},\stateWSTDumBICArg{k} \in \RR{\nspacedof}$---over the $k^{\mathrm{th}}$ window is bounded by
\vspace{-3mm}\begin{equation}\label{eq:local_error_bound}
 \Vmat{  \stateWSTDumAArg{k} - \stateWSTDumBArg{k}  }_2 \le
\frac{1}{\constantAWSTArg{k}} \Vmat{ \resFullWSTDiscreteArg{k}(\stateWSTDumAArg{k};\stateWSTDumAICArg{k}) -  \resFullWSTDiscreteArg{k}(\stateWSTDumBArg{k};\stateWSTDumBICArg{k})  }_2 +
\frac{\constantBWSTArg{k}}{\constantAWSTArg{k}}  \Vmat{  \stateWSTDumAICArg{k} -\stateWSTDumBICArg{k}   }_2,
\end{equation}
where $\constantAWSTArg{k} = \pa{\sigmaMinArg{\AWSTArg{k}} - \sigmaMaxArg{\BWSTArg{k}} \Delta t \lipschitzVelocity }$ and $ 
\constantBWSTArg{k} = \sigmaMaxArg{ \AICWSTArg{k}} +  \Delta t \lipschitzVelocity \sigmaMaxArg{\BICWSTArg{k}}.$
\end{theorem}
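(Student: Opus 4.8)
The plan is to mirror the \emph{a posteriori} argument for ST-LSPG in~\cite{choi2019space}, specialized to a single window and to two arbitrary trajectories rather than a FOM/ROM pair. First I would form the difference of the two windowed residuals evaluated at $(\stateWSTDumAArg{k};\stateWSTDumAICArg{k})$ and $(\stateWSTDumBArg{k};\stateWSTDumBICArg{k})$. Since $\resFullWSTDiscreteArg{k}$ is affine in the multistep matrices $\AWSTArg{k},\BWSTArg{k},\AICWSTArg{k},\BICWSTArg{k}$ and the only nonlinearities enter through $\fluxWSTArg{k}(\cdot;\params)$ over the window and $\flux(\cdot,t^{\windowIndexMapperArg{k}-1};\params)$ at the window inlet, this difference rearranges to isolate $\AWSTArg{k}(\stateWSTDumAArg{k}-\stateWSTDumBArg{k})$ on one side, leaving the residual difference, the windowed-flux difference, the inlet-flux difference, and the inlet-state difference $\AICWSTArg{k}(\stateWSTDumAICArg{k}-\stateWSTDumBICArg{k})$ on the other.

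Next I would take $\ell^2$-norms. On the left, $\Vmat{\AWSTArg{k}(\stateWSTDumAArg{k}-\stateWSTDumBArg{k})}_2 \ge \sigmaMinArg{\AWSTArg{k}}\,\Vmat{\stateWSTDumAArg{k}-\stateWSTDumBArg{k}}_2$. On the right, the triangle inequality splits the four contributions: the windowed-flux term is controlled with the space--time Lipschitz property established in the preceding Lemma, $\Vmat{\fluxWSTArg{k}(\stateWSTDumAArg{k};\params)-\fluxWSTArg{k}(\stateWSTDumBArg{k};\params)}_2 \le \lipschitzVelocity\Vmat{\stateWSTDumAArg{k}-\stateWSTDumBArg{k}}_2$, together with $\Vmat{\BWSTArg{k}\,\cdot\,}_2 \le \sigmaMaxArg{\BWSTArg{k}}\Vmat{\cdot}_2$; the inlet-flux term is controlled using assumption A1 directly with $\Vmat{\BICWSTArg{k}\,\cdot\,}_2 \le \sigmaMaxArg{\BICWSTArg{k}}\Vmat{\cdot}_2$; and the inlet-state term uses $\Vmat{\AICWSTArg{k}\,\cdot\,}_2 \le \sigmaMaxArg{\AICWSTArg{k}}\Vmat{\cdot}_2$.

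Collecting terms, the two contributions proportional to $\Vmat{\stateWSTDumAArg{k}-\stateWSTDumBArg{k}}_2$ combine into the coefficient $\sigmaMinArg{\AWSTArg{k}} - \sigmaMaxArg{\BWSTArg{k}}\Delta t\,\lipschitzVelocity = \constantAWSTArg{k}$, and the two inlet contributions combine into $\sigmaMaxArg{\AICWSTArg{k}} + \Delta t\,\lipschitzVelocity\,\sigmaMaxArg{\BICWSTArg{k}} = \constantBWSTArg{k}$. Assumption A2 ensures $\constantAWSTArg{k}>0$, so dividing through by $\constantAWSTArg{k}$ yields~\eqref{eq:local_error_bound}. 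I do not expect a genuine obstacle here: the argument is one rearrangement followed by bookkeeping with the reverse triangle inequality and singular-value bounds. The only point that needs care is using the \emph{space--time} Lipschitz constant (from the Lemma) for the over-the-window flux term while using the \emph{pointwise} Lipschitz constant (A1) for the single inlet evaluation, and observing that both equal $\lipschitzVelocity$ so they merge cleanly into $\constantAWSTArg{k}$ and $\constantBWSTArg{k}$.
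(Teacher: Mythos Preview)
Your proposal is correct and follows essentially the same approach as the paper: form the residual difference, isolate $\AWSTArg{k}(\stateWSTDumAArg{k}-\stateWSTDumBArg{k})$, bound it below by $\sigmaMinArg{\AWSTArg{k}}\Vmat{\stateWSTDumAArg{k}-\stateWSTDumBArg{k}}_2$, control the flux terms via Lipschitz continuity (the Lemma for the windowed term, A1 for the inlet), pull out singular-value bounds, and divide by $\constantAWSTArg{k}$ under A2. The only cosmetic difference is that the paper groups the two inlet terms together as $\Vmat{\eta-\theta}_2$ before bounding, whereas you split all four contributions at once; the arithmetic is identical.
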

\begin{proof}
The difference between the windowed space--time residual evaluated at  trajectories $\stateWSTDumAArg{k}$ and $\stateWSTDumBArg{k}$ can be written as
\vspace{-1mm}
\begin{multline*}
 \underbrace{\resFullWSTDiscreteArg{k}\pa{\stateWSTDumAArg{k};\stateWSTDumAICArg{k}}  - \resFullWSTDiscreteArg{k}\pa{\stateWSTDumBArg{k};\stateWSTDumBICArg{k}}}_{\alpha}  =   \underbrace{\AWSTArg{k}\pa{ \stateWSTDumAArg{k} - \stateWSTDumBArg{k} }}_{\beta}  -  \underbrace{\Delta t \BWSTArg{k} \pa{ \fluxWSTArg{k}\pa{\stateWSTDumAArg{k}} - \fluxWSTArg{k}\pa{\stateWSTDumBArg{k}} }}_{\gamma}
+ \\
 \underbrace{\AICWSTArg{k}\pa{ \stateWSTDumAICArg{k} - \stateWSTDumBICArg{k} }}_{\eta}  -  \underbrace{\Delta t \BICWSTArg{k} \pa{ \flux\pa{\stateWSTDumAICArg{k},t^{\windowIndexMapperArg{k}-1};\params} - \flux\pa{\stateWSTDumBICArg{k},t^{\windowIndexMapperArg{k}-1};\params} }}_{\theta}.
\end{multline*}
Rearranging terms, taking the $\ell^2$-norm of each side, and applying the triangle inequality yields
\begin{equation*}
    \Vmat{\beta}_2=\Vmat{\alpha +\gamma-\eta+\theta}_2\longrightarrow\Vmat{\beta}_2\leq\Vmat{\alpha}_2 +\Vmat{\gamma}_2+\Vmat{\eta-\theta}_2.
\end{equation*}
Subtracting $\Vmat{\gamma}_2$ on both sides gives $\Vmat{\beta}_2-\Vmat{\gamma}_2\leq\Vmat{\alpha}_2 + \Vmat{\eta-\theta}_2.$
Bounding $\Vmat{\beta}_2$ from below and $\Vmat{\gamma}_2$ from above yields
\vspace{-2mm}
\begin{equation*}
\sigmaMinArg{\AWSTArg{k}} \Vmat{  \stateWSTDumAArg{k} - \stateWSTDumBArg{k}  }_2 - \sigmaMaxArg{\BWSTArg{k}} \Delta t \Vmat{  \fluxWSTArg{k}\pa{\stateWSTDumAArg{k}} - \fluxWSTArg{k}\pa{\stateWSTDumBArg{k}} }_2 \leq \Vmat{\alpha}_2 + \Vmat{\eta-\theta}_2.
\end{equation*}
Applying Assumption A1 (Lipshitz continuity of the velocity) results in
\begin{equation*}\label{e:analysis_tmp1}
\pa{ \sigmaMinArg{\AWSTArg{k}} - \sigmaMaxArg{\BWSTArg{k}} \Delta t \lipschitzVelocity}  \Vmat{  \stateWSTDumAArg{k} - \stateWSTDumBArg{k} }_2\leq \Vmat{\alpha}_2 + \Vmat{\eta-\theta}_2.
\end{equation*}
Next, with Assumption A1, $\Vmat{\eta-\theta}_2$ can be bounded as
\begin{equation}\label{e:analysis_tmp2}
    \Vmat{\eta-\theta}_2 \leq \\
\bmat{ \sigmaMaxArg{ \AICWSTArg{k}} +  \Delta t \lipschitzVelocity \sigmaMaxArg{\BICWSTArg{k}} }\Vmat{ \stateWSTDumAICArg{k} - \stateWSTDumBICArg{k}}_2.
\end{equation}
Inserting the inequality~\eqref{e:analysis_tmp2} into the inequality~\eqref{e:analysis_tmp1} yields 
\begin{equation*}
\pa{ \sigmaMinArg{\AWSTArg{k}} - \sigmaMaxArg{\BWSTArg{k}} \Delta t \lipschitzVelocity} \Vmat{ \stateWSTDumAArg{k} - \stateWSTDumBArg{k}}_2 \le
\Vmat{ \alpha }_2 + 
\bmat{ \sigmaMaxArg{ \AICWSTArg{k}} +  \Delta t \lipschitzVelocity \sigmaMaxArg{\BICWSTArg{k}} } \Vmat{ \stateWSTDumAICArg{k} - \stateWSTDumBICArg{k} }_2.
\end{equation*}
Dividing  through by $\pa{  \sigmaMinArg{\AWSTArg{k}} - \sigmaMaxArg{\BWSTArg{k}} \Delta t \lipschitzVelocity}$ and employing assumption A2 gives the desired result.
\end{proof}
\begin{corollary}\label{th:local_apost}
(Local a posteriori error bounds) The error between the WST-LSPG solution and the FOM solution over the $k^{th}$ window is bounded by 
\vspace{-1mm}
\begin{equation}\label{e:local_apost_bound_corr}
\Vmat{ \errorWSTArg{k} }_2 \le  \frac{1}{\constantAWSTArg{k}} \Vmat{ \resFullWSTDiscreteArg{k}\pa{\approxStateWSTArg{k};\approxStateICWSTArg{k}}}_2 + \frac{\constantBWSTArg{k}}{\constantAWSTArg{k}} \Vmat{\errorICWSTArg{k} }_2,
\end{equation}
where $\errorWSTArg{k} = \approxStateWSTArg{k} - \fomStateWSTArg{k}$ and $\errorICWSTArg{k} = \approxStateICWSTArg{k} - \fomStateICWSTArg{k}$.
\end{corollary}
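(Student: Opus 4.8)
The plan is to obtain Corollary~\ref{th:local_apost} as an immediate specialization of Theorem~\ref{th:traj_bound}. First I would instantiate the two trajectories in the local bound~\eqref{eq:local_error_bound} by taking $\stateWSTDumAArg{k} = \approxStateWSTArg{k}$ (the WST-LSPG space--time solution on the $k^{\mathrm{th}}$ window), with starting point $\stateWSTDumAICArg{k} = \approxStateICWSTArg{k} = \solFullDiscreteApproxArg{\windowIndexMapperArg{k}-1}$, and $\stateWSTDumBArg{k} = \fomStateWSTArg{k}$ (the FOM O$\Delta$E solution restricted to the $k^{\mathrm{th}}$ window), with starting point $\stateWSTDumBICArg{k} = \fomStateICWSTArg{k} = \solFullDiscreteArg{\windowIndexMapperArg{k}-1}$. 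Both trajectories are well defined, and assumptions A1--A2 are in force, so Theorem~\ref{th:traj_bound} applies verbatim to this pair.

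Second I would invoke the fact that the FOM solution satisfies the space--time O$\Delta$E exactly. By definition of the FOM (Eqn.~\eqref{e:fom_odeltae_spacetime_vector}), the restriction of the FOM trajectory to the $k^{\mathrm{th}}$ window, together with its true incoming state, is a root of the windowed space--time residual, i.e.\ $\resFullWSTDiscreteArg{k}\pa{\fomStateWSTArg{k};\fomStateICWSTArg{k}} = \bz$. Hence the residual-difference term in~\eqref{eq:local_error_bound} collapses:
\[
\Vmat{ \resFullWSTDiscreteArg{k}(\approxStateWSTArg{k};\approxStateICWSTArg{k}) -  \resFullWSTDiscreteArg{k}(\fomStateWSTArg{k};\fomStateICWSTArg{k})  }_2 = \Vmat{ \resFullWSTDiscreteArg{k}(\approxStateWSTArg{k};\approxStateICWSTArg{k}) }_2 .
\]

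Third, I would substitute this identity into~\eqref{eq:local_error_bound} and read off the result using the error notation $\errorWSTArg{k} = \approxStateWSTArg{k} - \fomStateWSTArg{k}$ and $\errorICWSTArg{k} = \approxStateICWSTArg{k} - \fomStateICWSTArg{k}$, together with the constants $\constantAWSTArg{k}$ and $\constantBWSTArg{k}$ already defined in Theorem~\ref{th:traj_bound}; this yields exactly~\eqref{e:local_apost_bound_corr}. I expect no real obstacle here: the only point that deserves an explicit sentence is the exact-solution property of the FOM on the window, which is immediate from the definitions of $\fomStateWSTArg{k}$ and $\fomStateICWSTArg{k}$ as the FOM discrete solution and its incoming state, respectively. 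Consequently the corollary requires no new estimates beyond Theorem~\ref{th:traj_bound}.
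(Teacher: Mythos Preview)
Your proposal is correct and follows essentially the same approach as the paper: instantiate Theorem~\ref{th:traj_bound} with the WST-LSPG trajectory and the FOM trajectory (together with their respective incoming states), then use the fact that $\resFullWSTDiscreteArg{k}\pa{\fomStateWSTArg{k};\fomStateICWSTArg{k}} = \bz$ to reduce the residual-difference term to a single residual norm. There is nothing to add.
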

\begin{proof}
The proof follows directly from evaluating Theorem~\ref{th:traj_bound} for the trajectories $\approxStateWSTArg{k},\fomStateWSTArg{k}$, starting from $\approxStateICWSTArg{k}$ and $\fomStateICWSTArg{k}$, and noting that  $\resFullWSTDiscreteArg{k}\pa{ \fomStateWSTArg{k}; \fomStateICWSTArg{k}} = \bz$.
\end{proof}
Theorem~\ref{th:local_apost} demonstrates that, if using the same initial condition into the $k^{th}$ window, the solution obtained via WST-LSPG is equipped with a lower a posteriori bound than the solution obtained via LSPG; WST-LSPG guarantees the solution over the window with a minimum residual. Next, global error bounds are presented.
\begin{corollary}\label{th:global_apost}
(Global \textit{a posteriori} error bounds) The error in the WST-LSPG solution over the $k^{th}$ window is bounded by
\vspace{-2mm}
\begin{equation}\label{e:global_apost_error_bound}
\Vmat{  \errorWSTArg{k} }_2 \le
\mathlarger{\sum}\limits_{i=0}^{k}\pa{\, \prod\limits_{j=0}^{k-i}\constantBWSTArg{k-j}\Bigg/\prod\limits_{j=0}^{k - i+1}\constantAWSTArg{k-j}}\Vmat{ \resFullWSTDiscreteArg{i}\pa{\approxStateWSTArg{i};\approxStateICWSTArg{i}} }_2,
\end{equation}
where, for notational simplicity, $\Pi$ denotes the product that is non-inclusive in its upper limit.\footnote{For example, $\prod\limits_{j=1}^{4} j = 1\times 2\times3$.}  
\end{corollary}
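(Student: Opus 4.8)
The plan is to obtain the global bound by iterating the local \emph{a posteriori} estimate of Corollary~\ref{th:local_apost} across the windows. The structural observation that makes this work is that, by the WST-LSPG construction (the reference-state choice of Section~\ref{s:reference_state} together with the stencil assumption $\knArg{\windowIndexMapperArg{k}}=1$), the state fed into window $k$ as its initial condition is exactly the final-time-step block of the space--time state over window $k-1$: $\approxStateICWSTArg{k}$ is the last block of $\approxStateWSTArg{k-1}$ and $\fomStateICWSTArg{k}$ is the corresponding block of $\fomStateWSTArg{k-1}$. Hence $\errorICWSTArg{k}$ is a sub-vector of $\errorWSTArg{k-1}$, and since the $\ell^2$-norm of a sub-vector never exceeds that of the full vector, $\Vmat{\errorICWSTArg{k}}_2 \le \Vmat{\errorWSTArg{k-1}}_2$ for $k\ge 1$. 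For $k=0$ the reference state is the exact shared initial condition, so $\errorICWSTArg{0}=\bz$.

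Inserting these facts into Corollary~\ref{th:local_apost} converts the local bound into the scalar first-order recursion
\begin{equation*}
\Vmat{\errorWSTArg{k}}_2 \le \frac{1}{\constantAWSTArg{k}}\Vmat{\resFullWSTDiscreteArg{k}\pa{\approxStateWSTArg{k};\approxStateICWSTArg{k}}}_2 + \frac{\constantBWSTArg{k}}{\constantAWSTArg{k}}\Vmat{\errorWSTArg{k-1}}_2,\qquad k\ge 1,
\end{equation*}
with base case $\Vmat{\errorWSTArg{0}}_2 \le \tfrac{1}{\constantAWSTArg{0}}\Vmat{\resFullWSTDiscreteArg{0}\pa{\approxStateWSTArg{0};\approxStateICWSTArg{0}}}_2$. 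The rest is purely algebraic: unroll the recursion, either by a direct induction on $k$ (assume the claimed closed form at $k-1$, substitute, and re-index the products) or equivalently by telescoping — multiply the residual term at window $i$ by the accumulated amplification factor $\prod_{j=i+1}^{k}\constantBWSTArg{j}/\constantAWSTArg{j}$ and sum over $i=0,\ldots,k$. Both routes give
\begin{equation*}
\Vmat{\errorWSTArg{k}}_2 \le \sum_{i=0}^{k}\frac{\prod_{j=i+1}^{k}\constantBWSTArg{j}}{\prod_{j=i}^{k}\constantAWSTArg{j}}\,\Vmat{\resFullWSTDiscreteArg{i}\pa{\approxStateWSTArg{i};\approxStateICWSTArg{i}}}_2,
\end{equation*}
which is exactly~\eqref{e:global_apost_error_bound} after substituting $j\mapsto k-j$ to match the paper's ``non-inclusive upper limit'' product convention.

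The main obstacle here is bookkeeping rather than analysis: one must be careful that the $\constantBWSTArg{\cdot}$-product runs from $i+1$ to $k$ while the $\constantAWSTArg{\cdot}$-product runs from $i$ to $k$, and verify that the degenerate cases behave correctly — the term $i=k$ should contribute an empty $\constantB$-product equal to $1$, and the case $k=0$ should collapse to the local bound with $\errorICWSTArg{0}=\bz$. The one step that deserves to be stated explicitly rather than waved through is the interface estimate $\Vmat{\errorICWSTArg{k}}_2\le\Vmat{\errorWSTArg{k-1}}_2$, since it is precisely this cylindrical structure of the space--time windows — no compatibility constraints, the window-$k$ initial state being literally a component of the window-$(k-1)$ solution vector — that prevents any new error from entering at the window boundary and that ultimately yields exponential growth in the number of windows rather than in the number of time steps.
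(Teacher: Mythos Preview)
Your proposal is correct and follows essentially the same route as the paper: establish the interface estimate $\Vmat{\errorICWSTArg{k}}_2\le\Vmat{\errorWSTArg{k-1}}_2$, substitute it into the local bound of Corollary~\ref{th:local_apost} to obtain a first-order recursion in $k$, and unroll. You are more explicit than the paper about why the interface estimate holds (the sub-vector argument) and about the index bookkeeping in the unrolled products, but the argument is the same.
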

\begin{proof}
First, note that $\Vmat{\errorICWSTArg{k}}_2 \le \Vmat{ \approxStateWSTArg{k-1} - \fomStateWSTArg{k-1}  }_2$, $k \in \nat{\numWindows}$.
Applying the local error bound~\eqref{eq:local_error_bound} to the last term on the right hand side of~\eqref{eq:local_error_bound} gives
\begin{equation*}
\Vmat{\errorWSTArg{k}}_2 \le
\dfrac{1}{\constantAWSTArg{k}} \Vmat{ \resFullWSTDiscreteArg{k}\pa{\approxStateWSTArg{k};\approxStateICWSTArg{k}}  }_2 +
\frac{\constantBWSTArg{k}}{\constantAWSTArg{k}} \bmat{ \dfrac{1}{\constantAWSTArg{k-1}} \Vmat{ \resFullWSTDiscreteArg{k-1}\pa{\approxStateWSTArg{k-1};\approxStateICWSTArg{k-1}}}_2 +
\dfrac{\constantBWSTArg{k-1}}{\constantAWSTArg{k-1}}  \Vmat{ \errorWSTArg{k-1} }_2
}.
\end{equation*}
%
Applying the same process recursively yields the desired result.
%
%
\end{proof}
The global error bound~\eqref{e:global_apost_error_bound} shows how the error in WST-LPSG can be bounded by a sum of weighted residuals over each window. As WST-LPSG yields a lower residual $\ell^2$ norm over a window than LSPG, WST-LSPG is likely equipped with a more favorable \textit{a posteriori} error bound than LSPG. Analogously, as ST-LSPG guarantees the lowest total space--time residual, ST-LSPG is likely equipped with a more favorable \textit{a posteriori} error bound than WST-LSPG. 

\subsection{A priori bounds}
\textit{A priori} error bounds are now derived. These bounds enable the error in the WST-LSPG solution to be bounded, \textit{a priori} by the FOM solution and demonstrate how errors are accumulated in WST-LSPG.
\begin{theorem}\label{th:local_ariori}
(Local a priori error bounds) The error between the WST-LSPG solution and the FOM solution onto the trial subspace over the $k^{th}$ window can be bounded, \textit{a priori}, by
\begin{equation*}
\Vmat{ \errorWSTArg{k}}_2 \le  \frac{1}{\constantAWSTArg{k}} \Vmat{ \resFullWSTDiscreteArg{k}\pa{\fomStateProjectWSTArg{k};\fomStateProjectICWSTArg{k}}}_2 + \frac{2 \constantBWSTArg{k}}{\constantAWSTArg{k}} \Vmat{\errorICWSTArg{k} }_2.\end{equation*}
where $\fomStateProjectWSTArg{k}$ denotes the $\ell^2$-orthogonal projection of the FOM solution onto the trial space, i.e., 
\begin{equation*}
\fomStateProjectWSTArg{k} = \underset{\stateWSTDumAArg{k}\in \trialWSTSubspaceVectorArg{k}}{\arg\min}\!\Vmat{ \fomStateWSTArg{k} - \stateWSTDumAArg{k} }_2.
\end{equation*}
Analogously, $\fomStateProjectICWSTArg{k}$ corresponds to the $\ell^2$-orthogonal projection of the FOM initial conditions for the $k^{th}$ window onto the trial subspace.
\end{theorem}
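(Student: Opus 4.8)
The plan is a Céa-type quasi-optimality argument. The right-hand side of the asserted bound is the windowed residual of the $\ell^2$-orthogonal projection $\fomStateProjectWSTArg{k}$ of the FOM trajectory onto the trial space, so the goal is to show that over the $k^{\mathrm{th}}$ window the WST-LSPG solution $\approxStateWSTArg{k}$ is no worse than this best approximation, up to the stated constants. The structural facts to exploit are: (i) $\approxStateWSTArg{k}$ and $\fomStateProjectWSTArg{k}$ both lie in the windowed trial subspace $\trialWSTSubspaceVectorArg{k}$ --- the former by construction, the latter by definition of the projection; (ii) $\approxStateWSTArg{k}$ minimizes $\Vmat{\resFullWSTDiscreteArg{k}\pa{\,\cdot\,;\approxStateICWSTArg{k}}}_2$ over $\trialWSTSubspaceVectorArg{k}$ by~\eqref{e:t-lspgReducedLargerFullST} with $\weightmatwstArg{k}=\mathbf{I}$; and (iii) $\resFullWSTDiscreteArg{k}$ is affine in its inlet-state argument and controlled through it by assumption A1 exactly as in~\eqref{e:analysis_tmp2}.

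First I would apply Theorem~\ref{th:traj_bound} to the two trajectories $\approxStateWSTArg{k}$ and $\fomStateProjectWSTArg{k}$, started from the inlet states $\approxStateICWSTArg{k}$ and $\fomStateProjectICWSTArg{k}$, which gives
\begin{equation*}
\Vmat{\approxStateWSTArg{k}-\fomStateProjectWSTArg{k}}_2 \le \frac{1}{\constantAWSTArg{k}}\Vmat{\resFullWSTDiscreteArg{k}\pa{\approxStateWSTArg{k};\approxStateICWSTArg{k}}-\resFullWSTDiscreteArg{k}\pa{\fomStateProjectWSTArg{k};\fomStateProjectICWSTArg{k}}}_2 + \frac{\constantBWSTArg{k}}{\constantAWSTArg{k}}\Vmat{\errorICWSTArg{k}}_2 .
\end{equation*}
Second I would estimate the residual difference. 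By the triangle inequality it is at most $\Vmat{\resFullWSTDiscreteArg{k}(\approxStateWSTArg{k};\approxStateICWSTArg{k})}_2+\Vmat{\resFullWSTDiscreteArg{k}(\fomStateProjectWSTArg{k};\fomStateProjectICWSTArg{k})}_2$; the optimality (ii) bounds the first summand by $\Vmat{\resFullWSTDiscreteArg{k}(\fomStateProjectWSTArg{k};\approxStateICWSTArg{k})}_2$ because $\fomStateProjectWSTArg{k}\in\trialWSTSubspaceVectorArg{k}$, and the affine-in-inlet estimate (iii) then bounds that by $\Vmat{\resFullWSTDiscreteArg{k}(\fomStateProjectWSTArg{k};\fomStateProjectICWSTArg{k})}_2+\constantBWSTArg{k}\Vmat{\errorICWSTArg{k}}_2$. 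Collecting these and dividing through by $\constantAWSTArg{k}>0$ (assumption A2) yields the bound, with the residual term carrying a constant and the second copy of $\constantBWSTArg{k}$ producing the factor $2\constantBWSTArg{k}/\constantAWSTArg{k}$ on the inlet-error term; the sharp constant of one on the residual term then follows by replacing the triangle inequality at this step with the best-approximation (Pythagorean) identity of the least-squares problem, which is exact when $\flux$ is affine and holds up to linearization in general.

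The main obstacle, and the place where genuine care is needed, is the inlet-state bookkeeping: the WST-LSPG problem is posed and minimized with the \emph{ROM} window-inlet state $\approxStateICWSTArg{k}$, whereas the a priori right-hand side must be expressed through the residual at the \emph{projected} FOM inlet state $\fomStateProjectICWSTArg{k}$, and the two are bridged only by the affine dependence of $\resFullWSTDiscreteArg{k}$ on its inlet argument together with A1. One must in particular resist using $\resFullWSTDiscreteArg{k}\pa{\fomStateProjectWSTArg{k};\cdot}=\bz$, which is false --- only $\resFullWSTDiscreteArg{k}\pa{\fomStateWSTArg{k};\fomStateICWSTArg{k}}=\bz$, the projection $\fomStateProjectWSTArg{k}$ being a trial-space element that does not annihilate the residual. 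A second, milder point is that in the nonlinear setting the WST-LSPG residual is $\ell^2$-orthogonal only to the linearization of the trial manifold at $\approxStateWSTArg{k}$, so the ``factor one'' on the residual term should be read as a quasi-optimality statement; with that caveat the argument proceeds verbatim, and the global a priori bound follows by the same recursive chaining over windows used for the global a posteriori bound.
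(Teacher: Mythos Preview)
Your argument bounds the wrong quantity on the left-hand side. Applying Theorem~\ref{th:traj_bound} to the pair $(\approxStateWSTArg{k},\fomStateProjectWSTArg{k})$ with inlets $(\approxStateICWSTArg{k},\fomStateProjectICWSTArg{k})$ gives a bound on $\Vmat{\approxStateWSTArg{k}-\fomStateProjectWSTArg{k}}_2$, not on $\Vmat{\errorWSTArg{k}}_2=\Vmat{\approxStateWSTArg{k}-\fomStateWSTArg{k}}_2$, which is what the theorem asserts. Since $\approxStateWSTArg{k}\in\trialWSTSubspaceVectorArg{k}$ and $\fomStateProjectWSTArg{k}$ is the $\ell^2$-orthogonal projection of $\fomStateWSTArg{k}$ onto $\trialWSTSubspaceVectorArg{k}$, Pythagoras gives $\Vmat{\errorWSTArg{k}}_2^2=\Vmat{\approxStateWSTArg{k}-\fomStateProjectWSTArg{k}}_2^2+\Vmat{\fomStateWSTArg{k}-\fomStateProjectWSTArg{k}}_2^2$, so the quantity you bound is the \emph{smaller} one, and your estimate does not transfer to $\Vmat{\errorWSTArg{k}}_2$ without an additional projection-error term that is absent from the stated bound.

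The remedy is to start from the correct pair $(\approxStateWSTArg{k},\fomStateWSTArg{k})$ with inlets $(\approxStateICWSTArg{k},\fomStateICWSTArg{k})$, which is precisely Corollary~\ref{th:local_apost} because $\resFullWSTDiscreteArg{k}(\fomStateWSTArg{k};\fomStateICWSTArg{k})=\bz$. This immediately yields $\Vmat{\errorWSTArg{k}}_2\le\frac{1}{\constantAWSTArg{k}}\Vmat{\resFullWSTDiscreteArg{k}(\approxStateWSTArg{k};\approxStateICWSTArg{k})}_2+\frac{\constantBWSTArg{k}}{\constantAWSTArg{k}}\Vmat{\errorICWSTArg{k}}_2$, with a \emph{single} residual term. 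Now your steps (ii) and (iii) apply cleanly: optimality replaces $\approxStateWSTArg{k}$ by $\fomStateProjectWSTArg{k}$ at inlet $\approxStateICWSTArg{k}$, and the affine-in-inlet estimate swaps $\approxStateICWSTArg{k}$ for $\fomStateProjectICWSTArg{k}$ at the cost of $\constantBWSTArg{k}\Vmat{\approxStateICWSTArg{k}-\fomStateProjectICWSTArg{k}}_2\le\constantBWSTArg{k}\Vmat{\errorICWSTArg{k}}_2$. This is exactly the paper's route, delivers the constant $1$ on the residual term without any Pythagorean-identity handwave, and avoids the spurious factor $2$ on the residual that your triangle-inequality splitting of a residual \emph{difference} produced.
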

\begin{proof}
Using the bound~\eqref{e:local_apost_bound_corr} and leveraging the residual minimization property of WST-LSPG
$$\Vmat{ \errorWSTArg{k} }_2 \le  \frac{1}{\constantAWSTArg{k}} \Vmat{ \resFullWSTDiscreteArg{k}(\fomStateProjectWSTArg{k};\approxStateICWSTArg{k}) }_2 + \frac{\constantBWSTArg{k}}{\constantAWSTArg{k}} \Vmat{\errorICWSTArg{k} }_2.$$
Expanding the definition of the space--time residual 
$$\Vmat{ \errorWSTArg{k} }_2 \le  \frac{1}{\constantAWSTArg{k}} \Vmat{  \AWSTArg{k} \fomStateProjectWSTArg{k} - \Delta  t \BWSTArg{k}\fluxWSTArg{k}( \fomStateProjectWSTArg{k};\params) +  \AICWSTArg{k} \approxStateICWSTArg{k} - \Delta t \BICWSTArg{k}\flux( \approxStateICWSTArg{k},t^{\windowIndexMapperArg{k}-1};\params)  }_2 + \frac{\constantBWSTArg{k}}{\constantAWSTArg{k}} \Vmat{\errorICWSTArg{k}}_2.$$
Adding and subtracting $\AICWSTArg{k} \fomStateProjectICWSTArg{k} - \Delta t \BICWSTArg{k}\flux( \fomStateProjectICWSTArg{k},t^{\windowIndexMapperArg{k}-1};\params)$, and gathering terms, $$\Vmat{ \errorWSTArg{k}}_2 \le  \!\frac{1}{\constantAWSTArg{k}} \!\Vmat{ \resFullWSTDiscreteArg{k}\pa{\fomStateProjectWSTArg{k};\fomStateProjectICWSTArg{k}} \!+\! \AICWSTArg{k}\pa{ \approxStateICWSTArg{k} \!-\! \fomStateProjectICWSTArg{k}} \!-\! \Delta t \BICWSTArg{k}\!\bmat{\flux\pa{\approxStateICWSTArg{k},t^{\windowIndexMapperArg{k}-1};\params} \!-\! \flux\pa{ \fomStateProjectICWSTArg{k},t^{\windowIndexMapperArg{k}-1};\params} }  }_2 \!+ \frac{\constantBWSTArg{k}}{\constantAWSTArg{k}} \Vmat{\errorICWSTArg{k} }_2.$$
Using the triangle inequality gives%
\begin{multline*}
\Vmat{ \errorWSTArg{k}}_2 \le  \frac{1}{\constantAWSTArg{k}} \Vmat{ \resFullWSTDiscreteArg{k}\pa{\fomStateProjectWSTArg{k};\fomStateProjectICWSTArg{k}}}_2 + \frac{1}{\constantAWSTArg{k}} \Vmat{ \AICWSTArg{k}\pa{ \approxStateICWSTArg{k} - \fomStateProjectICWSTArg{k}} }_2 + \\ \frac{1}{\constantAWSTArg{k}} \Vmat{ \Delta t \BICWSTArg{k}\bmat{\flux\pa{ \approxStateICWSTArg{k},t^{\windowIndexMapperArg{k}-1};\params} - \flux\pa{ \fomStateProjectICWSTArg{k},t^{\windowIndexMapperArg{k}-1};\params} }  }_2 + \frac{\constantBWSTArg{k}}{\constantAWSTArg{k}} \Vmat{\errorICWSTArg{k} }_2.
\end{multline*}%
Employing assumption A1 (Lipshitz continuity of the velocity),%
$$\Vmat{ \errorWSTArg{k}}_2 \le  \frac{1}{\constantAWSTArg{k}} \Vmat{ \resFullWSTDiscreteArg{k}\pa{\fomStateProjectWSTArg{k};\fomStateProjectICWSTArg{k}}}_2 + \frac{1}{\constantAWSTArg{k}} \Vmat{  \AICWSTArg{k} \pa{ \approxStateICWSTArg{k} - \fomStateProjectICWSTArg{k}} }_2 + \frac{\lipschitzVelocity }{\constantAWSTArg{k}} \Vmat{ \Delta t \BICWSTArg{k} }_2  \Vmat{ \approxStateICWSTArg{k} - \fomStateProjectICWSTArg{k} }_2 +  \frac{\constantBWSTArg{k}}{\constantAWSTArg{k}} \Vmat{\errorICWSTArg{k} }_2.$$%
Collecting terms and noting that $\Vmat{\approxStateICWSTArg{k} - \fomStateProjectICWSTArg{k} }_2 \le \Vmat{\approxStateICWSTArg{k} - \fomStateICWSTArg{k} }_2$ gives the desired result.
\end{proof}

\begin{corollary}
(Global \textit{a priori} error bounds) The error in the WST-LSPG solution over the $k^{th}$ window is bounded, \textit{ a priori}, by
\vspace{-2mm}
\begin{equation}\label{e:global_apriori_error_bound}
\Vmat{  \errorWSTArg{k} }_2 \le \mathlarger{\sum}_{i=0}^{k} \pa{\,\prod\limits_{j=0}^{k-i}2 \constantBWSTArg{k-j}\Bigg/\prod\limits_{j=0}^{k - i+1}\constantAWSTArg{k-j}} \Vmat{ \resFullWSTDiscreteArg{i}\pa{\fomStateProjectWSTArg{i};\fomStateProjectICWSTArg{i}} }_2. 
\end{equation}
\end{corollary}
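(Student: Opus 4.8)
The plan is to mirror the recursive argument already used for the global \emph{a posteriori} bound in Corollary~\ref{th:global_apost}, now starting from the local \emph{a priori} bound of Theorem~\ref{th:local_ariori} instead of the local \emph{a posteriori} bound. The key structural observation is that the state fed into the $k^{\mathrm{th}}$ window---both for the FOM and for the WST-LSPG ROM---is precisely the state produced at the final time step of the $(k-1)^{\mathrm{st}}$ window. Hence $\errorICWSTArg{k}$ coincides with the last $\nspacedof$ entries of the window error vector $\errorWSTArg{k-1}$, which immediately gives $\Vmat{\errorICWSTArg{k}}_2 \le \Vmat{\errorWSTArg{k-1}}_2$ for every $k \in \nat{\numWindows}$. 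This is the only new ingredient beyond what Theorem~\ref{th:local_ariori} already provides.

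Substituting this inequality into the local \emph{a priori} bound yields, for each $k$, the scalar linear recurrence
\begin{equation*}
\Vmat{\errorWSTArg{k}}_2 \le \frac{1}{\constantAWSTArg{k}}\Vmat{\resFullWSTDiscreteArg{k}\pa{\fomStateProjectWSTArg{k};\fomStateProjectICWSTArg{k}}}_2 + \frac{2\constantBWSTArg{k}}{\constantAWSTArg{k}}\Vmat{\errorWSTArg{k-1}}_2,
\end{equation*}
which has multiplicative factor $2\constantBWSTArg{k}/\constantAWSTArg{k}$ at step $k$ and forcing term $\Vmat{\resFullWSTDiscreteArg{k}\pa{\fomStateProjectWSTArg{k};\fomStateProjectICWSTArg{k}}}_2/\constantAWSTArg{k}$. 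Unrolling the recurrence down to the first window---exactly as in the proof of Corollary~\ref{th:global_apost}, but with every $\constantBWSTArg{\cdot}$ replaced by $2\constantBWSTArg{\cdot}$---and collecting the accumulated product of multiplicative factors that multiplies each residual contribution produces the double sum asserted in~\eqref{e:global_apriori_error_bound}. The base case is immediate: in the first window the reference state is taken to be the exact FOM initial condition, so that initial condition lies in the trial subspace and $\errorICWSTArg{0} = \bz$; thus the recursion terminates cleanly and no leftover initial-error term survives.

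The only point requiring care is the index bookkeeping in the nested products $\prod_{j=0}^{k-i} 2\constantBWSTArg{k-j}$ and $\prod_{j=0}^{k-i+1}\constantAWSTArg{k-j}$ under the non-inclusive upper-limit convention of the accompanying footnote: one must check that the factors $\constantBWSTArg{\cdot}$ and $\constantAWSTArg{\cdot}$ picked up at each level of the unrolling line up with these expressions. Since this telescoping is identical to the \emph{a posteriori} case, I would simply observe that the same recursive substitution carries through verbatim, with the factor of $2$ propagating through each level; the exponential-in-the-number-of-windows growth is then read off directly from the product $\prod_j 2\constantBWSTArg{k-j}/\constantAWSTArg{k-j}$. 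I do not anticipate any substantive obstacle beyond this verification.
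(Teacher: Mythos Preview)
Your proposal is correct and follows essentially the same approach as the paper: the paper's proof consists of a single sentence stating that the argument is identical to that of Corollary~\ref{th:global_apost}, i.e., recursively unroll the local bound of Theorem~\ref{th:local_ariori} using $\Vmat{\errorICWSTArg{k}}_2 \le \Vmat{\errorWSTArg{k-1}}_2$, exactly as you describe.
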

\begin{proof}
The proof follows that that provided for Theorem~\ref{th:global_apost}.
\end{proof}

\begin{corollary}
(Simplified global \textit{a priori} error bounds) In the case that the window sizes are equivalent, and the time schemes are equivalent on each window such that $\AWSTArg{k} = \AWSTArg{}$, $\BWSTArg{k} = \BWSTArg{}$, $\AICWSTArg{k} = \AICWSTArg{}$, and $\BICWSTArg{k} = \BICWSTArg{}$, $k \in \nat{\numWindows}$, the error in the WST-LSPG solution over the $k^{th}$ window is bounded, \textit{ a priori}, by
\begin{equation}\label{e:global_apriori_error_bound_simple}
\Vmat{  \errorWSTArg{k} }_2 \le  \frac{k}{\constantAWSTArg{}}   \pa{ 2 \frac{  \sigmaMaxArg{\AICWSTArg{}}} {  \sigmaMinArg{\AWSTArg{}} } }^{k} \exp \pa{ k \frac{  \Delta t \lipschitzVelocity \pa{ \frac{ \sigmaMaxArgSmall{\BICWSTArg{}} }{ \sigmaMaxArgSmall{ \AICWSTArg{}} } + \frac{  \sigmaMaxArgSmall{\BWSTArg{}} }{ \sigmaMinArgSmall{\AWSTArg{}}  } } }{1 - \Delta t \lipschitzVelocity  \frac{  \sigmaMaxArgSmall{\BWSTArg{}} }{ \sigmaMinArgSmall{\AWSTArg{}}  } } }\max_{i \in \nat{k+1} } \Vmat{ \resFullWSTDiscreteArg{i}\pa{\fomStateProjectWSTArg{i};\fomStateProjectICWSTArg{i}} }_2. 
\end{equation}
where $\constantAWSTArg{k}  = \constantAWSTArg{}$, $k \in  \nat{\numWindows}$.
\end{corollary}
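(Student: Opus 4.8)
The plan is to specialize the global \textit{a priori} bound~\eqref{e:global_apriori_error_bound} to the case of identical window lengths and time schemes, where $\constantAWSTArg{k}=\constantAWSTArg{}$ and $\constantBWSTArg{k}=\constantBWSTArg{}$ for all $k$, and then estimate the resulting geometric-type sum. First I would observe that, under these assumptions, each of the non-inclusive products in~\eqref{e:global_apriori_error_bound} collapses: $\prod_{j=0}^{k-i}2\constantBWSTArg{k-j}=\pa{2\constantBWSTArg{}}^{k-i}$ (a product of $k-i$ equal factors) and $\prod_{j=0}^{k-i+1}\constantAWSTArg{k-j}=\pa{\constantAWSTArg{}}^{k-i+1}$, so that the weight attached to the $i$-th residual becomes $\pa{2\constantBWSTArg{}}^{k-i}/\pa{\constantAWSTArg{}}^{k-i+1}=\tfrac{1}{\constantAWSTArg{}}\rho^{k-i}$ with $\rho:=2\constantBWSTArg{}/\constantAWSTArg{}$. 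This yields
\[
\Vmat{\errorWSTArg{k}}_2\le\frac{1}{\constantAWSTArg{}}\sum_{i=0}^{k}\rho^{k-i}\,\Vmat{\resFullWSTDiscreteArg{i}\pa{\fomStateProjectWSTArg{i};\fomStateProjectICWSTArg{i}}}_2\le\frac{1}{\constantAWSTArg{}}\Bigl(\sum_{i=0}^{k}\rho^{k-i}\Bigr)\max_{i\in\nat{k+1}}\Vmat{\resFullWSTDiscreteArg{i}\pa{\fomStateProjectWSTArg{i};\fomStateProjectICWSTArg{i}}}_2.
\]

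Next I would bound the geometric sum. Since $\rho\ge1$ for the linear multistep matrices of interest (indeed $\constantBWSTArg{}\ge\sigmaMaxArg{\AICWSTArg{}}$ while $\constantAWSTArg{}\le\sigmaMinArg{\AWSTArg{}}\le 2\sigmaMaxArg{\AICWSTArg{}}$), the largest of the $k+1$ summands is $\rho^{k}$, so $\sum_{i=0}^{k}\rho^{k-i}\le(k+1)\rho^{k}$; this produces the prefactor $k/\constantAWSTArg{}$ appearing in~\eqref{e:global_apriori_error_bound_simple} (up to the harmless replacement of $k+1$ by $k$, or, equivalently, after absorbing the residual term of the exact first window into the remaining ones). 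This counting step is the part requiring the most care, since it is where both the linear-in-$k$ prefactor and the hypothesis $\rho\ge1$ genuinely enter; everything else is a direct substitution plus two scalar inequalities.

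Finally I would rewrite $\rho^{k}$ in the stated form. Substituting $\constantAWSTArg{}=\sigmaMinArg{\AWSTArg{}}-\Delta t\lipschitzVelocity\,\sigmaMaxArg{\BWSTArg{}}$ and $\constantBWSTArg{}=\sigmaMaxArg{\AICWSTArg{}}+\Delta t\lipschitzVelocity\,\sigmaMaxArg{\BICWSTArg{}}$, and then factoring $\sigmaMaxArg{\AICWSTArg{}}$ out of the numerator and $\sigmaMinArg{\AWSTArg{}}$ out of the denominator, gives $\rho=\tfrac{2\sigmaMaxArg{\AICWSTArg{}}}{\sigmaMinArg{\AWSTArg{}}}\cdot\tfrac{1+a}{1-b}$ with $a:=\Delta t\lipschitzVelocity\,\sigmaMaxArgSmall{\BICWSTArg{}}/\sigmaMaxArgSmall{\AICWSTArg{}}\ge0$ and $b:=\Delta t\lipschitzVelocity\,\sigmaMaxArgSmall{\BWSTArg{}}/\sigmaMinArgSmall{\AWSTArg{}}$, where $b<1$ is exactly assumption A2. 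Writing $\rho^{k}=\pa{2\sigmaMaxArg{\AICWSTArg{}}/\sigmaMinArg{\AWSTArg{}}}^{k}\exp\!\bigl(k[\ln(1+a)-\ln(1-b)]\bigr)$ and applying the elementary inequalities $\ln(1+a)\le a$, $-\ln(1-b)\le b/(1-b)$, and $a\le a/(1-b)$ (the last two using $0\le b<1$) yields $\ln(1+a)-\ln(1-b)\le(a+b)/(1-b)$, which is precisely the exponent in~\eqref{e:global_apriori_error_bound_simple}. Combining the three displays completes the proof.
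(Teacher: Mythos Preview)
Your proof is correct and follows essentially the same route as the paper: specialize~\eqref{e:global_apriori_error_bound} to constant $\constantAWSTArg{},\constantBWSTArg{}$, bound the resulting geometric sum using $\rho=2\constantBWSTArg{}/\constantAWSTArg{}\ge 1$ (via $2\sigmaMaxArg{\AICWSTArg{}}\ge\sigmaMinArg{\AWSTArg{}}$), and rewrite $\rho^{k}$ in the stated exponential form. The only stylistic difference is in that last rewriting: the paper uses the exact algebraic identity $\tfrac{1+a}{1-b}=1+\tfrac{a+b}{1-b}$ and then $(1+x)^{n}\le e^{nx}$, whereas you pass through logarithms and the three elementary bounds $\ln(1+a)\le a$, $-\ln(1-b)\le b/(1-b)$, $a\le a/(1-b)$; both yield the same inequality.
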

\begin{proof}
Employing the identity~\cite{carlberg2017galerkin}
\vspace{-3mm}
$$\pa{ \frac{ \sigmaMaxArg{ \AICWSTArg{n}} +  \Delta t \lipschitzVelocity \sigmaMaxArg{\BICWSTArg{n}} } { \sigmaMinArg{\AWSTArg{n}} - \sigmaMaxArg{\BWSTArg{n}} \Delta t \lipschitzVelocity } }^n = \pa{ \frac{  \sigmaMaxArg{\AICWSTArg{n}}} {  \sigmaMinArg{\AWSTArg{n}} } }^n \pa{ 1 + \frac{  \Delta t \lipschitzVelocity \pa{ \frac{ \sigmaMaxArgSmall{\BICWSTArg{n}} }{ \sigmaMaxArgSmall{ \AICWSTArg{n}} } + \frac{  \sigmaMaxArgSmall{\BWSTArg{n}} }{ \sigmaMinArgSmall{\AWSTArg{n}}  } } }{1 - \Delta t \lipschitzVelocity  \frac{  \sigmaMaxArgSmall{\BWSTArg{n}} }{ \sigmaMinArgSmall{\AWSTArg{n}}  } } }^n,$$
along with $(1+x)^n \le \exp(nx)$, the bound~\eqref{e:global_apriori_error_bound} becomes\vspace{-2mm}
\begin{equation}
\Vmat{  \errorWSTArg{k} }_2 \le \mathlarger{\sum}_{i=0}^{k}  \frac{1}{ \constantAWSTArg{} } \pa{ 2 \frac{  \sigmaMaxArg{\AICWSTArg{}}} {  \sigmaMinArg{\AWSTArg{}} } }^{k - i} \exp \pa{(k - i) \frac{  \Delta t \lipschitzVelocity \pa{ \frac{ \sigmaMaxArgSmall{\BICWSTArg{}} }{ \sigmaMaxArgSmall{ \AICWSTArg{}} } + \frac{  \sigmaMaxArgSmall{\BWSTArg{}} }{ \sigmaMinArgSmall{\AWSTArg{}}  } } }{1 - \Delta t \lipschitzVelocity  \frac{  \sigmaMaxArgSmall{\BWSTArg{}} }{ \sigmaMinArgSmall{\AWSTArg{}}  } } } \Vmat{ \resFullWSTDiscreteArg{i}\pa{\fomStateProjectWSTArg{i};\fomStateProjectICWSTArg{i}} }_2. 
\end{equation}
Noting that $2 \sigmaMaxArg{\AICWSTArg{}} \ge   \sigmaMinArg{\AWSTArg{}} $, and employing assumption A2, the desired result is obtained by using\vspace{-2mm}
$$\Vmat{ \resFullWSTDiscreteArg{i}\pa{\fomStateProjectWSTArg{i};\fomStateProjectICWSTArg{i}} }_2 \le \max_{j \in \nat{k+1} } \Vmat{ \resFullWSTDiscreteArg{j}\pa{\fomStateProjectWSTArg{j};\fomStateProjectICWSTArg{j}} }_2,\quad \pa{ 2 \frac{  \sigmaMaxArg{\AICWSTArg{}}} {  \sigmaMinArg{\AWSTArg{}} } }^{k - i}  \le  \pa{ 2 \frac{  \sigmaMaxArg{\AICWSTArg{}}} {  \sigmaMinArg{\AWSTArg{}} } }^{k},$$ and, for all $x > 0$, $\exp((k-i)x) \le \exp(kx)$, $i \in \nat{k}$.
\end{proof}
Due to the repeated use of inequalities, the bound~\eqref{e:global_apriori_error_bound_simple} is less sharp than the bound~\eqref{e:global_apriori_error_bound}. However, the bound~\eqref{e:global_apriori_error_bound_simple} highlights that WST-LSPG is equipped with a global \textit{a priori} error bound that grows exponentially in the number of windows. In contrast, LSPG and Galerkin are equipped with a priori error bounds that grow exponentially in the number of time steps. In the case where only one window is employed (i.e., ST-LSPG), there is no exponential growth in the error.

\section{Numerical experiments}\label{sec: experiments}
This section analyzes the performance of the WST-LSPG/WST-GNAT method via numerical experiments. 
In this work, only predictive model reduction simulations are performed due to the way the space--time bases are created. The following parameters will be examined: dimension of the space--time trial subspace, time window length ($\ell_w$) or number of time windows ($\numWindows$), sub-window length ($\ell_s$) or number of sub-windows ($\numSubWindows$), and hyper-reduction parameters such as the number of spatial and temporal nodes in the sample mesh. For this work, only the case where $\numSubWindows=1$ will be considered when constructing the space--time residual basis for GNAT. This numerical experiment will focus on the following methods:
\begin{itemize}
    \item Full-order model $(\bm{\mathrm{FOM}})$: satisfies the governing equations (O$\Delta$E) set by Eqn.~\ref{e:LMMresidual}, which is characterized by a set of design parameters.
    \item Space--time least-squares Petrov--Galerkin ROM ({\bf{ST-LSPG}}): Unweighted space--time model reduction technique that solves the residual  minimization problem with $\weightmatst = \mathbf{I}$.
    \item Space--time least-squares Petrov--Galerkin GNAT ({\bf{ST-GNAT}}): Weighted space--time model reduction technique that solves the residual minimization problem by setting the space--time weighted matrix to $\weightmatst=\pa{\bm{Z}\stbasismat_r}^+\bm{Z}$.
    \item Windowed space--time least-squares Petrov--Galerkin ROM ({\bf{WST-LSPG}}): Unweighted windowed space--time model reduction technique that solves the residual minimization problem for each time window, $k$, with $\weightmatwstArg{k} = \mathbf{I}$
    \item Windowed space--time least-squares Petrov--Galerkin GNAT ({\bf{ST-GNAT}}): Weighted windowed space--time model reduction technique that uses a space--time weighted matrix set to $\weightmatwstArg{k}=\pa{\bm{Z}^k\stbasismat_{r}^k}^+\bm{Z}^k$ for each window, $k$. 
\end{itemize}
One metric of interest used to assess the performance of the ROMs is the relative mean squared error (MSE), which is defined as
\vspace{-2mm}
\begin{equation}
    \mathrm{mean\;squared\:error}= \sqrt{\sum_{n=1}^{N_t}\Vmat{\solFullDiscreteApproxArg{n}\pa{\param}-\solFullDiscreteArg{n}\pa{\param}}_2^2}\Bigg/\sqrt{\sum_{n=1}^{N_t}\Vmat{\solFullDiscreteArg{n}\pa{\param}}_2^2}.
\end{equation}
Another measure of accuracy used to determine the performance of these methods is the residual $\ell^2$ norm of the ROM. For each problem introduced in this section, other output errors of interest will be introduced to assess the performance of the windowed space--time model reduction methods.

\subsection{Parameterized Burgers' equation}\label{s:Burgers}
The first prototypical problem to consider for WST-LSPG is the parameterized Burgers' equation, which is defined as
\begin{equation} \label{e:burgers}
\begin{split}
\frac{\partial u(x,t;\param)}{\partial t}+ \frac{1}{2} \frac{\partial (u(x,t;\param)^2)}{\partial x}&=0.02e^{\mu_2 x}\quad\forall x\in [0,100],\quad\forall t\in [0,T], \\
u(x,0)&=1.0, \quad \forall x \in [0,100],
\end{split}
\end{equation}
where the spatial domain is defined to be $x\in[0,100]$, and the temporal domain is defined to be $t\in[0,T]$. The trajectory is defined as a conserved quantity, $u:[0,100]\times [0,T]\times \mathcal{D}\rightarrow \RR{}$. The parameter design set is chosen to be $\param\equiv (\mu_1,\mu_2) \in \mathcal{D}=[2, 4.1]\times [0.013, 0.02]$, where $\Delta \mu_1=0.105$ for $N_{\mu_1}=20$ indices and  $\Delta \mu_2=0.0035$ for $N_{\mu_2}=5$ indices. Overall, the number of training data sets is $n_{\mathrm{train}}=100$. The test parameters used for the predictive model reduction model are set to $\mu_{\mathrm{test},1}=4.0714$ and $\mu_{\mathrm{test},2}=0.0185$. For the creation of the space--time residual bases, the parameter design set is chosen to be the same as the parameter design set for the states; however, the number of indices for $\mu_{r,1}$ changes to $N_{\mu_{r,1}}=10$, resulting in $\Delta \mu_{r,1}=0.21$.

The parameterized Burgers' equation is implemented in the \texttt{pymortestbed} python code, which uses the finite volume method to solve the governing equation and the \texttt{NumPy} library for numerical linear algebra~\cite{zahr2015progressive}. Godunov's method is used to spatially discretize the spatial domain into $N_s=200$ control volumes. This leads to a spatial step of $\Delta x = 0.5$. For temporal integration, the linear multistep method also known as the backward differentiation formulae (BDF) is used. For this problem, first order BDF1 (also known as backward Euler) is chosen as the time discretization scheme for the FOM and all model reduction simulations. The final time is set to $T=25.6$ with $N_t=256$ time steps, such that the time step is $\Delta t = 0.1$. For each ROM and corresponding FOM case, the online simulations were executed five times each to find the average relative wall times. All timings for the Burgers' equation are obtained by performing serial calculations on an Intel(R) Xeon(R) CPU E5-2680 0 @ 2.70GHz 128 GB RAM.

\subsubsection{Model Predictions}
This section presents and compares the performances of the WST-LSPG method over varying window lengths $\ell_w$ and varying sub-window lengths $\ell_s$. The results for this comparison additionally include results for the ST-LSPG method, which corresponds to when the window lengths and the sub-window lengths are both equal to the full time simulation length, $\ell_w=\ell_s=T$. 

Figure~\ref{f:window1} shows the MSE, integrated mean squared error (IMSE), and the residual $\ell^2$ norm as a function of the window length, $\ell_w$ and the sub-window length, $\ell_s$. The IMSE is found by calculating the relative difference between the definite integral of the ROM and the FOM over the entire time domain. The goal of this study is to gain an understanding on how introducing windows and sub-windows affects the performances of the resulting ROM as measured by the different error metrics. Note, that the solution for $\ell_s=256\Delta t$ in these figures refers to the the case where the WST-LSPG method is equivalent to the ST-LSPG method.
\begin{figure}[t!]
\subfloat[mean squared error: $e_s = 0.99, e_t = 0.99$][\centering \hspace{1.5mm}mean squared error:\par\hspace{5.5mm}$e_s = 0.99, e_t = 0.99$\label{f:burgersMSE0}]{\includegraphics[height=0.264\textwidth]{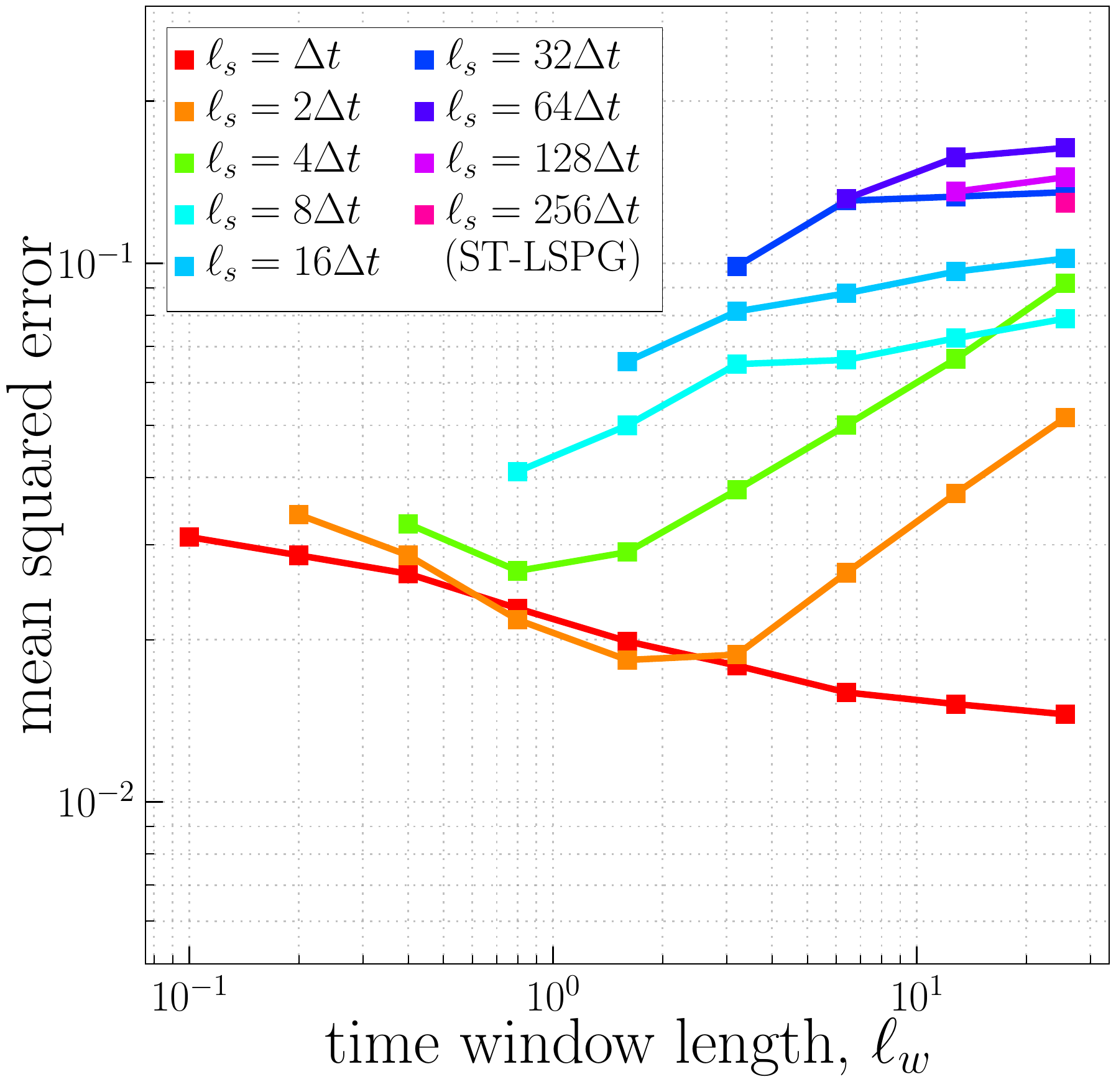}}\hspace{1mm}\nolinebreak
\subfloat[mean squared error: $e_s = 0.99, e_t = 0.999$][\centering \hspace{1.5mm}mean squared error:\par\hspace{5.5mm}$e_s = 0.99, e_t = 0.999$\label{f:burgersMSE2}]{\includegraphics[height=0.264\textwidth]{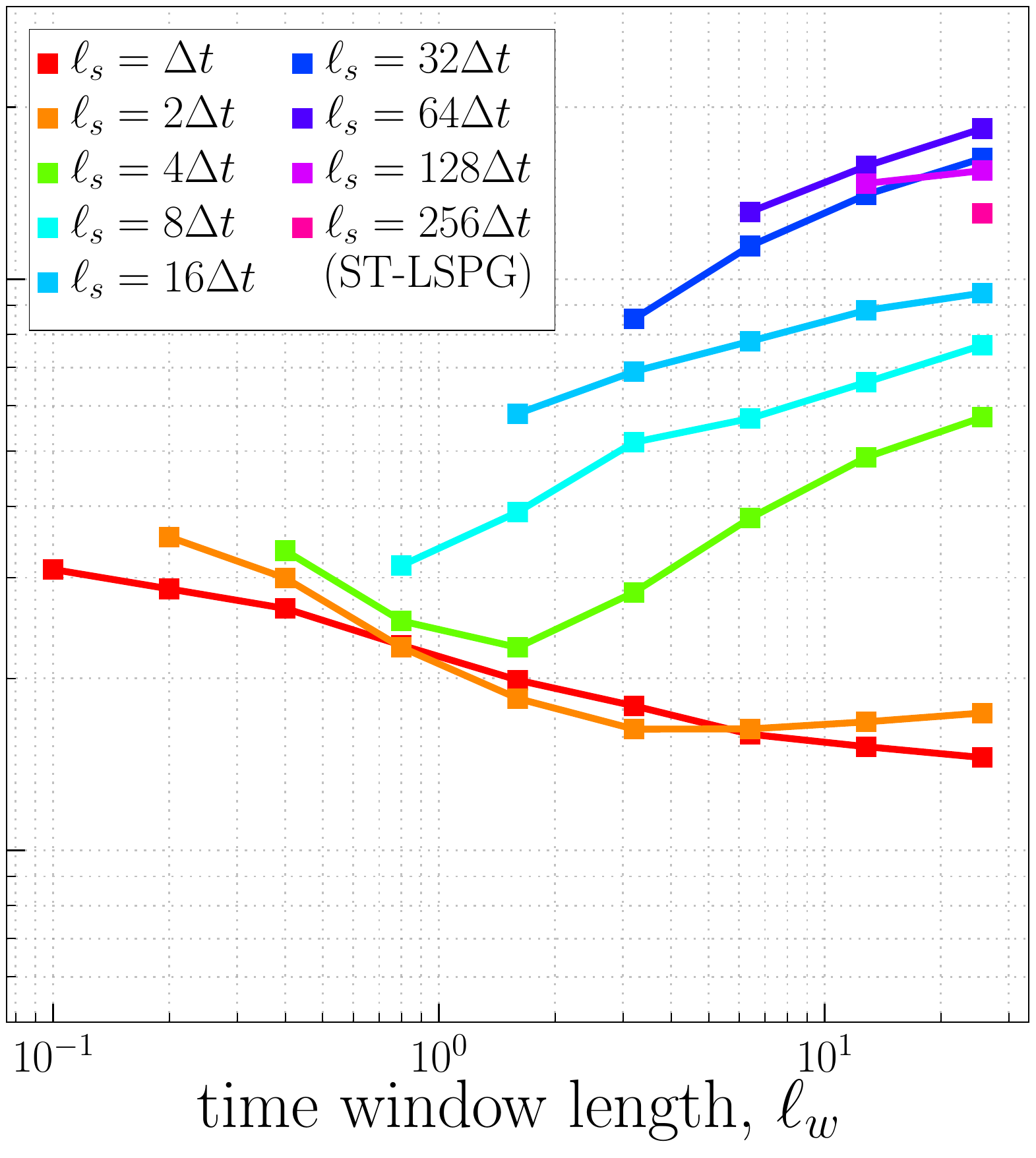}}\hspace{1mm}\nolinebreak
\subfloat[mean squared error: $e_s = 0.999, e_t = 0.99$][\centering \hspace{1.5mm}mean squared error:\par\hspace{5.5mm}$e_s = 0.999, e_t = 0.99$\label{f:burgersMSE4}]{\includegraphics[height=0.264\textwidth]{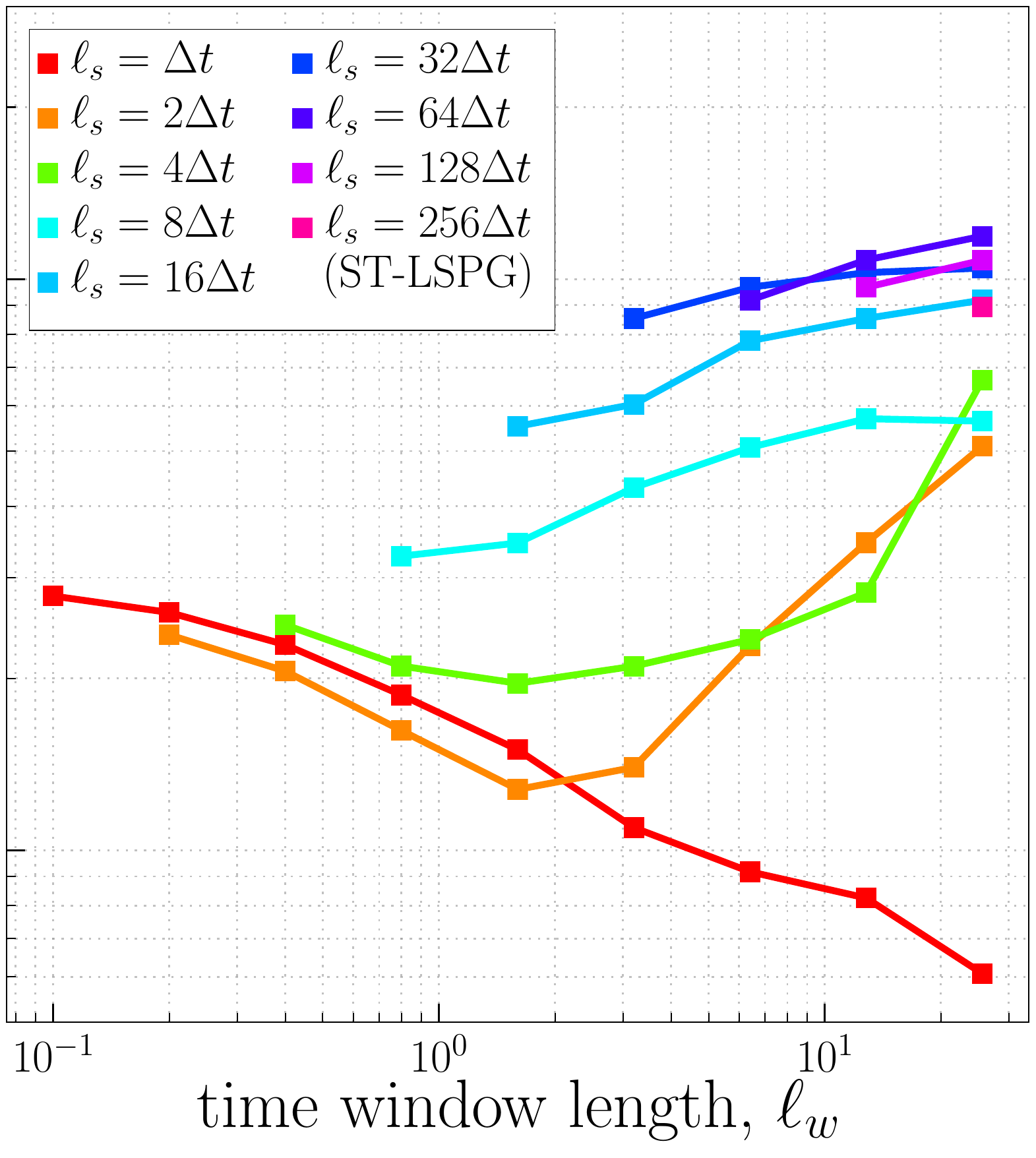}}\hspace{1mm}\nolinebreak
\subfloat[mean squared error: $e_s = 0.999, e_t = 0.999$][\centering \hspace{1.5mm}mean squared error:\par\hspace{5.5mm}$e_s = 0.999, e_t = 0.999$\label{f:burgersMSE6}]{\includegraphics[height=0.264\textwidth]{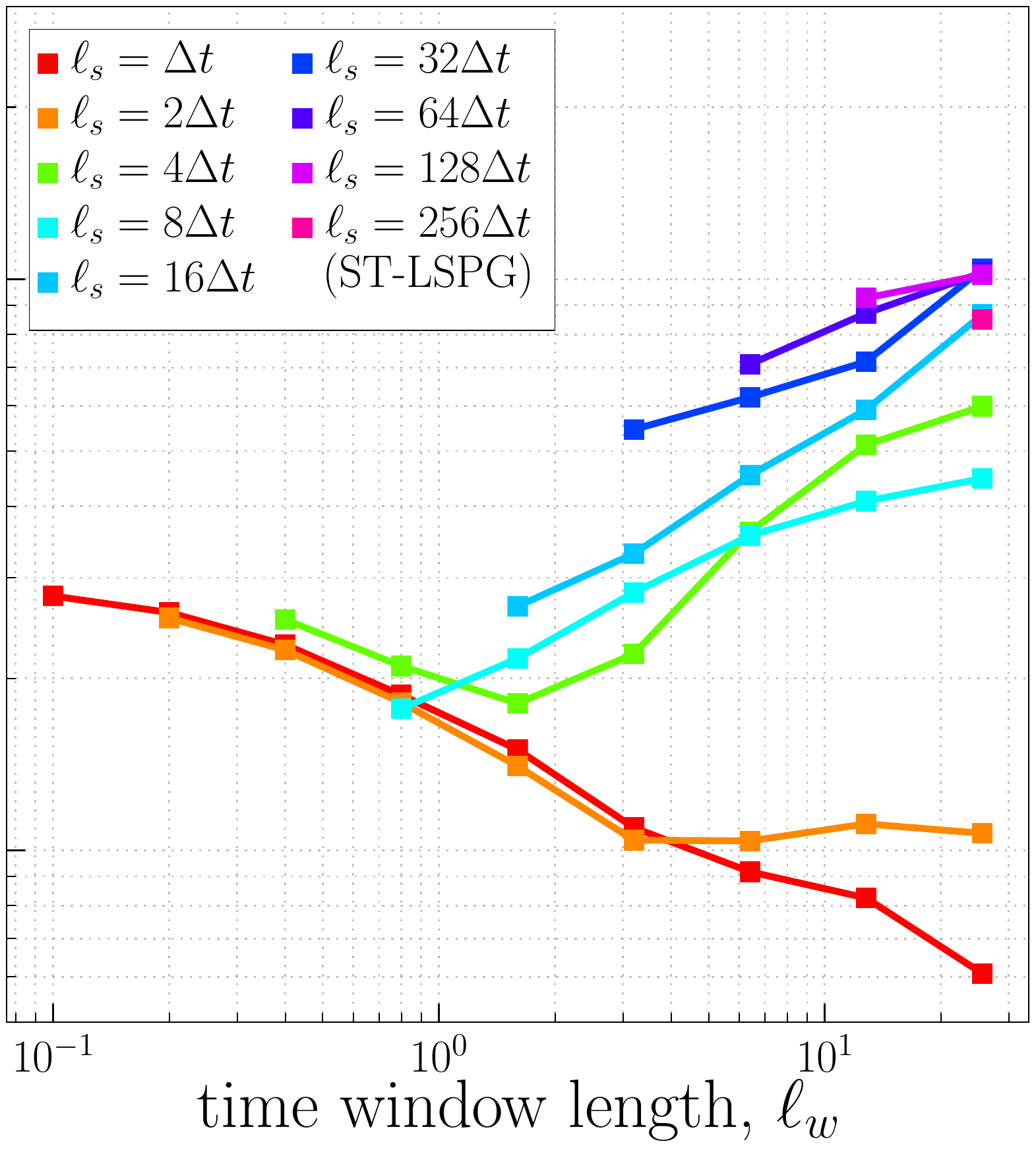}}\vspace{-2mm}\\
\subfloat[residual $\ell^2$ norm: $e_s = 0.99, e_t = 0.99$][\centering \hspace{1.5mm}residual $\ell^2$ norm:\par\hspace{5.5mm}$e_s = 0.99, e_t = 0.99$\label{f:burgersResidual0}]{\includegraphics[height=0.264\textwidth]{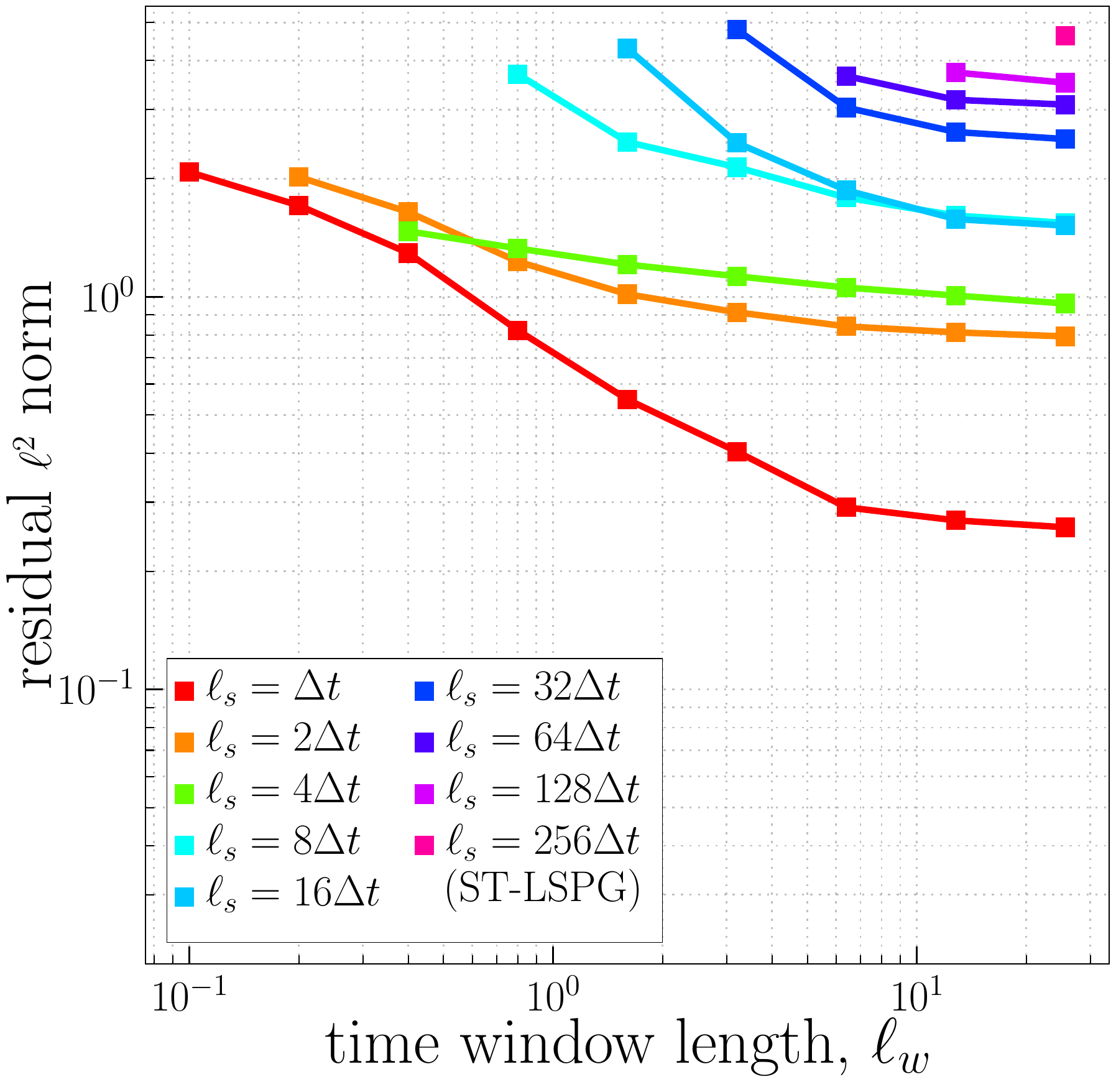}}\hspace{1mm}\nolinebreak
\subfloat[residual $\ell^2$ norm: $e_s = 0.99, e_t = 0.999$][\centering \hspace{1.5mm}residual $\ell^2$ norm:\par \hspace{5.5mm}$e_s = 0.99, e_t = 0.999$\label{f:burgersResidual2}]{\includegraphics[height=0.264\textwidth]{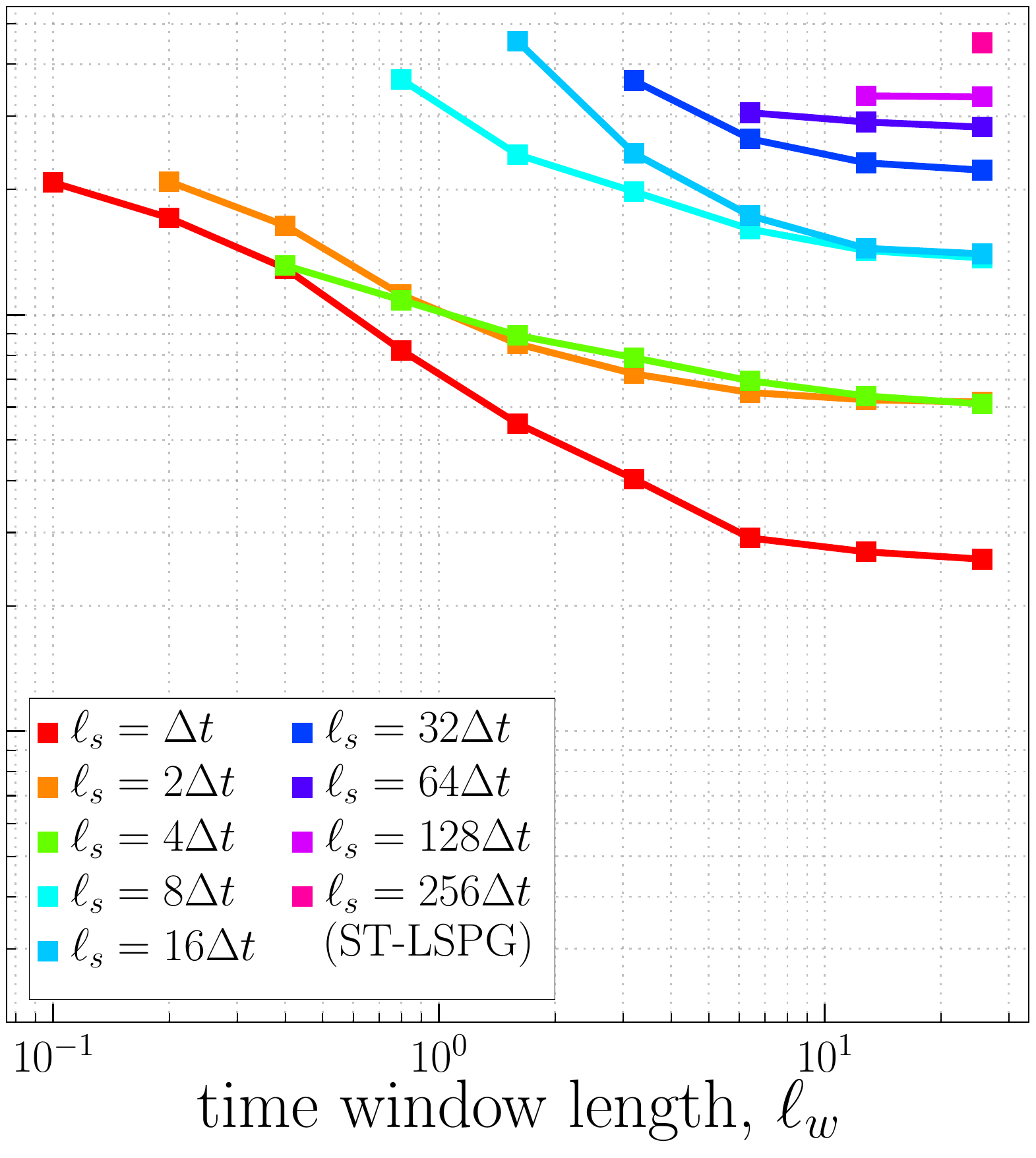}}\hspace{1mm}\nolinebreak
\subfloat[residual $\ell^2$ norm: $e_s = 0.999, e_t = 0.99$][\centering \hspace{1.5mm}residual $\ell^2$ norm:\par 
\hspace{5.5mm}$e_s = 0.999, e_t = 0.99$\label{f:burgersResidual4}]{\includegraphics[height=0.264\textwidth]{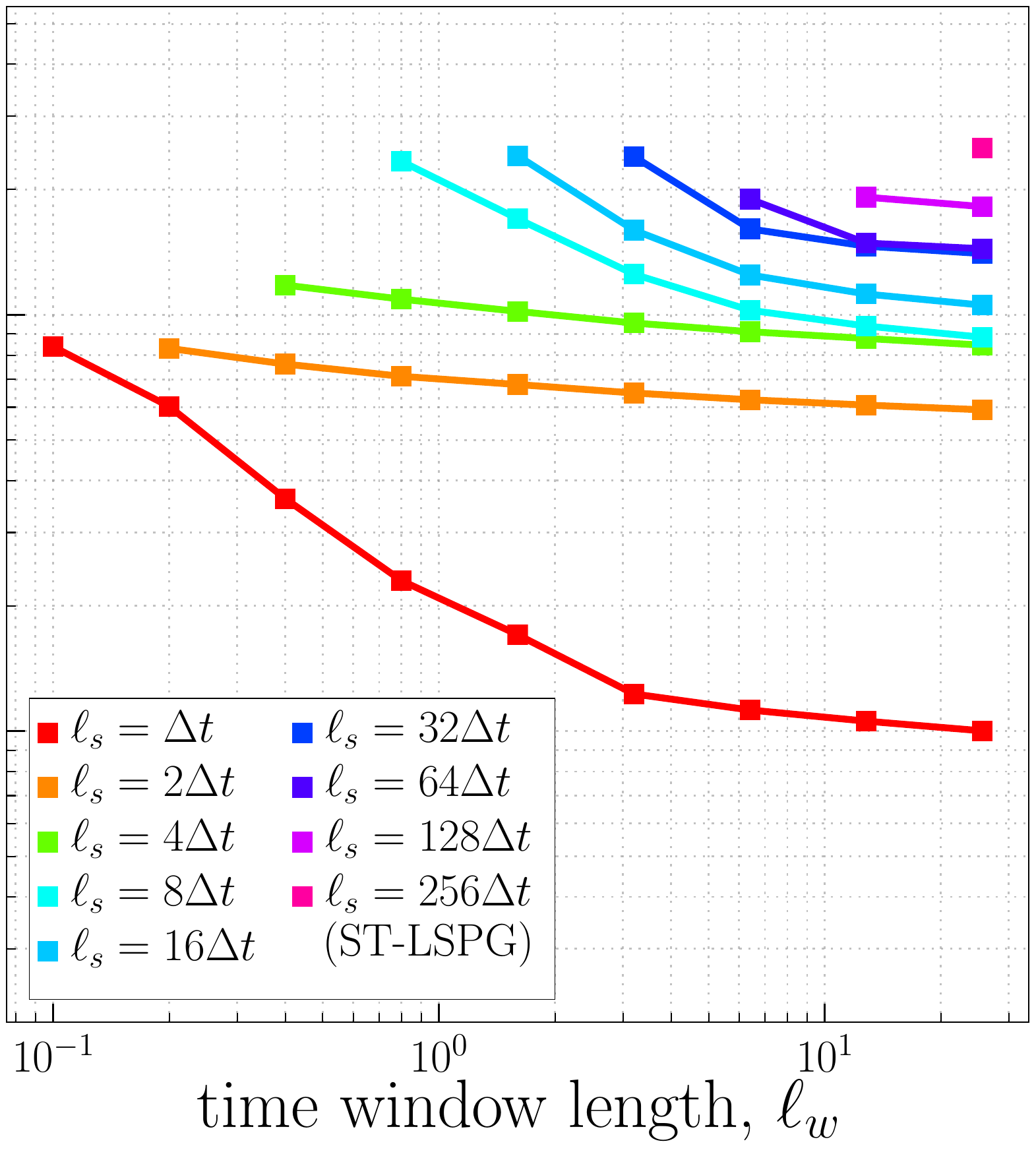}}\hspace{1mm}\nolinebreak
\subfloat[residual $\ell^2$ norm: $e_s = 0.999, e_t = 0.999$][\centering \hspace{1.5mm}residual $\ell^2$ norm:\par 
\hspace{5.5mm}$e_s = 0.999, e_t = 0.999$\label{f:burgersResidual6}]{\includegraphics[height=0.264\textwidth]{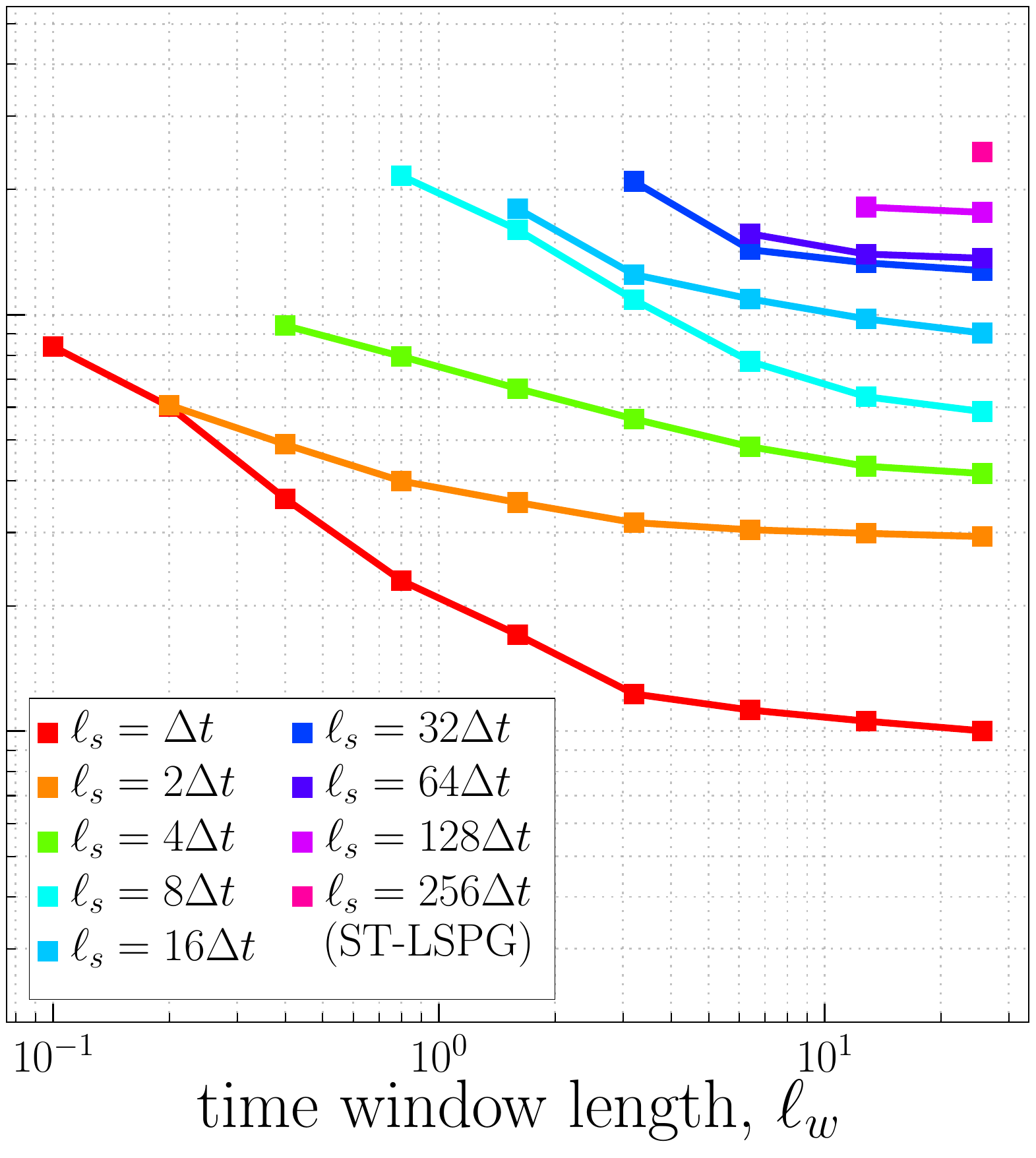}}
\caption{\emph{Burgers' equation.} WST-LSPG MSE and  residual $\ell^2$ norms for basis set $e_s = 0.99, e_t = 0.99$, $e_s = 0.99, e_t = 0.999$, $e_s = 0.999, e_t = 0.99$, and $e_s = 0.999, e_t = 0.999$. The results are presented as a function of the window length, $\ell_w$, for various sub-window lengths, $\ell_s$. The MSE and the residual $\ell^2$ norm exhibit opposite behavior with increasing time window lengths.}
\label{f:window1}
\end{figure}

For each set of temporal window lengths and each set of temporal sub-window lengths, Figure~\ref{f:window1} shows the MSE, IMSE, and the  residual $\ell^2$ norm for four different sets of space--time bases. The four different sets of space--time bases are based on two different spatial energy basis parameter values, $e_s =0.99$ and $e_s=0.999$, and two different temporal energy basis parameter values, $e_t=0.99$ and $e_t=0.999$. The window length for this study is set to be $\ell_w=\{1,2,4,8,16,32,64,128,256\}$ and the sub-window length is set to be $\ell_s=\{1,2,4,8,16,32,64,128,256\}$. To study how the error metrics ---MSE, IMSE, residual $\ell^2$ norm--- are affected by the WST-LSPG parameters, the experiments consist of running different ROMs with different combinations of window lengths and sub-window lengths. In this experimental setup, $\ell_s\leq \ell_w$ is a requirement.

Figures~\ref{f:burgersMSE0},~\ref{f:burgersMSE2},~\ref{f:burgersMSE4},~\ref{f:burgersMSE6} show how the MSEs behave with different window and sub-window lengths. It is observed that when the window length increases, the MSE generally first decreases, but then increases to create a minimum in the MSE at approximately $\ell_w\in [0.4,0.8]$. However, when $\ell_w=\Delta t$, as the window length increases, the MSE continues to decrease. The lowest MSE can be seen when $\ell_w=\Delta t$, which shows great improvement when compared to the $\ell_s=256\Delta t$, which corresponds to ST-LSPG. It can already be seen that, even with only one window, introducing sub-windows in the construction of the space--time bases leads to a more accurate space--time ROM as compared to that of the ST-LSPG. It can be seen that larger sub-window lengths lead to higher MSEs.  Figures~\ref{f:burgersMSE0},~\ref{f:burgersMSE2},~\ref{f:burgersMSE4},~\ref{f:burgersMSE6} additionally show that increasing $e_s$ and $e_t$ decreases the MSE. 

Next, Figures~\ref{f:burgersResidual0},~\ref{f:burgersResidual2},~\ref{f:burgersResidual4},~\ref{f:burgersResidual6} show how the  residual $\ell^2$ norms behave with different window and sub-window lengths. It can be seen that as the window length increases, the residual $\ell^2$ norms decrease for all values of sub-window length. The highest residual $\ell^2$ norms are found for the case where $\ell_s=256\Delta t$, which corresponds to the ST-LSPG technique. These results show that introducing sub-windows can greatly lower the residual $\ell^2$ norms by an order of magnitude across the four different set of space--time bases. However, the MSE results behave contrary to the way the residual $\ell^2$ norm results behave, especially at larger time window lengths. This behavior is consistent with what was observed in~\cite{parish2019windowed}.
Some possible explanations for this behavior include the following. In model reduction, the accuracy of the ROM is dependent on the trial subspace data and its corresponding subspace, hence, it is expected that the MSE will improve with increased fidelity of the bases. However, residual $\ell^2$ norms measure how well a solution, whether it be a ROM or FOM satisfies the governing equations. Since the goal of projection-based model reduction is to minimize the residual $\ell^2$ norms given the trial tensor data obtained offline, it can be deduced that without a physical constraint on the original PDE problem, the residual $\ell^2$ norms can increase with decreased time windowing length. An additional study was conducted to assess if the type of problem and/or the fidelity of the basis chosen is responsible for the opposing behaviors of the MSE and the residual $\ell^2$ norms at longer window lengths. To see if the phase error contributes to this opposing behavior, the IMSE is plotted in \ref{s:imse}; it was found to be a non-contributing factor.

Overall, Figure~\ref{f:window1} shows that there is some set of parameters (time window length, $\ell_w$ and sub-window length $\ell_s$) that gives the best MSE, IMSE and residual $\ell^2$ norm, a solution that minimizes the error relative to the space--time subspace while still following the physics of the problem as much as possible. In the next section, another experiment is made in order to study the effect of windowing and hyper-reduction on the MSE and the residual $\ell^2$ norm.

\subsubsection{Windowed parameter study with hyper-reduction}
This section compares the performances of the following space--time model reduction methods with varying method parameters: ST-LSPG, WST-LSPG, ST-GNAT, and WST-GNAT for the Burgers' equation. The goal of these comparisons is to gain insight and understanding on how using windows and sub-windows for the space--time bases affects the accuracy of the space--time ROMs. This goal is achieved by creating a study that collects space--time model reduction results over varying ROM parameters (i.e., window size, sub-window size, trial basis dimension, residual basis dimension) and constructing Pareto plots from these results. These Pareto plots highlight the performances of the ROM given the error output of interest and the relative wall time. For the Burgers' equation, two sets of Pareto plots are presented corresponding to two different error output of interest: (1) MSE and (2) residual $\ell^2$ norm. 
For each window size $\ell_w$ and sub-window size $\ell_s$, smaller localized Pareto plots are presented. An overall Pareto plot and its corresponding Pareto front for each error output of interest and windowed space--time parameter of interest are presented as well. The Pareto fronts for a given error output and given windowed space--time parameter of interest are characterized by the given set of parameters that minimize the competing relative error and relative wall time. For the Burgers' equation, the number of windows and sub-windows is varied such that $\numWindows=\Bmat{1,2,4,8,16,32,64,128,256}$ and $\numSubWindows=\Bmat{1,2,4,8,16,32,64,128,256}$. The relative wall time is calculated by dividing the time it takes to execute the method of interest (e.g. WST-LSPG) by the time it takes to complete the FOM. The number of spatial elements in the sample mesh for hyper-reduction is defined as $z_s =\bmat{10, 20, 30 &\ldots& 200}$, and the number of temporal indices in the space--time simulation for each window $k$ is defined as $z_t=\bmat{2&\ldots &N^k_t}$. 
%

%
\begin{figure}[!t]
\centering
\subfloat[$\numWindows=1,\ell_{w}=256\Delta t$\label{f:mse8}]{\pareto{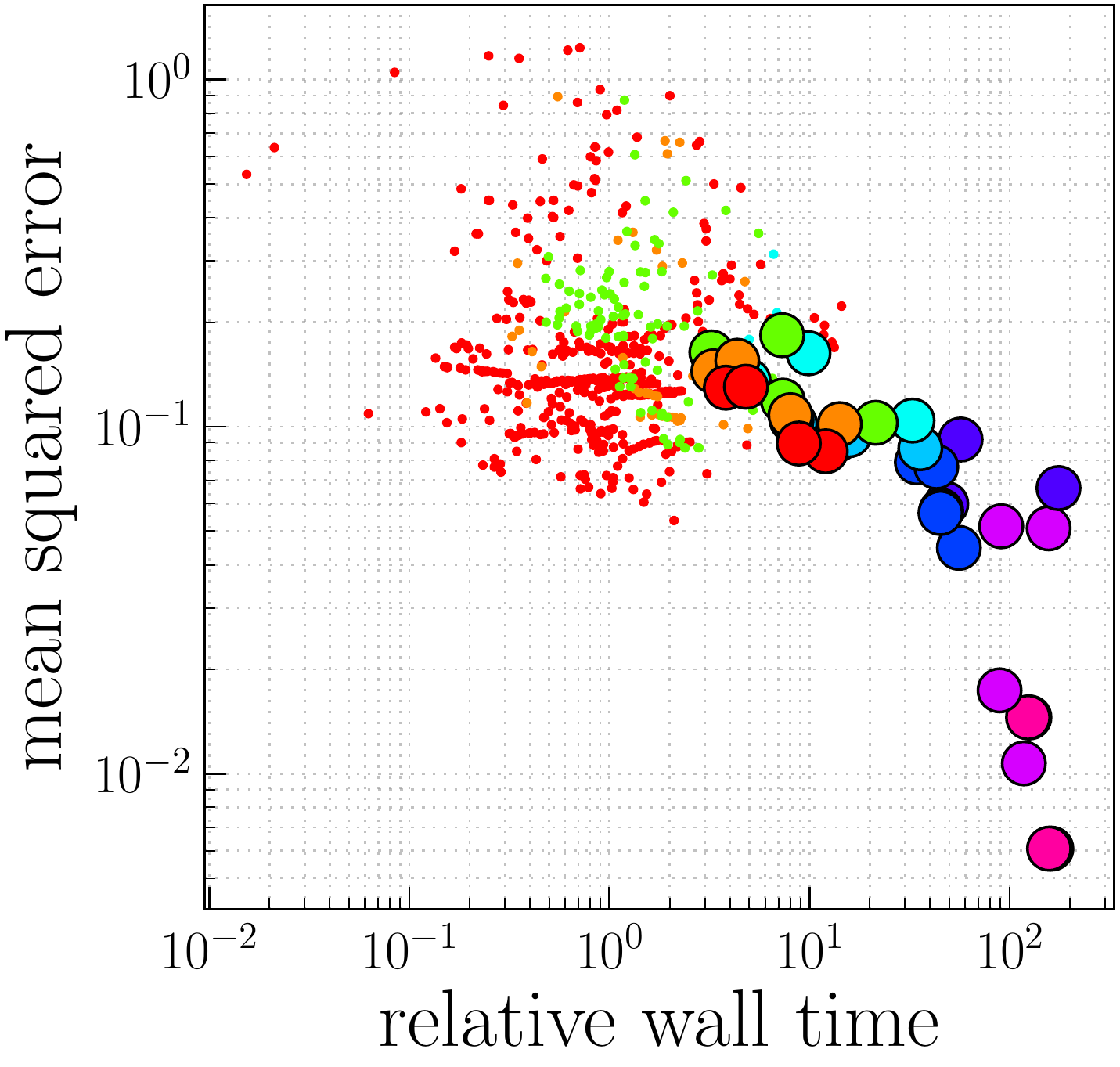}}\hspace{1mm}\nolinebreak
\subfloat[$\numWindows=2,\ell_{w}=128\Delta t$\label{f:mse7}]{\pareto{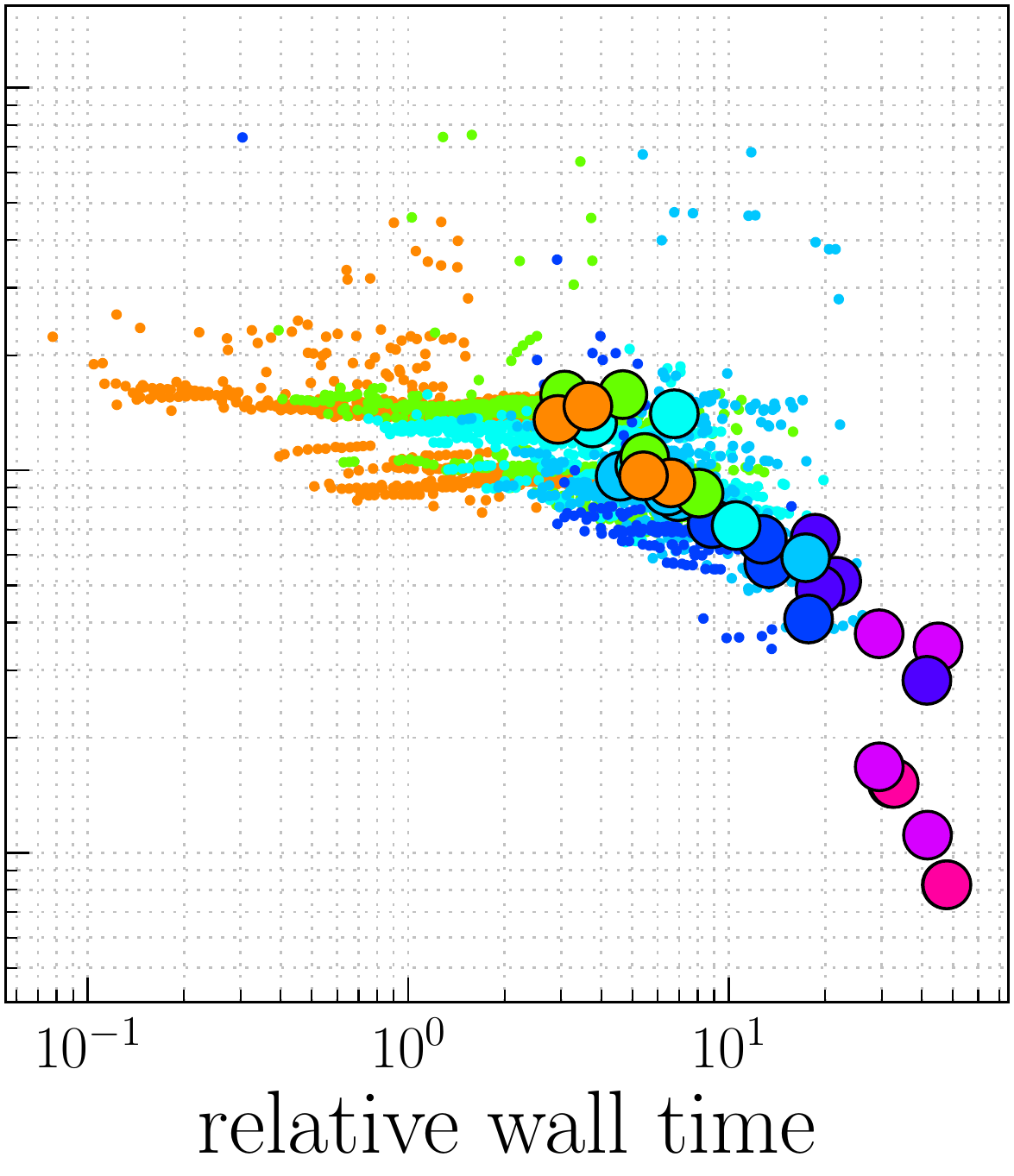}}\hspace{1mm}\nolinebreak
\subfloat[$\numWindows = 4,\ell_{w}=64\Delta t$\label{f:mse6}]{\pareto{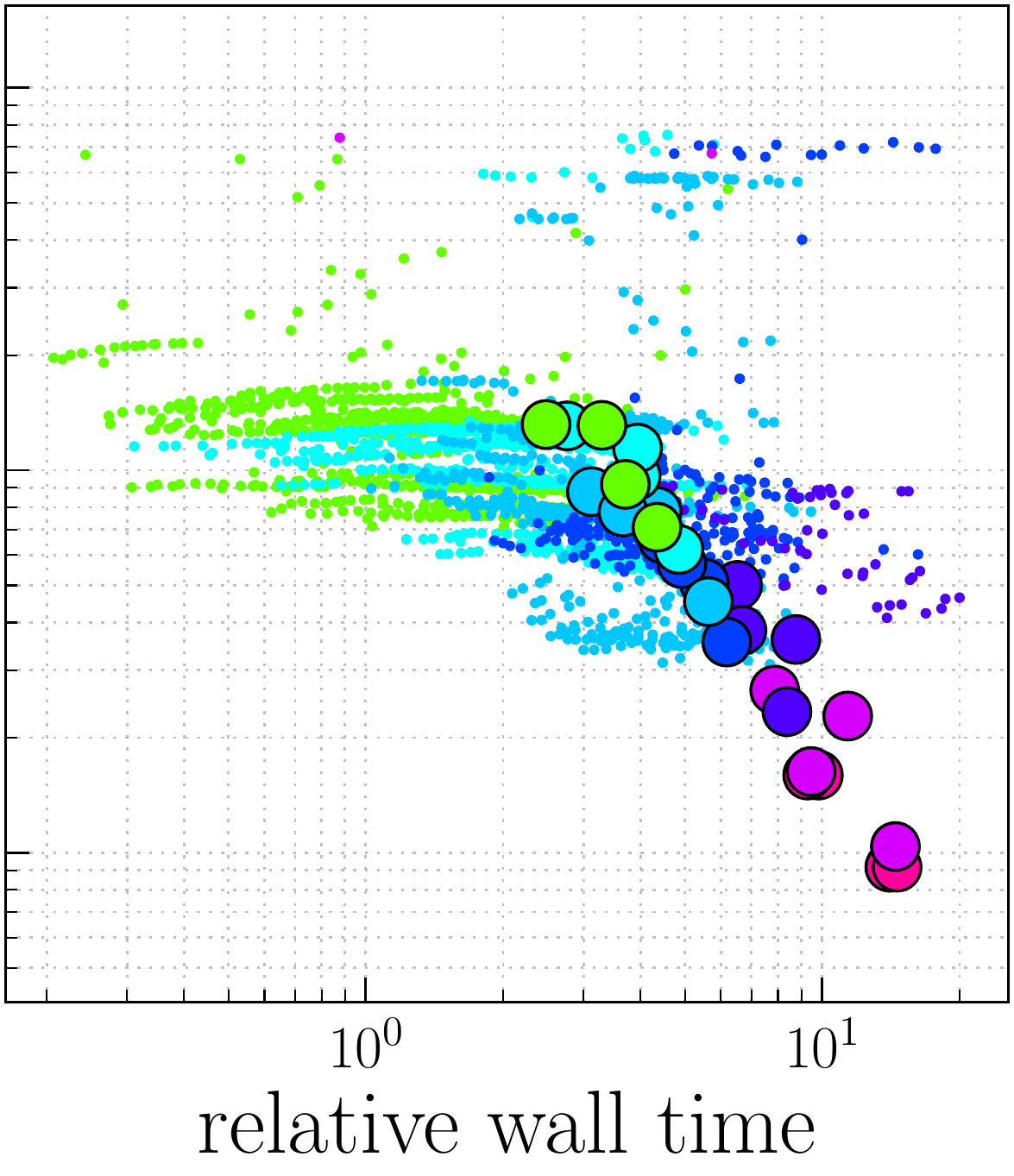}}\hspace{1mm}\nolinebreak
\subfloat[$\numWindows = 8,\ell_{w}=32\Delta t$\label{f:mse5}]{\pareto{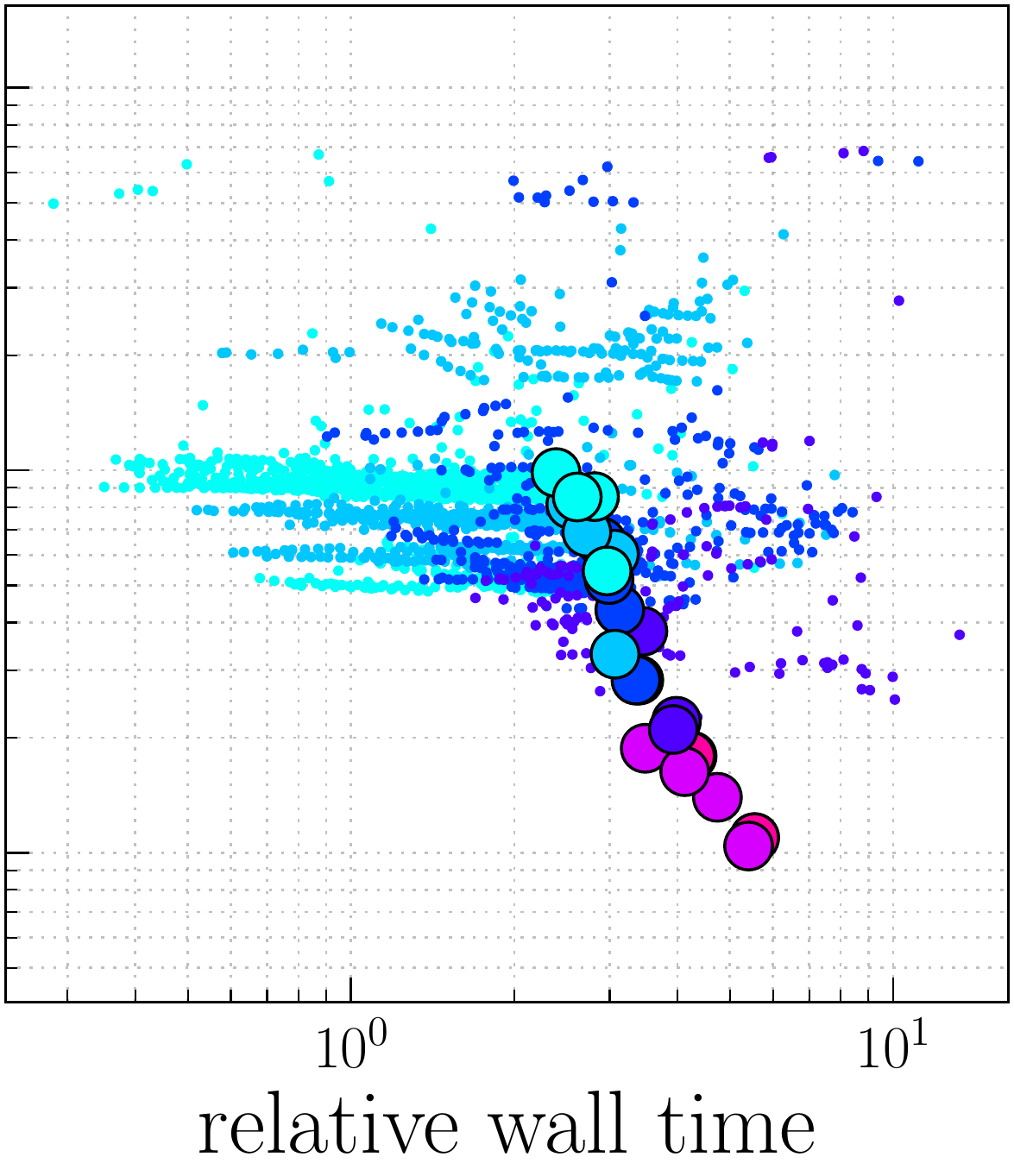}}\hspace{1mm}\nolinebreak
\subfloat[$\numWindows = 16,\ell_{w}=16\Delta t$\label{f:mse4}]{\pareto{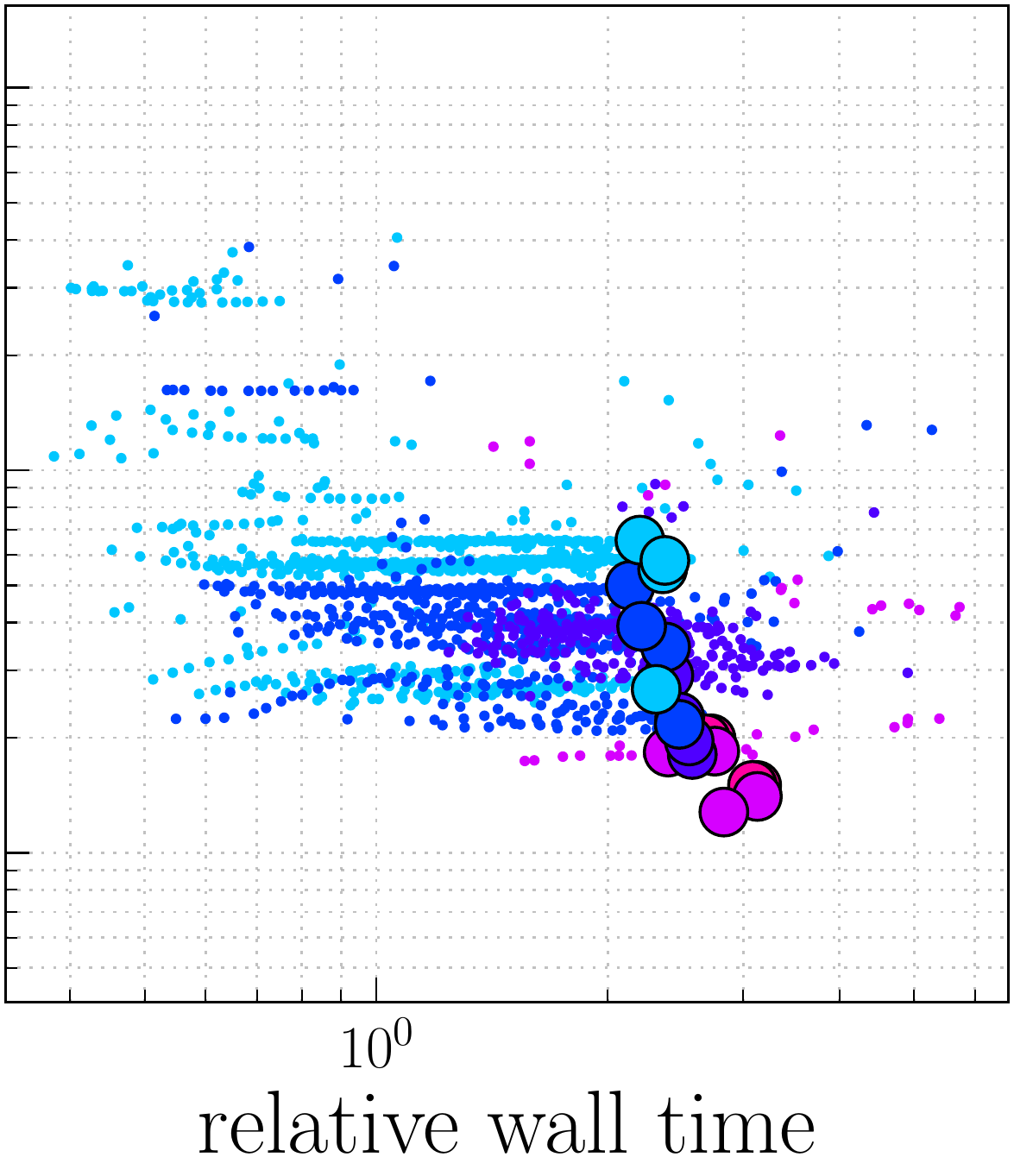}}\vspace{-1mm}\\
\subfloat[$\numWindows = 32,\ell_{w}=8\Delta t$\label{f:mse3}]{\pareto{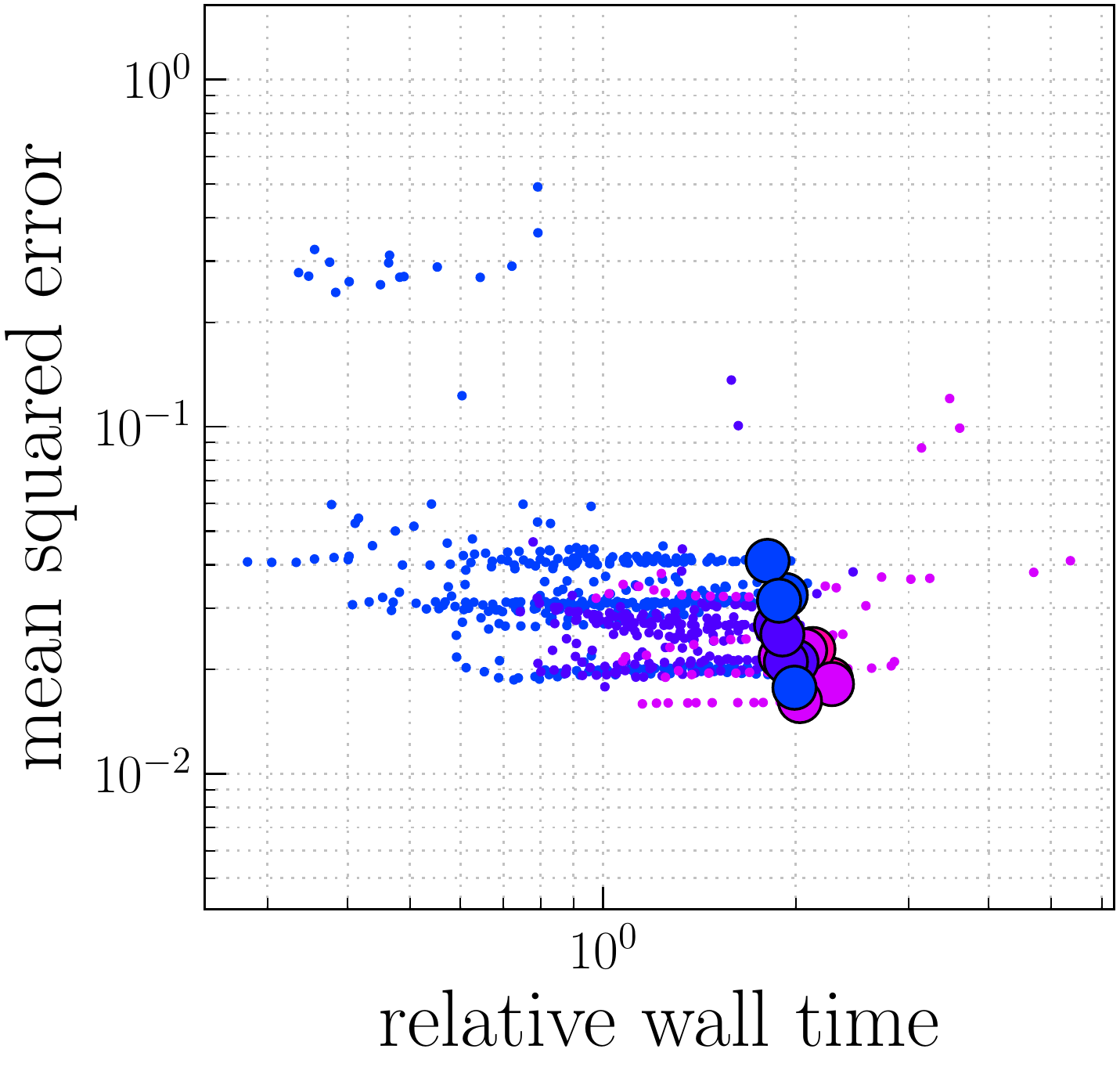}}\hspace{1mm}\nolinebreak
\subfloat[$\numWindows = 64,\ell_{w}=4\Delta t$\label{f:mse2}]{\pareto{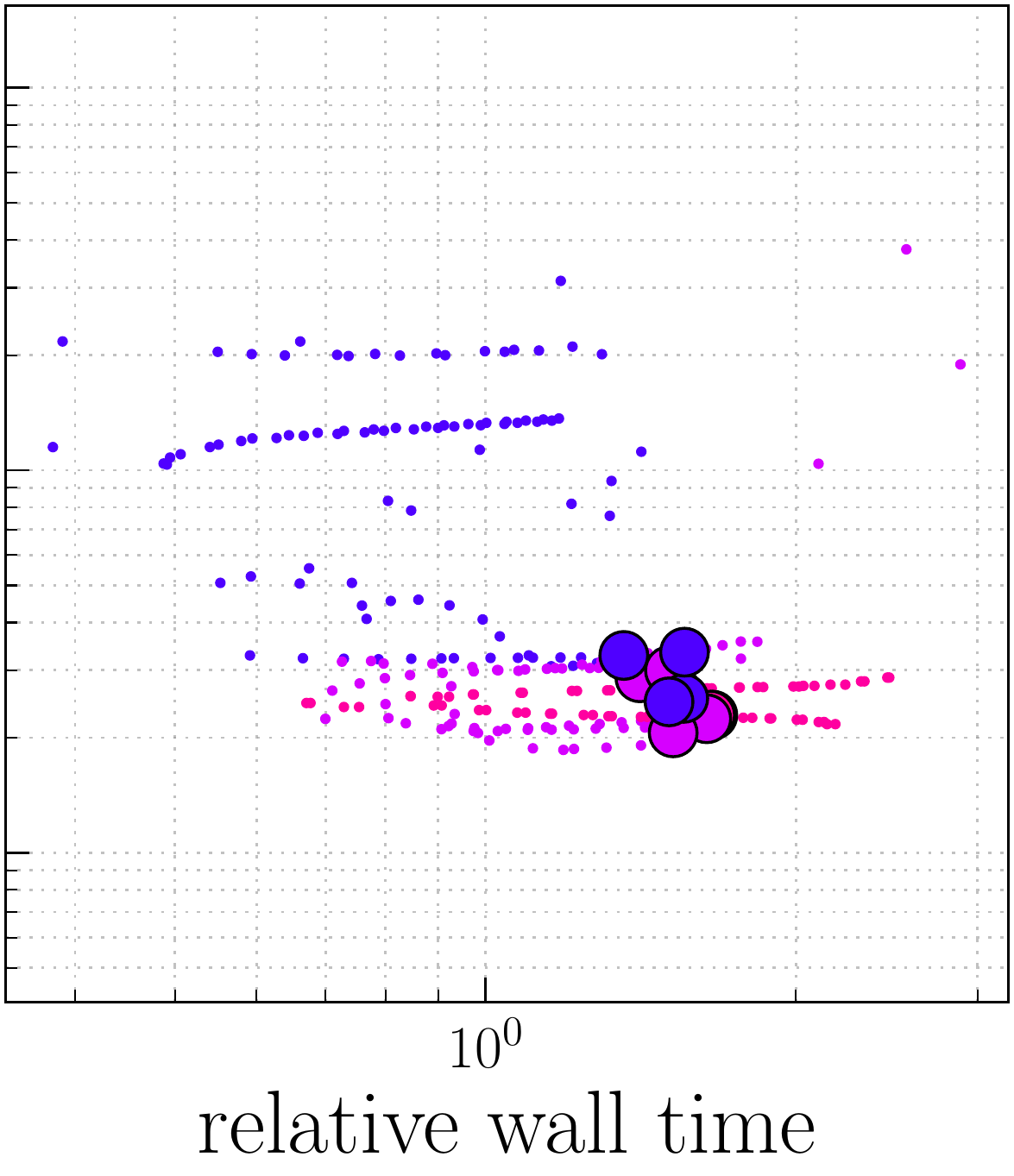}}\hspace{1mm}\nolinebreak
\subfloat[$\numWindows = 128,\ell_{w}=2\Delta t$\label{f:mse1}]{\pareto{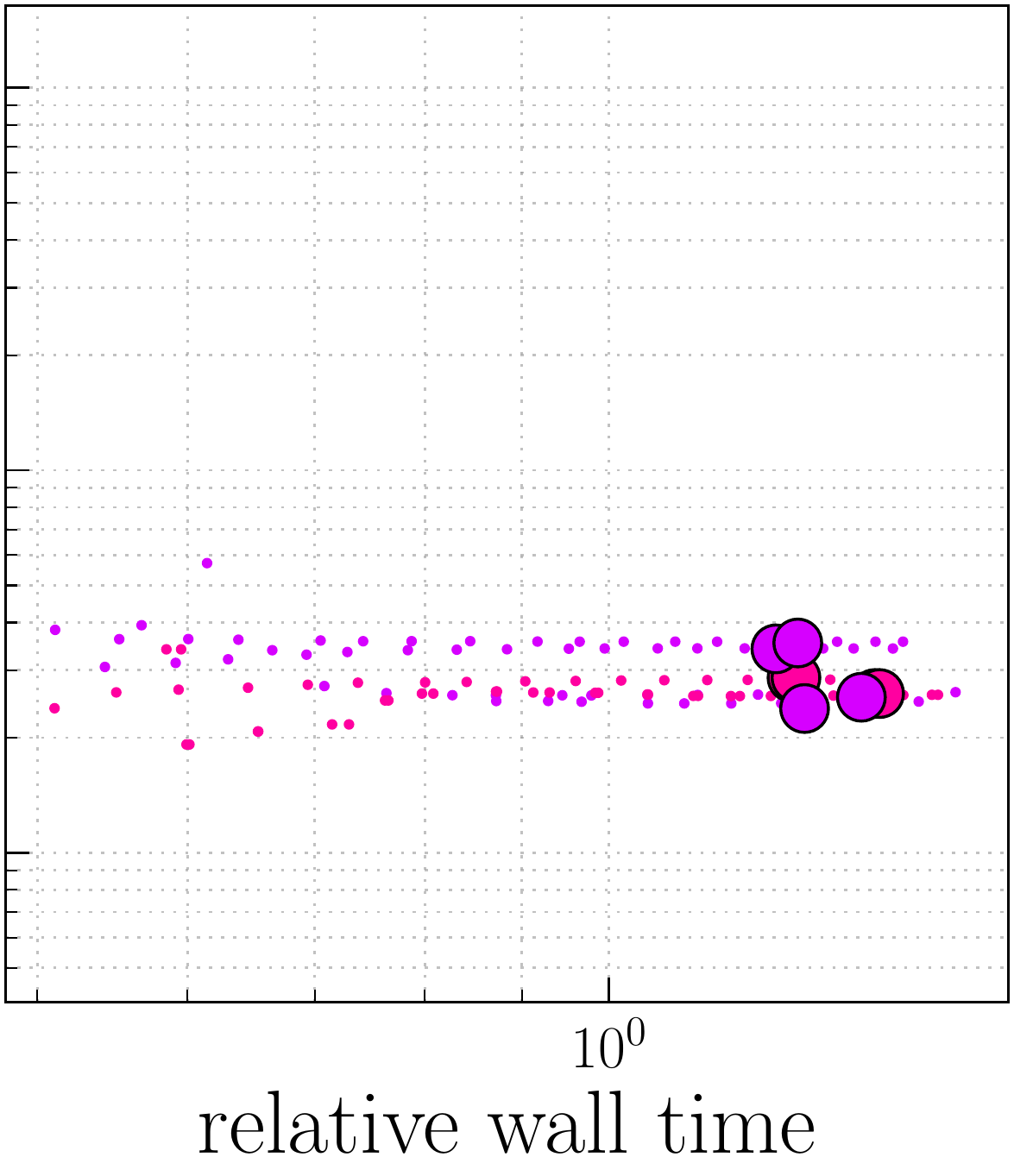}}\hspace{1mm}\nolinebreak
\subfloat[$\numWindows = 256,\ell_{w}=\Delta t$\label{f:mse0}]{\pareto{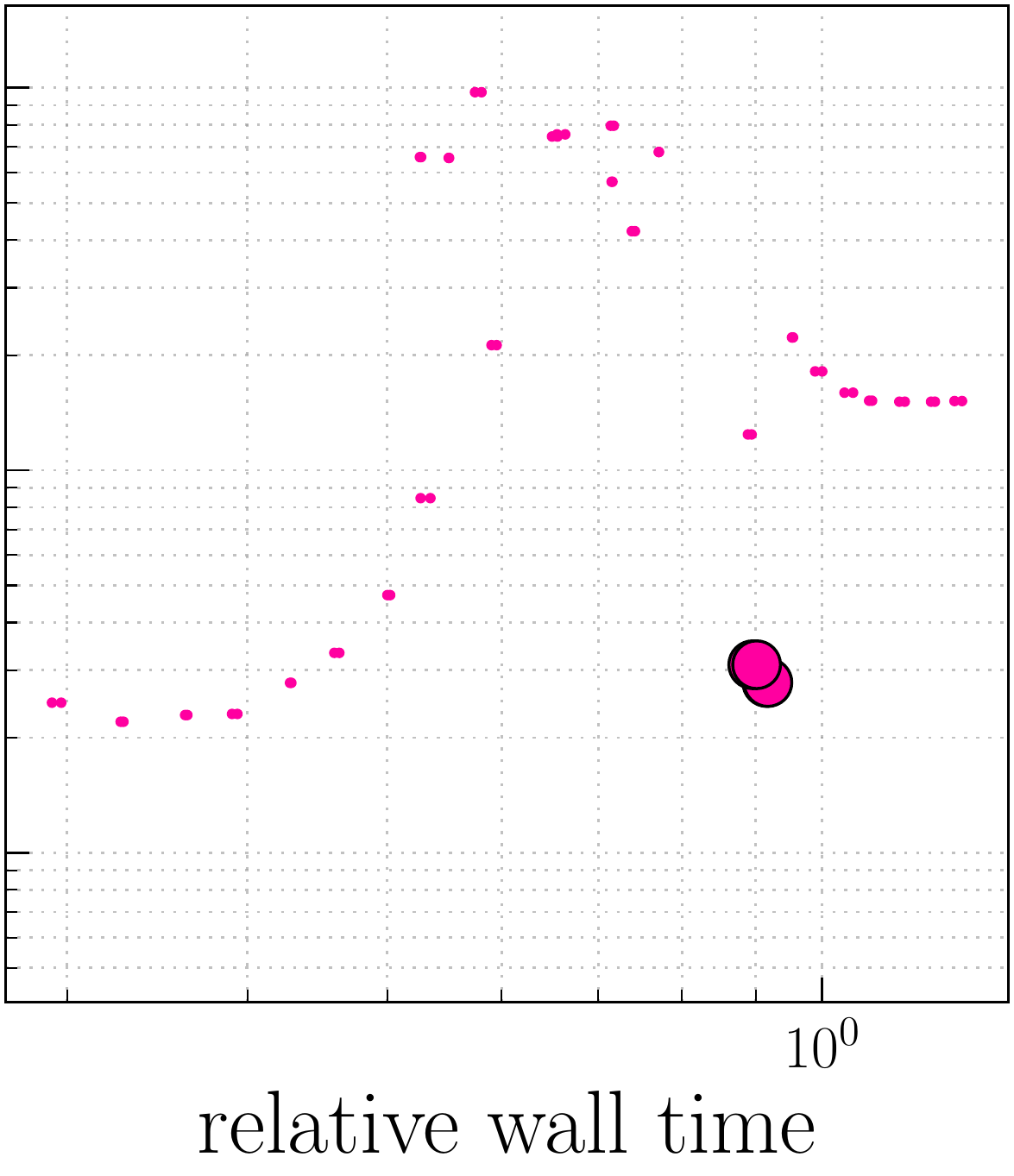}}\hspace{1mm}\nolinebreak
\legend{legend.pdf}\vspace{-1mm}\\
\subfloat[Combined Pareto Plot with Pareto Front\label{f:MSE_ALL}]{\includegraphics[width=1\textwidth]{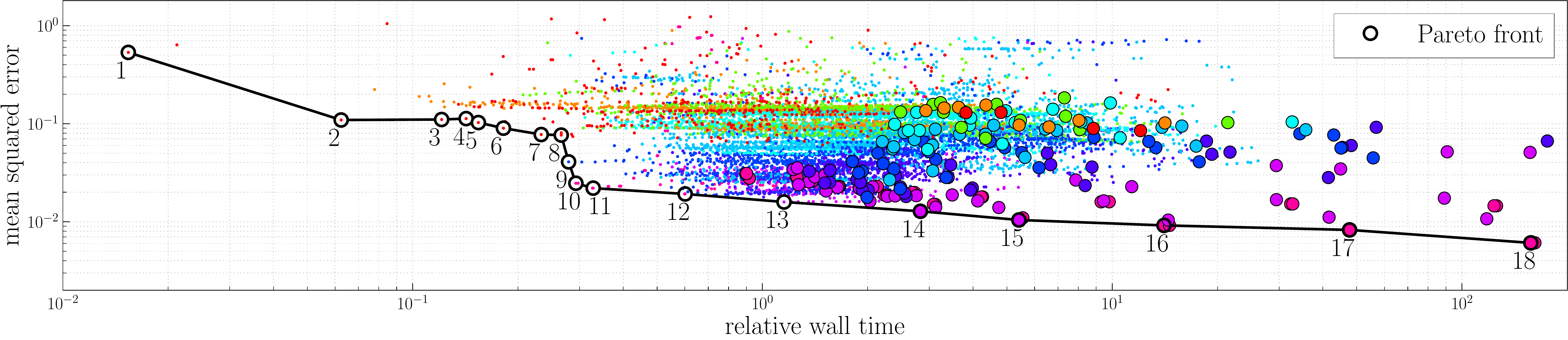}}
\caption{\emph{Burgers' equation}. Pareto plots showing the MSE vs. relative wall time for varying sub-window lengths. These Pareto plots highlight how the sub-window length $\ell_s$ affect the mean squared error for each time window length $l_w$ for WST-LSPG (ST-LSPG when $\ell_w=\ell_s$) and WST-GNAT (ST-GNAT when $\ell_w=\ell_s$). Each hyper-reduction point refers to a unique set of spatial sample nodes and temporal sample nodes. The last Pareto plot combines all the results and shows the Pareto front.}
\label{f:paretoMSE}
\end{figure}
\begin{figure}[!hp]
\centering
\subfloat[$\ell_{s}=256\Delta t$\label{f:mse_e8}]{\pareto{Pareto_MSE_8_e.pdf}}\hspace{1mm}\nolinebreak
\subfloat[$\ell_{s}=128\Delta t$\label{f:mse_e7}]{\pareto{Pareto_MSE_7_e.pdf}}\hspace{1mm}\nolinebreak
\subfloat[$\ell_{s}=64\Delta t$\label{f:mse_e6}]{\pareto{Pareto_MSE_6_e.pdf}}\hspace{1mm}\nolinebreak
\subfloat[$\ell_{s}=32\Delta t$\label{f:mse_e5}]{\pareto{Pareto_MSE_5_e.pdf}}\hspace{1mm}\nolinebreak
\subfloat[$\ell_{s}=16\Delta t$\label{f:mse_e4}]{\pareto{Pareto_MSE_4_e.pdf}}\vspace{-1mm}\\
\subfloat[$\ell_{s}=8\Delta t$\label{f:mse_e3}]{\pareto{Pareto_MSE_3_e.pdf}}\hspace{1mm}\nolinebreak
\subfloat[$\ell_{s}=4\Delta t$\label{f:mse_e2}]{\pareto{Pareto_MSE_2_e.pdf}}\hspace{1mm}\nolinebreak
\subfloat[$\ell_{s}=2\Delta t$\label{f:mse_e1}]{\pareto{Pareto_MSE_1_e.pdf}}\hspace{1mm}\nolinebreak
\subfloat[$\ell_{s}=\Delta t$\label{f:mse_e0}]{\pareto{Pareto_MSE_0_e.pdf}}\hspace{1mm}\nolinebreak
\legend{legend2.pdf}\vspace{-1mm}\\
\subfloat[Combined Pareto Plot with Pareto Front\label{f:MSE_e_ALL}]{\includegraphics[width=1\textwidth]{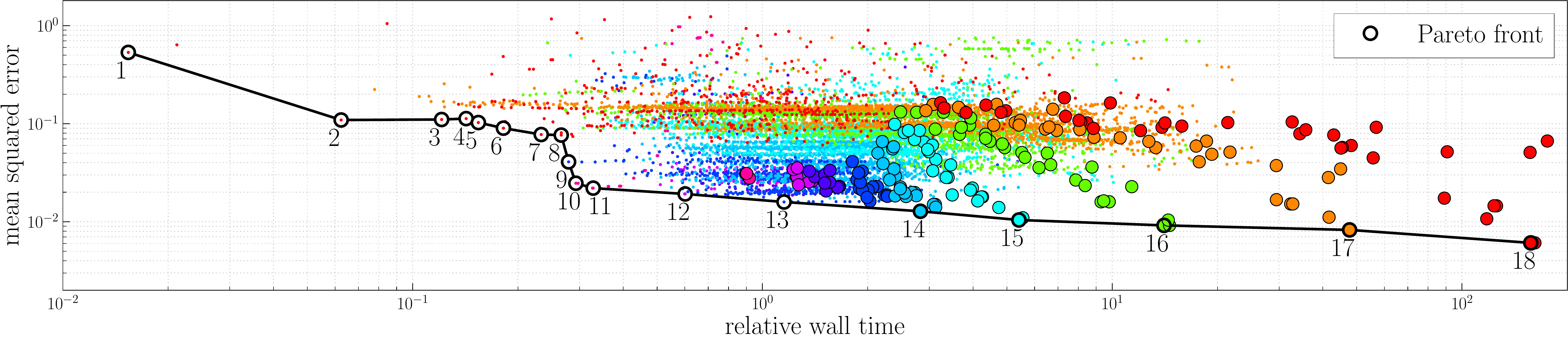}}
\caption{\emph{Burgers' equation}. Pareto plots showing the MSE vs. relative wall time for varying window lengths. These Pareto plots highlight how the window length $\ell_w$ affect the mean squared error for each sub-window length $\ell_s$ for WST-LSPG (ST-LSPG when $\ell_w=\ell_s$) and WST-GNAT (ST-GNAT when $\ell_w=\ell_s$). Each hyper-reduction point refers to a unique set of spatial sample nodes and temporal sample nodes. The last Pareto plot combines all the results and shows the Pareto front (same as Figure~\ref{f:MSE_ALL}).}
\label{f:paretoMSE_opposite}
\vspace{-2mm}
\begin{center}
\captionof{table}{\label{t:MSE}\emph{Burgers' equation}. Method and parameter values for labeled data points in Figures~\ref{f:paretoMSE} and~\ref{f:paretoMSE_opposite} that yield Pareto-optimal performance for MSE. Highlighted rows refer to test cases that additionally exist on the optimal Pareto Front for the residuals.}
\resizebox{\columnwidth}{!}{%
\begin{tabular}{ ccccccccccccccc } \toprule
case& method & $\ell_w$ & $\ell_s$ & \thead{total\\$n_{st}$} &\thead{total\\$n^r_{st}$} &$e_s$ & $e_t$ & $e_{r_s}$ & $e_{r_t}$ & $z_t$ & $z_s$ & MSE & \thead{x-LSPG relative\\ wall time} &\thead{relative wall\\time}\\
\midrule \midrule
\rowcolor{matcha}1 & ST-GNAT&25.6&	25.6& 98& 148&	0.999&	0.999& 0.999&	0.999&	2	&130 & 0.533224345 & 12.45653562&0.015386589\\ 
\rowcolor{matcha} 2 & ST-GNAT&	25.6&	25.6&22&76&	0.99&	0.99& 0.999&	0.999&	18	&20 & 0.109034729 &3.786097827&0.062465228\\ 
 3  & ST-GNAT	& 25.6&	25.6& 22&76&	0.99&	0.99 &0.999	&0.999&	18&	30& 0.110290589& 3.857666540&0.121033087\\ 
 4  & ST-GNAT	&25.6&	25.6&22&76&	0.99&	0.99  & 0.999	&0.999&	34&	20&0.112667597& 3.852059257&0.142223121\\ 
 5  & ST-GNAT&	25.6&	25.6&22& 76&	0.99&	0.99 &  0.999&	0.999&	18&	40& 0.102668917 &3.846951908&0.154143598\\ 
\rowcolor{matcha} 6  & ST-GNAT	&25.6&	25.6&	68& 190&0.999&	0.99 &0.999	&0.999&	18&	50&  0.089993176& 9.092380722& 0.181795453 \\ 
\rowcolor{matcha}7  & ST-GNAT&25.6&	25.6& 68&190&	0.999&	0.99& 0.999&	0.999&	18&	60& 0.077490715& 8.935311415&0.233166894\\  
\rowcolor{matcha} 8  & ST-GNAT&	25.6&	25.6&68&190&	0.999&	0.99& 0.999&	0.999&	18&	70& 0.076497668& 9.036366568&0.265945585\\ 
 9  & WST-GNAT&	0.8&	0.8& 360 & 1563&	0.99&	0.99 &0.999&	0.999&	5&	40& 0.040785017& 1.313484544&0.279190563\\
 10  &  WST-GNAT&	0.1&	0.1&1730&4010&	0.99&	0.99& 0.999&	0.999&	2&	20& 0.024680093& 0.894093204&0.292897573 \\
 11  & WST-GNAT&	0.1&	0.1& 1730&4010&	0.99&	0.999 &0.999&	0.999&	2&	30 &0.022005269&0.904674125&0.328264012 \\
\rowcolor{matcha}12  &  WST-GNAT& 	0.2&	0.1& 2490& 4186 &	0.999&	0.999 & 0.999&	0.999&	3&	50 &0.019193423&1.383185815& 0.601325305\\
 13  &  WST-GNAT&0.8&	0.2& 1496&5247&	0.999&	0.99& 0.9999&	0.9999&	9&	70 &0.015893315&2.003705125&  1.152805599\\
 14  &  WST-LSPG&	1.6&	0.2	&1496& \NA&0.999&	0.99  &\NA &\NA &\NA &\NA &0.012777172& 2.829880654 &\NA \\
 15  & WST-LSPG&	3.2& 	0.2	&2205&\NA& 0.999	& 0.999& \NA &\NA &\NA &\NA & 0.010414663 &  5.411230826&\NA \\
 16  & WST-LSPG&	6.4& 	0.1& 2490&\NA&	0.999& 	0.999&  \NA &\NA &\NA &\NA & 0.009168400&  14.06306610&\NA \\
 17  & WST-LSPG&	12.8& 	0.1& 2490& \NA &	0.999& 	0.999 & \NA &\NA &\NA &\NA & 0.008247672&  47.71383365&\NA \\
 18  &WST-LSPG&	25.6& 	0.1& 2490& \NA& 	0.999& 	0.99&\NA &\NA &\NA &\NA & 0.006083036 & 157.3006332&\NA \\
 \bottomrule
\end{tabular}
}
\end{center}
\end{figure}
The first set of Pareto plots can be seen in Figure~\ref{f:paretoMSE}, which shows nine different Pareto plots for MSE vs. relative wall time for varying values of $\ell_w$. Each Pareto plot shows the WST-LSPG and WST-GNAT results for all four different sets of space--time bases. Note that the data points that refer to $\ell_w=\ell_s=256\Delta t$ is equivalent to the ST-LSPG and ST-GNAT method. Figure~\ref{f:paretoMSE} shows that incorporating hyper-reduction into WST-LSPG (i.e., WST-GNAT) decreases the relative wall time compared to its non-hyper-reduction counter part (i.e., WST-LSPG). For each window length $\ell_w$, the MSE decreases as the sub-window lengths decrease for both WST-LSPG and WST-GNAT. The MSE is the lowest when $\ell_s=\Delta t$ and $\ell_w=\Delta t$; however, these cases are associated with longer relative wall times. To obtain the lowest relative wall time, fewer number of windows and longer sub-window lengths are needed as exhibited in the limiting case of ST-LSPG. However, this reduction in relative wall time is at the expense of a higher MSE. Figure~\ref{f:MSE_ALL} combines the results from Figure~\ref{f:mse8}-\ref{f:mse0} and shows the Pareto front for all the MSE vs. relative wall time results. From this overall Pareto plot, the worst performing  methods correspond to larger sub-window lengths and the ST-GNAT methods, while the optimally performing methods on the Pareto front refer to WST-LSPG methods with smaller sub-window lengths.

%
%
The next set of Pareto plots presents the same data as shown in Figure~\ref{f:paretoMSE}, but highlights how the MSE and relative wall time are affected by varying the window length for a constant sub-window length. The results for this study can be seen in Figure~\ref{f:paretoMSE_opposite}. Figure~\ref{f:mse_e8} shows results for only the ST-LSPG and the ST-GNAT method (when the sub-window length and the time window length are both equal to the length of the entire time simulation). Next, in Figures~\ref{f:mse_e7}-\ref{f:mse_e3} is is seen that, for a given sub-window length, the lowest MSE and relative wall time occurs when the the window length is equal to the sub-window length. Figures~\ref{f:mse_e2}-\ref{f:mse_e0}, however, show different behaviors: For smaller sub-window lengths, the lowest MSE is associated with larger window lengths, $\ell_w>\ell_s$. Figure~\ref{f:MSE_e_ALL} combines the results from Figures~\ref{f:mse_e8}-\ref{f:mse_e0} and shows the same Pareto front as seen in Figure~\ref{f:MSE_ALL}. The difference between Figure~\ref{f:MSE_e_ALL} and Figure~\ref{f:MSE_ALL} is that Figure~\ref{f:MSE_ALL} highlights how the MSE decreases with smaller sub-window length, and Figure~\ref{f:MSE_e_ALL} highlights how the MSE decreases with smaller window length.  

The parameters for MSE that lie on the Pareto front, labeled as cases $1-18$ in Figures~\ref{f:MSE_ALL} and~\ref{f:MSE_e_ALL}, are listed in Table~\ref{t:MSE}. Cases 1-8 refer to the original ST-GNAT method with hyper-reduction (i.e.,  $\ell_w=\ell_s=T$). 
Case 1 refers to when only two temporal nodes are used out of the possible 256 temporal nodes in temporal hyper-reduction. Case 1 additionally refers to the use of the (spatial) sample mesh where 130 spatial nodes out of the possible 200 spatial nodes are used. Cases 9--13 refer to WST-GNAT, where windowing and hyper-reduction is used to achieve low MSE and low relative times. The number of windows range from 32 to 256 where the associated time window lengths range from 0.1 to 0.8. Additionally, the sub-window length ranges from 0.1 to 0.8. The sample mesh for these Pareto front cases includes as little as 20 spatial nodes to as many as 70 spatial nodes, much is less than half of the total number spatial nodes. Cases 14--18 refer to the WST-LSPG method without hyper-reduction. As expected, the results produced by the WST-LSPG method give the lowest MSE results, but require longer relative wall times.  Overall, these MSE results show that WST-GNAT can decrease the relative wall time by two to three orders of magnitude. Lastly, longer window lengths lead to higher computational costs, and shorter window lengths lead to shorter relative wall times without substantially affecting the MSE. 
%
%
%
\begin{figure}[!t]
\centering
\subfloat[$\numWindows=1,\ell_{w}=256\Delta t$\label{f:res8}]{\pareto{Pareto_Residual_8.pdf}}\hspace{1mm}\nolinebreak
\subfloat[$\numWindows=2,\ell_{w}=128\Delta t$\label{f:res7}]{\pareto{Pareto_Residual_7.pdf}}\hspace{1mm}\nolinebreak
\subfloat[$\numWindows=4,\ell_{w}=64\Delta t$\label{f:res6}]{\pareto{Pareto_Residual_6.pdf}}\hspace{1mm}\nolinebreak
\subfloat[$\numWindows=8,\ell_{w}=32\Delta t$\label{f:res5}]{\pareto{Pareto_Residual_5.pdf}}\hspace{1mm}\nolinebreak
\subfloat[$\numWindows=16,\ell_{w}=16\Delta t$\label{f:res4}]{\pareto{Pareto_Residual_4.pdf}}\vspace{-1mm}\\
\subfloat[$\numWindows=32,\ell_{w}=8\Delta t$\label{f:res3}]{\pareto{Pareto_Residual_3.pdf}}\hspace{1mm}\nolinebreak
\subfloat[$\numWindows=64,\ell_{w}=4\Delta t$\label{f:res2}]{\pareto{Pareto_Residual_2.pdf}}\hspace{1mm}\nolinebreak
\subfloat[$\numWindows=128,\ell_{w}=2\Delta t$\label{f:res1}]{\pareto{Pareto_Residual_1.pdf}}\hspace{1mm}\nolinebreak
\subfloat[$\numWindows=256,\ell_{w}=\Delta t$\label{f:res0}]{\pareto{Pareto_Residual_0.pdf}}\hspace{1mm}\nolinebreak
\legend{legend.pdf}\vspace{-1mm}\\
\subfloat[Combined Pareto Plot with Pareto Front\label{f:res0_ALL}]{\includegraphics[width=1\textwidth]{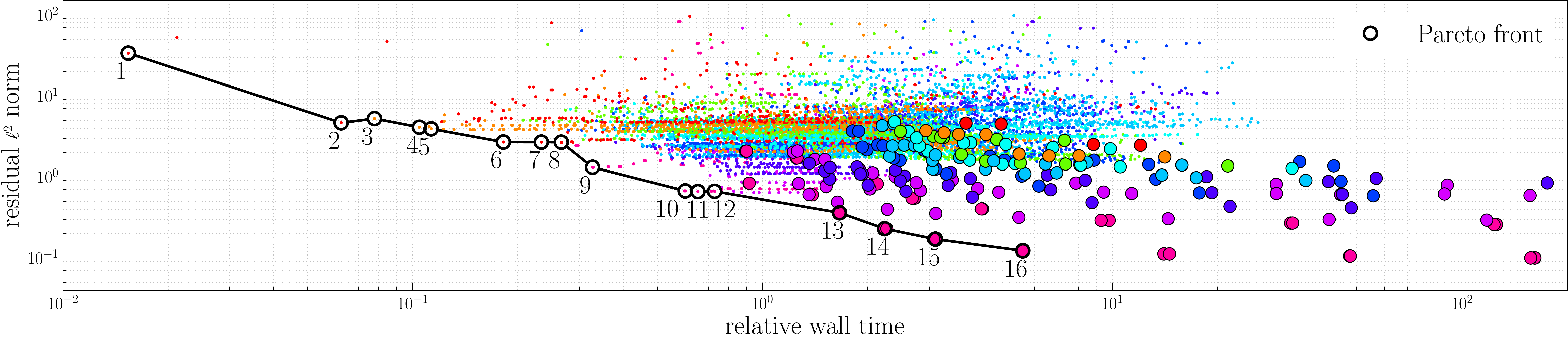}}
\caption{\emph{Burgers' equation}. Pareto plots showing the residual $\ell^2$ norm vs. relative wall time at varying sub-window lengths. These Pareto plots highlight how the sub-window length $\ell_s$ affect the residual $\ell^2$ norm for each time window length $l_w$ for WST-LSPG (ST-LSPG when $\ell_w=\ell_s$) and WST-GNAT (ST-GNAT when $\ell_w=\ell_s$). Each hyper-reduction point refers to a unique set of spatial sample nodes and temporal sample nodes. The last Pareto plot combines all the results and shows the Pareto front.}
\label{f:paretoResidual}
\end{figure}
\begin{figure}[!hp]
\centering
\subfloat[$\ell_{s}=256\Delta t$\label{f:res_e8}]{\pareto{Pareto_Residual_8_e.pdf}}\hspace{1mm}\nolinebreak
\subfloat[$\ell_{s}=128\Delta t$\label{f:res_e7}]{\pareto{Pareto_Residual_7_e.pdf}}\hspace{1mm}\nolinebreak
\subfloat[$\ell_{s}=64\Delta t$\label{f:res_e6}]{\pareto{Pareto_Residual_6_e.pdf}}\hspace{1mm}\nolinebreak
\subfloat[$\ell_{s}=32\Delta t$\label{f:res_e5}]{\pareto{Pareto_Residual_5_e.pdf}}\hspace{1mm}\nolinebreak
\subfloat[$\ell_{s}=16\Delta t$\label{f:res_e4}]{\pareto{Pareto_Residual_4_e.pdf}}\vspace{-1mm}\\
\subfloat[$\ell_{s}=8\Delta t$\label{fres_e3}]{\pareto{Pareto_Residual_3_e.pdf}}\hspace{1mm}\nolinebreak
\subfloat[$\ell_{s}=4\Delta t$\label{f:res_e2}]{\pareto{Pareto_Residual_2_e.pdf}}\hspace{1mm}\nolinebreak
\subfloat[$\ell_{s}=2\Delta t$\label{f:res_e1}]{\pareto{Pareto_Residual_1_e.pdf}}\hspace{1mm}\nolinebreak
\subfloat[$\ell_{s}=\Delta t$\label{f:res_e0}]{\pareto{Pareto_Residual_0_e.pdf}}\hspace{1mm}\nolinebreak
\legend{legend2.pdf}\vspace{-1mm}\\
\subfloat[Combined Pareto Plot with Pareto Front\label{f:res_e0ALL}]{\includegraphics[width=1\textwidth]{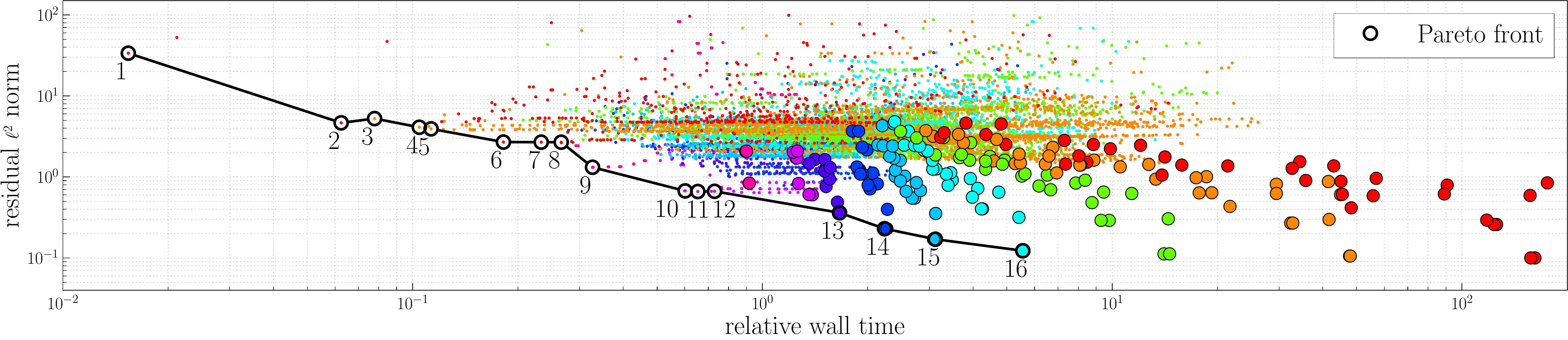}}
\caption{\emph{Burgers' equation}. Pareto plots for residual $\ell^2$ norm vs. relative wall time for varying sub-window lengths. These Pareto plots highlight how the window length, $\ell_w$, affect the residual $\ell^2$ norm for each sub-window length $\ell_s$ for WST-LSPG/ST-LSPG and WST-GNAT/ST-GNAT. Each hyper-reduction case refers to a unique set of spatial sample nodes and temporal sample nodes. The last Pareto plot combines all the results for the residual $\ell^2$ norm and shows the Pareto front (same as Figure~\ref{f:MSE_ALL}).}
\label{f:residual_opposite}
\vspace{-2mm}
\begin{center}
\captionof{table}{\label{t:residuals}\emph{Burgers' equation}. Method and parameter values for labeled data points in Figures~\ref{f:paretoMSE} and~\ref{f:paretoMSE_opposite} that yield Pareto-optimal performance for the general Windowed model reduction method in terms of the residual $\ell^2$ norm. Highlighted rows refer to test cases that additionally exist on the optimal Pareto Front for MSE.}
\resizebox{\columnwidth}{!}{%
\begin{tabular}{ ccccccccccccccc } \toprule
case& method & $\ell_w$ & $\ell_s$ & \thead{total\\$n_{st}$} &\thead{total\\$n^r_{st}$} & $e_s$ & $e_t$ & $e_{r_s}$ & $e_{r_t}$ & $z_t$ & $z_s$ & \thead{residual\\$\ell^2$ norm} & \thead{x-LSPG relative\\ wall time} &\thead{Relative wall\\time}\\
 \midrule \midrule
 \rowcolor{matcha}1 & ST-GNAT&25.6&	25.6& 98& 148&	0.999&	0.999& 0.999&	0.999&	2	&130 & 33.68670968 & 12.45653562&0.015386589\\ 
\rowcolor{matcha} 2 & ST-GNAT&	25.6&	25.6&22&76&	0.99&	0.99& 0.999&	0.999&	18	&20 & 4.655537161 &3.786097827&0.062465228\\ 
3&WST-GNAT  &12.8 &	12.8 & 39& 148&	0.99 &	0.99 &	0.999 &	0.999 &	10 &	20 &5.253386860 &2.950033721&	0.077836879 \\ 
4&WST-GNAT &12.8 &	12.8 &39& 148&	0.99 &	0.99 &	0.999 &	0.999 &	10 &	40 &	4.104538792 &	2.932780389&0.104491508\\ 
5&WST-GNAT  &	12.8 &	12.8 &	39&148 &0.99 &	0.99 &	0.999 &	0.999 &	10 &	50 &	3.921332144	 &2.927011265&0.112900133\\ 
\rowcolor{matcha}6&ST-GNAT &	25.6 &	25.6 &	68 & 190&0.999 &	0.99 &	0.999 &	0.999 &	18 &	50 &	2.694921068	 &9.092380722&0.181795453 \\ 
\rowcolor{matcha}7&ST-GNAT &25.6 &	25.6 &	68 & 190 & 0.999 &	0.99 &	0.999 &	0.999 &	18 &	60 &	2.687869219 &8.935311415&	0.233166894\\  
\rowcolor{matcha}8&ST-GNAT&	25.6 &	25.6 & 68 & 190&	0.999 &	0.99 &	0.999 &	0.999 &	18 &	70 &	2.685009426	 & 9.036366568&0.265945585\\ 
9&WST-GNAT  &0.1 &	0.1 & 1730& 4010&	0.99 &	0.99 &	0.999 &	0.999 &	2 &	30 &	1.316808304 &0.895191869&	0.326798440\\
\rowcolor{matcha}10&WST-GNAT	 &0.2 &	0.1 & 2490 &4186&	0.999 &	0.999 &	0.999 &	0.999 &	3 &	50 &	0.672582791 &1.383185815&	0.601325305\\
11&WST-GNAT &	0.2 &	0.1	 &2490 &4186& 0.999	 &0.999 &	0.999 &	0.999 &	3 &	60 &	0.660878763	 &1.383908211&0.653675049\\
12&WST-GNAT  &	0.2 &	0.1 &2490 &4186& 	0.999 & 	0.999  & 	0.999 &	0.999 & 	3 &	70 &	0.664936770 &1.403074748&	0.729775602\\
13&WST-LSPG &	0.4	 &0.1 & 2490& \NA &	0.999 &	0.999 &\NA &\NA &\NA &\NA & 	0.361554579	 &\NA &1.661458611\\
14&WST-LSPG  &	0.8	 &0.1 &	2490 & \NA &0.999 &	0.999 &	\NA &\NA &\NA &\NA & 0.229856161 &	\NA &2.234523461\\
15&WST-LSPG &1.6 &	0.1 &	2490 & \NA&0.999 &	0.999 &	\NA &\NA &\NA &\NA & 0.170444365	 &\NA &3.121685483\\
16&WST-LSPG &3.2 &	0.1 &2490& \NA& 	0.999 &	0.99 &\NA &\NA &\NA &\NA & 	0.122859453	 &\NA &5.551955888\\
\bottomrule
\end{tabular}
}
\end{center}
\end{figure}
The next set of Pareto plots can be seen in Figure~\ref{f:paretoResidual}, which shows nine different Pareto plots for residual $\ell^2$ norms vs. relative wall time for varying values of $\ell_s$. These residual $\ell^2$ norm calculations come from the same set of data shown in Figures~\ref{f:paretoMSE} and~\ref{f:paretoMSE_opposite}. Like the MSE results in Figure~\ref{f:paretoMSE}, the hyper-reduction results show that the WST-GNAT simulations result in relative wall times that are orders of magnitude less than those of WST-LSPG. However, for the residual $\ell^2$ norms, the lower relative wall times are only obtained up to a certain sub-window length for each window length. For example, in Figure~\ref{f:res7}, when the window length and sub-window length is the same $\ell_w=\ell_s$, WST-GNAT reveals that for the same amount of accuracy, the relative wall time decreases. This is true for when the sub-window length is further halved. However, past this point, the residual $\ell^2$ norms do not follow this downward trend and increase with decreasing sub-window length as seen when $\ell_s = 16\Delta t$ and $\ell_s = 8\Delta t$. Thus, the optimal $\ell_s$ values give low relative wall times and low residual $\ell^2$ norms. Figure~\ref{f:res0_ALL} combines the results from Figure~\ref{f:res8}-\ref{f:res0} to create a Pareto front showing the optimal set of parameters that give lower residual $\ell^2$ norms vs relative wall time. 

The next set of Pareto plots presents the same data as shown in Figure~\ref{f:paretoResidual}, but highlights how the residual $\ell^2$ norm and relative wall time are affected by varying window lengths for a constant sub-window length. The results for this can be seen in Figure~\ref{f:residual_opposite}. Figures~\ref{f:res_e7}-\ref{f:res_e4} show that with hyper-reduction, WST-GNAT does decrease the relative wall time without increasing the residual $\ell^2$ norm for many of the test cases. However, as the sub-window length decreases, the WST-GNAT results begin to deviate from the initial trend. It is apparent in Figure~\ref{f:res_e2} that at a certain value of sub-window length, adding hyper-reduction increases the residual $\ell^2$ norms regardless of the set of bases used or the number of temporal and spatial nodes used. This shows that in order to achieve low MSE and low residual $\ell^2$ norms, the sub-window length for this experiment need to be set such that $\ell_s>16\Delta t$. If the sub-window length is too small, the ROM will produce larger residual $\ell^2$ norms. 
Figure~\ref{f:residual_opposite} compiles the results from Figure~\ref{f:res_e8}-\ref{f:res_e0} to create a Pareto front showing the optimal set of parameters that gives the lowest residual $\ell^2$ norms vs. relative wall time. This Pareto front is the same one shown in Figure~\ref{f:res0_ALL}, but highlights how the residual $\ell^2$ norms and relative wall time change with different window lengths. 
The parameters for residual $\ell^2$ norm that lie on the Pareto front can be seen in Table~\ref{t:residuals}. Cases 1--2 and cases 6--8, refer to the original ST-LSPG method with hyper-reduction (i.e., ST-GNAT). These ST-GNAT data points correspond to low relative wall time, but high residual norms. Cases 3--5 and 9--12 refer to the WST-LSPG method with hyper-reduction. These cases, especially cases 9--12, exhibit results that have a combination of both lower residual $\ell^2$ norms and lower relative wall times. Cases 13--16 refer to the WST-LSPG method without hyper-reduction and are characterized by lower residual $\ell^2$ norms, but longer relative wall times.

Lastly, Table~\ref{t:MSE} and Table~\ref{t:residuals} have certain rows highlighted. The highlighted rows refer to a specific set of parameters that refer to the Pareto optimal parameters for both the MSE and residual $\ell^2$ norm; these test cases exist in both Pareto front. These highlighted test cases are adequate starting points to find a windowed space--time ROM solution that optimizes both the MSE and residual $\ell^2$ norm. 

\subsubsection{Summary of results}
WST-LSPG was performed on the one-dimensional Burgers' equation. To assess this proposed method, the MSE, IMSE, and the residual $\ell^2$ norms were calculated over two different window parameters, $\ell_w$ and $\ell_s$. First, the results for this study were shown in Figure~\ref{f:window1}. Figure~\ref{f:window1} showed that the MSE and the residual $\ell^2$ norms exhibited opposite behaviors as the time window length increased. The IMSE was plotted as well in~\ref{s:imse} in order to discount the influence of phase error on MSE. Next, the WST-LSPG results were shown in Pareto plots for the MSE in Figure~\ref{f:paretoMSE} and Figure~\ref{f:paretoMSE_opposite} and for the residual $\ell^2$ norm in Figure~\ref{f:paretoResidual} and Figure~\ref{f:residual_opposite}. The optimal cases were plotted on Pareto fronts. These fronts showed that the cases that gave the lowest MSE and residual $\ell^2$ norms were characterized by a larger number of windows and sub-windows. Finally, with hyper-reduction, WST-GNAT was able to reduce the computational cost while achieving a MSE as low as $0.015893315$ and low residual $\ell^2$ norms as low as $0.664936770$. Note that without hyper-reduction, the WST-LSPG achieved MSEs as low as $0.006083036$ and residual $\ell^2$ norms as low as $0.122859453$, highlighting that hyper-reduction did not achieve the lowest MSE and residual $\ell^2$ norms. 


\subsection{Parameterized compressible Navier--Stokes equations}
This section presents results for WST-LSPG and WST-GNAT applied to the two-dimensional compressible Naiver--Stokes equations. The compressible Navier--Stokes equations describe the motion of a viscous fluid and is made up of the conservation of mass, conservation of momentum, and the conservation of energy equations. Together, the equations are defined as
\begin{equation}
    \frac{\partial \state}{\partial t} +\nabla\cdot\bm{F} = \nabla \cdot \bm{Q},\quad \forall x\in [-100,100],\quad y\in [-100,100],\quad \forall t\in[0,T],
\end{equation}
where $\bm{u}$ is the state vector, $\bm{F}$ is the convective flux, and $\bm{Q}$ is the diffusive flux. The state and fluxes are defined as
\begin{equation*}
    \state =
    \bmat{
    \rho\\
    \rho u_1\\
    \rho u_2\\
    \rho E
    },\quad
    \bm{F} =
    \bmat{
    \rho u_1\\
    \rho u_1^2 +p\\
    \rho u_1 u_2\\
    \rho u_1 H
    }
    \widehat{\bm{i}}\;+\;
    \bmat{
    \rho u_2\\
    \rho u_1 u_2\\
    \rho u_2^2+p\\
    \rho u_2 H
    }
    \widehat{\bm{j}},\quad
    \bm{Q} =
    \bmat{
    0\\
    \tau_{11}\\
    \tau_{12}\\
    u_1\tau_{11}+u_2\tau_{12}+q_1
    }\widehat{\bm{i}}\;+\;
    \bmat{
    0\\
    \tau_{12}\\
    \tau_{22}\\
    u_1\tau_{12}+u_2\tau_{22}+q_2
    }\widehat{\bm{j}},
\end{equation*}
where $\widehat{\bm{i}}$ and $\widehat{\bm{j}}$ refer to the spatial dimension, $\rho$ is the density, $u_1, u_2$ are the components of the velocities, $E$ is the specific total energy per unit mass, and $H$ is the specific total enthalpy per unit mass. The heat transfer term and the viscous shear and normal stresses for a Newtonian fluid are defined as
\begin{equation*}
  q_i = \kappa_T\partial_i T,\quad \tau_{ij} = \mu_{\mathrm{NS}}(\partial_iu_j+\partial_ju_i)+\delta_{ij}\lambda\partial_mu_m,
\end{equation*}
where $\mu_{\mathrm{NS}}$ is the dynamic viscosity, $\lambda$ is the bulk viscosity, $\delta_{ij}$ is the Kronecker delta function, and $\kappa_T$ thermal conductivity. These quantities are defined as
\begin{equation}
\mu = \mu_{\mathrm{ref}}\left(\frac{T}{T_{ref}}\right)^{1.5}\left(\frac{T_{ref}+T_s}{T+T_s}\right),\quad \lambda = -\frac{2}{3}\mu,\quad \kappa_T = \frac{\gamma\mu R}{(\gamma-1)Pr}.
\end{equation}
Important properties for this problem include the specific heat ratio $\gamma = 1.4$, Reynolds number $\mathrm{Re} = 5000$, Prandtl number $Pr = 0.71$, and the gas constant $R = 287.05\;J/(kg\cdotp K)$. The free-stream state is initialized to $\bm{u}_{\text{FS}} =\bmat{\rho & \rho u_1 & \rho u_2 & \rho E}^T=\bmat{1 & \cos(\alpha)& \sin(\alpha) & \frac{1}{\gamma(\gamma-1)M^2}+\frac{1}{2}}^T$, and the parameter space is defined as $\param\equiv \left(\alpha,M\right) \in \mathcal{D}=[8, 9]\times [0.6, 0.7])$, where $\alpha$ is the angle of attack and $M = \sqrt{\gamma p/\rho}$ is the Mach number. The set of design parameters for $\alpha$ and $M$ were found by uniformly sampling such that $\Delta \mu_{\alpha} =0.025$, and $\Delta \mu_{M} =0.025$. This leads to $N_{\mu_{\alpha}}=5$ and $N_{\mu_{M}}=5$. Overall, the number of training data sets is $n_{\mathrm{train}} = 25$. The test parameters used in the predictive model reduction simulations are set to $\mu_{\mathrm{test},\alpha} = 8.37$ and $\mu_{\mathrm{test},M}=0.67$. The design parameter space for the space--time residual basis is set to be the same as the one used for the states. This example considers four different quantities of interest to evaluate the accuracy and behavior of the WST-LSPG ROMs: MSE, residual $\ell^2$ norm, relative drag error, and relative lift error. The FOM wall time was found by taking the average of 100 simulation wall times. For each ROM case, the online simulation was executed five times to find the average relative wall times.

The geometry chosen for this work is the two-dimensional NACA 0012 airfoil. The boundary conditions are full-state on the far-field and adiabatic no-slip wall on the airfoil. The final time is set to $T=12.8$. The state is initialized to free-stream quantities and advanced forward in time using BDF2 time marching with a constant time step of $\Delta t = 0.1$ for a total of 128 time steps. The far-field is approximately 100 chord lengths away from the airfoil. To discretize in space, the discontinuous Galerkin (DG) method is used. The basis used in the DG method are Tri-Lagrange polynomial basis functions defined by $p+1$ order. For this work, the interpolation order is set to $p=1$ for a second order accurate state solution.

The mesh used for the NACA 0012 airfoil is an unstructured mesh that can be seen in Figure~\ref{f:airfoil}. The unstructured mesh was created by performing steady mesh refinement via output-based error estimation on a coarse mesh~\cite{Fidkowski_2015_VKI}. Five iterations of mesh adaptation were performed based on the output-based adjoint error to find a mesh that was fine enough at locations such as the wake of the flow. The outputs for mesh adaption were set to be the drag and lift. The final unstructured mesh used in these simulations contain $32,424$ spatial nodes and $10808$ triangular elements. The compressible two-dimensional Navier--Stokes equations are implemented in the \texttt{xflow} program, a high-order discontinuous Galerkin finite element solver in \texttt{ANSI C}~\cite{xflow}. All model reduction routines for the two-dimensional model reduction are implemented in the \texttt{ykflow} program in \texttt{ANSI C}, which is a library of functions that performs model reduction routines using the \texttt{PETSc} and \texttt{SLEPc} numerical linear algebra libraries. All timings are obtained by performing serial calculations on an Intel(R) Xeon(R) CPU E5-2695 v4 @ 2.10GHz, 128 GB RAM.
\begin{figure}[!t]
\begin{center}
\includegraphics[width = 0.243\textwidth,frame]{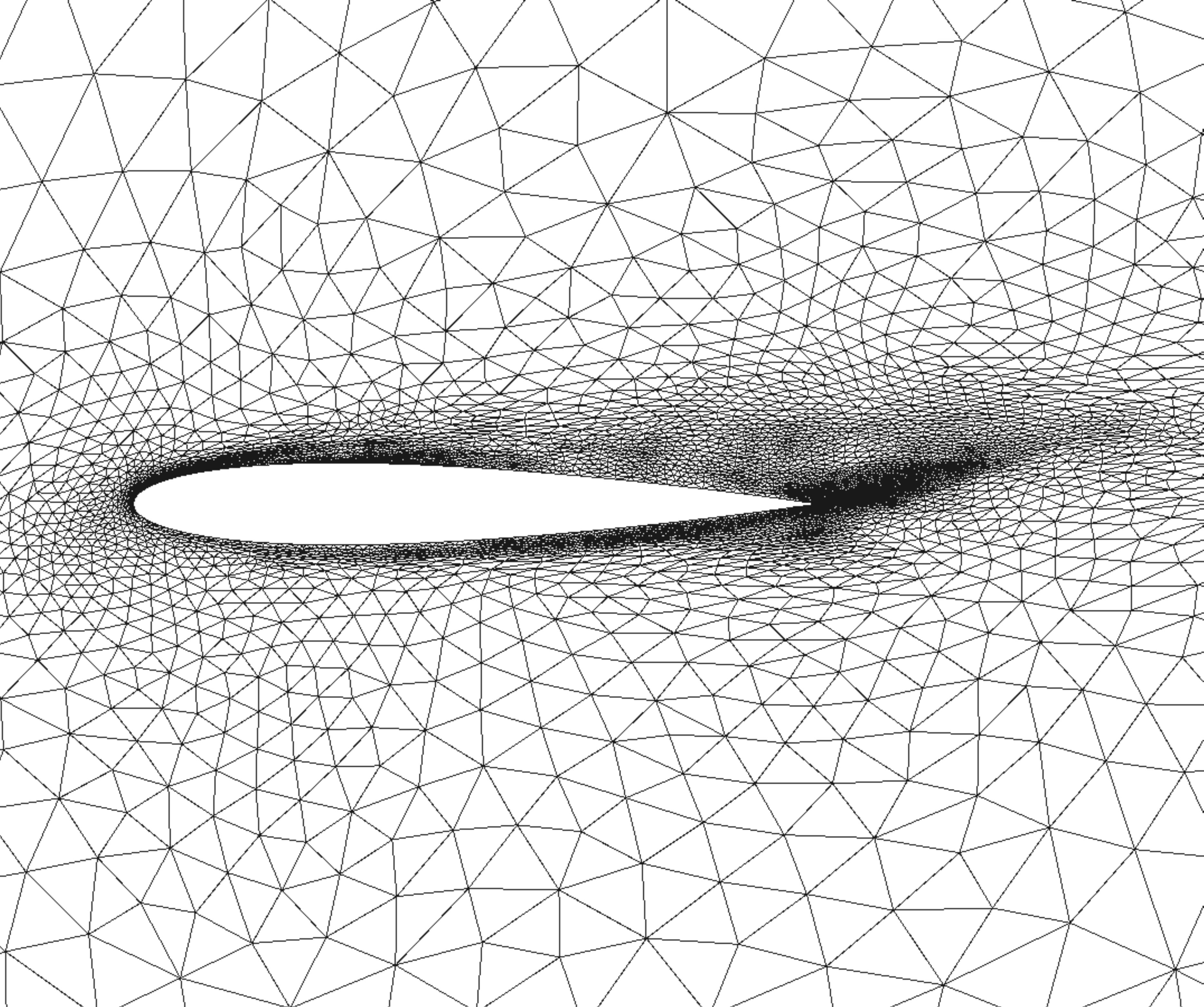} %
\includegraphics[width = 0.243\textwidth,frame]{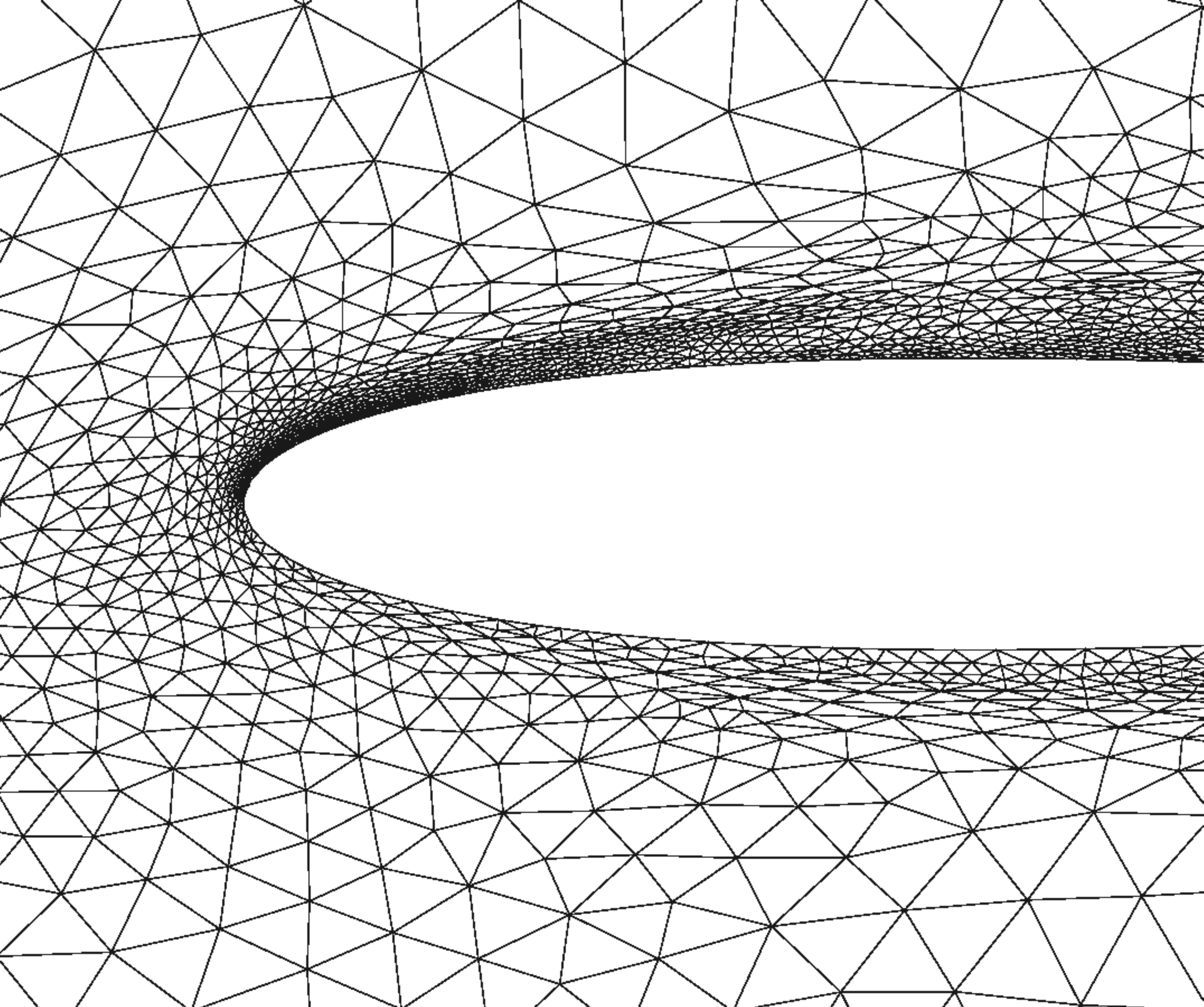}
\includegraphics[width = 0.243\textwidth,frame]{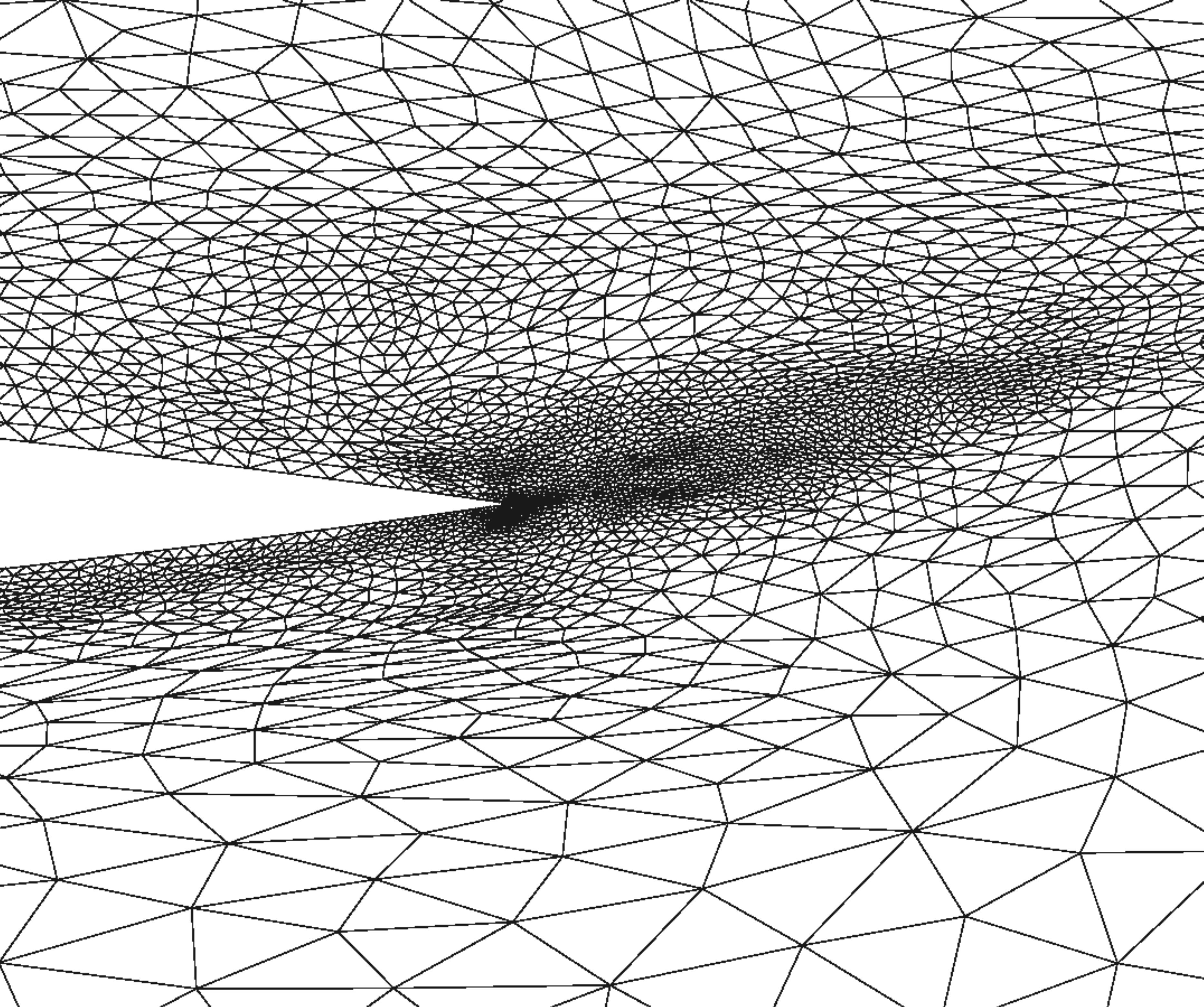}
\includegraphics[width = 0.243\textwidth,frame]{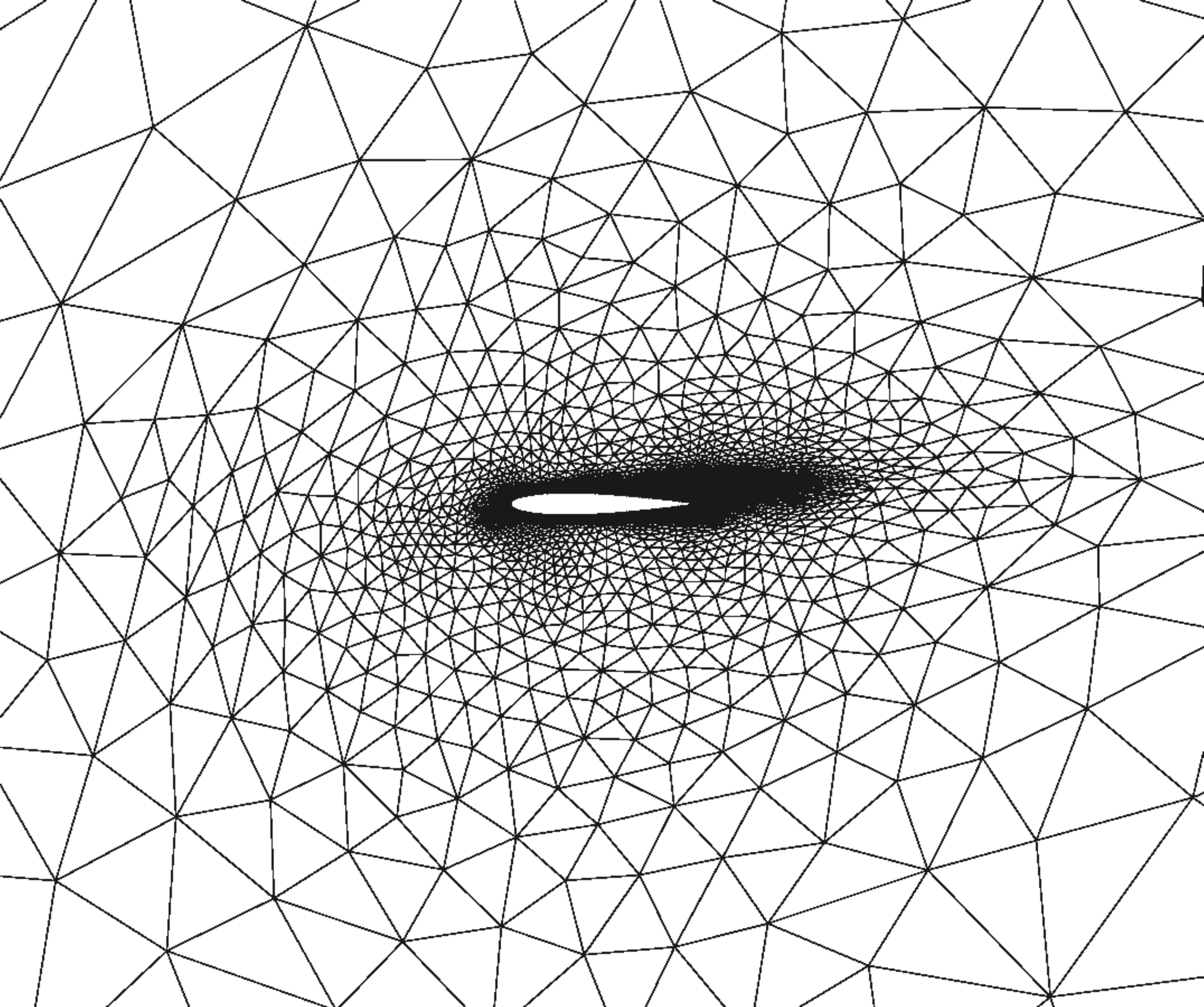}
\caption{NACA 0012 geometry and its unstructured mesh}
\label{f:airfoil}
\end{center}
\end{figure}
\subsubsection{Model predictions}

\begin{figure}[p!]
\subfloat[mean squared error: $e_s = 0.99, e_t = 0.99$][\centering \hspace{1.5mm}mean squared error:\par\hspace{5.5mm}$e_s = 0.99, e_t = 0.99$\label{f:NSMSE0}]{\includegraphics[height=0.264\textwidth]{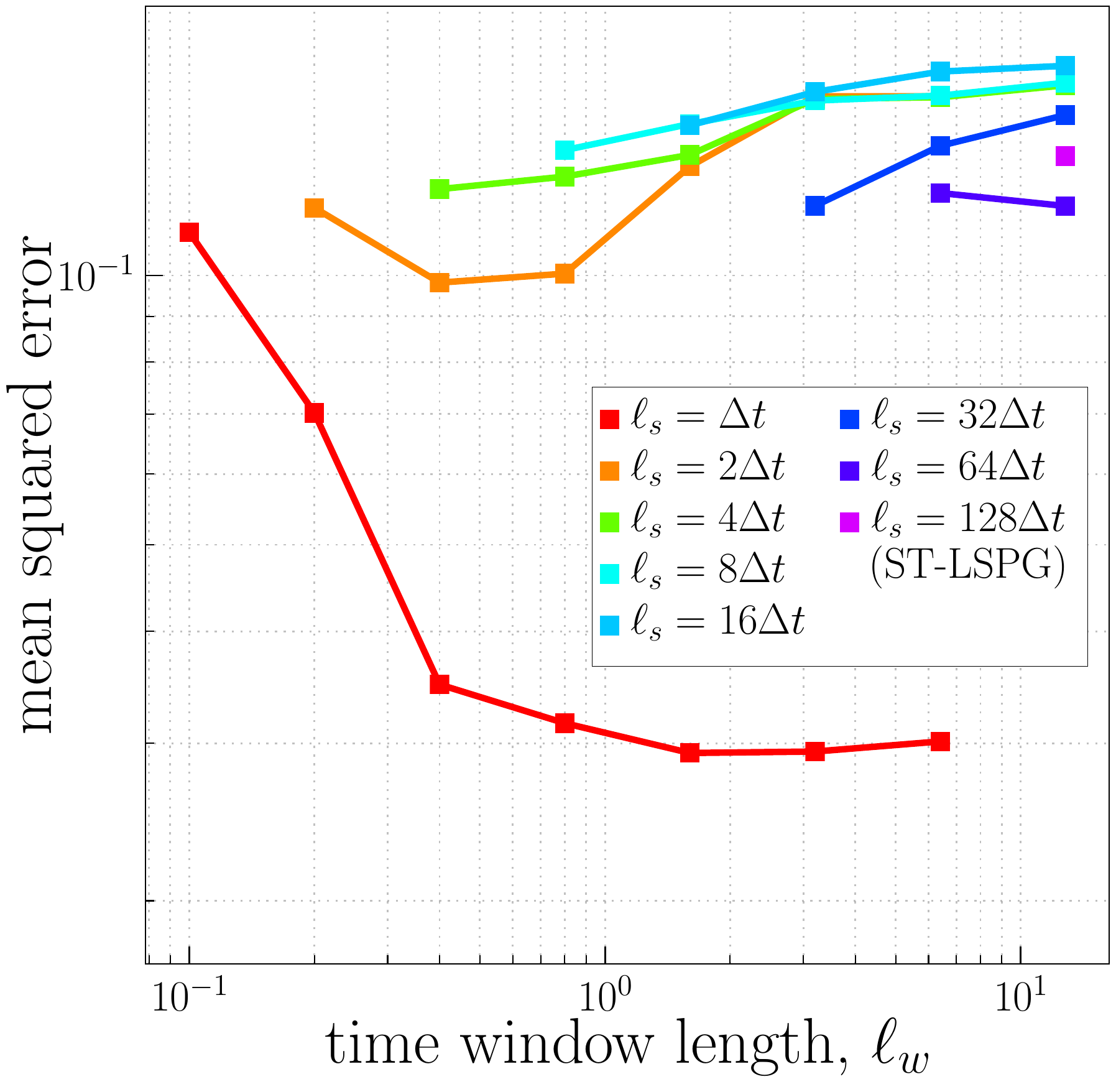}}\hspace{1mm}\nolinebreak
\subfloat[mean squared error: $e_s = 0.99, e_t = 0.999$][\centering \hspace{1.5mm}mean squared error:\par\hspace{5.5mm}$e_s = 0.99, e_t = 0.999$\label{f:NSMSE2}]{\includegraphics[height=0.264\textwidth]{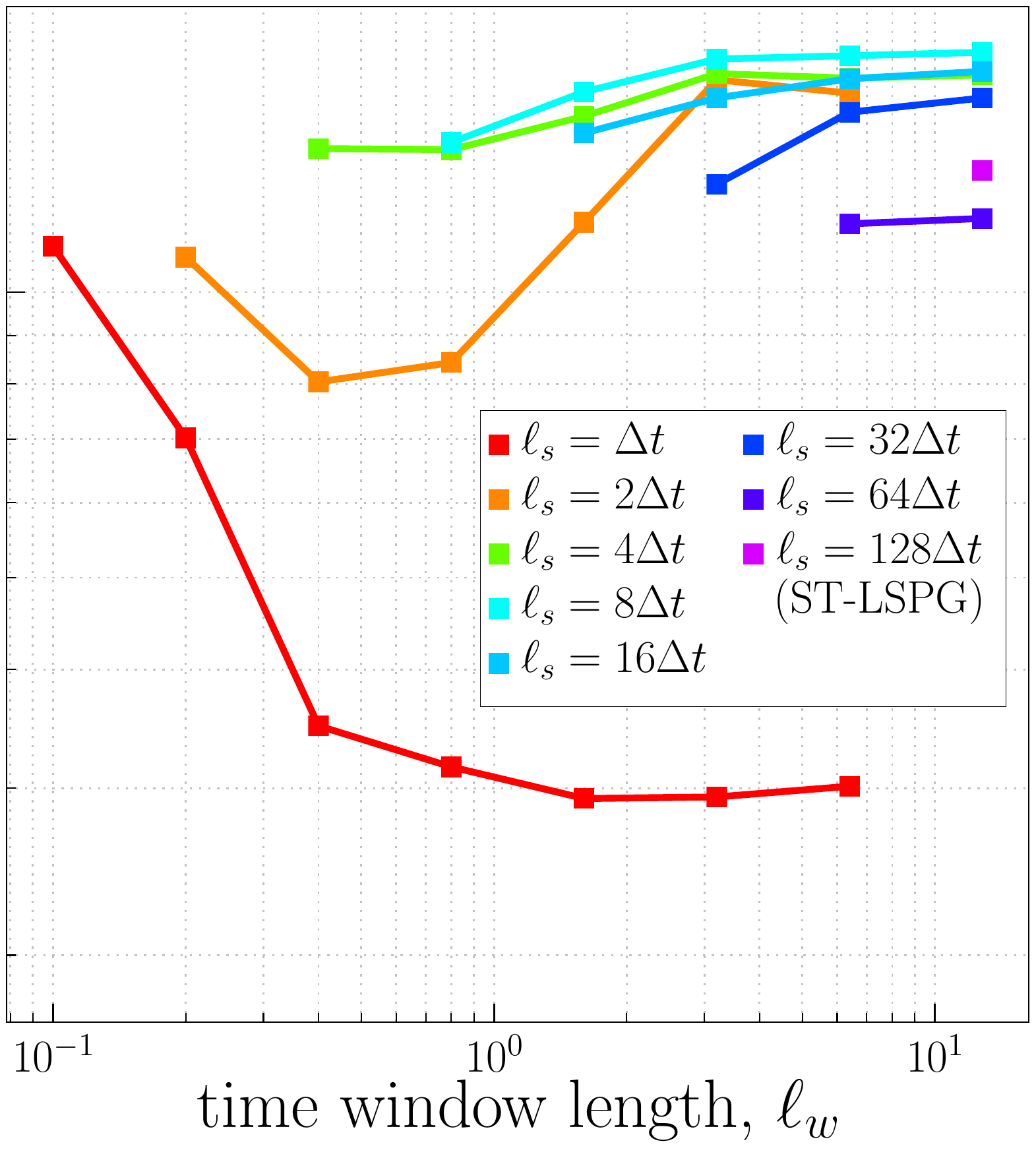}}\hspace{1mm}\nolinebreak
\subfloat[mean squared error: $e_s = 0.999, e_t = 0.99$][\centering \hspace{1.5mm}mean squared error:\par\hspace{5.5mm}$e_s = 0.999, e_t = 0.99$\label{f:NSMSE4}]{\includegraphics[height=0.264\textwidth]{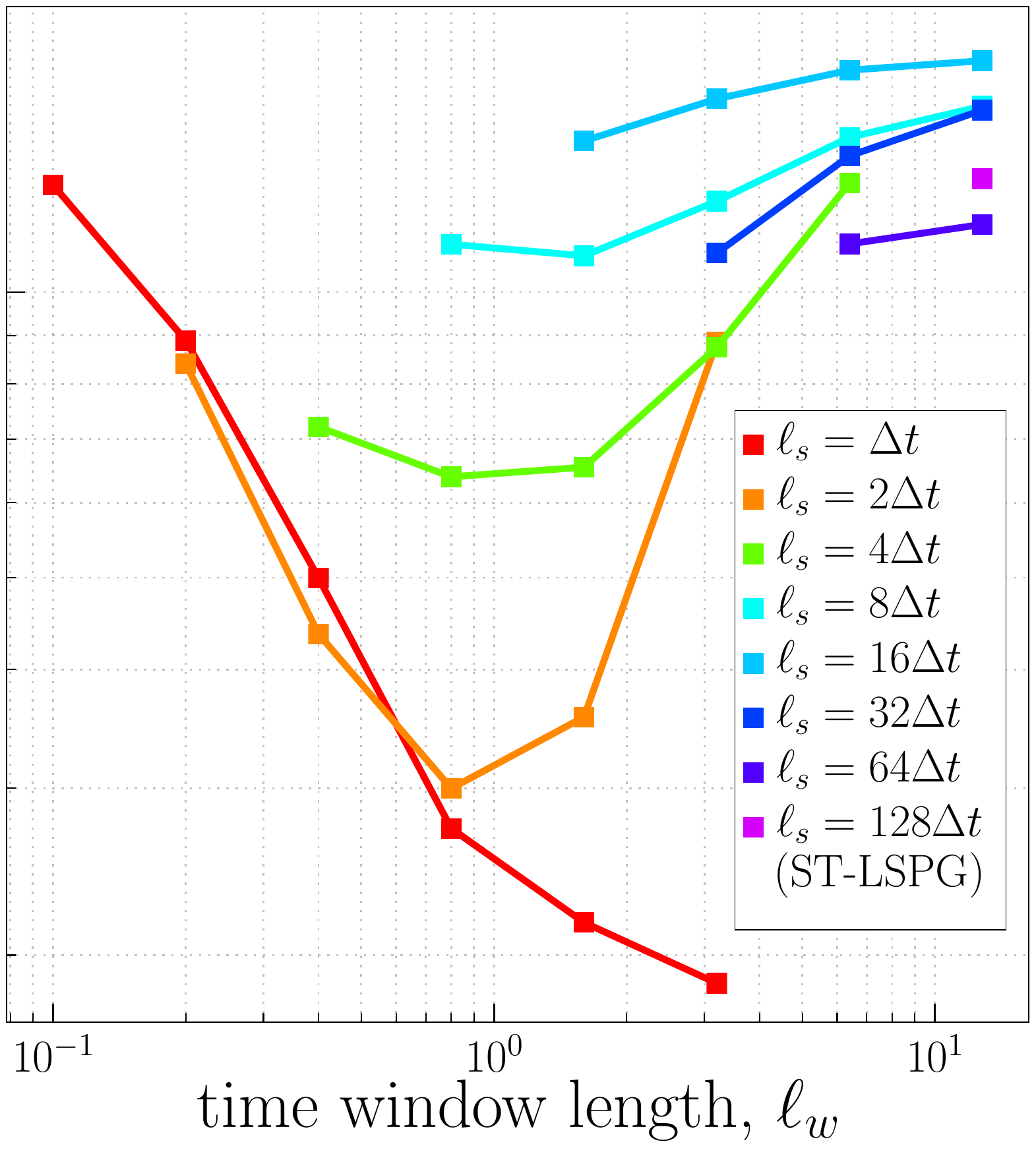}}\hspace{1mm}\nolinebreak
\subfloat[mean squared error: $e_s = 0.999, e_t = 0.999$][\centering \hspace{1.5mm}mean squared error:\par\hspace{5.5mm}$e_s = 0.999, e_t = 0.999$\label{f:NSMSE6}]{\includegraphics[height=0.264\textwidth]{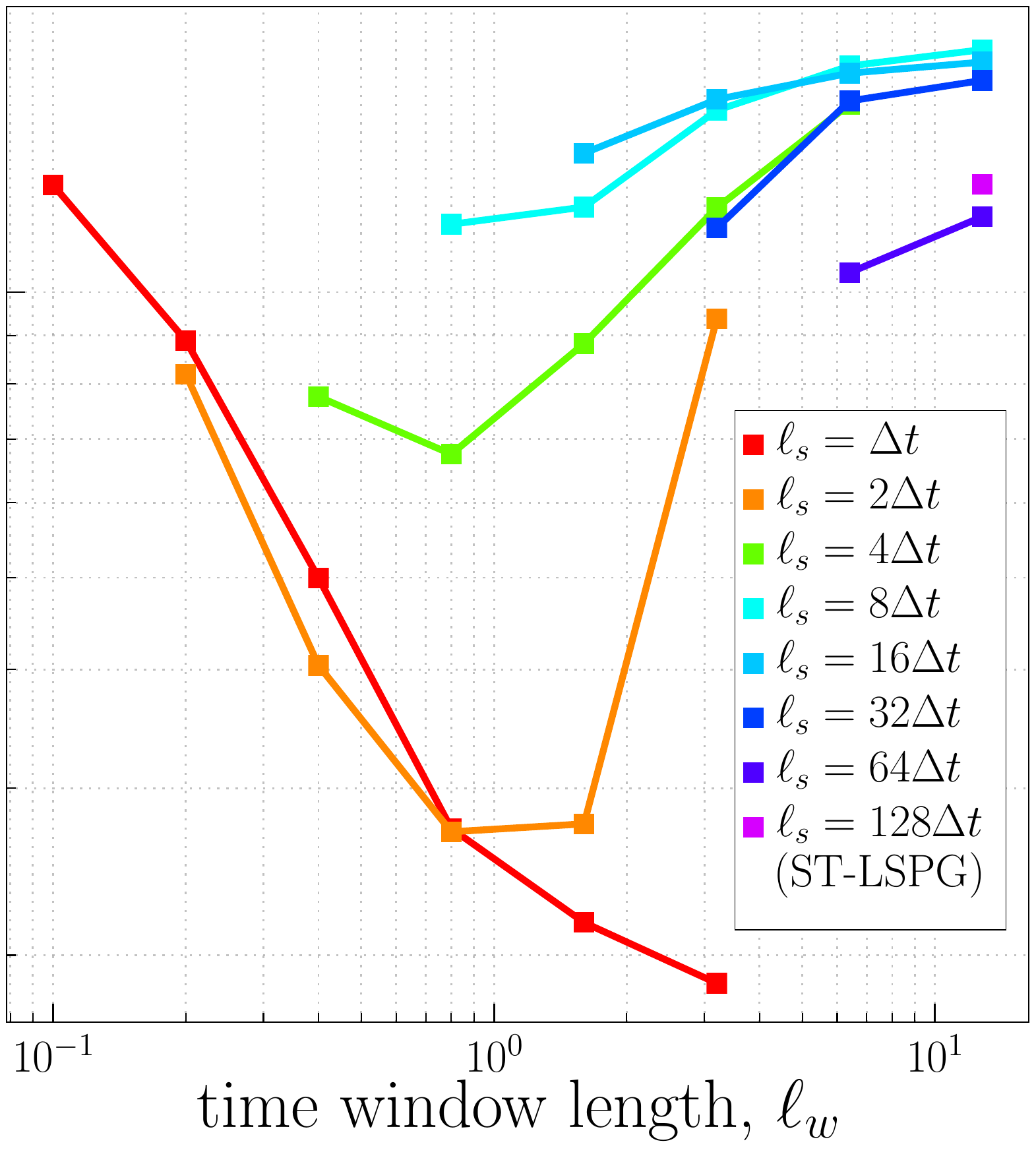}}\vspace{-2mm}\\ 
\subfloat[residual $\ell^2$ norm: $e_s = 0.99, e_t = 0.99$][\centering \hspace{1.5mm}residual $\ell^2$ norm:\par\hspace{5.5mm}$e_s = 0.99, e_t = 0.99$ \label{f:NSResidual0}]{\includegraphics[height=0.264\textwidth]{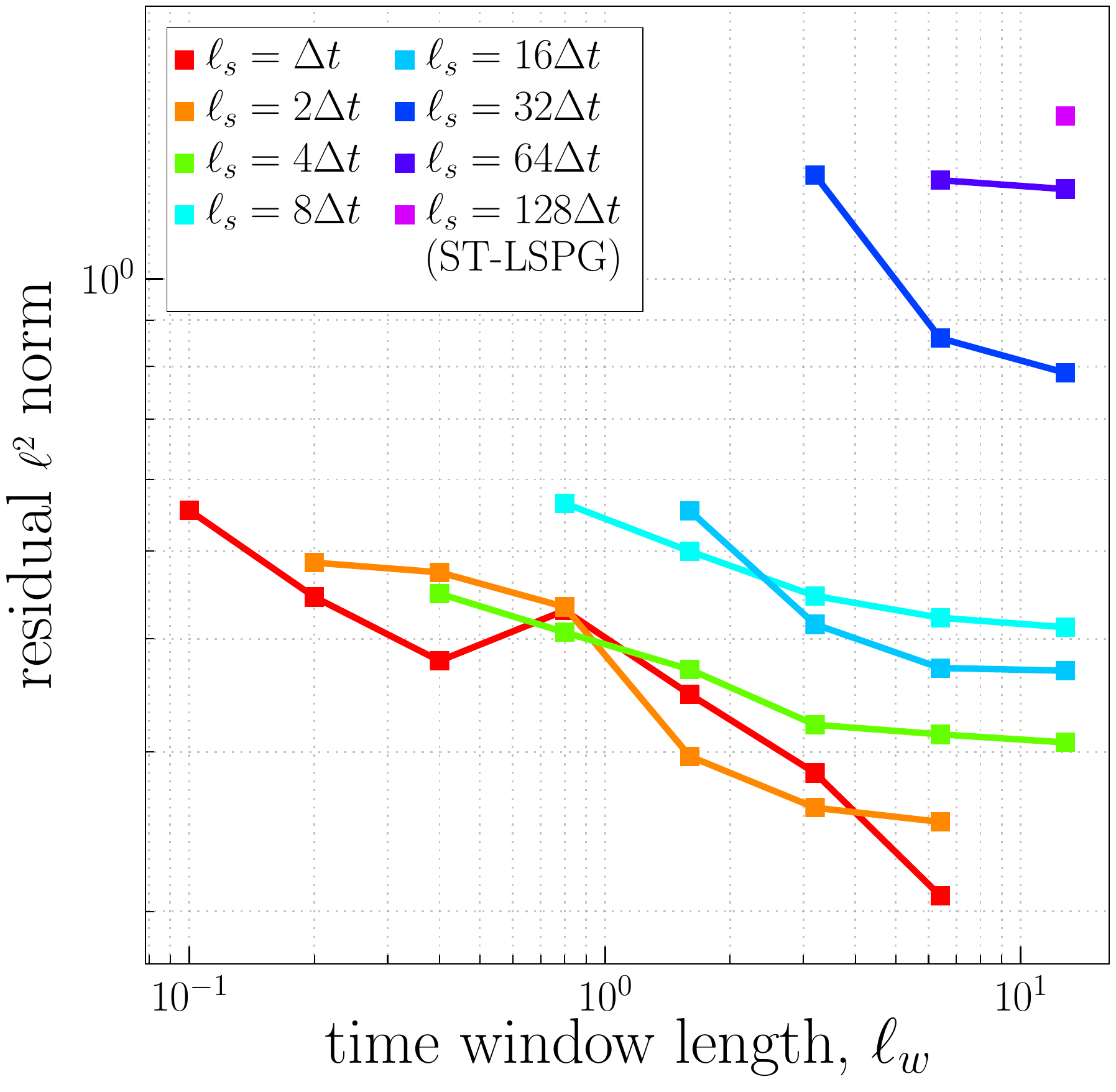}}\hspace{1mm}\nolinebreak
\subfloat[residual $\ell^2$ norm: $e_s = 0.99, e_t = 0.999$][\centering \hspace{1.5mm}residual $\ell^2$ norm:\par\hspace{5.5mm}$e_s = 0.99, e_t = 0.999$ \label{f:NSResidual2}]{\includegraphics[height=0.264\textwidth]{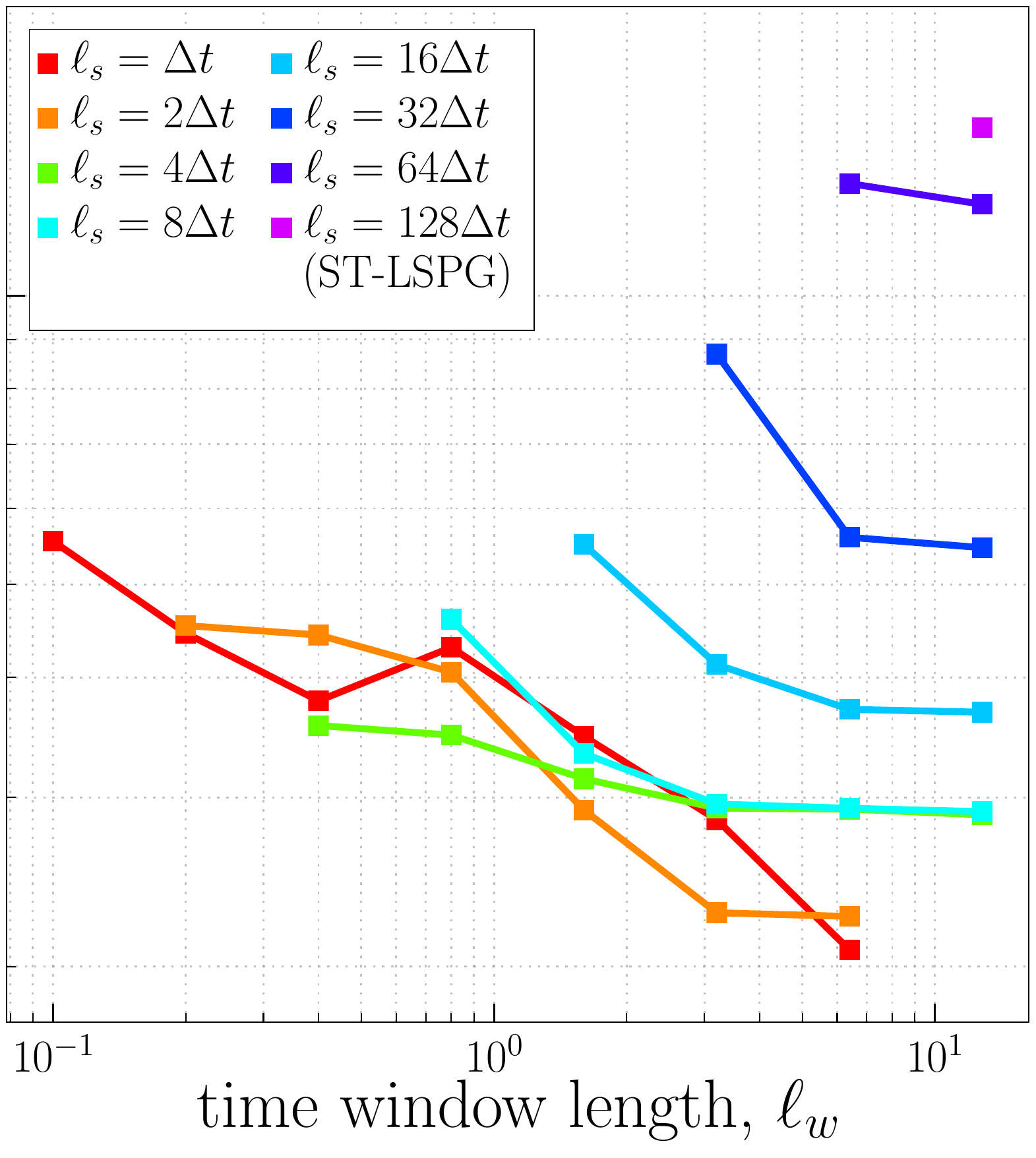}}\hspace{1mm}\nolinebreak
\subfloat[residual $\ell^2$ norm: $e_s = 0.999, e_t = 0.99$][\centering \hspace{1.5mm}residual $\ell^2$ norm:\par\hspace{5.5mm}$e_s = 0.999, e_t = 0.99$\label{f:NSResidual4}]{\includegraphics[height=0.264\textwidth]{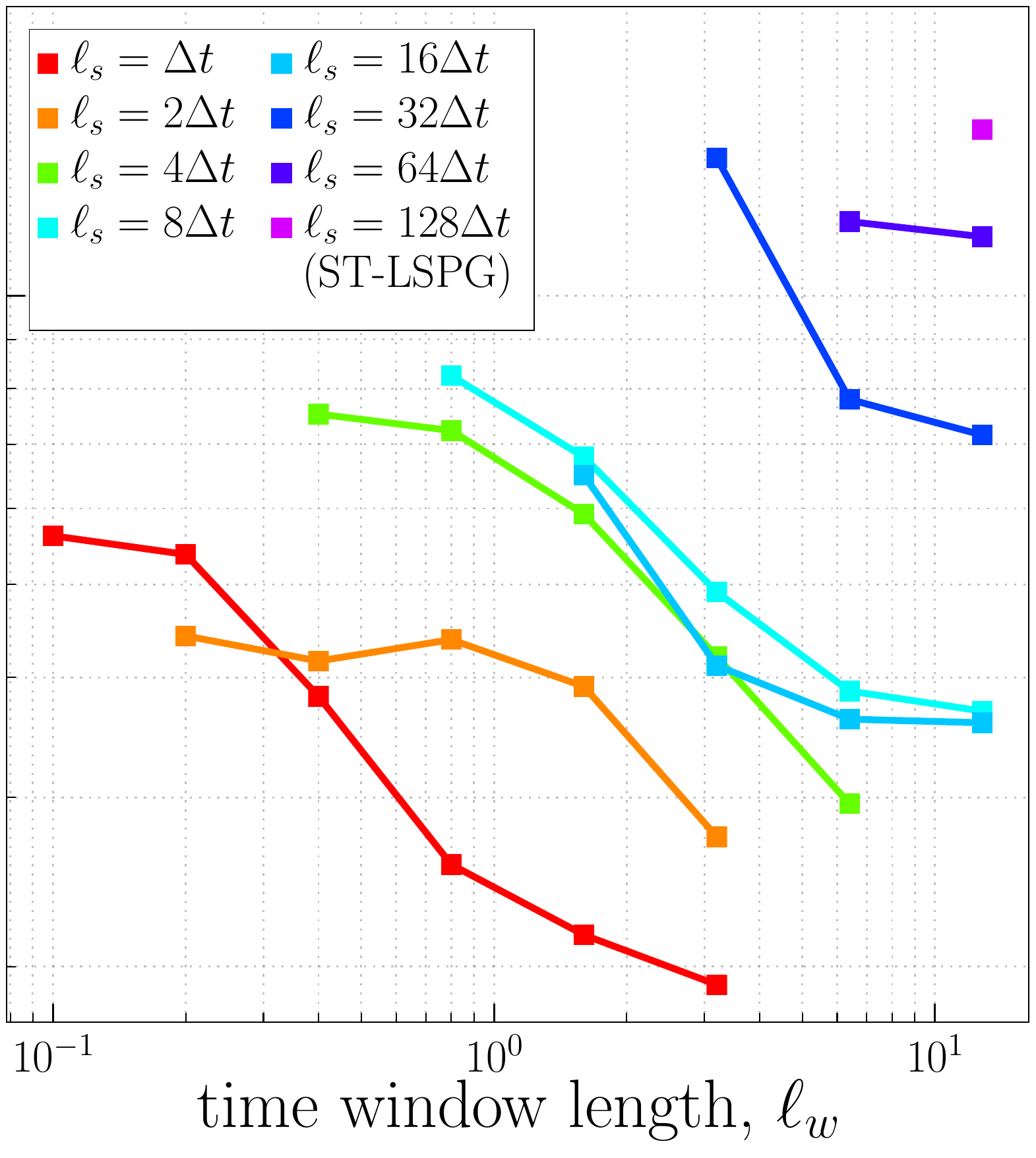}}\hspace{1mm}\nolinebreak
\subfloat[residual $\ell^2$ norm: $e_s = 0.999, e_t = 0.999$][\centering \hspace{1.5mm}residual $\ell^2$ norm:\par\hspace{5.5mm}$e_s = 0.999, e_t = 0.999$\label{f:NSResidual6}]{\includegraphics[height=0.264\textwidth]{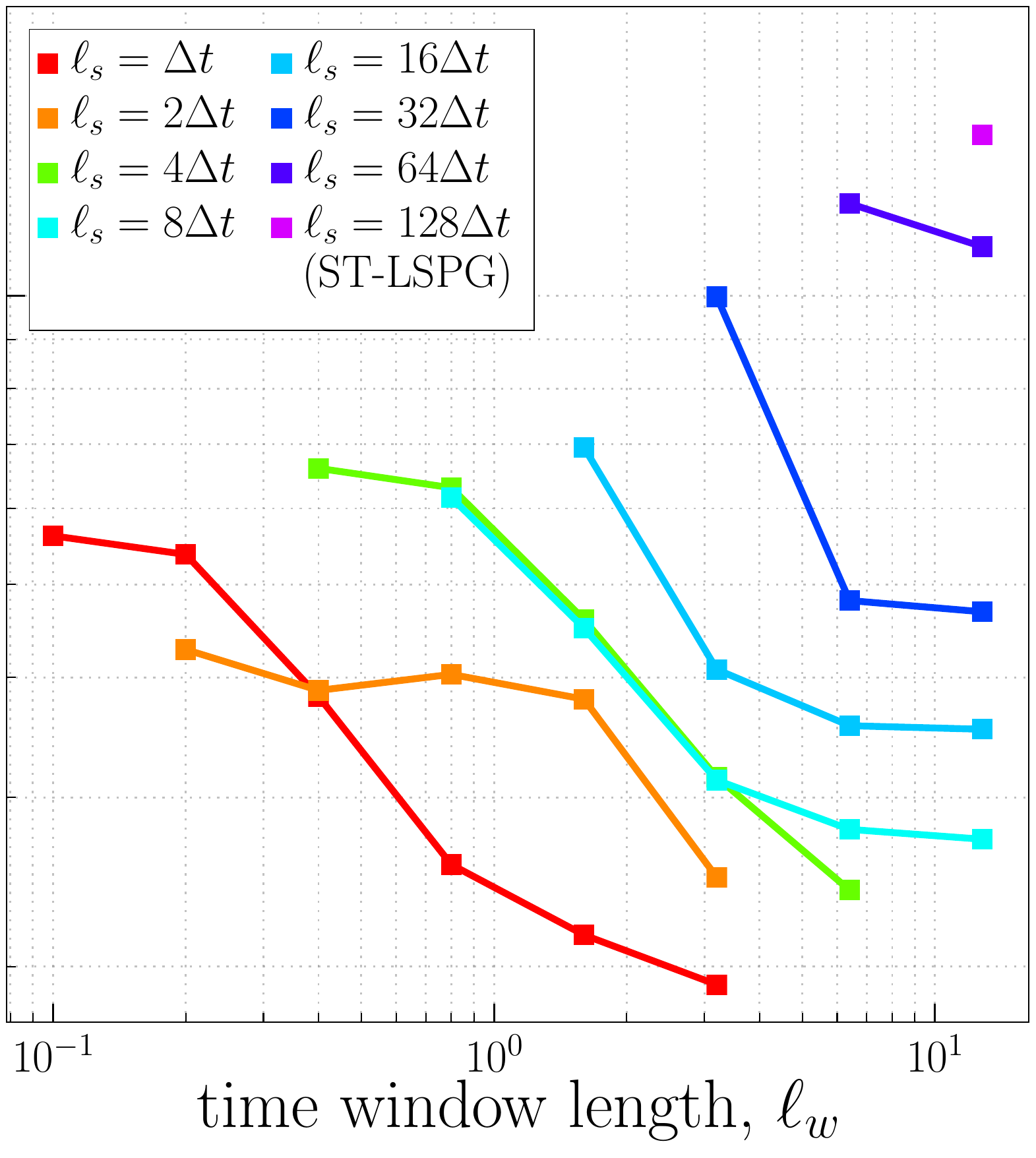}}\vspace{-2mm}\\
\subfloat[relative drag error: $e_s = 0.99, e_t = 0.99$][\centering \hspace{1.5mm}relative drag error:\par\hspace{5.5mm}$e_s = 0.99, e_t = 0.99$\label{f:NSDrag0}]{\includegraphics[height=0.264\textwidth]{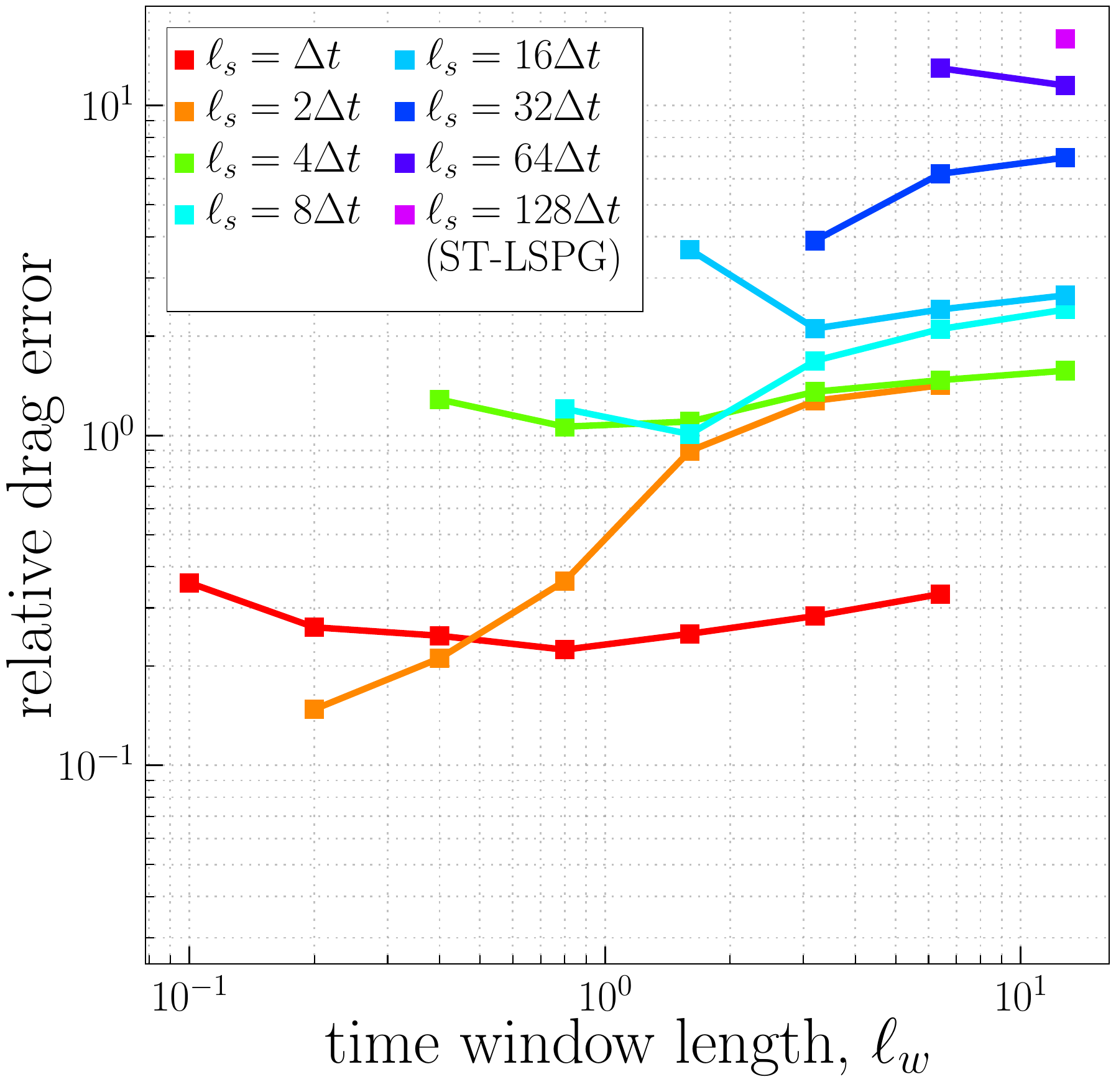}}\hspace{1mm}\nolinebreak
\subfloat[relative drag error: $e_s = 0.99, e_t = 0.999$][\centering \hspace{1.5mm}relative drag error:\par\hspace{5.5mm}$e_s = 0.99, e_t = 0.999$\label{f:NSDrag2}]{\includegraphics[height=0.264\textwidth]{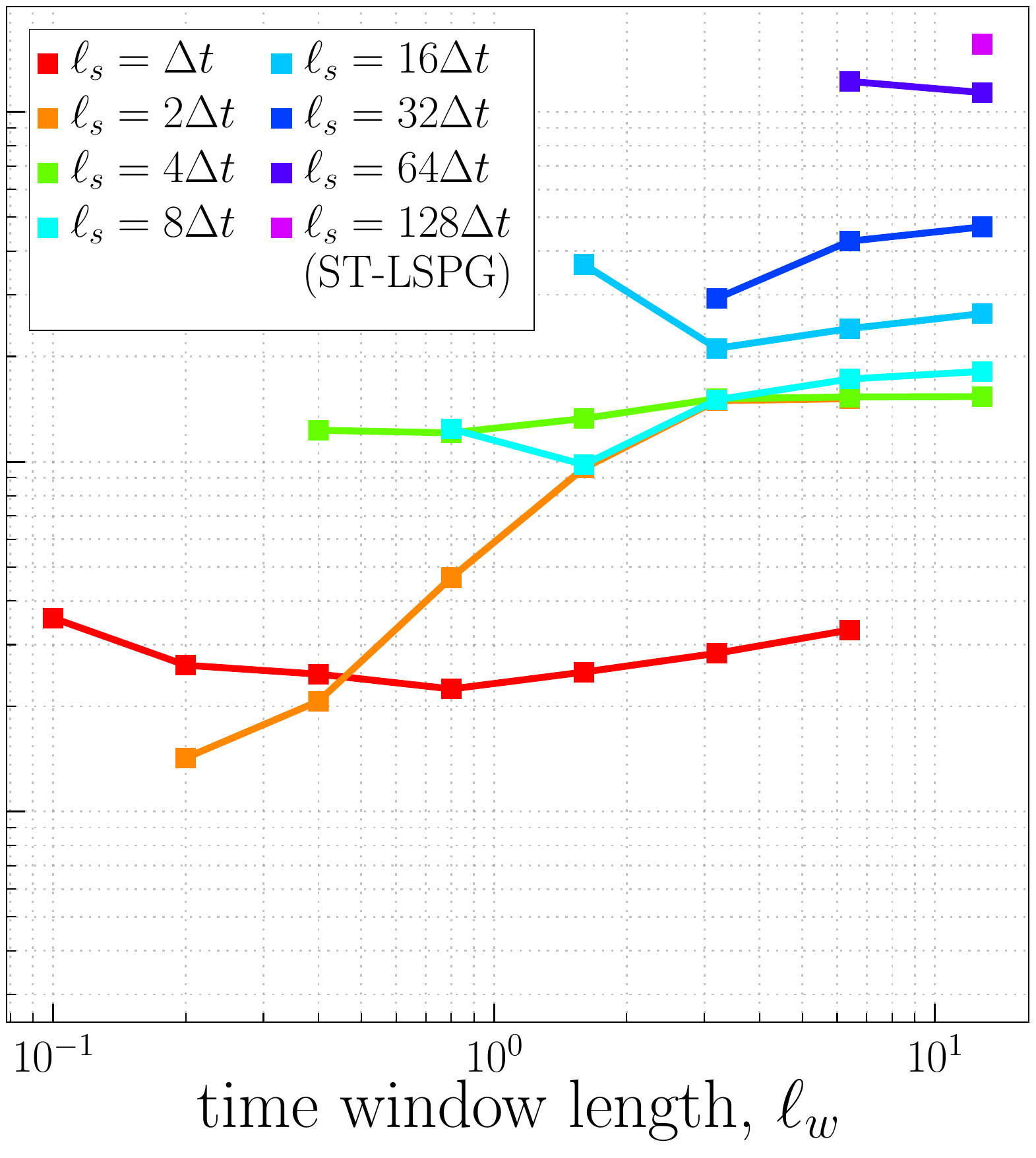}}\hspace{1mm}\nolinebreak
\subfloat[relative drag error: $e_s = 0.999, e_t = 0.99$][\centering \hspace{1.5mm}relative drag error:\par\hspace{5.5mm}$e_s = 0.999, e_t = 0.99$\label{f:NSDrag4}]{\includegraphics[height=0.264\textwidth]{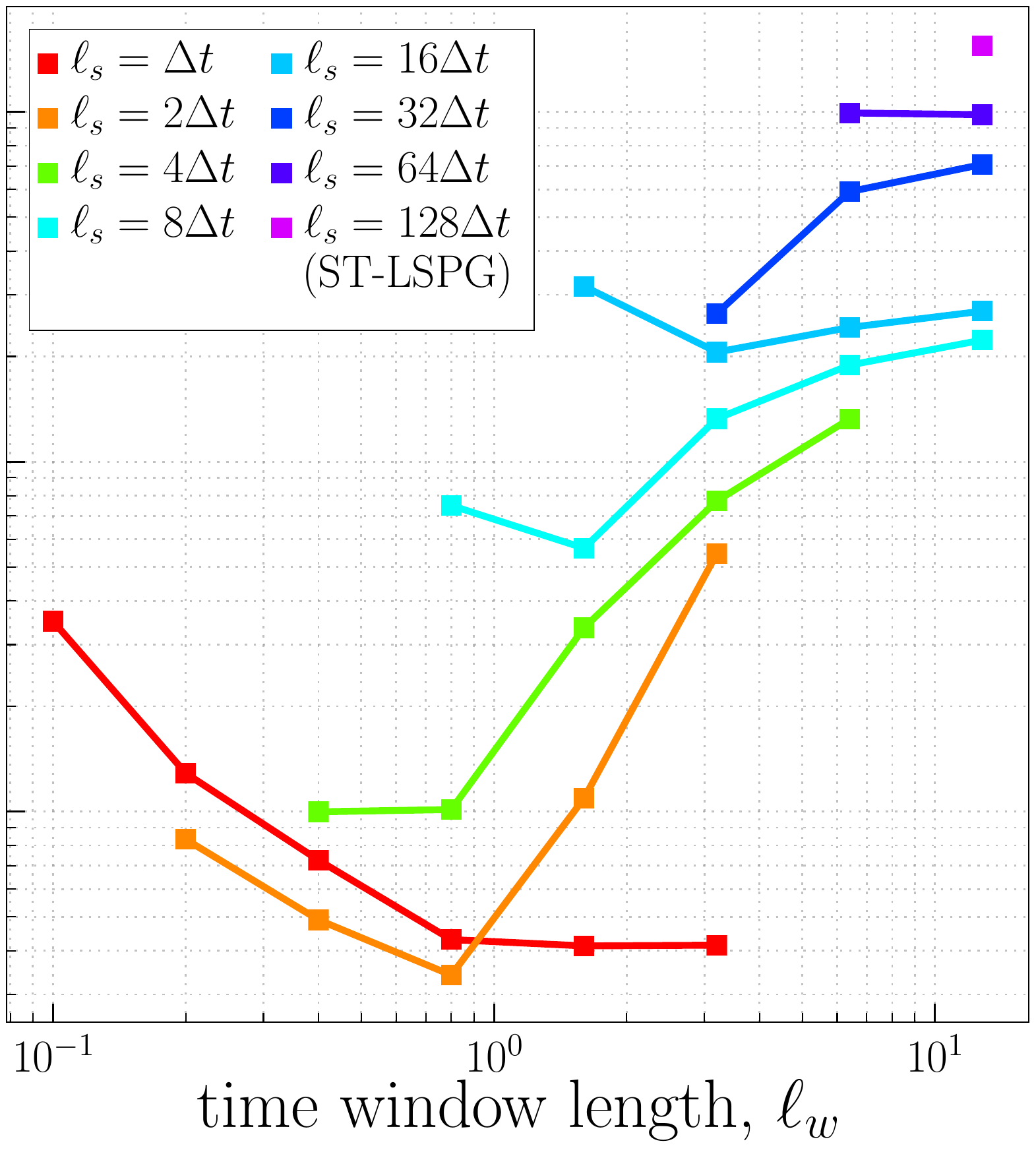}}\hspace{1mm}\nolinebreak
\subfloat[relative drag error: $e_s = 0.999, e_t = 0.999$][\centering \hspace{1.5mm}relative drag error:\par\hspace{5.5mm}$e_s = 0.999, e_t = 0.999$\label{f:NSDrag6}]{\includegraphics[height=0.264\textwidth]{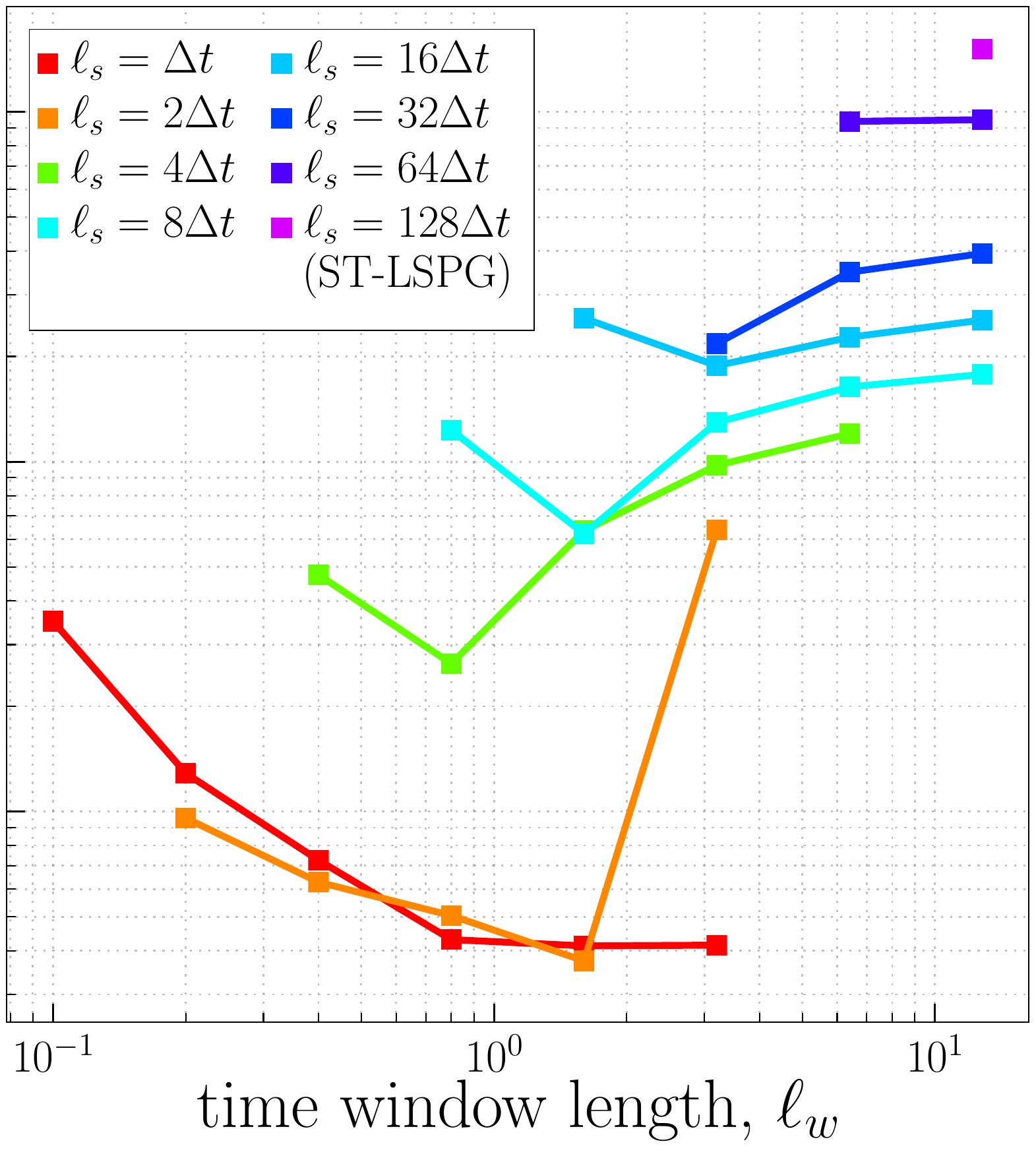}} \vspace{-2mm}\\
\subfloat[relative lift error: $e_s = 0.99, e_t = 0.99$][\centering \hspace{1.5mm}relative lift error:\par\hspace{5.5mm}$e_s = 0.99, e_t = 0.99$\label{f:NSLift0}]{\includegraphics[height=0.264\textwidth]{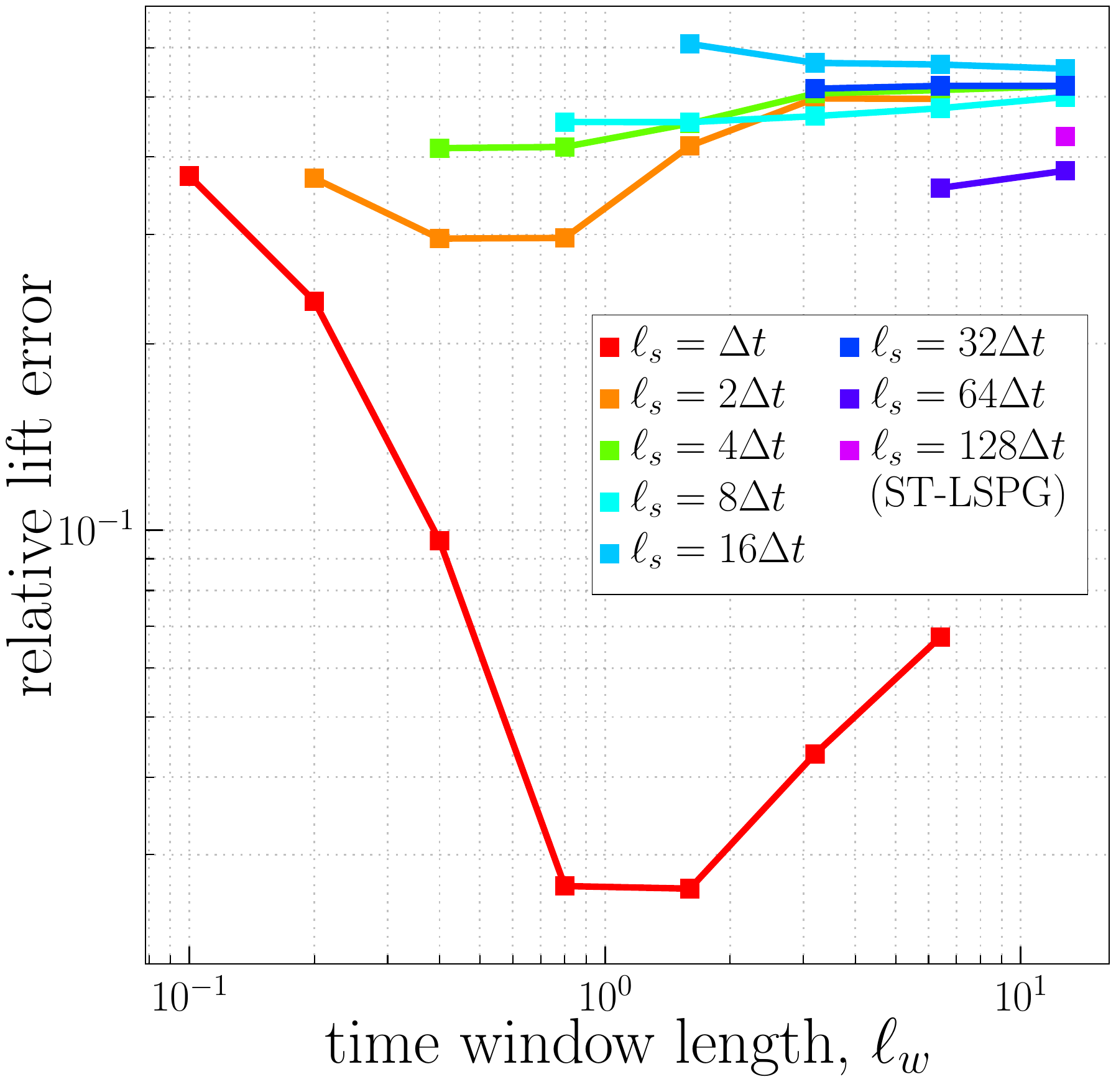}}\hspace{1mm}\nolinebreak
\subfloat[relative lift error: $e_s = 0.99, e_t = 0.999$][\centering \hspace{1.5mm}relative lift error:\par\hspace{5.5mm}$e_s = 0.99, e_t = 0.999$\label{f:NSLift2}]{\includegraphics[height=0.264\textwidth]{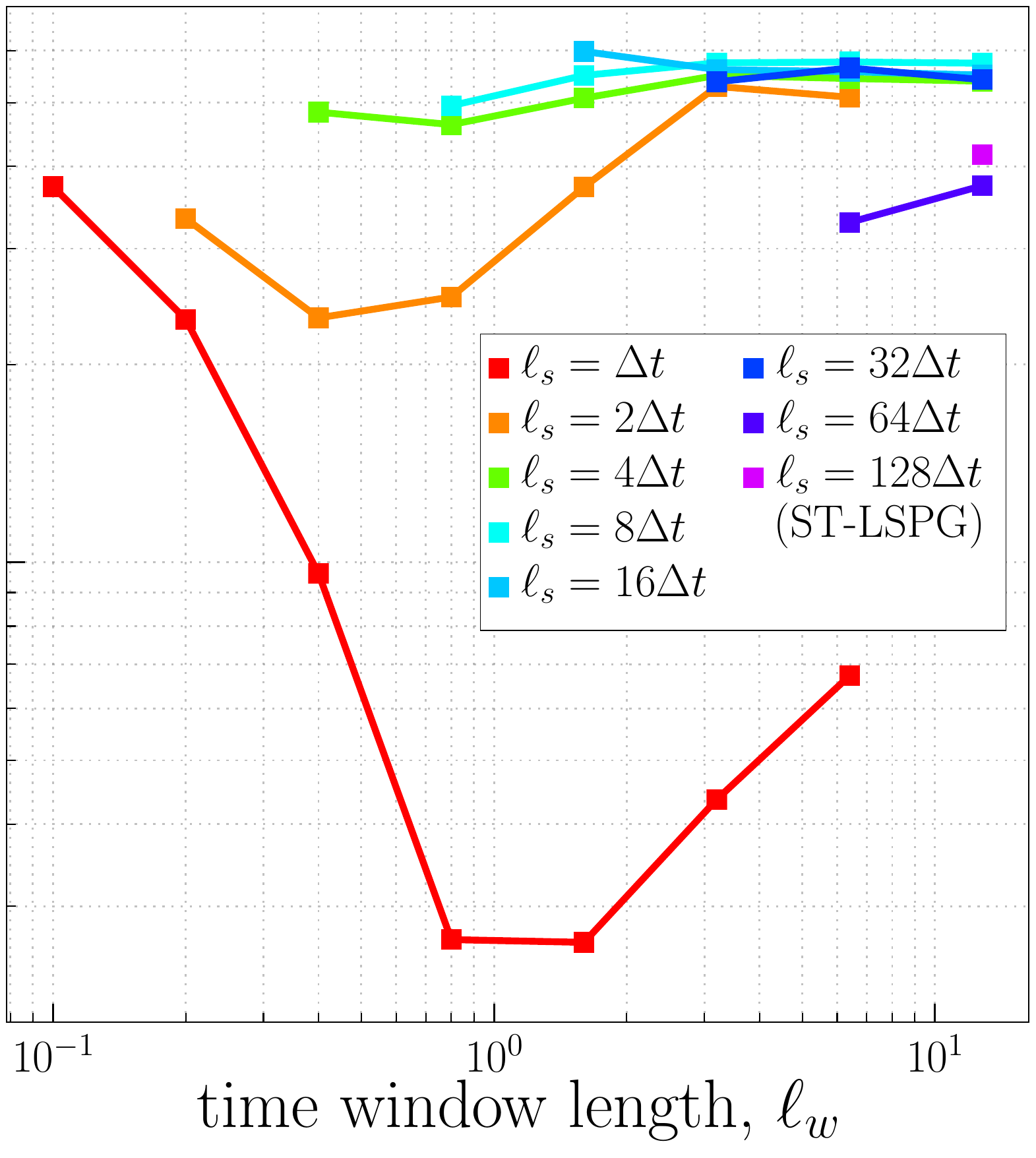}}\hspace{1mm}\nolinebreak
\subfloat[relative lift error: $e_s = 0.999, e_t = 0.99$][\centering \hspace{1.5mm}relative lift error:\par\hspace{5.5mm}$e_s = 0.999, e_t = 0.99$\label{f:NSLift4}]{\includegraphics[height=0.264\textwidth]{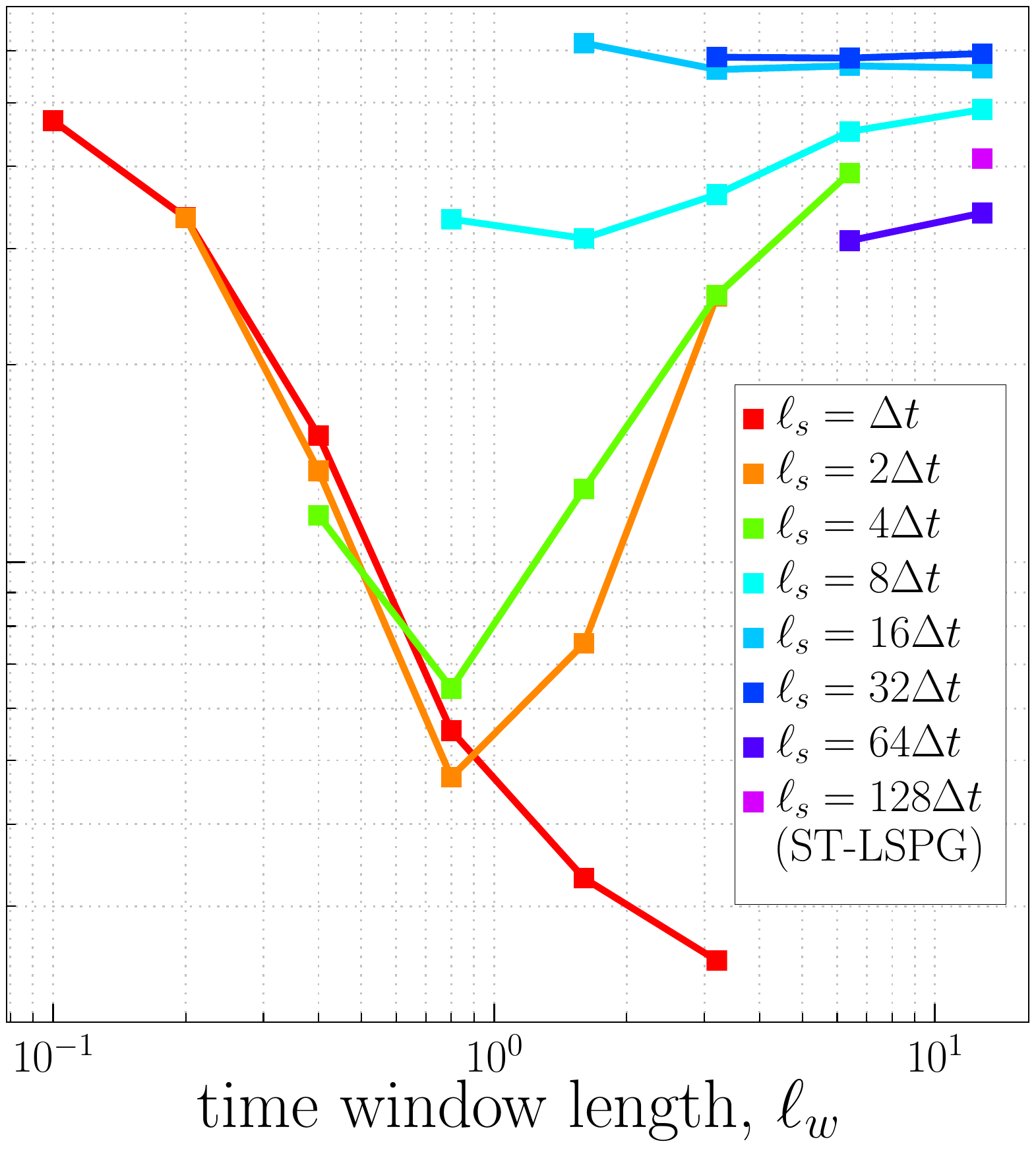}}\hspace{1mm}\nolinebreak
\subfloat[relative lift error: $e_s = 0.999, e_t = 0.999$][\centering \hspace{1.5mm}relative lift error:\par\hspace{5.5mm}$e_s = 0.999, e_t = 0.999$\label{f:NSLift6}]{\includegraphics[height=0.264\textwidth]{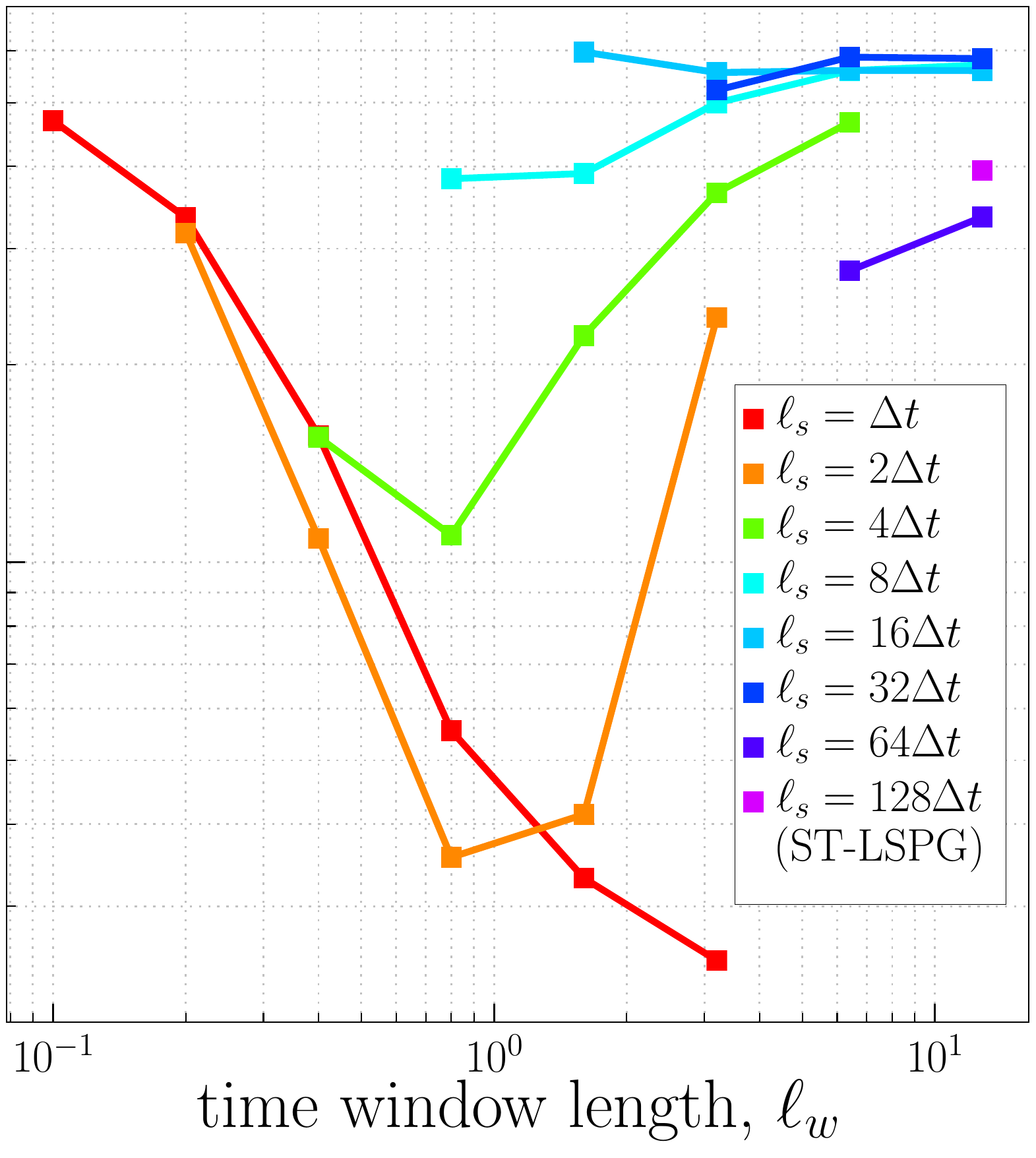}}
\caption{\emph{Compressible Navier--Stokes equations.} MSE, residual $\ell^2$ norms, relative drag error, and relative lift error for WST-LSPG for the following set of bases: $e_s = 0.99, e_t = 0.99$ $e_s = 0.99, e_t = 0.999$, $e_s = 0.999, e_t = 0.99$, $e_s = 0.999, e_t = 0.999$. All results are shown for various time window size, $\ell_w$, and various sub-window size, $\ell_s$.}
\label{f:NSLinePlots}
\end{figure}
This section presents and compares the performance of the WST-LSPG method over varying window lengths and varying sub-window lengths for the two-dimensional Navier--Stokes equations.  Figure~\ref{f:NSLinePlots} shows the MSE, residual $\ell^2$ norm, relative drag error, and relative lift error as a function of the window length, $\ell_w$ and the sub-window length, $\ell_s$. The number of time windows is set to be $\numWindows=\Bmat{1,2,4,8,16,32,64,128}$, and the number of sub-windows is set to be $\numSubWindows=\Bmat{1,2,4,8,16,32,64,128}$.

First, Figures~\ref{f:NSMSE0},~\ref{f:NSMSE2},~\ref{f:NSMSE4},~\ref{f:NSMSE6} show the MSE vs. window length, $\ell_w$ for varying sub-window lengths. These results show that the MSE decreases initially and then increases as the time window length increases, except when, $\ell_s=\Delta t$. Overall larger sub-window and window lengths lead to higher MSE. It is interesting to note that the ST-LSPG test case ($\ell_w=128\Delta t,\ell_s=128\Delta t$) does not give the lowest MSE when compared to the test cases where $\ell_s=\Bmat{8\Delta t, 16\Delta t, 32\Delta t, 64\Delta t}$. Lastly, it can be seen that higher number of bases for space and time lead to overall lower MSE, especially for lower sub-window lengths, $\ell_s$. 

Second, Figures~\ref{f:NSResidual0},~\ref{f:NSResidual2},~\ref{f:NSResidual4},~\ref{f:NSResidual6} show the residual $\ell^2$ norms vs. window length, $\ell_w$ for varying sub-window lengths. These figures show that as the window lengths increase for all sets of space--time bases, the residual $\ell^2$ norm decreases for all sub-window lengths, contrary to the results shown for MSE. This is consistent with the results found for the Burgers' equation in Figure~\ref{f:window1}. Note that unlike the MSE results, the ST-LSPG case has the largest residual $\ell^2$ norm for all set of space--time bases, showing that introducing windows and sub-windows improves the residual $\ell^2$ norm, especially for larger time-window lengths.

Next, Figures~\ref{f:NSDrag0},~\ref{f:NSDrag2},~\ref{f:NSDrag4},~\ref{f:NSDrag6} show the relative drag error vs. window length, $\ell_w$ for varying sub-window lengths. Unlike the results for the residual $\ell^2$ norm, the relative drag error error increases with the time window length, $\ell_w$ except for when $\ell_s=\Delta t$ and $e_s=0.999$. Again, the ST-LSPG case has the largest relative drag error for all set of space--time bases, which shows that windowing can increase the accuracy of the ROM when it comes to the drag. For a larger number of spatial and temporal bases, the relative drag error does decrease with increasing time window length when $\ell_s=\Delta t$. Of the four different types of output errors, relative drag error is the largest in this study.

Lastly, Figures~\ref{f:NSLift0},~\ref{f:NSLift2},~\ref{f:NSLift4},~\ref{f:NSLift6} show the relative lift error vs. window length for varying sub-window lengths. The relative lift error behaves similarly to the MSE. For $e_s=0.99, e_t=0.99$ and $e_s=0.99, e_t=0.999$, the relative lift error decreases when $\ell_s=\Delta t$. For $\ell_s> \Delta t$, the relative lift error is an order of magnitude greater than when $\ell_s=\Delta t$, and increases slowly as the time window length increases. Note that the ST-LSPG case does not give the lowest relative lift error---true for MSE as well. It can be seen that when $\ell_s>2\Delta t$, the WST-LSPG relative lift error can be greater than the relative lift error for ST-LSPG. However, when $e_s=0.999, e_t=0.99$ and $e_s=0.999, e_t=0.999$, the relative lift error improves, showing that more spatial bases are needed in order to achieve lower relative lift errors. 
\subsection{Windowed parameter study with hyper-reduction}
\begin{figure}[p!]
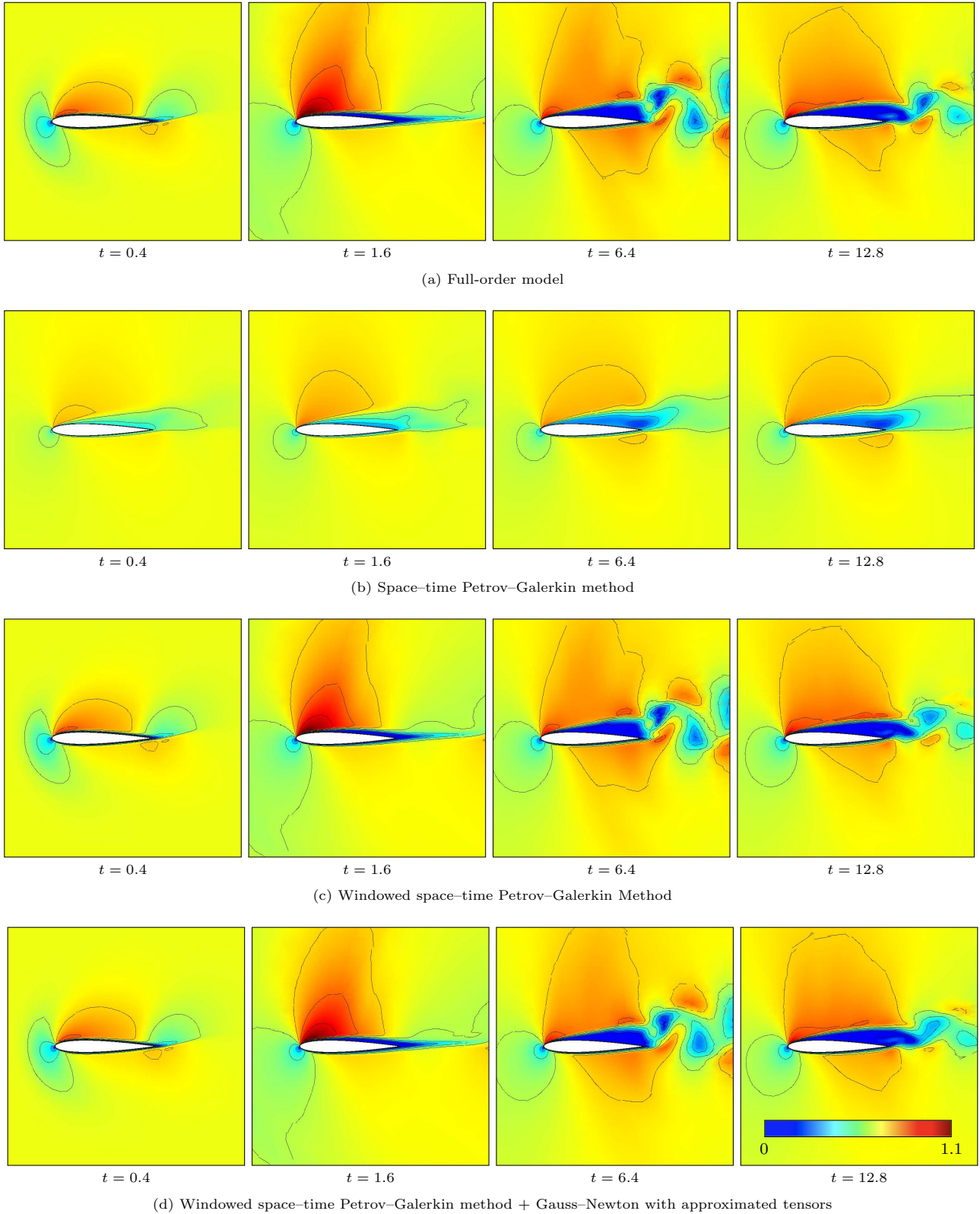

\begin{center}
\subfloat[Full-order model\label{f:fomAirfoilA}]{\nolinebreak
\airfoil{naca_U00004.pdf}{{\scriptstyle t\;=\;0.4}}\nolinebreak
\airfoil{naca_U00016.pdf}{{\scriptstyle t\;=\;1.6}}\nolinebreak
\airfoil{naca_U00064.pdf}{{\scriptstyle t\;=\;6.4}}\nolinebreak
\airfoil{naca_U00128.pdf}{{\scriptstyle t\;=\;12.8}}}\\
\subfloat[Space--time Petrov--Galerkin method\label{f:stlspgAirfoilA}]{\nolinebreak
\airfoil{naca_state_1_2_STROM_approximate_U00004.pdf}{{\scriptstyle t\;=\;0.4}}\nolinebreak
\airfoil{naca_state_1_2_STROM_approximate_U00016.pdf}{{\scriptstyle t\;=\;1.6}}\nolinebreak
\airfoil{naca_state_1_2_STROM_approximate_U00064.pdf}{{\scriptstyle t\;=\;6.4}}\nolinebreak
\airfoil{naca_state_1_2_STROM_approximate_U00128.pdf}{{\scriptstyle t\;=\;12.8}}}\\
\subfloat[Windowed space--time Petrov--Galerkin Method\label{f:wstlspgAirfoilA}]{\nolinebreak
\airfoil{WST-LSPG-4-32-naca_state_1_2_STROM_approximate_U00004.pdf}{{\scriptstyle t\;=\;0.4}}\nolinebreak
\airfoil{WST-LSPG-4-32-naca_state_1_2_STROM_approximate_U00016.pdf}{{\scriptstyle t\;=\;1.6}}\nolinebreak
\airfoil{WST-LSPG-4-32-naca_state_1_2_STROM_approximate_U00064.pdf}{{\scriptstyle t\;=\;6.4}}\nolinebreak
\airfoil{WST-LSPG-4-32-naca_state_1_2_STROM_approximate_U00128.pdf}{{\scriptstyle t\;=\;12.8}}}\\
\subfloat[Windowed space--time Petrov--Galerkin method + Gauss--Newton with approximated tensors\label{f:wstgnatAirfoilA}]{\nolinebreak
\airfoil{naca_state_1_2_STGNAT_approximateFinal_U00004.pdf}{{\scriptstyle t\;=\;0.4}}\nolinebreak
\airfoil{naca_state_1_2_STGNAT_approximateFinal_U00016.pdf}{{\scriptstyle t\;=\;1.6}}\nolinebreak
\airfoil{naca_state_1_2_STGNAT_approximateFinal_U00064.pdf}{{\scriptstyle t\;=\;6.4}}\nolinebreak
\airfoilbar{naca_state_1_2_STGNAT_approximateFinal_U00128.pdf}{{\scriptstyle t\;=\;12.8}}}
\caption{\emph{Compressible Navier--Stokes equations.} NACA 0012 airfoil Mach number plots for FOM, ST-LSPG, WST-LSPG, and WST-GNAT for $t=\Bmat{0.4, 1.6, 6.4, 12.8}$}
\label{f: MachContours1}
\end{center}
\end{figure}
%

\begin{figure}[t!]
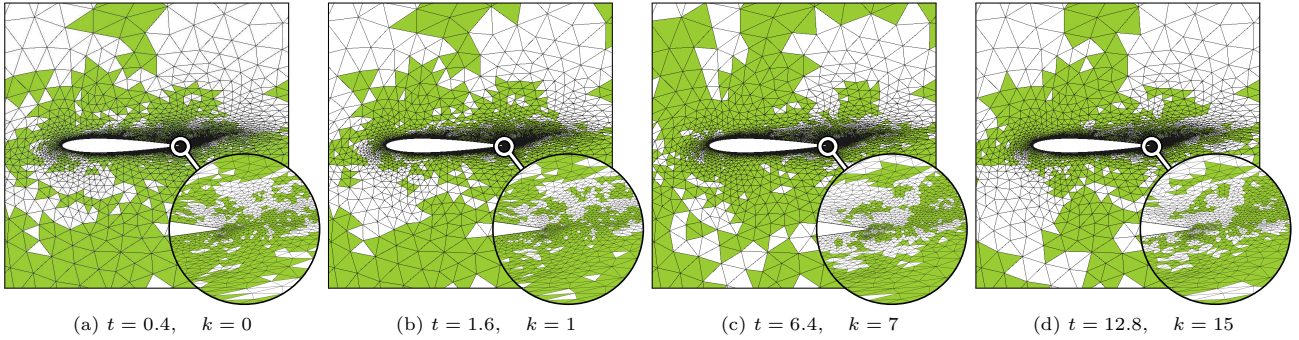

\subfloat[$t = 0.4,\quad k=0$\label{f:mesh_8}]{\samplemesh{hyperplot_8.pdf}{circle_8.pdf}}\hspace{1mm}\nolinebreak
\subfloat[$t = 1.6,\quad k=1$\label{f:mesh_16}]{\samplemesh{hyperplot_16.pdf}{circle_16.pdf}}\hspace{1mm}\nolinebreak
\subfloat[$t = 6.4,\quad k=7$\label{f:mesh_64}]{\samplemesh{hyperplot_64.pdf}{circle_64.pdf}}\hspace{1mm}\nolinebreak
\subfloat[$t = 12.8,\quad k=15$\label{f:mesh_128}]{\samplemesh{hyperplot_128.pdf}{circle_128.pdf}}
\caption{\emph{Compressible Navier--Stokes equations.} Sample mesh obtained by performing a space--time greedy algorithm for a time instance, $t$ of the $k^{\mathrm{th}}$ window used for the WST-GNAT case in Figure~\ref{f: MachContours1}.}
\label{f: po}
\end{figure}

In this section, the performances of WST-LSPG and WST-GNAT are compared with ST-LSPG and ST-GNAT across different window lengths and sub-window lengths. First, Figure~\ref{f: MachContours1} shows the Mach number contour plots for the compressible Navier--Stokes equations for four different methods: FOM, ST-LSPG, WST-LSPG, and WST-GNAT.

Figure~\ref{f:fomAirfoilA} shows the Mach contour plots for the FOM at different time steps. With a Reynolds number of $\mathrm{Re}=5000$, the flow tends to exhibit more pseudo-chaotic behaviors as time increases. Figure~\ref{f:stlspgAirfoilA} shows the Mach contour plots for the original ST-LSPG method at the same time steps as shown for the FOM. This test case refers to a ROM produced with $\numWindows=1$ and $\numSubWindows=1$ where $e_s=0.999$ and $e_t=0.999$. At $\mathrm{Re}=5000$, ST-LSPG is unable to pick up the intricacies involved in the wake of the NACA 0012 airfoil. 

Figure~\ref{f:wstlspgAirfoilA} shows the Mach contour plots for the proposed WST-LSPG method at the same time steps as shown for the FOM and ST-LSPG. This test case refers to a ROM produced with $\numWindows=4$, $\numSubWindows=32$, $e_s=0.999$, and $e_t=0.999$. This test case was chosen, because this set of WST-LSPG parameters produced the lowest residual $\ell^2$ norms. It can be seen that introducing windows and sub-windows improves the accuracy of the Mach number profiles. Figure~\ref{f:wstlspgAirfoilA} shows a transonic shock wave developing, accompanied by high Mach numbers, which ST-LSPG failed to qualify. Note that by $t=6.4$, the development of the transonic shock wave is hindered by the development of shedding vortices. Figure~\ref{f:wstlspgAirfoilA} at $t=6.4$ and $t=12.8$ shows that WST-LSPG is able to resolve the wake better than ST-LSPG. 

Figure~\ref{f:wstgnatAirfoilA} show the Mach contour plots for the WST-GNAT method at the same time steps as shown for WST-LSPG. This test case refers to a ROM produced with $\numWindows=16$, $\numSubWindows=8$, $e_s=0.999$, $e_t=0.999$, $e_{r_s}=0.999$, $e_{r_t}=0.999$, $z_s=3242$ (30\% of the total number of elements in the spatial sample mesh), and $z_t=3$. These plots show that WST-GNAT is able to resolve the wake, but at a lower reduced computational cost compared to WST-LSPG. A subset of the sample mesh used in Figure~\ref{f:wstgnatAirfoilA} can be seen in Figure~\ref{f: po}. 

Next, a more thorough comparison is made for the performances of WST-LSPG and WST-GNAT. Like the Burgers' equation experiment, the results of these methods over varying windowed parameters are presented in Pareto plots. There are three sets of Pareto plots for the compressible Navier--Stokes equations in this paper, one for each output error of interest: MSE, relative drag error, and relative lift error. Additionally, residual $\ell^2$ norm results were calculated and can be found in \ref{s:nsresidualss}, which showed similar residual $\ell^2$ norm results for Navier--Stokes equations as the residual $\ell^2$ norm results for Burgers' equation. For each output error of interest, this paper presents two Pareto plots, one for varying sub-window lengths, $\ell_s$ and one for varying window lengths, $\ell_w$. Note that not all possible combinations of window lengths and sub-window lengths for WST-GNAT were able to be performed due to three reasons: (1) when the window sizes are large (i.e., large space--time residual vectors) and many sub-windows are employed (i.e., high-fidelity basis), the Gauss--Newton method requires more memory than was available on the node used to perform the computations, (2) not all parameter combinations led to solutions that converged, and (3) for $\ell_w\in[\Delta t, 2\Delta t]$, e.g., Figures~\ref{f:NSParetoMSE7},~\ref{f:NSParetoMSE8},~\ref{f:NSParetoDrag_7},~\ref{f:NSParetoDrag_8},~\ref{f:NSParetoLift7},~\ref{f:NSParetoLift8}, performing hyper-reduction in time is limited, and performing hyper-reduction in space over a small time window yields unstable solutions due to lack of \emph{a priori} bounds~\cite{carlberg2017galerkin}.
\begin{figure}[!t]
\centering
\subfloat[$\numWindows=1,\ell_{w}=128\Delta t$\label{f:NSParetoMSE1}]{\pareto{NS_Pareto_MSE_7.pdf}}\hspace{1mm}\nolinebreak
\subfloat[$\numWindows=2,\ell_{w}=64\Delta t$\label{f:NSParetoMSE2}]{\pareto{NS_Pareto_MSE_6.pdf}}\hspace{1mm}\nolinebreak
\subfloat[$\numWindows=4,\ell_{w}=32\Delta t$\label{f:NSParetoMSE3}]{\pareto{NS_Pareto_MSE_5.pdf}}\hspace{1mm}\nolinebreak
\subfloat[$\numWindows=8,\ell_{w}=16\Delta t$\label{f:NSParetoMSE4}]{\pareto{NS_Pareto_MSE_4.pdf}}\hspace{1mm}\nolinebreak
\subfloat[$\numWindows=16,\ell_{w}=8\Delta t$\label{f:NSParetoMSE5}]{\pareto{NS_Pareto_MSE_3.pdf}}\\
\subfloat[$\numWindows=32,\ell_{w}=4\Delta t$\label{f:NSParetoMSE6}]{\pareto{NS_Pareto_MSE_2.pdf}}\hspace{1mm}\nolinebreak
\subfloat[$\numWindows=64,\ell_{w}=2\Delta t$\label{f:NSParetoMSE7}]{\pareto{NS_Pareto_MSE_1.pdf}}\hspace{1mm}\nolinebreak
\subfloat[$\numWindows=128,\ell_{w}=\Delta t$\label{f:NSParetoMSE8}]{\pareto{NS_Pareto_MSE_0.pdf}}\hspace{1mm}\nolinebreak
\legend{ns_legend.pdf}\hspace{1mm}\\
\subfloat[Combined Pareto plot with Pareto front for MSE\label{f:NSParetoMSEALL}]{\includegraphics[width=1.0\textwidth]{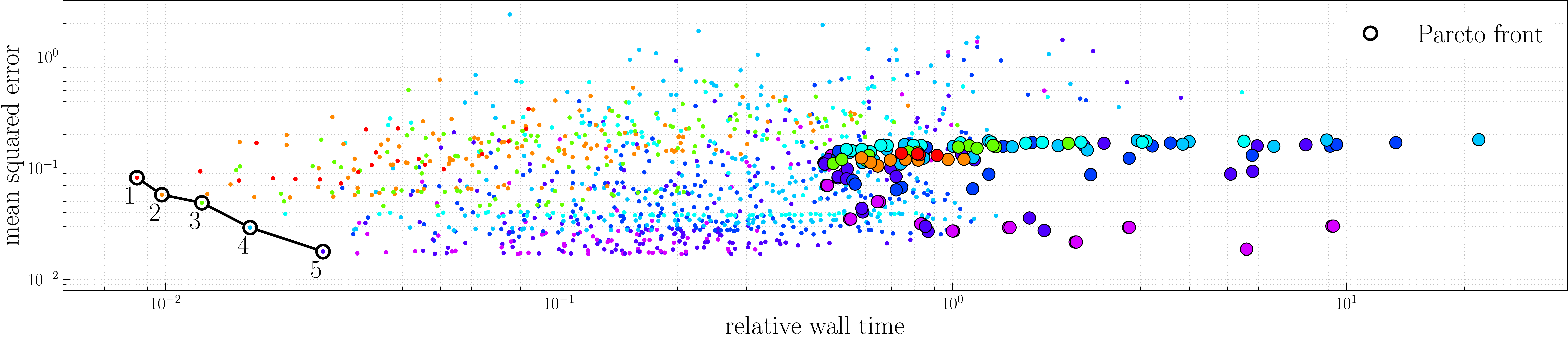}}
\caption{\emph{Compressible Navier--Stokes equations.} Pareto plots for mean squared error vs. relative wall time for varying sub-window lengths. These Pareto plots highlight how the sub-window length, $\ell_s$, affect the residual $\ell^2$ norm for each window length, $\ell_w$ for WST-LSPG and WST-GNAT. Each hyper-reduction case refers to a unique set of spatial sample nodes and temporal sample nodes. The last Pareto plot combines all the results for MSE and shows the Pareto front.}
\label{f:NSParetoMSE}
\end{figure}
\begin{figure}[!hp]
\centering
\subfloat[$\ell_{s}=128\Delta t$\label{f:NSParetoMSE_e1}]{\pareto{NS_Pareto_MSE_7_e.pdf}}\hspace{1mm}\nolinebreak
\subfloat[$\ell_{s}=64\Delta t$\label{f:NSParetoMSE_e2}]{\pareto{NS_Pareto_MSE_6_e.pdf}}\hspace{1mm}\nolinebreak
\subfloat[$\ell_{s}=32\Delta t$\label{f:NSParetoMSE_e3}]{\pareto{NS_Pareto_MSE_5_e.pdf}}\hspace{1mm}\nolinebreak
\subfloat[$\ell_{s}=16\Delta t$\label{f:NSParentoMSE_e4}]{\pareto{NS_Pareto_MSE_4_e.pdf}}\hspace{1mm}\nolinebreak
\subfloat[$\ell_{s}=8\Delta t$\label{f:NSParetoMSE_e5}]{\pareto{NS_Pareto_MSE_3_e.pdf}}\\
\subfloat[$\ell_{s}=4\Delta t$\label{f:NSParetoMSE_e6}]{\pareto{NS_Pareto_MSE_2_e.pdf}}\hspace{1mm}\nolinebreak
\subfloat[$\ell_{s}=2\Delta t$\label{f:NSParetoMSE_e7}]{\pareto{NS_Pareto_MSE_1_e.pdf}}\hspace{1mm}\nolinebreak
\subfloat[$\ell_{s}=\Delta t$\label{f:NSParetoMSE_e8}]{\pareto{NS_Pareto_MSE_0_e.pdf}}\hspace{1mm}\nolinebreak
\legend{ns_legend2.pdf}\\
\subfloat[Combined Pareto plot with Pareto front for MSE \label{f:NSParetoMSE_eALL}]{\includegraphics[width=1.0\textwidth]{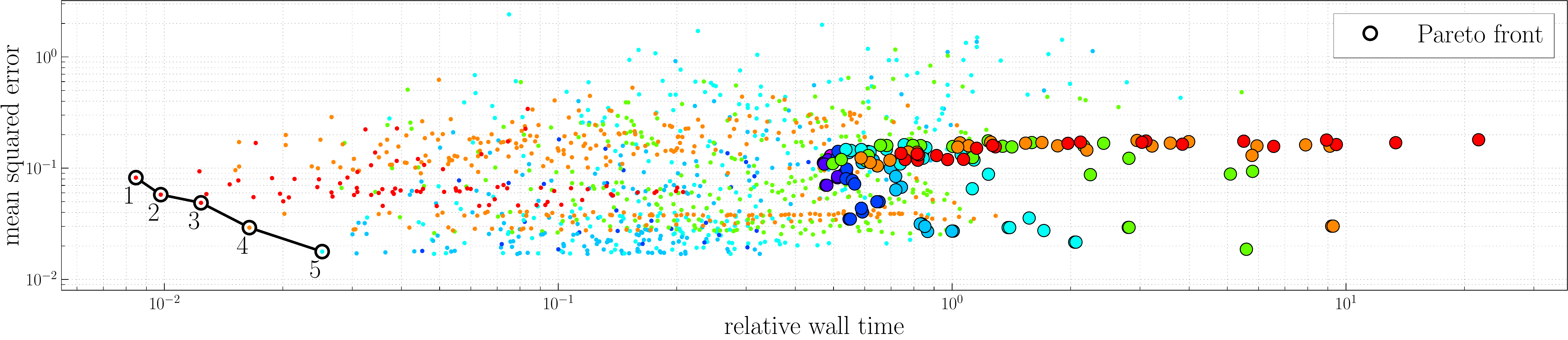}}
\caption{\emph{Compressible Navier--Stokes equations.} Pareto plots for MSE vs. relative wall time for varying window lengths. These Pareto plots highlight how the window length, $\ell_w$, affect the MSE for WST-LSPG and WST-GNAT. The last Pareto plot combines the results for the MSE and shows the Pareto front (same as Figure~\ref{f:MSE_ALL}).}
\label{f:NSParetoMSE_e}
\begin{center}
\vspace{-3mm}
\captionof{table}{\label{t:NSMSE}\emph{Compressible Navier--Stokes equations}. Method and parameter values for labeled data points in Figures~\ref{f:NSParetoMSE} and~\ref{f:NSParetoMSE_e} that yield Pareto-optimal performance for MSE. Highlighted rows refer to test cases that additionally exist on the Pareto Front for the residual $\ell^2$ norms.}
\resizebox{\columnwidth}{!}{%
\begin{tabular}{ ccccccccccccccc } \toprule
case& method & $\ell_w$ & $\ell_s$ & \thead{total\\$n_{st}$} &\thead{total\\$n^r_{st}$} &$e_s$ & $e_t$ & $e_{r_s}$ & $e_{r_t}$ & $z_t$ & $z_s$ & MSE & \thead{x-LSPG relative\\ wall time} &\thead{relative wall\\time}\\
\midrule \midrule
\rowcolor{matcha}1 & ST-GNAT	&12.8&	12.8&20 &16 &	0.999&	0.99& 0.999&	0.999&	9	&1080 & 0.082234972 &  0.839231102&0.008468197\\ 
2 & WST-GNAT&	12.8&	6.4& 35 & 10 &	0.999&	0.99 & 0.999&	0.999& 9	&1080& 0.057827486 & 0.842380016 &  0.009793139\\ 
 3  & WST-GNAT&	12.8&	3.2&71 & 6&	0.999&	0.99& 0.999&	0.999&	9&	1080& 0.048843402 & 1.306378678& 0.012390053\\ 
 \rowcolor{matcha}4 & WST-GNAT&	6.4	&0.8& 245 & 13 &	0.999&	0.99& 0.999	&0.999	&5	&1080& 0.029223707 &  2.261437990 &  0.016439454\\
 \rowcolor{matcha} 5& WST-GNAT &	1.6&	0.2	& 725 & 145& 0.999&	0.999 &0.999&	0.999&	3&	1080& 0.017791989 & 1.717868245 & 0.025160652\\\bottomrule
\end{tabular}
}
\end{center}
\end{figure}

Similarly to Figure~\ref{f:paretoMSE}, Figure~\ref{f:NSParetoMSE} shows how the MSE vs. relative wall time behaves with varying sub-window lengths for each time window length. Overall, hyper-reduction reduces the relative wall time for each $\ell_w$ considered. At the same time, the MSE improved with decreasing sub-window length. Figure~\ref{f:NSParetoMSEALL} combines the results for the MSE for varying sub-window length and shows the resulting Pareto front for MSE. It is seen that introducing hyper-reduction to WST-LSPG decreases the relative wall time by up to three orders of magnitude. 

Figure~\ref{f:NSParetoMSE_e} shows how the MSE vs. relative time wall time behaves with varying window lengths. These Pareto plots show the same data points as shown in Figure~\ref{f:NSParetoMSE}, but highlight how the MSE changes with varying window length for each sub-window length. It can be seen that decreasing the time window length leads to higher mean squared error. This means that an optimal set of window lengths and sub-window lengths exist for MSE: larger time window lengths, but shorter sub-window lengths. Figure~\ref{f:NSParetoMSE_eALL} combines the results and shows the Pareto front, which is the same as the Pareto front in Figure~\ref{f:NSParetoMSEALL}. Table~\ref{t:NSMSE} shows the parameters of the cases that lie on the Pareto front for MSE shown in both Figures~\ref{f:NSParetoMSEALL} and~\ref{f:NSParetoMSE_eALL}. Case 1 refers to the ST-GNAT technique, which has the lowest relative wall time. Cases 2--5 refer to the WST-GNAT technique, which exhibited higher relative wall times compared to Case 1, but, overall, exhibited lower MSEs. The highlighted rows refer to a specific set of parameters that lie on the Pareto front for MSE, residual $\ell^2$ norms, relative drag error, and relative lift error.
\begin{figure}[!t]
\begin{center}
\subfloat[$\numWindows=1,\ell_{w}=128\Delta t$\label{f:NSParetoDrag_1}]{\pareto{NS_Pareto_Drag_7.pdf}}\hspace{1mm}\nolinebreak
\subfloat[$\numWindows=2,\ell_{w}=64\Delta t$\label{f:NSParetoDrag_2}]{\pareto{NS_Pareto_Drag_6.pdf}}\hspace{1mm}\nolinebreak
\subfloat[$\numWindows=4,\ell_{w}=32\Delta t$\label{f:NSParetoDrag_3}]{\pareto{NS_Pareto_Drag_5.pdf}}\hspace{1mm}\nolinebreak
\subfloat[$\numWindows=8,\ell_{w}=16\Delta t$\label{f:NSParetoDrag_4}]{\pareto{NS_Pareto_Drag_4.pdf}}\hspace{1mm}\nolinebreak
\subfloat[$\numWindows=16,\ell_{w}=8\Delta t$\label{f:NSParetoDrag_5}]{\pareto{NS_Pareto_Drag_3.pdf}}\\
\subfloat[$\numWindows=32,\ell_{w}=4\Delta t$\label{f:NSParetoDrag_6}]{\pareto{NS_Pareto_Drag_2.pdf}}\hspace{1mm}\nolinebreak
\subfloat[$\numWindows=64,\ell_{w}=2\Delta t$\label{f:NSParetoDrag_7}]{\pareto{NS_Pareto_Drag_1.pdf}}\hspace{1mm}\nolinebreak
\subfloat[$\numWindows=128,\ell_{w}=\Delta t$\label{f:NSParetoDrag_8}]{\pareto{NS_Pareto_Drag_0.pdf}}\hspace{1mm}\nolinebreak
\legend{ns_legend.pdf}\\
\subfloat[Combined Pareto plot with Pareto front for relative drag error error\label{f:NSParetoDrag_All}]{\includegraphics[width=1.0\textwidth]{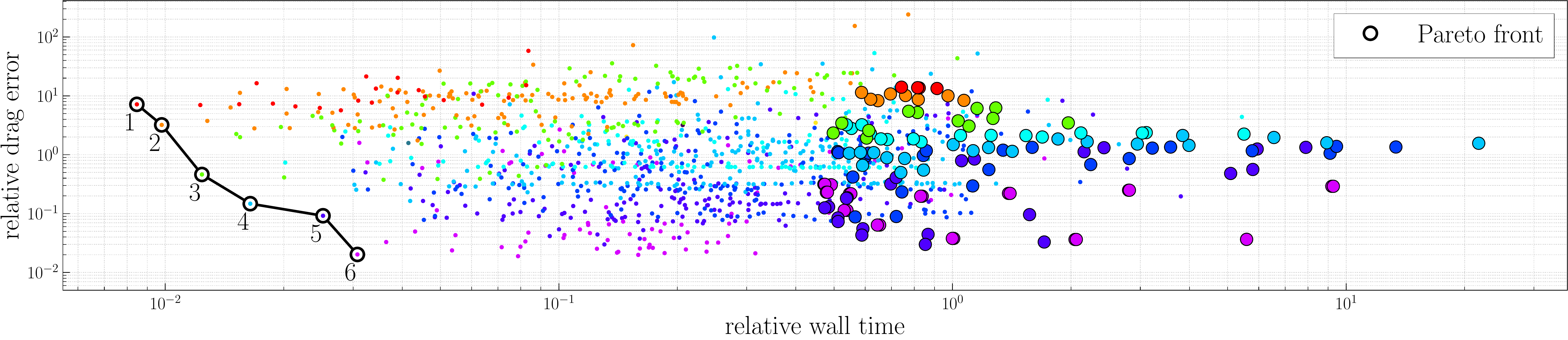}}
\caption{\emph{Compressible Navier--Stokes equations.} Pareto plots for relative drag error vs. relative wall time for varying sub-window lengths. These Pareto plots highlight how the sub-window length, $\ell_s$, affect the relative drag error for WST-LSPG and WST-GNAT. Each hyper-reduction case refers to a unique set of spatial sample nodes and temporal sample nodes. The last Pareto plot combines all the results for the relative drag error and shows the Pareto front.}
\label{f:NSParetoDrag}
\end{center}
\end{figure}
\begin{figure}[!hp]
\begin{center}
\subfloat[$\ell_{s}=128\Delta t$\label{f:NSParetoDrag_e1}]{\pareto{NS_Pareto_Drag_7_e.pdf}}\hspace{1mm}\nolinebreak
\subfloat[$\ell_{s}=64\Delta t$\label{f:NSParetoDrag_e2}]{\pareto{NS_Pareto_Drag_6_e.pdf}}\hspace{1mm}\nolinebreak
\subfloat[$\ell_{s}=32\Delta t$\label{f:NSParetoDrag_e3}]{\pareto{NS_Pareto_Drag_5_e.pdf}}\hspace{1mm}\nolinebreak
\subfloat[$\ell_{s}=16\Delta t$\label{f:NSParetoDrag_e4}]{\pareto{NS_Pareto_Drag_4_e.pdf}}\hspace{1mm}\nolinebreak
\subfloat[$\ell_{s}=8\Delta t$\label{f:NSParetoDrag_e5}]{\pareto{NS_Pareto_Drag_3_e.pdf}}\\
\subfloat[$\ell_{s}=4\Delta t$\label{f:NSParetoDrag_e6}]{\pareto{NS_Pareto_Drag_2_e.pdf}}\hspace{1mm}\nolinebreak
\subfloat[$\ell_{s}=2\Delta t$\label{f:NSParetoDrag_e7}]{\pareto{NS_Pareto_Drag_1_e.pdf}}\hspace{1mm}\nolinebreak
\subfloat[$\ell_{s}=\Delta t$\label{f:NSParetoDrag_e8}]{\pareto{NS_Pareto_Drag_0_e.pdf}}\hspace{1mm}\nolinebreak
\legend{ns_legend2.pdf}\\
\subfloat[Combined Pareto plot with Pareto front for relative drag error\label{f:NSParetoDrag_eAll}]{\includegraphics[width=1.0\textwidth]{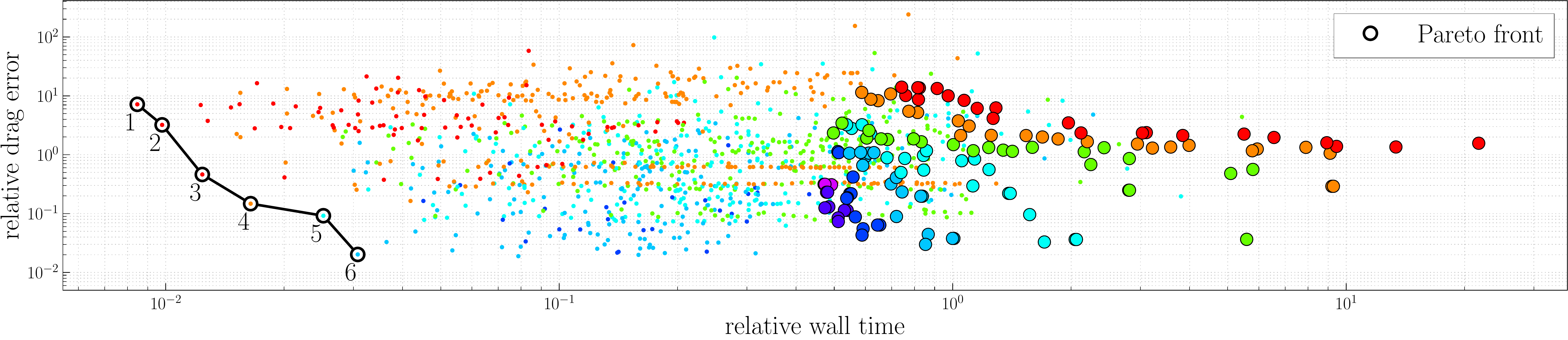}}
\caption{\emph{Compressible Navier--Stokes equations.} Pareto plots for relative drag error vs. relative wall time for varying window lengths. These Pareto plots highlight how the window length, $\ell_w$, affect the relative drag error for WST-LSPG and WST-GNAT. The last Pareto plot combines all the results for the relative drag error and shows the Pareto front.}
\label{f:NSParetoDrag_e}
\end{center}
\begin{center}
\vspace{-3mm}
\captionof{table}{\label{t:NSDrag}\emph{Compressible Navier--Stokes equations}. Method and parameter values for labeled data points in Figures~\ref{f:NSParetoDrag} and~\ref{f:NSParetoDrag_e} that yield Pareto-optimal performance for the relative drag error. Highlighted rows refer to test cases that additionally exist on the optimal Pareto Front for MSE, residual $\ell^2$ norm, and relative lift error.}
\resizebox{\columnwidth}{!}{%
\begin{tabular}{ ccccccccccccccc } \toprule
case& method & $\ell_w$ & $\ell_s$ & \thead{total\\$n_{st}$} &\thead{total\\$n^r_{st}$}& $e_s$ & $e_t$ & $e_{r_s}$ & $e_{r_t}$ & $z_t$ & $z_s$ & \thead{relative drag\\ error} & \thead{x-LSPG relative\\ wall time} &\thead{relative wall\\time}\\
\midrule \midrule 
\rowcolor{matcha}1 & ST-GNAT	&12.8&	12.8&20 &16 &	0.999&	0.99& 0.999&	0.999&	9	&1080 & 7.168969146 &  0.839231102&0.008468197\\ 
2 & WST-GNAT&12.8&	6.4& 35 & 10 &	0.999&	0.99 & 0.999&	0.999& 9	&1080& 3.210113397 & 0.842380016 &  0.009793139\\ 
 3  & WST-GNAT&	12.8&	3.2	&71& 6&0.999&	0.99& 0.999&	0.999&	9&	1080& 0.461654353& 1.306378678& 0.012390053\\ 
\rowcolor{matcha}4 & WST-GNAT&	6.4	&0.8& 245 &13&	0.999&	0.99& 0.999	&0.999	&4	&1080&0.147403667 &  2.261437990 &  0.016439454\\
 \rowcolor{matcha}5& WST-GNAT &	1.6&	0.2	&725 &145&0.999&	0.999 &0.999&	0.999&	3&	1080&0.091927101& 1.717868245 & 0.025160652\\
  6& WST-GNAT&0.8&	0.1& 774 &277&	0.999&	0.999& 0.999&	0.999&	3&	1080 &0.020251278& 1.006124284& 0.030765692\\ 
\bottomrule
\end{tabular}
}
\end{center}
\end{figure}
\begin{figure}[!t]
\begin{center}
\subfloat[$\numWindows=1,\ell_{w}=128\Delta t$ \label{f:NSParetoLift1}]{\pareto{NS_Pareto_Lift_7.pdf}}\hspace{1mm}\nolinebreak
\subfloat[$\numWindows=2,\ell_{w}=64\Delta t$ \label{f:NSParetoLift2}]{\pareto{NS_Pareto_Lift_6.pdf}}\hspace{1mm}\nolinebreak
\subfloat[$\numWindows=4,\ell_{w}=32\Delta t$ \label{f:NSParetoLift3}]{\pareto{NS_Pareto_Lift_5.pdf}}\hspace{1mm}\nolinebreak
\subfloat[$\numWindows=8,\ell_{w}=16\Delta t$ \label{f:NSParetoLift4}]{\pareto{NS_Pareto_Lift_4.pdf}}\hspace{1mm}\nolinebreak
\subfloat[$\numWindows=16,\ell_{w}=8\Delta t$ \label{f:NSParetoLift5}]{\pareto{NS_Pareto_Lift_3.pdf}}\\
\subfloat[$\numWindows=32,\ell_{w}=4\Delta t$ \label{f:NSParetoLift6}]{\pareto{NS_Pareto_Lift_2.pdf}}\hspace{1mm}\nolinebreak
\subfloat[$\numWindows=64,\ell_{w}=2\Delta t$ \label{f:NSParetoLift7}]{\pareto{NS_Pareto_Lift_1.pdf}}\hspace{1mm}\nolinebreak
\subfloat[$\numWindows=128,\ell_{w}=\Delta t$ \label{f:NSParetoLift8}]{\pareto{NS_Pareto_Lift_0.pdf}}\hspace{1mm}\nolinebreak
\legend{ns_legend.pdf}\\
\subfloat[Combined Pareto plot with Pareto front for relative lift error error\label{f:NSParetoLiftALL}]{\includegraphics[width=1.0\textwidth]{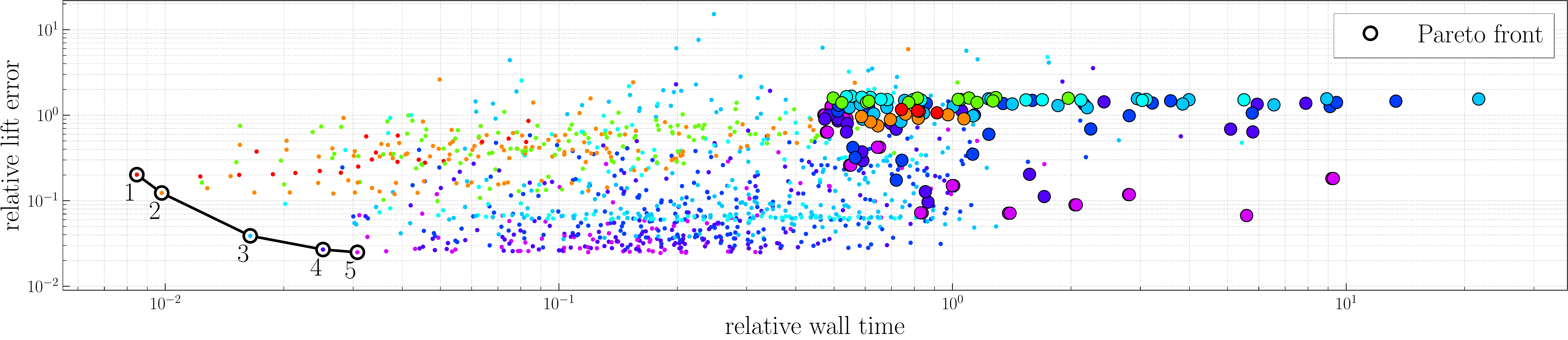}}
\caption{\emph{Compressible Navier--Stokes equations.} Pareto plots for relative lift error vs. relative wall time for varying sub-window lengths. These Pareto plots highlight how the sub-window length, $\ell_s$, affect the relative lift error for WST-LSPG and WST-GNAT. Each hyper-reduction case refers to a unique set of spatial sample nodes and temporal sample nodes. The last Pareto plot combines all the results for the relative lift error and shows the Pareto front.}
\label{f:NSParetoLift}
\end{center}
\end{figure}
\begin{figure}[!hp]
\begin{center}
\subfloat[$\ell_{s}=128\Delta t$ \label{f:NSParetoLift_e1}]{\pareto{NS_Pareto_Lift_7_e.pdf}}\hspace{1mm}\nolinebreak
\subfloat[$\ell_{s}=64\Delta t$ \label{f:NSParetoLift_e2}]{\pareto{NS_Pareto_Lift_6_e.pdf}}\hspace{1mm}\nolinebreak
\subfloat[$\ell_{s}=32\Delta t$ \label{f:NSParetoLift_e3}]{\pareto{NS_Pareto_Lift_5_e.pdf}}\hspace{1mm}\nolinebreak
\subfloat[$\ell_{s}=16\Delta t$ \label{f:NSParetoLift_e4}]{\pareto{NS_Pareto_Lift_4_e.pdf}}\hspace{1mm}\nolinebreak
\subfloat[$\ell_{s}=8\Delta t$ \label{f:NSParetoLift_e5}]{\pareto{NS_Pareto_Lift_3_e.pdf}}\\
\subfloat[$\ell_{s}=4\Delta t$ \label{f:NSParetoLift_e6}]{\pareto{NS_Pareto_Lift_2_e.pdf}}\hspace{1mm}\nolinebreak
\subfloat[$\ell_{s}=2\Delta t$ \label{f:NSParetoLift_e7}]{\pareto{NS_Pareto_Lift_1_e.pdf}}\hspace{1mm}\nolinebreak
\subfloat[$\ell_{s}=\Delta t$ \label{f:NSParetoLift_e8}]{\pareto{NS_Pareto_Lift_0_e.pdf}}\hspace{1mm}\nolinebreak
\legend{ns_legend2.pdf}\\
\subfloat[Combined Pareto plot with Pareto front for relative lift error\label{f:NSParetoLift_eAll}]{\includegraphics[width=1.0\textwidth]{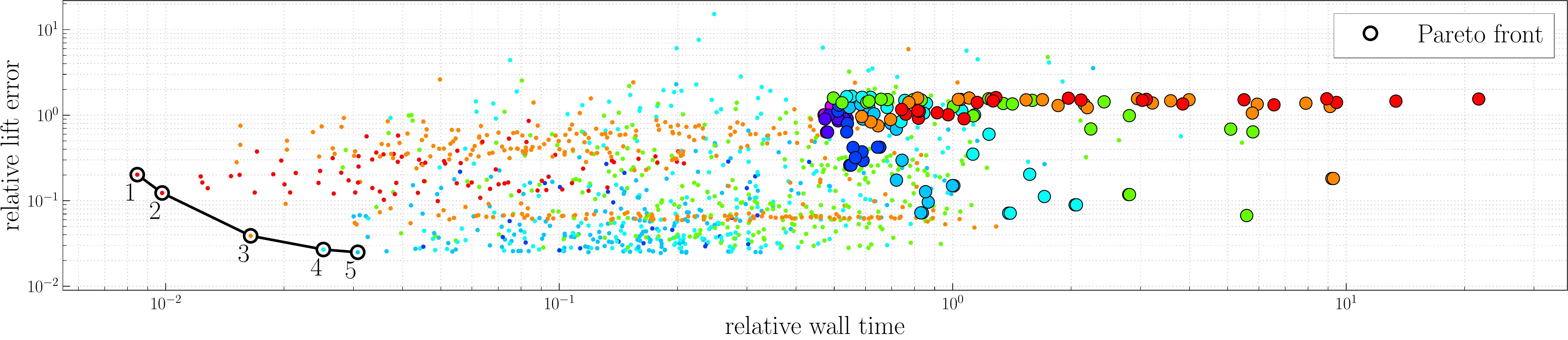}}
\caption{\emph{Compressible Navier--Stokes equations.} Pareto plots for relative lift error vs. relative wall time for varying window lengths. These Pareto plots highlight how the window length, $\ell_w$, affect the relative lift error for WST-LSPG and WST-GNAT. The last Pareto plot combines all the results for the relative lift error and shows the Pareto front.}
\label{f:NSParetoLift_e}
\end{center}
\begin{center}
\vspace{-3mm}
\captionof{table}{\label{t:NSLift}\emph{Compressible Navier--Stokes equations.} Method and parameter values for labeled data points in Figures~\ref{f:NSParetoLift} and~\ref{f:NSParetoLift_e} that yield Pareto-optimal performance for the relative lift error. Highlighted rows refer to test cases that additionally exist on the optimal Pareto Front for MSE, residual $\ell^2$ error, and the relative drag error.}
\resizebox{\columnwidth}{!}{%
\begin{tabular}{ ccccccccccccccc } \toprule
case& method & $\ell_w$ & $\ell_s$ & \thead{total\\$n_{st}$} &\thead{total\\$n^r_{st}$}&$e_s$ & $e_t$ & $e_{r_s}$ & $e_{r_t}$ & $z_t$ & $z_s$ & \thead{relative lift\\ error} & \thead{x-LSPG relative\\ wall time} &\thead{relative wall\\time}\\
\midrule \midrule
\rowcolor{matcha}1 & ST-GNAT	&12.8&	12.8&20 &16 &	0.999&	0.99& 0.999&	0.999&	9	&1080 &0.201771432&  0.839231102&0.008468197\\ 
 2 & WST-GNAT &	12.8&	6.4& 35 & 10&	0.999&	0.99& 0.999&	0.999&	9&	1080& 0.123280751& 0.842380016 &  0.009793139\\
 \rowcolor{matcha}3 & WST-GNAT&	6.4	&0.8& 245 & 13&	0.999&	0.99& 0.999	&0.999	&5	&1080& 0.038892137&  2.261437990 &  0.016439454\\
 \rowcolor{matcha} 4& WST-GNAT &	1.6&	0.2	& 725 & 145  &0.999&	0.999 &0.999&	0.999&	3&	1080& 0.026915803& 1.717868245 & 0.025160652\\
  5& WST-GNAT &	0.8&	0.1& 774 &277&	0.999&	0.999& 0.999&	0.999	&3&	1080 &0.024986254 &1.006124284 &0.030765692\\\bottomrule
\end{tabular}
}
\end{center}
\end{figure}
%
Next, Figure~\ref{f:NSParetoDrag} shows how the relative drag error vs. relative wall time behaves with varying sub-window lengths for each time window length. It is seen that, as the sub-window length increases, the relative drag error for WST-LSPG and WST-GNAT increases as well. With hyper-reduction, the relative wall time decreases approximately up to two orders of magnitude without affecting the relative drag error. Again, none of the WST-GNAT cases converged for $\ell_w=2\Delta t$ and $\ell_w=\Delta t$, due to the fact that stability guarantee decreases as the window length decreases. Figure~\ref{f:NSParetoDrag_All} combines the data for the relative drag error and shows the Pareto front. Along the Pareto front, it can be seen that the lowest relative wall time are obtained when $\ell_s=128\Delta t$, but that these parameter settings lead to high relative drag errors. Decreasing the sub-window length along the Pareto front leads to WST-GNAT cases where the relative drag error decreases up to three orders of magnitude with less than an order of magnitude in change for the relative wall time. These results show that, to obtain low relative drag error with low computational cost, using the smallest possible sub-window length is ideal for this problem.

Figure~\ref{f:NSParetoDrag_e} shows how the relative drag error vs. relative wall time behaves with varying window lengths for each sub-window length. The data shown in these plots are the same data shown in Figure~\ref{f:NSParetoDrag}. It can be seen that the relative drag error tends to increase with decreasing window lengths. This behavior can be seen more apparently for $\ell_s=64\Delta t$ and $\ell_s=32\Delta t$. Figure~\ref{f:NSParetoDrag_eAll} combines the data for relative drag error and shows the Pareto front, which is the same Pareto front shown in Figure~\ref{f:NSParetoDrag_All}. The WST-GNAT cases that lie on the Pareto front show that decreasing the time window length can lead to less relative drag error and relatively low relative wall times when compared to the WST-LSPG cases. Table~\ref{t:NSDrag} lists the parameters used in these cases that lie on the Pareto front for relative drag error shown in both Figures~\ref{f:NSParetoDrag_All} and~\ref{f:NSParetoDrag_eAll}. Case 1 refers to a ST-GNAT case, which exhibits the lowest relative wall time, but is accompanied by high relative drag error. Cases 2--6 show parameters for WST-GNAT methods that yield optimal relative drag error and relative wall time. Case 6 shows the most optimal data point that gives the lowest optimal relative drag error with less than an order of magnitude increase in the relative wall time. This particular case was executed with $\numWindows=16$ and $\numSubWindows=8$. The highlighted rows again refer to the most optimal cases on the Pareto front that are best for MSE, residual $\ell^2$ norm, relative drag error, and relative lift error. 
%

Figure~\ref{f:NSParetoLift} shows how the relative lift error vs. relative wall time behaves with varying sub-window length. The relative lift error behaves similarly as the other output errors; for a certain window length, the relative lift improves as the sub-window length decreases. Additionally, introducing hyper-reduction to WST-LSPG resulted in increased accuracy of the relative lift. As before, the WST-GNAT cases for $\ell_w=2\Delta t$ and $\ell_w=\Delta t$ did not converge due to lack of enough stability guarantee over the smaller windows. 
Figure~\ref{f:NSParetoLiftALL} shows the combined results for relative lift vs. relative wall time. It can be seen that introducing windowing to ST-LSPG leads to lower relative lift errors and lower relative time. These results additionally confirm that increasing the sub-window length leads to higher relative lift and lower relative wall times.

Figure~\ref{f:NSParetoLift_e} shows how the relative lift error vs. relative wall time behaves with varying window lengths. All data in this figure is the same as those found in Figure~\ref{f:NSParetoLift}. When $\ell_s=\Bmat{128\Delta t, 64\Delta t, 32\Delta t}$, the relative lift error decreases with decreasing window length. The opposite seems to be true for $\ell_s=16\Delta t$ and $8\Delta t$, where the relative lift error decreases with increasing window length. Figure~\ref{f:NSParetoLift_eAll} combines the results and shows the Pareto front, which is the same Pareto front shown in Figure~\ref{f:NSParetoLiftALL}. It is seen that longer window lengths result in higher relative lift along the Pareto front. Overall, including hyper-reduction reduces the wall time by approximately two orders of magnitudes. Table~\ref{t:NSLift} details the parameters for the cases that lie on the Pareto front for the relative lift error shown in both Figure~\ref{f:NSParetoLiftALL} and Figure~\ref{f:NSParetoLift_eAll}. Case 1 refers to the ST-GNAT method and is associated with higher relative lift errors but lower relative wall times. Cases 2--5 refer to the WST-GNAT method, which is associated with lower relative lift errors but higher relative wall times; however, the relative wall times are reduced by less than an order of magnitude. The highlighted rows again refer to the most optimal cases on the Pareto front that is best for MSE, residual $\ell^2$ norm, relative drag error, and relative lift error. 
\subsubsection{Summary of results}
The WST-LSPG method was implemented on the two-dimensional compressible Navier--Stokes equation. To additionally evaluate the behavior of WST-LSPG, the MSE, the residual $\ell^2$ norm, the relative drag error, and the relative lift error were calculated over two different window hyper-parameters: window length, $\ell_w$ and sub-window length, $\ell_s$. Applied to the two-dimensional problem, WST-LSPG was shown to produce ROMs with lower MSE and lower residual $\ell^2$ norms. This is due to higher time-local fidelity bases over windows that were able to better resolve complexities in the flow, especially at the wake of the NACA 0012 airfoil. As seen in the results for the Burgers' equation, the results for the two-dimensional Navier--Stokes problem additionally produced MSE that first decreases in window lengths and then increases in window lengths. The results for the residual $\ell^2$ norm showed different behaviors compared to the results of the MSE, where the residual $\ell^2$ norm decreased consistently as the window length increased over varied values of sub-window length. Note that unlike the Burgers' equation results, inclusion of hyper-reduction in the space--time model reduction process yielded the lowest MSE of $0.017791989$ and yielded approximately the same residual $\ell^2$ norms. The results for the relative drag error and relative lift error showed similar behaviors as the results for the MSE. Finally, it was shown that the addition of hyper-reduction for the windowed model reduction technique yielded orders of magnitude lower relative wall times without affecting the output errors of interest. Overall, applying the WST-LSPG technique on the two-dimensional compressible Navier--Stokes equation produced lower MSEs, residual $\ell^2$ norms, relative drag errors, and relative lift errors, when compared to the ST-LSPG technique. 

\section{Conclusions}\label{sec: conclusions}
This work introduced the WST-LSPG method for the space--time model reduction of parameterized nonlinear dynamical systems. WST-LSPG, which builds upon the space--time least-squares Petrov--Galerkin method, divides the time simulation into $\numWindows$ windows and sequentially minimizes the discrete-in-time residual within its own unique low dimensional space--time subspace in a weighted $\ell^2$-norm. Additionally, this work proposes a temporal domain decomposition technique to construct space--time trial subspaces via tensor decompositions by further dividing the $\numWindows$ into $\numSubWindows$ sub-windows. Advantages of WST-LSPG include:

\begin{itemize}
\item Ability to limit the exponential growth of the computational cost to windows instead of time steps, decreasing the overall cost of finding the reduced space--time states
\item Providing more efficient functionality for implementation of ROMs in high-fidelity codes by reducing the cost associated with calculating the product of the space--time Jacobian and the space--time bases
\item Ability to decrease the output error of interests when introducing windows and sub-windows to the space--time affine trial subspaces
\item Production of ROMs with lower relative wall times and better output errors when using smaller time window lengths and smaller sub-window lengths as shown in the numerical experiments for the Burgers' equation and the compressible Navier--Stokes equations
\item Overall reduction of the computational cost of creating the space--time ROM by using hyper-reduction via the Gauss--Newton with Approximated Tensors (GNAT) method 
\end{itemize} 
Future work for WST-LSPG consists of (1) developing a more intelligent way to determine where a window shall begin and end depending on the physical attributes of the problem in the time domain, (2) creating a general windowed space--time method for nonlinear manifolds using deep convolutions autoencoders, (3) integrating conservation techniques to improve the residual $\ell^2$ norms of more complicated problems such as compressible turbulent flows, and (4) formulating ways to better resolve complex events that happen in flows such as shocks with windowed space--time methods.

\section*{Acknowledgments}
The authors acknowledge Matthew Zahr for providing the \texttt{pymortestbed} code and Krzysztof Fidkowski for providing the \texttt{xflow} program, which were used and modified to help obtain the numerical results. Additionally, the authors acknowledge Kevin Carlberg, Irina Tezaur, Jaideep Ray, Kookjin Lee, and Francesco Rizzi for the productive discussions on the method and Guodong Chen for his advice on developing the two-dimensional geometry case. This work was performed at Sandia National Laboratories. Any subjective views or opinions that might be expressed in the paper do not necessarily represent the views of the U.S. Department of Energy or the United States Government. Sandia National Laboratories is a multimission laboratory managed and operated by National Technology \& Engineering Solutions of Sandia, LLC, a wholly owned subsidiary of Honeywell International Inc., for the U.S. Department of Energy’s National Nuclear Security Administration under contract DE-NA0003525.

\appendix
\section{\emph{Burgers' equation}. Integrated mean squared error}\label{s:imse}
An additional study was conducted to see if the type of problem and/or the fidelity of the basis chosen is responsible for the opposing behaviors of MSE and  residual $\ell^2$ norm at longer time window lengths. The ROM found by performing WST-LSPG on the Burgers' problem exhibits phase error over a longer period of time, which could possibly contribute to the MSE increasing with increased time window length. In order to remove any phase error that may exist and that may contribute to the MSE, the integrated MSE was calculated and is shown in Figures~\ref{f:burgersMSE0Integrated},~\ref{f:burgersMSE2Integrated},~\ref{f:burgersMSE4Integrated},~\ref{f:burgersMSE6Integrated}. These figures show that the overall integrated MSE is an order of magnitude less than the MSE, showing that for the Burgers' equation, phase error is a non inconsequential error in this particular setup. The IMSE exhibits a minimum like seen in Figures~\ref{f:burgersMSE0},~\ref{f:burgersMSE2},~\ref{f:burgersMSE4},~\ref{f:burgersMSE6} and increases with larger time window length. This behavior again is different than the way the residual $\ell^2$ norm behaves with increasing time window length. 
\begin{figure}[h!]
\subfloat[mean squared error: $e_s = 0.99, e_t = 0.99$][\centering \hspace{1.5mm}integrated MSE:\par\hspace{5.5mm}$e_s = 0.99, e_t = 0.99$\label{f:burgersMSE0Integrated}]{\includegraphics[height=0.264\textwidth]{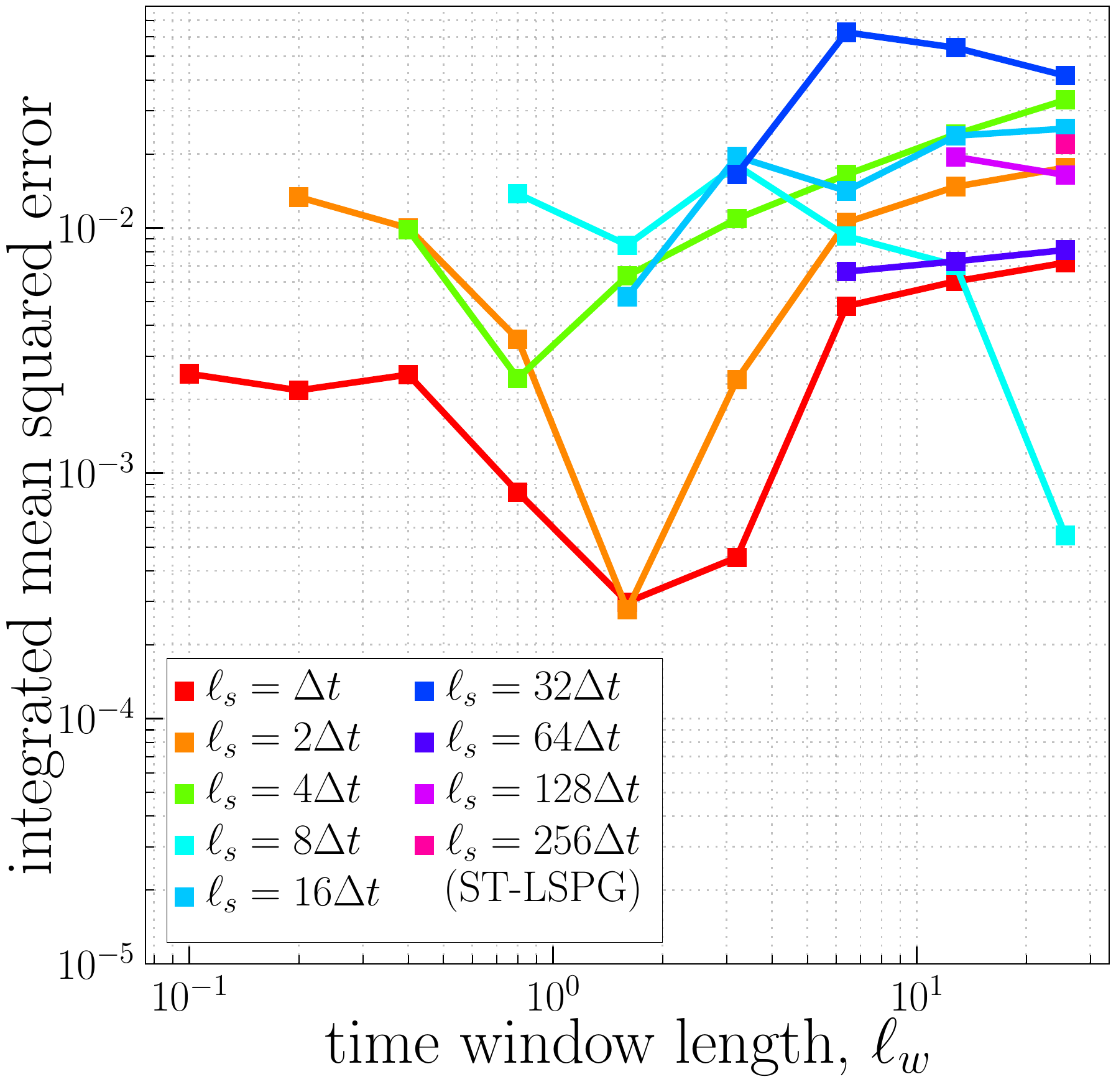}}\hspace{1mm}\nolinebreak
\subfloat[mean squared error: $e_s = 0.99, e_t = 0.999$][\centering \hspace{1.5mm}integrated MSE:\par\hspace{5.5mm}$e_s = 0.99, e_t = 0.999$\label{f:burgersMSE2Integrated}]{\includegraphics[height=0.264\textwidth]{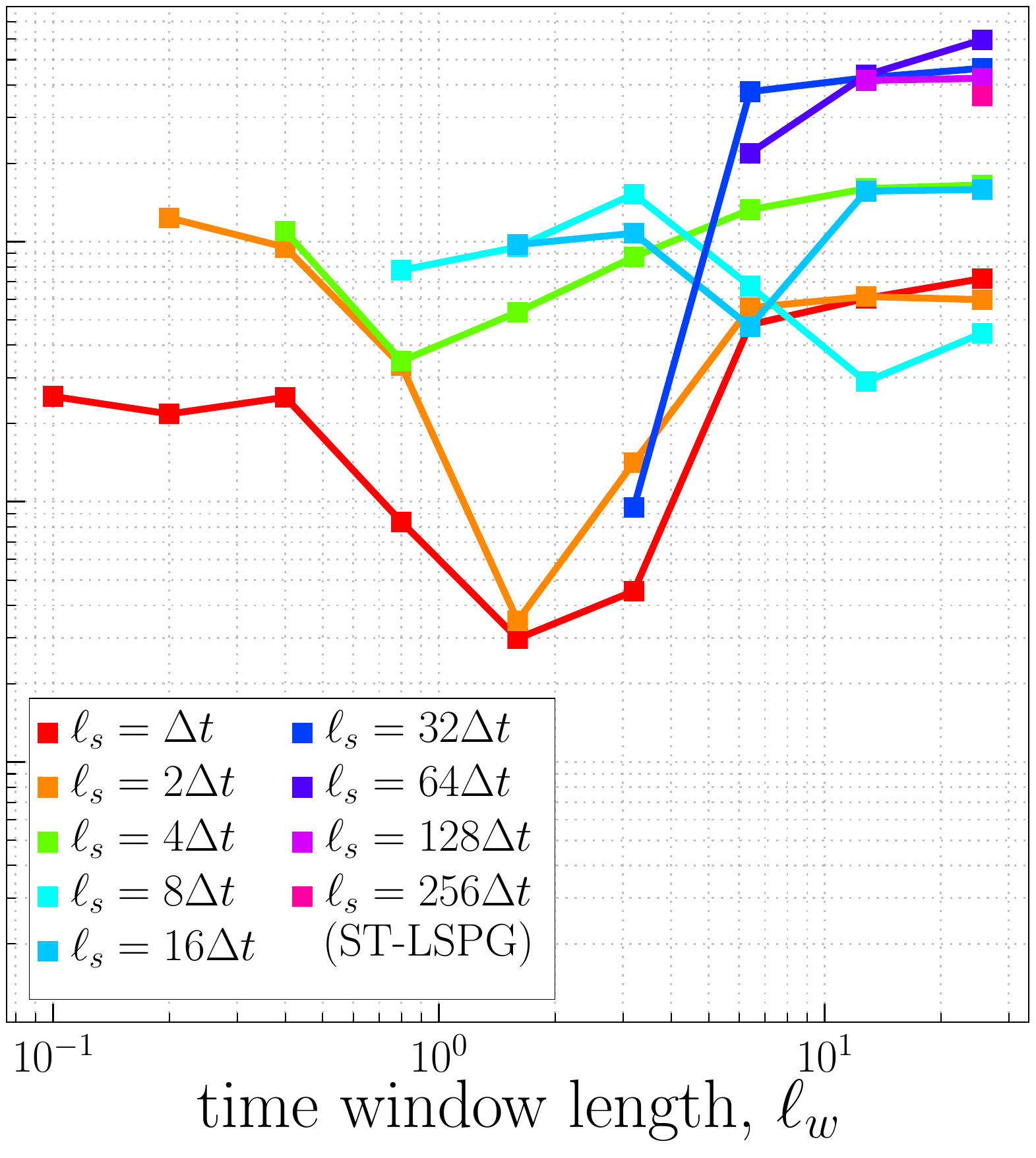}}\hspace{1mm}\nolinebreak
\subfloat[mean squared error: $e_s = 0.999, e_t = 0.99$][\centering \hspace{1.5mm}integrated MSE:\par\hspace{5.5mm}$e_s = 0.999, e_t = 0.99$\label{f:burgersMSE4Integrated}]{\includegraphics[height=0.264\textwidth]{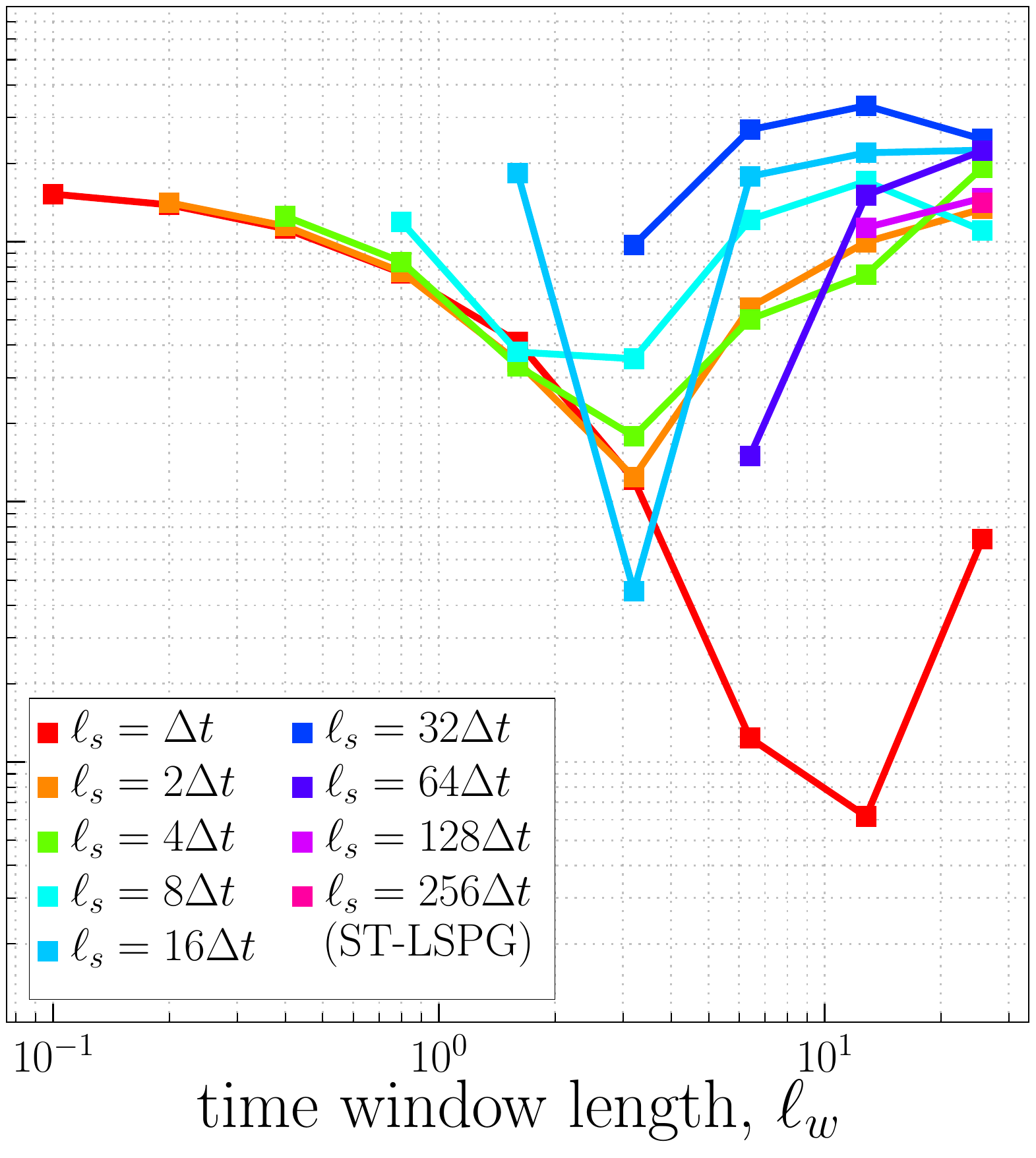}}\hspace{1mm}\nolinebreak
\subfloat[mean squared error: $e_s = 0.999, e_t = 0.999$][\centering \hspace{1.5mm}integrated MSE:\par\hspace{5.5mm}$e_s = 0.999, e_t = 0.999$\label{f:burgersMSE6Integrated}]{\includegraphics[height=0.264\textwidth]{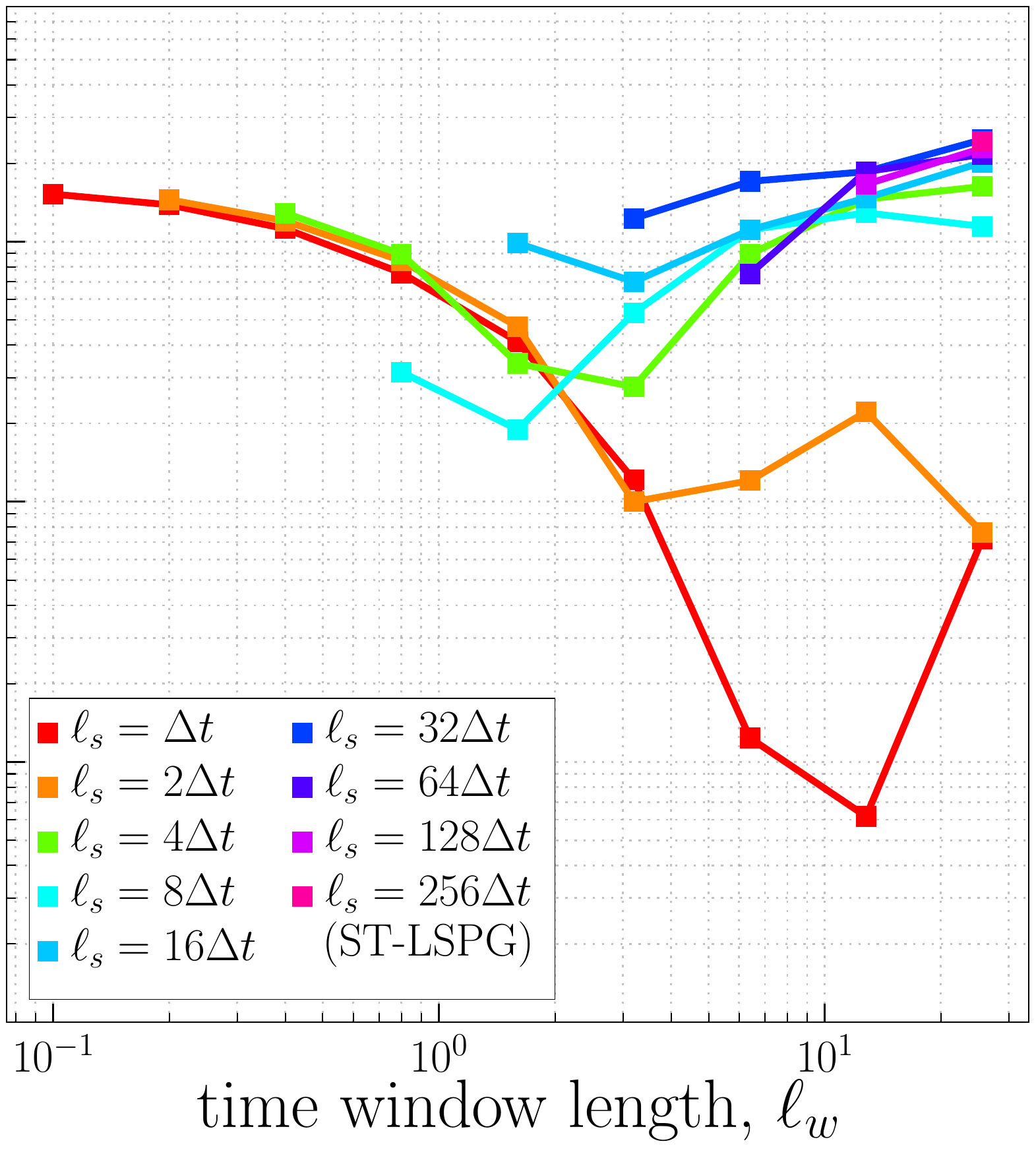}}\vspace{-2mm}
\caption{\emph{Burgers' equation.} WST-LSPG integrated MSE for basis set $e_s = 0.99, e_t = 0.99$, $e_s = 0.99, e_t = 0.999$, $e_s = 0.999, e_t = 0.99$, and $e_s = 0.999, e_t = 0.999$. The results are presented as a function of the window length, $\ell_w$, for various sub-window lengths, $\ell_s$.}
\label{f:windowA}
\end{figure}
\section{\emph{Compressible Navier--Stokes equations}. Residual $\ell^2$ norms}\label{s:nsresidualss}
Next, Figure~\ref{f:NSParetoResidual} shows how the residual $\ell^2$ norm vs. relative wall time behaves with varying sub-window lengths for each time window length. Each different Pareto plots refers to results for different time window lengths $\ell_w$. Collectively, these Pareto plots show that the residual $\ell^2$ norm increases with increasing sub-window length, which is contrary to the behavior of the MSE. With hyper-reduction, the relative wall time for the different sub-window lengths decreased by one to two orders of magnitude. Again for $\ell_w=2\Delta t$ and $\ell_w=\Delta t$, no set of WST-GNAT parameters converged due to memory limitations. Figure~\ref{f:NSParetoResidualAll} combines the results for residual $\ell^2$ norms and shows the Pareto front; this plot shows that with hyper-reduction, WST-GNAT was able to decrease the relative wall time by at most two orders of magnitude. Additionally, Figure~\ref{f:NSParetoResidualAll} shows that the residual $\ell^2$ norm decreases with decreasing sub-window lengths; this is consistent with the results for the one-dimensional Burger's equation. 

Figure~\ref{f:NSParetoResidual_e} shows how the residual $\ell^2$ norm vs. relative wall time behaves with varying window lengths for each sub-window length. These Pareto plots show the same exact data points as shown in Figure~\ref{f:NSParetoResidual}, but highlight how the residual $\ell^2$ norm changes with varying window length for each sub-window length. It can be seen that decreasing the time window length for each sub-window length leads to higher residual $\ell^2$ norms. Figure~\ref{f:NSParetoResidualeAll} combines the results for the residual $\ell^2$ norms and shows the Pareto front, which is the same as the one shown in Figure~\ref{f:NSParetoResidualAll}. Overall, it can be seen that larger time windows lead to lower relative wall times for WST-GNAT, but that setting the time window length such that $\ell_w\ge8\Delta t$ leads to lower residual $\ell^2$ norms, but slightly longer relative wall times. Table~\ref{t:NSresiduals} shows the parameters of the cases that lie on the Pareto front found in both Figure~\ref{f:NSParetoResidualAll} and Figure~\ref{f:NSParetoResidualeAll}. Cases 1--3 and 5 refer to ST-GNAT and exhibit lower relative wall times. Cases 4 and 6--7 refer to WST-GNAT and exhibit lower residual $\ell^2$ norms. The highlighted rows again refer to cases that give optimal results for MSE, residual $\ell^2$ norms, relative drag error, and relative lift error.
\begin{figure}[!t]
\centering
\subfloat[$\numWindows=1,\ell_{w}=128\Delta t$\label{f:NSParetoResidual1}]{\pareto{NS_Pareto_Residual_7.pdf}}\hspace{1mm}\nolinebreak
\subfloat[$\numWindows=2,\ell_{w}=64\Delta t$\label{f:NSParetoResidual2}]{\pareto{NS_Pareto_Residual_6.pdf}}\hspace{1mm}\nolinebreak
\subfloat[$\numWindows=4,\ell_{w}=32\Delta t$\label{f:NSParetoResidual3}]{\pareto{NS_Pareto_Residual_5.pdf}}\hspace{1mm}\nolinebreak
\subfloat[$\numWindows=8,\ell_{w}=16\Delta t$\label{f:NSParetoResidual4}]{\pareto{NS_Pareto_Residual_4.pdf}}\hspace{1mm}\nolinebreak
\subfloat[$\numWindows=16,\ell_{w}=8\Delta t$\label{f:NSParetoResidual5}]{\pareto{NS_Pareto_Residual_3.pdf}}\\
\subfloat[$\numWindows=32,\ell_{w}=4\Delta t$\label{f:NSParetoResidual6}]{\pareto{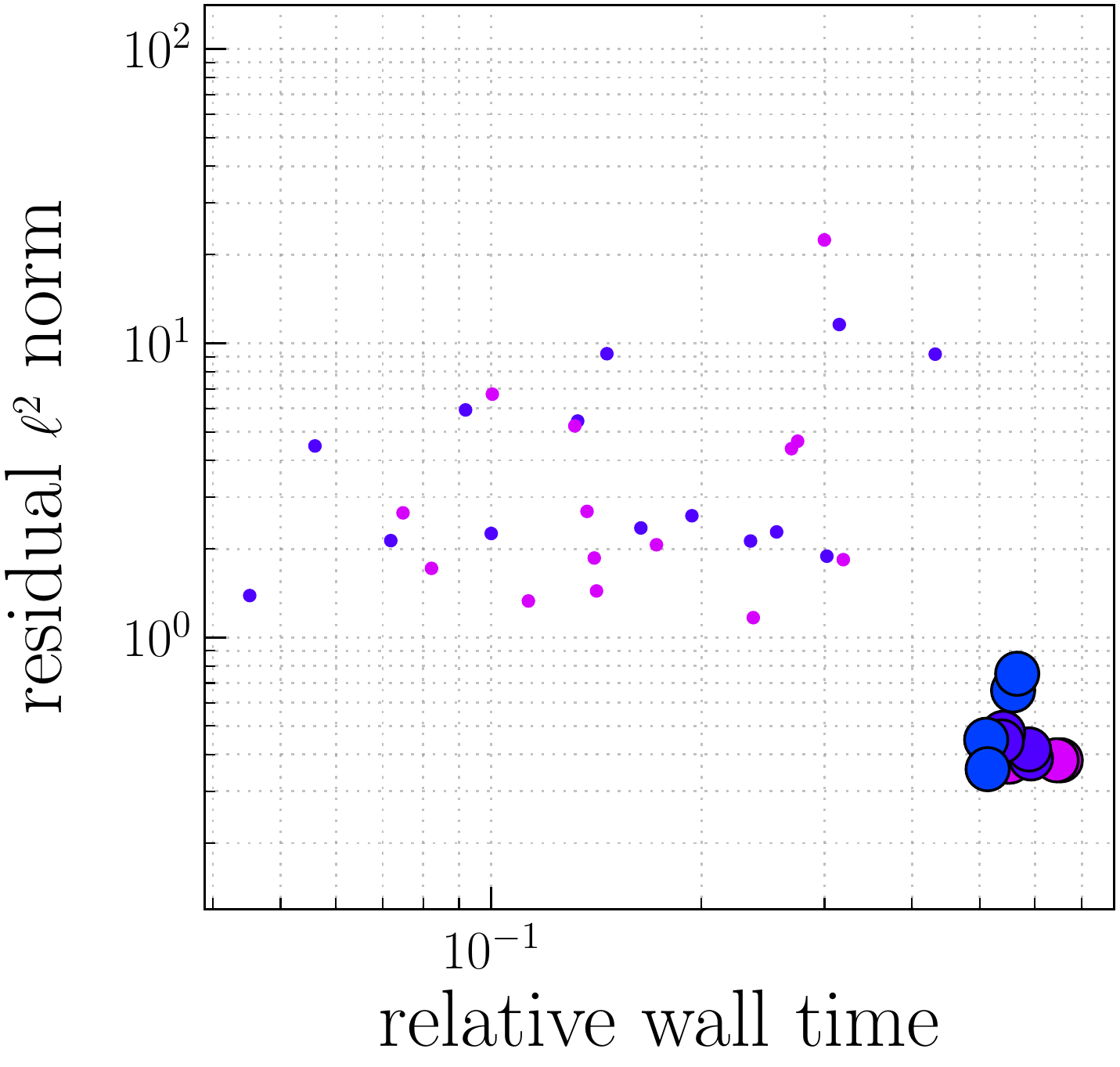}}\hspace{1mm}\nolinebreak
\subfloat[$\numWindows=64,\ell_{w}=2\Delta t$\label{f:NSParetoResidual7}]{\pareto{NS_Pareto_Residual_1.pdf}}\hspace{1mm}\nolinebreak
\subfloat[$\numWindows=128,\ell_{w}=\Delta t$\label{f:NSParetoResidual8}]{\pareto{NS_Pareto_Residual_0.pdf}}\hspace{1mm}\nolinebreak
\legend{ns_legend.pdf}\\
\subfloat[Combined Pareto plot with Pareto front for residual $\ell^2$ norm\label{f:NSParetoResidualAll}]{\includegraphics[width=1\textwidth]{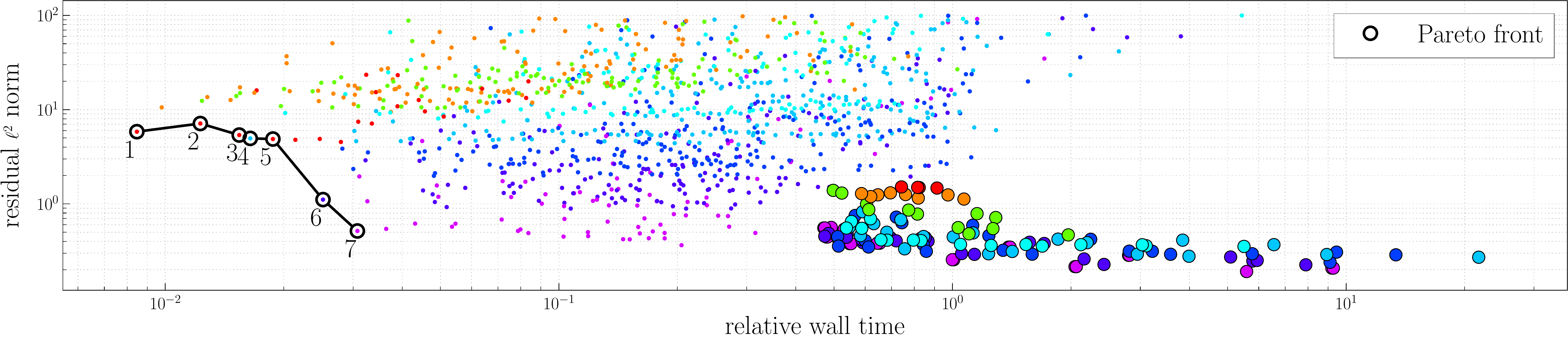}}
\caption{\emph{Compressible Navier--Stokes equations.} Pareto plots for residual $\ell^2$ norm vs. relative wall time for varying sub-window lengths. These Pareto plots highlight how the sub-window length, $\ell_s$, affect the residual $\ell^2$ norm for WST-LSPG and WST-GNAT. Each hyper-reduction case refers to a unique set of spatial sample nodes and temporal sample nodes. The last Pareto plot combines all the results for the residual $\ell^2$ norm and shows the Pareto front.}
\label{f:NSParetoResidual}
\end{figure}
\begin{figure}[!p]
\centering
\subfloat[$\ell_{s}=128\Delta t$\label{f:NSParetoResiduale1}]{\pareto{NS_Pareto_Residual_7_e.pdf}}\hspace{1mm}\nolinebreak
\subfloat[$\ell_{s}=64\Delta t$\label{f:NSParetoResiduale2}]{\pareto{NS_Pareto_Residual_6_e.pdf}}\hspace{1mm}\nolinebreak
\subfloat[$\ell_{s}=32\Delta t$\label{f:NSParetoResiduale3}]{\pareto{NS_Pareto_Residual_5_e.pdf}}\hspace{1mm}\nolinebreak
\subfloat[$\ell_{s}=16\Delta t$\label{f:NSParetoResiduale4}]{\pareto{NS_Pareto_Residual_4_e.pdf}}\hspace{1mm}\nolinebreak
\subfloat[$\ell_{s}=8\Delta t$\label{f:NSParetoResiduale5}]{\pareto{NS_Pareto_Residual_3_e.pdf}}\\
\subfloat[$\ell_{s}=4\Delta t$\label{f:NSParetoResiduale6}]{\pareto{NS_Pareto_Residual_2_e.pdf}}\hspace{1mm}\nolinebreak
\subfloat[$\ell_{s}=2\Delta t$\label{f:NSParetoResiduale7}]{\pareto{NS_Pareto_Residual_1_e.pdf}}\hspace{1mm}\nolinebreak
\subfloat[$\ell_{s}=\Delta t$\label{f:NSParetoResiduale8}]{\pareto{NS_Pareto_Residual_0_e.pdf}}\hspace{1mm}\nolinebreak
\legend{ns_legend2.pdf}\\
\subfloat[Combined Pareto plot with Pareto front for residual $\ell^2$ norm\label{f:NSParetoResidualeAll}]{\includegraphics[width=1\textwidth]{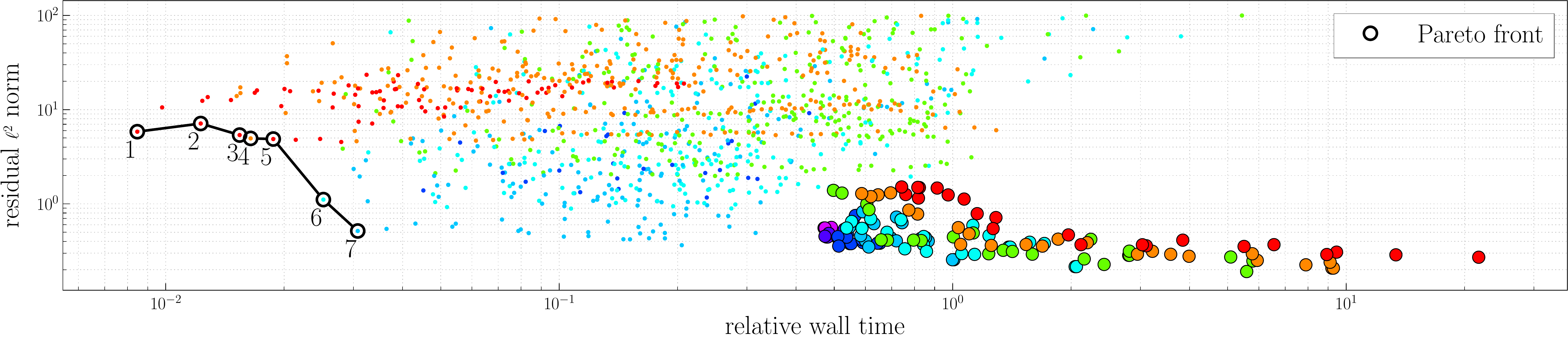}}
\caption{\emph{Compressible Navier--Stokes equations.} Pareto plots for residual $\ell^2$ norm vs. relative wall time for varying window lengths. These Pareto plots highlight how the window length, $\ell_w$, affect the residual $\ell^2$ norm for WST-LSPG and WST-GNAT. The last Pareto plot combines all the results for the residual $\ell^2$ norm and shows the Pareto front.}
\label{f:NSParetoResidual_e}
\begin{center}
\vspace{-3mm}
\captionof{table}{\label{t:NSresiduals}\emph{Compressible Navier--Stokes equations}. Method and parameter values for labeled data points in Figures~\ref{f:NSParetoResidual} and~\ref{f:NSParetoResidual_e} that yield Pareto-optimal performance for for the residual $\ell^2$ norm.}
\resizebox{\columnwidth}{!}{%
\begin{tabular}{ ccccccccccccccc } \toprule
case& method & $\ell_w$ & $\ell_s$ & \thead{total\\$n_{st}$} &\thead{total\\$n^r_{st}$}& $e_s$ & $e_t$ & $e_{r_s}$ & $e_{r_t}$ & $z_t$ & $z_s$ & \thead{residual\\$\ell^2$ norm} & \thead{x-LSPG relative\\ wall time} &\thead{relative wall\\time}\\
\midrule \midrule
\rowcolor{matcha}1 & ST-GNAT	&12.8&	12.8&20 &16 &	0.999&	0.99& 0.999&	0.999&	9	&1080 & 5.832617684 &  0.839231102&0.008468197\\ 
 2 & ST-GNAT &	12.8&	12.8&20 &16&	0.999&	0.99& 0.999&	0.999&	9&	2161& 7.122158120& 0.842223198 &  0.012274356\\
3& ST-GNAT & 12.8&	12.8& 20 & 16&	0.999&	0.99&  0.999&	0.999&	9&	3242 & 5.372841392&  0.844611717& 0.015412119\\
 \rowcolor{matcha}4 & WST-GNAT& 6.4	&0.8& 245 &13& 	0.999&	0.99& 0.999	&0.999	&5	&1080& 4.951202075 &  2.261437990&  0.016439454\\
  5 & ST-GNAT &12.8&	12.8& 20 & 16 &	0.999&	0.99& 0.999&	0.999&	9&	4323&  4.876390151& 0.851598851&  0.018766640\\
 \rowcolor{matcha} 6& WST-GNAT &	1.6&	0.2	&725& 145&0.999&	0.999 &0.999&	0.999&	3&	1080& 1.109987192 & 1.717868245 & 0.025160652\\
  7& WST-GNAT	&0.8&	0.1& 774 & 277&	0.999&	0.999& 0.999&	0.999	&3&	1080 &0.515574079  &1.006124284 &0.030765692\\\bottomrule
\end{tabular}
}
\end{center}
\end{figure}







\clearpage
\bibliographystyle{elsarticle-num-names}
\bibliography{windowedstlspg.bib}
\end{document}